\numberwithin{equation}{section}
\theoremstyle{plain}
\newtheorem{theorem}{Theorem}[section]
\newtheorem{corollary}[theorem]{Corollary}
\newtheorem{lemma}[theorem]{Lemma}
\newtheorem{proposition}[theorem]{Proposition}
\theoremstyle{definition}
\newtheorem{definition}[theorem]{Definition}
\newtheorem{remark}[theorem]{Remark}
\newtheorem{example}[theorem]{Example}
\theoremstyle{remark}
\newcommand{\R}{\mathbb{R}}
\newcommand{\Q}{\mathbb{Q}}
\newcommand{\Z}{\mathbb{Z}}
\newcommand{\C}{\mathbb{C}}
\newcommand{\h}{\mathbb{H}}
\renewcommand{\H}{\mathbb{H}}
\newcommand{\E}{\mathbb{E}}
\newcommand{\G}{\Gamma}
\newcommand{\g}{\gamma}
\newcommand{\La}{\Lambda}
\newcommand{\x}{\mathbf{x}}
\newcommand{\y}{\mathbf{y}}
\newcommand{\w}{\mathbf{w}}
\newcommand{\back}{\backslash}
\newcommand{\V}{V(\Q)}
\newcommand{\Hk}{\mathcal{H}_k}
\newcommand{\zxz}[4]{\begin{pmatrix} #1 & #2 \\ #3 & #4 \end{pmatrix}}
\newcommand{\kzxz}[4]{\left(\begin{smallmatrix} #1 & #2 \\ #3 & #4\end{smallmatrix}\right) }
\newcommand{\kabcd}{\kzxz{a}{b}{c}{d}}
\newcommand{\wwedge}[1]{\sideset{}{^{#1}}\bigwedge}
\newcommand{\calA}{\mathcal{A}}
\newcommand{\calD}{\mathcal{D}}
\newcommand{\calE}{\mathcal{E}}
\newcommand{\calF}{\mathcal{F}}
\newcommand{\calH}{\mathcal{H}}
\newcommand{\calL}{\mathcal{L}}
\newcommand{\calS}{\mathcal{S}}
\newcommand{\frake}{\mathfrak e}
\newcommand{\eps}{\varepsilon}
\newcommand{\vol}{\operatorname{vol}}
\newcommand{\tr}{\operatorname{tr}}
\newcommand{\sgn}{\operatorname{sgn}}
\newcommand{\Span}{\operatorname{span}}
\newcommand{\Sl}{\operatorname{SL}}
\newcommand{\SL}{\operatorname{SL}}
\newcommand{\Spin}{\operatorname{Spin}}
\newcommand{\PSL}{\operatorname{PSL}}
\newcommand{\Symp}{\operatorname{Sp}}
\newcommand{\Mp}{\operatorname{Mp}}
\newcommand{\Orth}{\operatorname{O}}
\newcommand{\Uni}{\operatorname{U}}
\newcommand{\Sym}{\operatorname{Sym}}
\newcommand{\SO}{\operatorname{SO}}
\newcommand{\Iso}{\operatorname{Iso}}
\newcommand{\PD}{\operatorname{PD}}
\newcommand{\mult}{\operatorname{mult}}
\begin{document}

\title[Spectacle cycles and modular forms]{Spectacle cycles with coefficients and modular forms of half-integral weight}

\author[Jens Funke and John Millson]{Jens Funke* and John Millson**}
\thanks{* Partially supported by NSF grant DMS-0710228}
\thanks{** Partially supported by NSF grant DMS-0907446, NSF FRG grant DMS-0554254, and the Simons Foundation}
\address{Department of Mathematical Sciences, University of Durham, Science Laboratories,
South Rd, Durham DH1 3LE, United Kingdom}
\email{jens.funke@durham.ac.uk}
\address{Department of Mathematics, University of Maryland, College Park, MD
20742, USA} \email{jjm@math.umd.edu}

\date{\today}

\dedicatory{To our friend Steve Kudla}

\begin{abstract}
In this paper we present a geometric way to extend the Shintani lift from even weight cusp forms for congruence subgroups to arbitrary modular forms, in particular Eisenstein series. This is part of our efforts to extend in the noncompact situation the results of Kudla-Millson and Funke-Millson relating Fourier coefficients of (Siegel) modular forms with intersection numbers of cycles (with coefficients) on orthogonal locally symmetric spaces. In the present paper, the cycles in question are the classical modular symbols {\it with nontrivial coefficients}. We introduce ``capped'' modular symbols with coefficients which we call ``spectacle cycles'' and show that the generating series of cohomological periods of any modular form over the spectacle cycles is a modular form of half-integral weight. 
In the last section of the paper we develop a new simplicial homology theory with  local
coefficients (that are not locally constant)  that allows us to extend the above results to orbifold quotients of the upper half
plane.
\end{abstract}

\maketitle

\section{Introduction}\label{intro}

In a series of articles from 1979 to 1990 Steve Kudla and the second named author developed a theory to  explain the occurrence of intersection numbers of geometrically defined cycles as Fourier coefficients of automorphic forms from the point of view of Riemannian geometry and the theory of reductive dual pairs and the theta correspondence (see eg \cite{KM90}). Their program was motivated by the work of Hirzebruch-Zagier \cite{HZ} on `Hirzebruch-Zagier' curves in Hilbert modular surfaces and Shintani \cite{Shin} for the `classical' modular symbols inside modular curves. They obtain analogues of the results of \cite{HZ} and \cite{Shin} for orthogonal, unitary, and symplectic groups of arbitrary dimension and signature. In particular, their work gives rise to a lift from the cohomology with compact supports for the associated locally symmetric spaces to spaces of holomorphic Siegel and Hermitian modular forms. Note however that the restriction to cohomology with compact supports implies that their results actually do not include the one obtained by Hirzebruch-Zagier (which deals with a smooth compactification of the Hilbert modular surface).

The typical shape of the results of Kudla-Millson for the dual pair $\Orth(p,q) \times \SL_2$ is as follows. There exists a theta series $\theta(\tau,\varphi)$ (associated to a carefully chosen vector-valued Schwartz function $\varphi$ on $\R^{(p,q)}$) with values in the closed differential forms on $X$, an appropriate, typically non-compact, arithmetic quotient of the orthogonal symmetric space, such that its cohomology class
\[
[\theta(\tau,\varphi)] = \sum_{n \geq 0} \PD(C_n)e^{2\pi i n\tau}
\]
is a holomorphic modular form of weight $(p+q)/2$ for $\SL_2$ (in $\tau \in \h$, the upper half plane) with values in the cohomology of $X$ with trivial coefficients. Here $\PD(C_n)$ are the Poincar\'e dual classes to the geometrically defined, totally geodesic, ``special'' cycles $C_n$  in $X$, parameterized by non-negative integers $n$. These cycles $C_n$ are (usually) non-compact and hence define in general homology classes relative to the (Borel-Serre) boundary. Then the (co)homological pairing of this generating series with the cohomology with compact supports or equivalently with absolute cycles gives rise to a theta lift from these (co)homology groups to classical modular forms.

\bigskip

The authors of present paper have been developing a program, see  \cite{FM1,FMcoeff,FMres,FM-HZ} (for an introductory overview also see \cite{FKyoto}), in which they seek to generalize the original work of Kudla-Millson in various directions. In \cite{FMcoeff}, they extended the lift to include non-trivial local coefficients systems. The main main goal however, is to extend the theta lift to cohomology groups associated to $X$ which capture its boundary.

Among the finite-volume non-compact quotient cases there is a family that appears to be amenable to attack using the techniques we have developed so far.
It is the family such that the theta functions $\theta(\varphi)$ (with potentially non-trivial coefficients) restricted to the Borel-Serre boundary of $X$ are exact. Equivalently, the special cycles that intersect the Borel-Serre boundary $\partial \overline{X}$ have intersections that are boundaries in $\partial \overline{X}$. In this case it appears that one may obtain a correction term given by another theta series $\theta(\phi)$ so that the pair $(\theta(\varphi), \theta(\phi))$ is a cocycle in the mapping cone de Rham complex associated to the pair $(\overline{X}, \partial \overline{X})$. This pair hence corresponds to a cohomology class on $X$ with compact supports whose image in the absolute cohomology coincides with the class of $\theta(\varphi)$. On the cycle level this construction corresponds to capping off the relative cycles $C_n $ at the boundary to form absolute cycles $C_n^c$ {\it which are homologous to $C_n$ as relative cycles}. 
In this way one can then extend the lift to the full cohomology of $X$.

\medskip

We have in fact implemented this procedure in \cite{FM-HZ} to reprove the main result of Hirzebruch and Zagier \cite{HZ} and give a topological interpretation of the remarkable fact of the authors' proof in which the desired generating series is expressed as the difference of two non-holomorphic modular forms. 

\medskip

Our purpose in this paper is to deal with the most basic case of all, namely geodesics {\it with coefficients} in modular curves, that is, we consider and extend Shintani's work \cite{Shin}. In our set-up this is the case of $\SO(2,1)$ whose arithmetic quotients $X$ via the special isomorphism with $\SL_2$ we can interpret as modular curves (in the non-compact case). Furthermore, the special cycles $C_n$ are closed or infinite geodesics (in the latter case these are the classical modular symbols). In \cite{FMcoeff} we explain (in much greater generality) how one can associate coefficient systems to the cycles $C_n$. Namely, we let $E_{2k}$ be the $2k$-th symmetric power of the standard representation of $\SL_2$. We can then construct cycles with coefficients $C_{n,[k]} \in H_1(X, \partial X, \widetilde{E_{2k}})$, where $\widetilde{E_{2k}}$ is the local system associated to $E_{2k}$, see below. Our main result of \cite{FMcoeff} specialized to this case recovers Shintani's result \cite{Shin} and states that 
\begin{equation}\label{Intro-Shintani}
\sum_{n > 0} \\PD(C_{n,[k]})e^{2\pi i n\tau} \in S_{k+3/2}(\G') \otimes H^1(X,\widetilde{E_{2k}})
\end{equation}
is a cusp form of weight $k+3/2$ for a congruence subgroup $\G' \subset \SL_2(\Z)$. Note that Shintani formulates his result in terms of weighted periods of cusp forms $f$ of weight $2k+2$ over the geodesics. Via the map $ f \mapsto \eta_f := f(z)dz \otimes (z\frake_1+\frake_2)^{2k}$, which induces the Eichler-Shimura isomorphism, one can obtain our point of view. Here $\frake_i,i=1,2$ is the standard basis of $E$.

\medskip

In this situation a remarkable phenomenon occurs. The cycles with trivial coefficients (i.e., $k=0$) cannot be capped off or equivalently the theta functions with trivial coefficients cannot be corrected to make them into relative classes. On the other hand {\it for any irreducible non-trivial coefficient system $E_{2k}$ the cycles $C_{n,[k]}$ can be capped off}. Equivalently, for non-trivial coefficients the forms $\theta(\varphi)$ of \cite{FMcoeff} can be corrected to be compactly-supported. The actual procedure of capping off a modular symbol (when it is an infinite geodesic joining two cusps) with coefficients produces a `spectacle' equipped with parallel sections of the coefficent system which we have named a `spectacle cycle (with coefficients)'.

\begin{figure}
\begin{center}
\caption{A spectacle chain}
\label{frequencies}
\epsfig{file=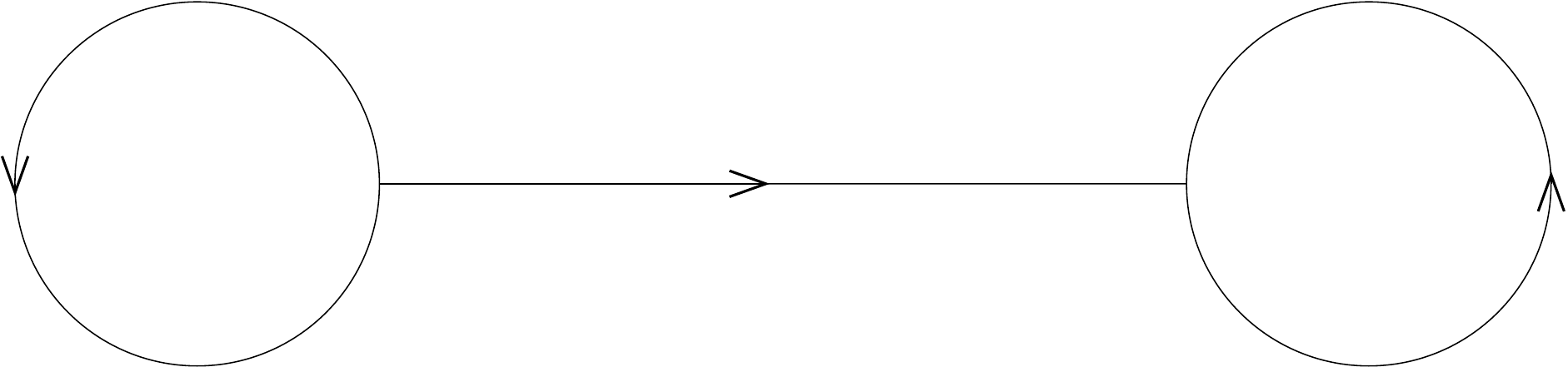,width=5in,height=3cm}
\end{center}
\end{figure}

The oriented graph $S$ of Figure 1 is of course not a cycle. It only becomes a cycle when local coefficients are added in a way that we now describe. Let $p_+$, resp. $p_-$, be the intersection
of the straight part $a$ of the spectacles with the right-hand circle $c_+$, resp. the left hand circle  $c_-$.  Suppose $S$ is embedded in a topological space equipped with a flat bundle $\E$. To promote $S$ to a cycle with coefficients in $\E$ we need a parallel section $s$ on $a$, a parallel section on $c_+$ with a jump at $p_+$ whose value is the negative of $s(p_+)$ and a parallel section on $c_-$ with a jump at $q$ with value equal to $s(p_-)$.  It is a remarkable fact (see the discussion below) that such cycles can be found implicitly in classical complex analysis (with $\E$ a flat complex line bundle). 

In our situation, we consider the Borel-Serre compactification $\overline{X}$ of the modular curve $X = \G \back \h$ which adds a boundary circle to each cusp and $\E=\widetilde{E_{2k}}$. Furthermore, $a$ is a modular symbol given by an infinite geodesic in $X$ joining two cusps, and the section $s=s_v$ on $a$ takes the constant value $v \in E_{2k}$. The circles $c_\pm$ are the boundary circles at the cusps joined by $a$. Then the sections on $c_\pm$ arise from solutions to the equation $(\g_\pm^{-1} -Id)w =v$ (which are not unique, and don't exist if $v$ is a lowest weight vector in $E_{2k}$ with respect to the Borel attached to the given cusp). Here $\g_\pm$ are the (properly oriented) generators of the stabilizer of the cusps $c_\pm$ in $\G$.

\medskip

The idea of adding `spectacles' to convert a locally-finite cycle with coefficients
`joining' two punctures (or equivalently $1$-cycles with coefficients relative to small circles surrounding the punctures) into a closed cycle with coefficients {\it in the same relative cohomology class} comes from Deligne and Mostow  \cite{DM} - see the picture at the top of page 14 (Deligne and Mostow have spectacles with rectangular lenses). 
Deligne and Mostow point out the classical antecedents of their construction in the contour integral formula for the $\G$-function. The gamma and hypergeometric integrands
should be regarded as {\it single-valued} differential one-forms with values in a one-dimensional
flat complex line bundle. The integral of such a form over a contour requires
a section of the dual bundle over that contour. In this paper we are dealing with differential $1$-forms and $1$-cycles with values in local systems of dimension higher than one that are locally homogeneous for the group $\SL(2,\R)$.

The spectacle construction in our situation appeared earlier in the work of Harder and his school, see eg Kaiser's Diplomarbeit \cite{Kaiser}, in their investigation of (the denominator of) Eisenstein cohomology. They discuss the cases $\SL_2(\Z)$ \cite{Harder} and $\G_1(p)$ \cite{Kaiser} in detail and integrate Eisenstein cohomology classes over the spectacle cycles.

\medskip

Throughout the paper we use the language and set-up of a rational quadratic space $V$ of signature $(2,1)$ whose arithmetic quotient gives rise to a modular curve $X = \G \back \h$, see Section~\ref{prelim}. This is necessary for our theta series construction, and is of course also the set-up in Shintani's paper. In Section~\ref{Spec-Section}, we construct the special cycles in our context and the associated spectacle cycles. The special cycles $C_\x$ arise from a rational vector of positive length $\x \in V$, and there is a natural choice to `promote' this cycle to one with coefficients $C_{\x,[k]}$. Then $C_{n,[k]}$ is obtained by summing over a set of representatives of $\G$-equivalence classes of vectors of length $n$ in a coset of an even lattice in $V$. 

The main result for the construction of the spectacle cycles is Lemma~\ref{v_x-lemma}. A rough summary of our considerations in Section~\ref{Spec-Section} is 

\begin{proposition}
Let $\overline{X}$ be the Borel-Serre compactification of $X$. Let $C_\x \otimes s_v$ be a modular symbol with coefficients, where $v$ is a rational vector in $E_{2k}$. Then if $\partial(C_\x \otimes s_v) =0$ in the homology of $\partial{\overline{X}}$, we can associate to $C_\x \otimes s_v$ a spectacle cycle in $\overline{X}$ which defines an absolute cycle with rational coefficients. In particular, there are natural choices to obtain spectacle cycles $C^c_{\x,[k]}$ and $C^c_{n,[k]}$ by requiring that the integral of the canonical generators of the cohomology at the boundary over the ``lenses'' vanishes. 
\end{proposition}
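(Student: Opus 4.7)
My plan is to construct the spectacle cycle in three stages: first describe $C_{\x}\otimes s_v$ on the Borel--Serre compactification and compute its boundary, second use the hypothesis to cancel that boundary by adjoining lens chains at the two cusps, and third remove the residual ambiguity via a cohomological normalization.

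Since $\x$ has positive length, $C_{\x}$ is either a closed geodesic (already an absolute cycle, with nothing to cap) or an infinite geodesic joining two cusps. Working on $\overline{X}$ in the latter case, $C_{\x}$ meets $\partial\overline{X}$ transversally in two points $p_\pm$ on the boundary circles $c_\pm$, and the relative $1$-chain $C_{\x}\otimes s_v$ has boundary $(p_+,v)-(p_-,v)$. Each $c_\pm$ has monodromy given by a parabolic generator $\g_\pm\in\G$ stabilizing the corresponding cusp, so $H_0(c_\pm,\widetilde{E_{2k}})=E_{2k}/(\g_\pm-\id)E_{2k}$ is the space of coinvariants; the hypothesis $\partial(C_{\x}\otimes s_v)=0$ in $H_0(\partial\overline{X},\widetilde{E_{2k}})$ is therefore exactly the solvability of $(\g_\pm^{-1}-\id)w_\pm=v$ in $E_{2k}$ at each cusp.

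Pick such $w_\pm$; since $v$ is rational and $\g_\pm^{-1}-\id$ is $\Q$-rational, we may take $w_\pm$ rational. For each cusp, parameterize $c_\pm$ as a loop based at $p_\pm$ and build the lens $\ell_\pm$ by equipping it with the section obtained by parallel transport starting from $w_\pm$: the section has a single jump at $p_\pm$ of size $(\g_\pm-\id)w_\pm$, which, with the signs fixed as in Figure~\ref{frequencies}, cancels the boundary contribution of $C_{\x}\otimes s_v$ at $p_\pm$. The sum $(C_{\x}\otimes s_v)+\ell_+ +\ell_-$ is then an absolute $1$-cycle with rational coefficients in $\widetilde{E_{2k}}$, homologous to $C_{\x}\otimes s_v$ as a relative class.

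To single out a canonical representative $C^c_{\x,[k]}$, note that $w_\pm$ is determined only up to the $1$-dimensional invariant line $E_{2k}^{\g_\pm}$, spanned by the lowest weight vector at the cusp. Let $\omega_\pm$ denote the canonical generator of $H^1(c_\pm,\widetilde{E_{2k}})$; the pairing $\int_{\ell_\pm}\omega_\pm$ is an affine function of the $E_{2k}^{\g_\pm}$-component of $w_\pm$, so imposing $\int_{\ell_\pm}\omega_\pm=0$ pins down a unique rational $w_\pm$. Summing the resulting $C^c_{\x,[k]}$ over a set of $\G$-equivalence classes of vectors $\x$ of length $n$ in the chosen coset then yields $C^c_{n,[k]}$. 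The main obstacle is this last step: one must identify $\omega_\pm$ concretely in terms of the parabolic data at the cusp and verify that the normalization genuinely selects a unique rational $w_\pm$ simultaneously at both cusps, which is essentially the content of Lemma~\ref{v_x-lemma}.
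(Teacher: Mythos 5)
Your construction matches the paper's: identify $H_0$ of each boundary circle with the $\Gamma_\ell$-coinvariants of $E_{2k}$, translate the vanishing of $\partial(C_\x\otimes s_v)$ into solvability of the jump equation $(\g_\pm^{-1}-\id)w_\pm=v$, glue in lens chains carrying the (jumping) parallel section, and normalize by the integral against the canonical boundary class. This is exactly the route through Proposition~\ref{exactness}, Lemma~\ref{1-boundary}, and Lemma~\ref{v_x-lemma}.

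Two small remarks. First, the line of ambiguity $E_{2k}^{\g_\pm}=\ker(\g_\pm^{-1}-\id)$ is spanned by the \emph{highest} weight vector $u_{\ell}^k$ for the Borel at the cusp, not the lowest weight vector; it is the obstruction class (the nonzero coinvariant detected by pairing with $u_{\ell}^k$) that lives in the lowest weight direction, so the two roles should not be conflated. Second, the point you flag as "the main obstacle" — that $\int_{\ell_\pm}\omega_\pm$ is a non-constant affine function of the $u_\ell^k$-component of $w_\pm$ — is resolved exactly as in Lemma~\ref{v_x-lemma}: replacing $w$ by $w-\beta u_\ell^k$ changes the lens chain by $-\beta$ times the $1$-cycle $X_\ell\otimes u_\ell^k$, and since $\omega_{\ell,k}$ is by definition Poincaré dual to that cycle with pairing $1$, the integral changes by $-\beta$. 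The two normalizations at the two cusps are independent, so there is no simultaneity issue to check. With those two points filled in, your argument is complete and coincides with the paper's.
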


We did not seriously consider integral structures in this paper since our focus is different. However, Proposition \ref{Propv_x} expresses the coefficient vectors of the spectacle explicitly as linear combinations of (integral) weight vectors in $E_{2k}$, where the coefficients are integral multiples of Bernoulli numbers and special values of Bernoulli polynomials. 

\medskip

It is well-known that if $f$ is a holomorphic cusp form of weight $2k+2$ then the critical values of the associated $L$-function has a cohomological interpretation as the weighted periods of the closed holomorphic $1$-form $\eta_f$ along the $y$-axis which may be considered as a locally finite $1$-cycle or as a relative cycle (relative to the Borel-Serre boundary). In fact, the $y$-axis can be realized as a modular symbol $C_\x$ for a certain vector $\x \in V$. Of course it is immediate that we get the same result by integrating over the corresponding spectacle cycle assuming $k>0$. However, if $f$ is not cuspidal then we can no longer take the period of $\eta_f$ over the $y$-axis - the integral diverges. For the definition of the $L$-function this is usually dealt with by subtracting the constant Fourier coefficient from $f$. 
However this looses the homological interpretation of this value as a period of $\eta_f$. We show that for $k>0$ the second interpretation of the $L$-value as the period of $\eta_f$ over the spectacle cycle stills makes sense (we push the spectacles in, take the period and show that this period is independent of how much we pushed in). Thus we obtain a uniform description of the critical $L$-values for all holomorphic modular forms of weight $2k+2$ with $k>0$. We discuss these issues in Section~\ref{L-Section}, and obtain (Theorem~\ref{Lvalue})

\begin{theorem}\label{Intro-Lvalue}
Let $f = \sum_{n=0}^{\infty} a_n e^{2\pi i n z/N} \in M_{2k+2}(\G(N))$, not necessarily a cusp form. Assume that $C_\x$ is the imaginary axis in $\h$. Then the (co)homological pairing $\langle [\eta_f],{C^c_{\x,[k]}} \rangle$ is the central value of the $L$-function of $f$. We have
\[
\langle [\eta_f], {C^c_{\x,[k]}} \rangle = (-2)^k i^{k+1} \Lambda(f,k+1),
\]
where $\Lambda(f,s)$ is the completed Hecke L-function associated to $f$ which (for $Re(s) \gg 0$) is given by $\int_0^{\infty} (f(iy)-a_0) y^s \tfrac{dy}{y}$. For the pairing, we interpreted ${C^c_{\x,[k]}}$ as an absolute cycle in $X$. In fact, all critical values of $f$ arise from the pairing of $[\eta_f]$ with spectacle cycles associated to $C_\x$ (with different coefficients).
\end{theorem}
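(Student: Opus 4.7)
The plan is to unfold the cohomological pairing explicitly along the three components of the spectacle cycle and to identify the geometric regularization produced by the lenses with the Mellin--transform regularization defining the completed $L$-function.

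Recall that the spectacle cycle $C^c_{\x,[k]}$ decomposes as the modular symbol $C_\x$ (the imaginary axis) carrying a parallel section $s_v$, together with two lens arcs $L_+$ and $L_-$ on the boundary circles of the Borel--Serre compactification at the cusps $i\infty$ and $0$, carrying parallel sections $s_{w_+}$ and $s_{w_-}$ whose jumps at the attachment points are $\pm v$. To evaluate the pairing against the $E_{2k}$-valued form $\eta_f = f(z)\,dz\otimes(z\frake_1+\frake_2)^{2k}$, first push the spectacles inwards so that $L_+$ lies on the horocircle $y = Y$ and $L_-$ on the image of that horocircle under $S = \left(\begin{smallmatrix} 0 & -1 \\ 1 & 0 \end{smallmatrix}\right)$. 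Because the spectacle cycle defines an absolute homology class on $X$, the pairing is independent of the truncation parameter $Y$, so we are free to let $Y\to\infty$ at the end.

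On the truncated imaginary axis, writing $z=iy$ and setting $P_v(z) = \langle(z\frake_1+\frake_2)^{2k},v\rangle$, the modular-symbol contribution is
\begin{equation*}
i\int_{1/Y}^{Y} f(iy)\,P_v(iy)\,dy
= i\int_{1/Y}^{Y}(f(iy)-a_0)\,P_v(iy)\,dy + i\,a_0\int_{1/Y}^{Y}P_v(iy)\,dy.
\end{equation*}
For the particular $v$ producing the central value one has $P_v(z) = c\cdot z^k$ for an explicit combinatorial constant $c$, so the first summand converges as $Y\to\infty$ to $i^{k+1}c\cdot\Lambda(f,k+1)$, while the second summand is polynomially divergent in $Y$.

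The two lens integrals are computed against the sections $s_{w_\pm}$ whose coordinates in a weight basis of $E_{2k}$ are the Bernoulli numbers and Bernoulli-polynomial values supplied by Proposition~\ref{Propv_x}. Expanding $f$ and $f\mid_{2k+2}S$ in Fourier series and integrating term by term along the horocircle, one finds that each lens contribution splits into an exponentially decaying piece (from the nonconstant Fourier coefficients) and a polynomial-in-$Y$ piece proportional to the constant term. The very defining property of $w_\pm$ --- that the canonical generator of the boundary cohomology integrates to zero over each lens --- forces the polynomial pieces from $L_+$ and $L_-$ to cancel the divergent term in the previous display exactly. In the limit $Y\to\infty$ only the finite piece $i^{k+1}c\cdot\Lambda(f,k+1)$ survives, and a direct matching of constants using the Bernoulli-number normalization of Proposition~\ref{Propv_x} shows that $c=(-2)^k$. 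The final assertion follows by replacing $v$ with the weight-$2j$ basis vector in $E_{2k}$: this replaces $P_v(z)$ by a scalar multiple of $z^{k+j}$ for $-k\le j\le k$ and yields all critical values $\Lambda(f,k+j+1)$.

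The main obstacle is the divergent cancellation described above. Tracking the Bernoulli-number expansion of $w_\pm$ through the action of the unipotent cusp stabilizers on $(z\frake_1+\frake_2)^{2k}$ and matching it against the constant-term asymptotic on $C_\x$ requires careful bookkeeping; conceptually, however, the cancellation is forced by the $Y$-independence of the pairing, so the computation only needs to produce the finite part with the correct constant $(-2)^k$.
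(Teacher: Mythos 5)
The overall strategy you describe — push the spectacle in, split the modular-symbol integral into a ``regularized'' piece and a divergent piece, and use the $Y$-independence of the pairing together with the normalization of the lens coefficients $w_{\pm}$ to force cancellation of the divergent terms — is exactly the paper's strategy. However, there is a substantive error in your accounting of what converges.

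You assert that $\displaystyle i\int_{1/Y}^{Y}(f(iy)-a_0)\,P_v(iy)\,dy$ converges as $Y\to\infty$ to $i^{k+1}c\,\Lambda(f,k+1)$. This is false in general. Subtracting $a_0$ regularizes only the $y\to\infty$ endpoint; at $y\to 0$ one has $f(iy) = (-1)^{k+1}y^{-(2k+2)}\,(f|_{2k+2}S)(iy)$, and if the constant term $b_0$ of $g:=f|_{2k+2}S$ is nonzero, then $(f(iy)-a_0)\,(iy)^k \sim (-1)^{k+1}b_0\,i^k\,y^{-(k+2)}$ as $y\to 0$, which is not integrable. So the ``first summand'' is itself divergent whenever $b_0\neq 0$, and the ``second summand'' $a_0\int_{1/Y}^Y (iy)^k\,dy$ is \emph{not} the only polynomially divergent contribution. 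The paper handles this by splitting the geodesic integral at $y=1$, transforming the $y\in(0,1]$ piece by $S$ to get an integral of $g$, and subtracting the constant term $b_0$ of $g$ there. This produces, besides the two convergent integrals $\int_1^{T}(f-a_0)y^{k+1-j}\tfrac{dy}{y}$ and $(-1)^{k+1}\int_1^{T}(g-b_0)y^{k+1+j}\tfrac{dy}{y}$, the additional constant $-\tfrac{a_0}{k+1-j}-(-1)^{k+1}\tfrac{b_0}{k+1+j}$ (the paper's term~\eqref{L2}), and it is precisely this constant together with the two convergent integrals that equals the analytically continued $\Lambda(f,k+1-j)$. Your decomposition never produces this constant contribution, so even after the divergences cancel you cannot recover the claimed value $(-2)^k i^{k+1}\Lambda(f,k+1)$.

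To repair the argument, replace the single $a_0$-subtraction by the two-sided regularization just described (as in the paper's proof of Theorem~\ref{Lvalue}), then apply your cancellation argument to the remaining divergent monomials in $T_1$ and $T_2$ separately. The lens contributions are, up to exponentially small corrections, polynomials in $T_1$ and $T_2$ with zero constant term by the normalization $\int_{X_{\ell,c}\otimes w}\omega_{\ell,k}=0$ of Lemma~\ref{v_x-lemma}; since the total pairing is $T_1,T_2$-independent, these polynomials must be the negatives of the divergent monomials, and the surviving finite part is exactly $c_{k,j}\,\Lambda(f,k+1-j)$. With this correction your argument matches the paper's.
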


Note that Kohnen-Zagier \cite{KZ}, \S4 give a formal definition of the periods of Eisenstein series by extending the formulas for cusp forms. From this perspective we give a geometric interpretation for their procedure. 

There is also a different approach to consider the periods of non-cuspidal modular forms over infinite modular symbols. Namely, one can replace $\eta_f$ by a cohomologous $1$-form which extends to $\overline{X}$. For periods of Eisenstein series this is for example the approach of Harder and Kaiser\cite{Harder,Kaiser}, and also of Stevens \cite{Stevens} (in a slightly different context). In fact, the explicit description of a spectacle cycle given in Proposition \ref{Propv_x} together with Theorem~\ref{Intro-Lvalue} can be used to a slightly different approach to the arithmetic properties of Eisenstein cohomology given in \cite{Harder,Kaiser}.

In Section~\ref{Schwartz-Section} we introduce the Schwartz forms needed to construct the theta series. We show in Section~\ref{Main-Section} that the theta series $\theta(\tau,\varphi^V_{1,[k]})$ for $V$ underlying our realization of the Shintani lift extends to a form on $\overline{X}$ (this is in much greater generality the main result of \cite{FMres}). The crucial point for us is that the restriction of $\theta(\tau,\varphi^V_{1,[k]})$ to the boundary face of a given cusp $\ell$ is an exact differential form, and there is a (natural) primitive $\theta(\tau,\phi^{N_{\ell}}_{[k]})$, a theta series for a positive definite $1$-dimensional subspace of $V$. Hence we have found an element $\left[\theta(\tau,\varphi^V_{1,[k]}), \sum_{[\ell]}  \theta(\tau,\phi^{N_{\ell}}_{[k]}) \right]$ in the mapping cone associated to the pair $(\overline{X}, \partial \overline{X})$. In the appendix we discuss the general mapping cone construction associated to the pair $(\overline{X}, \partial \overline{X})$ and also its relationship to the cohomology of compact supports for $X$. One reason for our detailed discussion is the need to have explicit integral formulas for the Kronecker pairings in the different realizations of the cohomology. For future reference we actually carry this out in greater generality needed for this paper, namely for smooth manifolds with boundary. 

The main result is the extension of \eqref{Intro-Shintani} and is discussed in Sections~\ref{Main-Section} and \ref{Modular-Section}.

\begin{theorem}\label{MainIntro}
The mapping cone element $\left[\theta(\tau,\varphi^V_{1,[k]}), \sum_{[\ell]}  \theta(\tau,\phi^{N_{\ell}}_{[k]}) \right]$ representing a class $H^1_c({X},\widetilde{E_{2k}})$ defines a non-cuspidal holomorphic modular form of weight $k+3/2$ and is equal to the (Poincar\'e dual of the) generating series of the spectacle cycles with coefficients
\[
\sum_{n \geq0} [C^c_{n,[k]}] e^{2\pi i n \tau}.
\]
In particular, this generating series can be paired with Eisenstein series/cohomology classes. This gives a geometric way of extending the Shintani lift. 
\end{theorem}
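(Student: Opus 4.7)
The plan is to verify the theorem in three coordinated steps: Fourier coefficient identification on $X$, the cocycle condition at the boundary, and matching to the spectacle cycles.

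First, I would invoke \eqref{Intro-Shintani} (extracted from \cite{FMcoeff}) to identify the $n$-th Fourier coefficient of $\theta(\tau,\varphi^V_{1,[k]})$ with the Poincar\'e dual class of the relative cycle $C_{n,[k]} \in H_1(\overline{X},\partial\overline{X},\widetilde{E_{2k}})$, and the general Kudla--Millson machinery to establish that this series transforms in $\tau$ as a modular form of weight $k+3/2$ with values in $H^1(X,\widetilde{E_{2k}})$. The second piece of the mapping cone element, $\theta(\tau,\phi^{N_\ell}_{[k]})$, is a theta series for the positive definite line $N_\ell \subset V$ stabilized by the unipotent radical at the cusp $\ell$; its classical modularity of weight $k+3/2$ follows from the standard transformation law for unary theta series (equivalently, by specialization of Kudla--Millson to the dual pair $\Orth(1,0)\times\SL_2$). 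Since the mapping cone differential commutes with the $\tau$-action, this will give modularity of the cocycle class in $H^1_c(X,\widetilde{E_{2k}})$.

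The cocycle property in the mapping cone complex amounts to the restriction identity
\[
\theta(\tau,\varphi^V_{1,[k]})\big|_{e(\ell)} \;=\; d\,\theta(\tau,\phi^{N_\ell}_{[k]})
\]
on each cusp face $e(\ell) \subset \partial\overline{X}$; this is the extension and boundary-exactness statement already invoked from Section~\ref{Main-Section} and \cite{FMres}. By the Appendix's pairing formula for the mapping cone under Poincar\'e--Lefschetz duality, for any closed $\widetilde{E_{2k}}$-valued $1$-form $\omega$ on $\overline{X}$ the pairing with the mapping cone class splits as $\int_{C_{n,[k]}}\omega - \sum_{[\ell]}\int_{c^n_\ell}\omega$ for lens $1$-chains $c^n_\ell$ on $\partial\overline{X}$; this is by construction the integral of $\omega$ over the spectacle cycle $C^c_{n,[k]}$ assembled in Section~\ref{Spec-Section}, so the mapping cone class is the Poincar\'e dual of $[C^c_{n,[k]}] \in H_1(X,\widetilde{E_{2k}})$.

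The main obstacle is step three: showing that the Fourier coefficients of $\theta(\tau,\phi^{N_\ell}_{[k]})$ produce exactly the parallel sections with prescribed jumps $v_\x$ supplied by Lemma~\ref{v_x-lemma} and Proposition~\ref{Propv_x}. I would carry out this matching by unfolding $\theta(\tau,\phi^{N_\ell}_{[k]})$ against the unipotent stabilizer $\Gamma_\ell$, expressing its $n$-th Fourier coefficient as a finite $\Gamma_\ell$-orbital sum parameterized by the same vectors $\x$ of length $n$ that index the components of $C_{n,[k]}$ meeting $e(\ell)$, and then identifying term by term the Schwartz data encoded in $\phi^{N_\ell}_{[k]}$ with the explicit solutions of $(\gamma_\ell^{-1} - \mathrm{Id})w = v_\x$ written via Bernoulli polynomials. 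The $n=0$ contribution, arising from the degenerate orbit of the zero vector, supplies a nonzero constant term and accounts for the non-cuspidality of the lift. Summing over cusp classes then recovers the generating series $\sum_{n\ge 0}[C^c_{n,[k]}]\,e^{2\pi i n\tau}$, and the resulting class, lying in $H^1_c(X,\widetilde{E_{2k}})$, pairs with arbitrary (in particular Eisenstein) cohomology classes and so provides the desired geometric extension of the Shintani lift.
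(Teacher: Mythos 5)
Your proposal diverges from the paper's proof, which never tries to separate the two mapping-cone terms and match them to the interior and boundary pieces of the spectacle cycle. Instead, the paper pairs the mapping cone class against relative cycles $C \in H_1(\overline{X},\partial\overline{X},\widetilde{E_{2k}})$, invokes Theorem~\ref{Shintani} to dispose of the case where $C$ comes from the image of absolute homology, and for the remaining cokernel representatives (the infinite modular symbols $C_\y \otimes u_\ell'^k$ with highest-weight coefficient) computes both sides of the claimed identity directly via Lemma~\ref{integralformula}, a reduction through Lemma~\ref{varphi-splitting} to the split $\Orth(1,1)$ theta lift of Section~\ref{1-1-Section}, and the explicit intersection counts of Propositions~\ref{TT4} and~\ref{TT5}. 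Your first two steps (the Shintani input and the cocycle property from Theorem~\ref{restriction}) are correct and match the paper's setup, but the ``main obstacle'' you identify is more serious than the unfolding you sketch can address.

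Concretely, it is false in general that the $n$-th Fourier coefficient of $\int_{\overline{X}}\omega\wedge\theta(\varphi^V_{1,[k]})$ equals $\int_{C_{n,[k]}}\omega$ for a closed $\widetilde{E_{2k}}$-valued $1$-form $\omega$ merely extending to $\overline{X}$. Theorem~\ref{Shintani} gives this only when $\omega$ is compactly supported or rapidly decreasing. The cohomological statement $[\theta_n]=\PD[C_{n,[k]}]$ in $H^1(X,\widetilde{E_{2k}})$ allows a discrepancy $d\nu$ with nonvanishing boundary restriction, and $\int_{\overline{X}}\omega\wedge d\nu=-\int_{\partial\overline{X}}i^*(\omega\wedge\nu)$ is precisely the boundary defect that the second term $\theta(\phi^{N_\ell}_{[k]})$ exists to compensate — so you cannot declare the first term clean and then match the second term to the lens chains in isolation. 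Moreover, the proposed ``unfolding of $\theta(\tau,\phi^{N_\ell}_{[k]})$ against $\Gamma_\ell$'' is not the mechanism: that series is already a $\Gamma_\ell$-invariant lattice sum over $\hat{\calL}_{W_\ell}$. In the paper the Bernoulli polynomials enter twice, from genuinely different sources — the constant term of the $\Orth(1,1)$ Eisenstein series arising in Theorem~\ref{11-theorem}, and the cap coefficients $w_\x$ of Proposition~\ref{Propv_x} — and the proof consists in matching these against chosen test cycles, not via a direct Poincar\'e duality on the boundary circles. The remark following Theorem~\ref{Main} explicitly acknowledges a variant of your strategy but notes that its details were not verified; the defect above is the step that would need to be filled.
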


By \eqref{Intro-Shintani} the theorem already holds for the pairing of elements in $H_1(\overline{X},\widetilde{E_{2k}})$ with $\left[\theta_{\calL_V}(\tau,\varphi^V_{1,[k]}), \sum_{[\ell]}  \theta_{\hat{\calL}_{W_\ell}}(\tau,\phi^{N_{\ell}}_{[k]}) \right]$. Hence it suffices to consider the lift for representatives of the cokernel of the natural map $H_1(\overline{X},\partial{\overline{X}},\widetilde{E_{2k}}) \to H_1(\overline{X},\widetilde{E_{2k}})$. For these representatives, we take infinite modular symbols $C_\x$ with {\it different} coefficients  for which the boundary map does not vanish. Then we prove the theorem by explicitly comparing the Fourier coefficients of the lift of these infinite modular symbols with the generating series of the intersection numbers with the spectacle cycles $C_{n,[k]}$. In a sense, we are proving the  theorem by a Hirzebruch-Zagier \cite{HZ} method. 

It turns out that this approach reduces the theorem to a corresponding theta lift for a split space of signature $(1,1)$, which we discuss in Section~\ref{1-1-Section}. For $k=0$, the trivial coefficient case, this lift for signature $(1,1)$ is a special case of a theta lift for the dual pair $\Orth(n,1) \times \Symp_n$ studied by Kudla \cite{KAnnalen,KShintani}. For this $(1,1)$-lift we also establish a regularized Siegel-Weil formula in the spirit of \cite{KR} making explicit the very general results of Kudla-Rallis. In particular, we realize classical integral weight Eisenstein series of  $k+1$ as a theta lift for $\Orth(1,1)$. For example, for the standard $\SL_2(\Z)$-Eisenstein series $E_{k+1}(\tau)$ of even weight we have 
\[
- \frac{B_{k+1}}{k+1} E_{k+1}(\tau)=  -\frac{B_{k+1}}{k+1} + 2 \sum_{\substack{x,y \in \Z_+}} x^k e^{2\pi i xy \tau},
\]
which we now interpret as a theta series of signature $(1,1)$ (where the summation is restricted to the positive cone). Moreover, from our perspective the Fourier coefficients are a weighted sum over certain $0$-cycles in the hyperbolic line. 

It is well-known that one can extend the Shimura-Shintani correspondence to Eisenstein series in a somewhat formal way by considering Hecke-eigenvalues. Our point is that one can give a geometric extension of the correspondence. In section~\ref{Eisenstein-Section} we show (for $\SL_2(\Z)$) that the lift of Eisenstein series of weight $2k+2$ indeed gives Eisenstein series of weight $k+3/2$. 

While the theta lift is defined for all congruence subgroups $\G$, the definition of the cycles themselves and the topology interpretation a priori require $\G$ to be torsion-free. Since, in many ways the most interesting case is the modular curve itself we conclude in Section~\ref{lastsection} by explaining how the results of our paper \cite{FMlocal} allow the results of to be extended to the case of quotients of orthogonal symmetric spaces by arithmetic subgroups {\it that are not torsion free}.

\medskip

It is a great pleasure to dedicate this article to Steve Kudla. His influence in our work is evident. We would like to thank him for by now decades of encouragement, collaboration, mathematical discussions, and friendship.

\section{Preliminaries}\label{prelim}

\subsection{The modular curve associated to the orthogonal group}

Let $V$ be a rational vector space of dimension $3$ with a
non-degenerate symmetric bilinear form  $(\,,\,)$ of signature
$(2,1)$. We write $q(\x) = \tfrac12(\x,\x)$ for the associated
quadratic form. Throughout we assume that $V$ is isotropic, and in fact we realize $V$ as the rational traceless $2\times2$ matrices. For simplicity we assume that the discriminant of $V$ is $1$. Then $q(\x) = -\det(\x)$ and $(\x,\y)= \tr(\x\y)$. 

In this model, $\SL_2$ acts on $V$ by conjugation, $g(\x)= g\x g^{-1}$, as isometries and gives rise to the isomorphism $\underline{G} := \Spin(V) \simeq \SL_2$ viewed as an algebraic group over $\Q$. We write $\bar{G}\simeq \PSL_2 \simeq \SO(V)$, and we set $G = \underline{G}(\R)$ for the real points of $\underline{G}$. We pick an orthogonal basis $e_1,e_2,e_3$ of $V_{\R}$ such that $(e_1,e_1)=(e_2,e_2)=1$ and $(e_3,e_3)=-1$. This also gives rise to an orientation of $V$. Explicitly, we set $e_1=\tfrac{1}{\sqrt{2}}\kzxz{}{1}{1}{}$, $e_2 = \tfrac{1}{\sqrt{2}}\kzxz{1}{}{}{-1}$, and $e_3 = \tfrac{1}{\sqrt{2}}\kzxz{}{1}{-1}{}$. We also define $u = \kzxz{}{1}{}{}$ and $u'=\kzxz{}{}{-1}{}$ so that $(u,u')=-1$. Note that $u$ and $u'$ are defined over $\Q$. 

We let $K\simeq \SO(2)$ be the stabilizer of $e_3$ in $G$, and we let $D= G/K $ be the associated symmetric space. It can be identified with the hyperboloid
\[
D \simeq \{ \x \in V(\R): \, (\x,\x) =-1, \, (\x,e_3)<0 \}.
\]
Hence $e_3$ represents the base point $z_0$ of $D$. The tangent space $T_{z_0}(D)$ at the base point is canonically isomorphic to $e_3^{\perp}$. We orient $D$ by stipulating that $e_1,e_2$ is an oriented basis of $T_{z_0}(D)$ and propagate this orientation continuously around $D$.

Of course we have $\H \simeq D$, and the isomorphism is given explicitly by
\[
z = x+iy \mapsto \x(z):=\frac1{\sqrt{2}y} \zxz{-x}{z\bar{z}}{-1}{x}.
\]
This intertwines the natural action of $G$ on $V$ and on $\h$: $\x(gz) = g(\x(z))$ and also preserves the canonical orientation of $\h$ given by its complex structure. 
 
Let $L \subset \V$ be an even lattice of full rank and write $L^\#$ for the dual lattice of $L$. We fix an element $h \in L^\#$ and let $\G$ be a torsion-free congruence subgroup of $\SL_2(\Z)$ which takes $L+h$ to itself. In the last section we remove this restriction. We let $X= X_{\G}=\G \back D$ be the associated arithmetic quotient. It is a modular curve. 

The set $\Iso(V)$ of all isotropic lines in $V(\Q)$ can be identified with $P^1(\Q)=\Q\cup\infty$, the set of cusps of $G(\Q)$, by means of the map $[\alpha:\beta] \mapsto \Span\kzxz{-\alpha\beta}{\alpha^2}{-\beta^2}{\alpha\beta}\in \Iso(V)$. This maps commutes with the $\underline{G}(\Q)$-actions. So the cusps of $X$ can be identified with the $\Gamma$- equivalence classes of $\Iso(V)$. The cusp $\infty\in P^1(\Q)$ corresponds to the isotropic line $\ell_{\infty}$ spanned by $u=u_{\infty}= \left( \begin{smallmatrix}  0&1 \\0&0  \end{smallmatrix}\right)$. For $\ell \in \Iso(V)$, we pick $\sigma_{\ell} \in \SL_2(\Z)$ such that $\sigma_{\ell} \ell_{\infty} = \ell$. We orient all lines $\ell \in \Iso(V)$ by requiring that $\sigma_{\ell} u_{\infty}=: u_{\ell}$ is a positively oriented basis vector of $\ell$. Hence a positively oriented basis vector of $\ell_{0}$, the cusp $0$, is given by $u'=u_{\ell_0}=\kzxz{0}{0}{-1}{0}$. We let $\G_{\ell}$ be the stabilizer of the line $\ell$ and write $M_{\ell}$ for the width of the associated cusp.

We let $\overline{X}$ be the Borel-Serre compactification of $X$. It is obtained by adding to each cusp $\ell$ of $X$ the circle $X_{\ell}=N_{\ell}/\G_{\ell} \simeq \R/M_{\ell}\Z$, where $N_{\ell} = \underline{N_{\ell}}(\R)$ are the real points of the nilpotent subgroup of $\underline{G}$ corresponding to $\ell$. For the topology of $\overline{X}$ it suffices to note that a sequence $z_n=x_n+iy_n$ in (a nice fundamental domain of) $X$ converges to the point $x$ in $X_{\infty}$ if $\lim x_n =x$ and $\lim y_n = \infty$. We can also view $\overline{X}$ as the $\G$-quotient of $\overline{D}$, the Borel-Serre enlargement of $D\simeq \h$, which is obtained by replacing each (rational) boundary point in $ P^1(\Q)$ by the corresponding nilpotent $N_{\ell} \simeq \R$. We orient $X_{\infty}$ (and then any $X_{\ell}$) by giving it the orientation of $N:=N_{\infty} = \left\{n(x):= \kzxz{1}{x}{}{1}; \, x \in \R \right\} \simeq \R$ (which is the same by stipulating that a tangent vector at a boundary point followed by its outer normal is properly oriented). This gives rise to a basepoint $z_{\infty}$ of $X_{\infty}$, and  any point in $X_{\infty}$ (or 
$D_{\infty}$) can be written as $n(c) z_{\infty}$. By slight abuse we identify this point with the scalar $c \in \R$. Finally, we obtain for each boundary component $X_{\ell}$ a basepoint $z_{\ell}= \sigma_{\ell} z_{\infty}$.

Note that $\overline{X}$ is homotopically equivalent to $X$.

\section{Spectacle cycles}\label{Spec-Section}

\subsection{Special cycles/modular symbols}

A vector $\x \in V(\Q)$ of positive length defines
a geodesic $D_\x$ in $D$ via
\[
D_\x = \{ z \in D; \; z \perp \x \}.
\]
We let $\G_\x$ be the stabilizer of $\x$ in $\G$. We denote the image of the quotient $\G_\x \back D_\x$ in $X$ by $C_\x$. The stabilizer $\bar{\G}_\x$ is either trivial (if the orthogonal
complement $\x^{\perp} \subset V$ is isotropic over $\Q$) or infinite cyclic (if $\x^{\perp}$ is non-split over $\Q$). It is well-known (see eg \cite{FCompo}, Lemma~3.6) that the first case occurs if and only if $q(\x) \in \left(\Q^{\times}\right)^2$. If $\G_\x$ is infinite, then $C_\x$ is a closed geodesic in $X$, while $C_\x$ is infinite if $\bar{\G}_\x$ is trivial. In the latter case these are exactly the classical modular symbols. 

In the upper half plane model, the cycle $D_\x$ is given for $\x = \kzxz{b}{2c}{-2a}{-b}$ by
\[
D_\x = \{ z \in \H; \, a|z|^2+bRe(z)+c =0\}.
\]
We orient $D_\x$ by requiring that a tangent vector $v \in T_{z}(D_x) \simeq z^{\perp} \cap \x^{\perp}\subset V$ followed by $\x$ gives a properly oriented basis of $T_z(D)$. Now $\x^{\perp}$, the orthogonal complement of $\x$ in $V_\R$, is spanned by two (not necessarily rational) cusps corresponding to isotropic lines $\ell_\x$ and $\ell'_\x$ with positive oriented generators $u_{\ell_\x}$ and $u_{\ell'_\x}$. Then the geodesic $D_\x$ joins these two cusps.  These isotropic lines are uniquely determined by the condition $\ell_\x$ and $ \ell'_\x$ both lie in $\x^{\perp}$. Thus
\[ 
(\x,u_{\ell_\x})=(\x,u_{\ell'_\x})=0.
\]
More precisely, $D_\x$ joins two points in boundary components of the Borel-Serre enlargement of $D$, and we denote the boundary points of $C_\x$ in $\overline{X}$ by $c_\x \in X_{\ell_\x}$ and $c'_\x \in X_{\ell'_\x}$. We distinguish $\ell_\x$ and $\ell'_\x$ by requiring that $u_{\ell_\x},\x,u_{\ell'_\x}$ gives a properly oriented basis of $V$ which also gives a different way of characterizing the orientation of $D_\x$. Of course, $C_\x$ is an infinite geodesic if and only if $\ell_\x$ and $\ell'_\x$ are rational. 

For $n \in \Q_{>0}$, the discrete group $\G$ acts on ${L}_{h,n} =
\{\x \in L +h ;\; q(x) =n\}$
 with finitely many orbits. We define the (decomposable) \emph{special cycle}  of discriminant $n $ on $X$ by
\begin{equation*}
C_n= \sum_{\x \in \G \back L_{h,n} } C_\x.
\end{equation*}
(We suppress the dependence on $L$, $h$ and $\G$ in the notation). 
\begin{remark}
In the above we are assuming $n>0$, we will later define $C_0$ (actually $C_{0,[k]}). $.
\end{remark}

Here the sum occurs in $H_1(X,\partial X,\Z)$ (or in $H_1(X,\Z)$ if $n \notin  \left(\Q^{\times}\right)^2$ when the cycles are absolute cycles). Note that by slight abuse of notation we use the same symbol $C_\x$ for the geodesic, the cycle, and the homology class. If we want to emphasize the homology class we write $[C_\x]$. 

\subsection{The local intersection multiplicity of two special cycles}\label{intermult}
Let $\x$ and $\y$ be two independent vectors such that $(\x,\x) >0$ and $(\y,\y)>0$ and such that
the restriction of $(\ ,\ )$ to the planes they span is positive definite. In this
case the corresponding geodesics $D_{\x}$ and $D_{\y}$ intersect in a single point $z \in D$.
It is the goal of this subsection to compute the intersection multiplicity $mult_z( D_{\x}, D_{\y})$
in terms of vector algebra in Minkowski three space $V, (\ ,\ )$. Since $\x$ and $\y$ determine $z$
we will usually drop the $z$ in what follows.
The point in doing this is that all transverse intersection of special cycles will be locally of the
above form. In this subsection we will use the standard terminology from special relativity, namely
a vector such that $(v,v)<0$ will be called ``timelike'' and a vector such that $(v,v)>0$ will be called
``spacelike''. 
\subsubsection{The Minkowski cross-product and scalar triple product}
First we have to choose a volume element for the dual $V^{\ast}$, that is a nonzero element of $\bigwedge^3(V^{\ast})$.
Note that $(\ ,\ )$ induces a negative definite form on the line $\bigwedge^3(V^{\ast})$. If we require the
volume element to have inner product $ -1 $ this does not determine the orientation (and {\it of course
the knowledge of an inner product cannot determine an orientation of a vector space}).  Let $e_1,e_2,e_3$
be the basis we have chosen earlier and let $\alpha_1, \alpha_2, \alpha_3$ be the dual basis.  We choose the
$e_1,e_2,e_3$ as a positively oriented basis and hence the orientation form $\vol \in \bigwedge^3(V^{\ast})$ 
is given by 
$$\vol = \alpha_1 \wedge \alpha_2 \wedge \alpha_3.$$   
The metric $(\ ,\ )$ induces an isomorphism $g: V \to V^{\ast}$ given by $ g(u)(v) = (u,v)$
whence $(g^{-1}(\alpha),v) = \alpha(v)$. We can now define the Minkowski cross-product $\times$ by
$$u \times v = g^{-1}( \iota_{u \wedge v} \vol).$$
Here $\iota_{u \wedge v} \vol$ is interior multiplication whence
$$\iota_{u \wedge v} \vol)(w) = \vol(u,v,w).$$

Following the usual notation from elementary vector algebra we define the Minkowski scalar product 
$s(u,v,w)$ of three vectors $u,v,w$ by
$$s(u,v,w) = ( u \times v, w).$$

We now compute the Minkowski cross-product in coordinates. 
\begin{lemma}(Minkowski cross-product)
 Let $u = x_1 e_1 + x_2 e_2 + x_3 e_3$
and $v = y_1 e_1 + y_2 e_2 + y_3 e_3$. 
Then we have
\begin{equation*}
u \times v = (x_2 y_3 - x_3 y_2) e_1 - (x_1 y_3 - x_3 y_1) e_2 - (x_1 y_2 - x_2 y_1)e_3
\end{equation*}
\end{lemma}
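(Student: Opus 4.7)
The plan is a direct bilinear computation, broken into two pieces corresponding to the two maps in the definition $u \times v = g^{-1}(\iota_{u\wedge v}\vol)$. First I would unpack $g^{-1}$ on the dual basis. Since $(e_i,e_j)=\epsilon_i\delta_{ij}$ with $\epsilon_1=\epsilon_2=1$ and $\epsilon_3=-1$, one has $g(e_i)=\epsilon_i\alpha_i$ and hence
\[
g^{-1}(\alpha_1)=e_1,\qquad g^{-1}(\alpha_2)=e_2,\qquad g^{-1}(\alpha_3)=-e_3.
\]
The minus sign in the last entry is the only subtlety in the whole argument and is responsible for the asymmetric signs appearing in the statement of the lemma.

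Next I would compute $\iota_{u\wedge v}\vol$ by evaluating it on an arbitrary $w=w_1e_1+w_2e_2+w_3e_3$. By the defining property $(\iota_{u\wedge v}\vol)(w)=\vol(u,v,w)$, and since $\vol=\alpha_1\wedge\alpha_2\wedge\alpha_3$, this value equals the determinant
\[
\det\begin{pmatrix} x_1 & y_1 & w_1 \\ x_2 & y_2 & w_2 \\ x_3 & y_3 & w_3 \end{pmatrix},
\]
which I would expand along the third column to obtain
\[
\iota_{u\wedge v}\vol \;=\; (x_2y_3-x_3y_2)\alpha_1 \;-\; (x_1y_3-x_3y_1)\alpha_2 \;+\; (x_1y_2-x_2y_1)\alpha_3.
\]

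Finally I would apply $g^{-1}$ termwise using the first step. The first two terms are unchanged, while the $\alpha_3$-term picks up a minus sign from $g^{-1}(\alpha_3)=-e_3$, producing exactly
\[
u \times v \;=\; (x_2y_3-x_3y_2)e_1 \;-\; (x_1y_3-x_3y_1)e_2 \;-\; (x_1y_2-x_2y_1)e_3,
\]
which is the claimed formula. There is no real obstacle here: the only thing one must be careful about is keeping track of the sign flip coming from the negative eigenvalue of the bilinear form on the $e_3$-axis, which distinguishes the Minkowski cross-product from its Euclidean counterpart.
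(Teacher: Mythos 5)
Your proof is correct, and it is precisely the ``direct computation'' that the paper explicitly leaves to the reader, so it is a genuinely different route from the argument the paper actually displays. The paper's argument is a reduction to the Euclidean case: it assumes the reader already knows the Euclidean cross-product formula, observes that the only ingredient that changes in passing to the Minkowski metric is $g^{-1}(\alpha_3)=-e_3$ rather than $e_3$, and concludes that only the sign of the $e_3$-component flips. Your version instead computes everything from scratch: you unpack $g^{-1}$ on the full dual basis, evaluate $\iota_{u\wedge v}\vol$ as a determinant expanded along the third column, and then apply $g^{-1}$ termwise. The trade-off is standard: yours is self-contained and verifies all three components directly, while the paper's is shorter but presupposes the Euclidean formula and isolates the single point of difference, which is perhaps more illuminating about \emph{why} the Minkowski formula looks the way it does. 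Both correctly pin the asymmetric sign pattern on the negative eigenvalue of the metric along $e_3$.
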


We leave the direct  computation to the reader.  However we can give another proof (under the assumption the above
formula gives the usual formula in the Euclidean case) by  noting there is only one place in the above formula where the Minkowski and
Euclidean cases differ.  
Both cases use the usual volume form for $(\R^3)^{\ast}$ and for both metrics $g$ we have  $g^{-1} \alpha _i =  e_i, i =1,2$
( the same as the Euclidean computation). But for the Euclidean metric we have  $g^{-1} (\alpha_3) = e_3$ whereas for
the Minkowski metric we have $g^{-1}( \alpha_3) = -e_3$.   Thus the formulas for the two cross-products will differ only in the sign of the last term, and we see $e_1 \times e_2 = - e_3$. 
Thus we get the Minkowski cross-product formula above.

\subsubsection{The formula for $\mult(D_\x, D_\y)$ }
We now recall that the formula for the intersection multiplicity.  Let $t_{\x}$ be a positively oriented  tangent
vector to $D_{\x}$ and  $t_{\y}$ be a positively oriented  tangent
vector to $D_{\y}$. Let $T_z(D)$ be the tangent space to the hyperbolic plane $D$ at $z$. Then the orientation multiplicty $\mult(D_\x, D_\y)$ is given by

\[ \mult(D_\x, D_\y)= \begin{cases} +1 & \text{ if $t_{\x}$ and $t_{\y}$ are a positively oriented basis for $T_z(D)$} \\
-1 & \text{otherwise}
\end{cases}
\]

Let $J =J_z$ be the almost complex structure of $D$ acting on $T_x(D)$ (rotation {\it counterclockwise} by $90$ degrees).
Recall we have oriented $D_{\x}$ so that $t_{\x}$ followed by $\x$ is the orientation of $T_x(D)$. It follows
then that 
$$ J t_{\x} = c \x \ , c >0 \qquad \text{and} \qquad  \x = - \frac{1}{c} J t_{_\x}.$$
To prove the following lemma it suffices to check the case $u = e_1,v = e_2,w = e_3$ which is trival.

\begin{lemma}
$$\vol(-Ju, -Jv,w) = \vol(u,v,w).$$
\end{lemma}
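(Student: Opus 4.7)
The plan is to verify the identity by reducing to the single base-point computation that the authors suggest. Both sides are trilinear in $(u,v,w)$ with $u,v\in T_z(D)$ and $w\in V$ (linearity in $u,v$ follows since $J$ is $\R$-linear), so by multilinearity it is enough to check the identity when $u,v,w$ range over a basis.

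My first step would be to use $G$-equivariance of the data to translate $z$ to the base point $z_0$: the group $G=\Spin(V)$ acts on $V$ by orientation-preserving isometries (preserving $\vol$) and intertwines the almost complex structures in the sense that $J_{gz}\circ dg = dg\circ J_z$. This lets me assume $z=z_0$, so $T_{z_0}(D)=\Span(e_1,e_2)$, $Je_1=e_2$, and $Je_2=-e_1$ by the chosen orientation. The only non-trivial basis case is then $u=e_1$, $v=e_2$ (for $u=v$ both sides vanish by antisymmetry of $\vol$ in the first two slots), for which $-Ju=-e_2$ and $-Jv=e_1$, and a single transposition gives $\vol(-e_2,e_1,w)=\vol(e_1,e_2,w)$ for every $w\in V$.

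A more conceptual variant that avoids bases altogether is to decompose $V=T_z(D)\oplus\R z$ and, for fixed $u,v\in T_z(D)$, observe that $\vol(u,v,\cdot)$ annihilates $T_z(D)$ (three vectors in a $2$-plane). Hence both sides are linear functionals in $w$ that vanish on $T_z(D)$, and they agree iff their values on $z$ agree. That reduces the claim to $\omega(u,v)=\omega(-Ju,-Jv)$, where $\omega:=\vol(\cdot,\cdot,z)|_{T_z(D)}$ is (up to a chosen sign) the oriented Euclidean area form on $T_z(D)$; since $-J$ is a $-\pi/2$ rotation of this oriented $2$-dimensional Euclidean space, applying it to both arguments preserves $\omega$.

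The only real point to keep straight is the sign/orientation bookkeeping — namely that $J$ rotates counterclockwise with respect to the orientation of $D$ fixed earlier and that $\vol(e_1,e_2,e_3)=+1$ — which has already been pinned down in Section \ref{prelim}. With these conventions in place there is no genuine obstacle, in line with the authors' assessment that the check is trivial.
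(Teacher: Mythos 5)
Your proof is correct and your first argument is essentially what the paper intends when it says it suffices to check $u=e_1$, $v=e_2$, $w=e_3$: you fill in the reduction (multilinearity plus $G$-equivariance to translate to the base point) and then do the one-line basis computation, which the paper leaves implicit.

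Your "conceptual variant" is a cleaner route worth noting. After observing that $\vol(u,v,\cdot)$ and $\vol(-Ju,-Jv,\cdot)$ both annihilate $T_z(D)$, the identity reduces to checking that $-J$ preserves the oriented area form on the two-dimensional space $T_z(D)$. This is automatic because $J^2=-\id$ forces $\det J=1$ on a two-dimensional space, hence $\det(-J)=1$ as well; it even makes the counterclockwise-vs-clockwise convention irrelevant, and it avoids both the basis reduction and the translation to $z_0$. The paper's pointwise check is shorter to state, but your determinant argument makes it transparent why no sign can creep in.
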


We can now give a vector-algebra formula for the required intersection multiplicity.

\begin{lemma} 
$$\mult_z(D_{\x}, D_\y) = \sgn~(\x \times \y,z) = \sgn~s(\x, \y,z) .$$
\end{lemma}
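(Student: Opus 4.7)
The plan is to reduce the multiplicity statement to a computation of $\vol(\x,\y,z)$ in three elementary steps: (i) rewrite the orientation multiplicity as a sign of $\vol$ applied to tangent vectors and $z$; (ii) convert tangent vectors to the normal vectors $\x,\y$ via the complex structure $J$ and the given orientation convention on $D_\x$, $D_\y$; (iii) identify $\vol(\x,\y,z)$ with the scalar triple product $s(\x,\y,z) = (\x\times\y,z)$.

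First I would establish the orientation characterization: for any $z \in D$, a basis $(v_1,v_2)$ of $T_z(D)=z^\perp$ is positively oriented if and only if $\vol(v_1,v_2,z)>0$. At the base point $z_0=e_3$ this is immediate since $T_{z_0}(D)=\Span(e_1,e_2)$ with oriented basis $(e_1,e_2)$ and $\vol(e_1,e_2,e_3)=1$. For a general point $z=g z_0$ with $g \in G = \Spin(V)$, the group $G$ acts as the identity component of $\SO(V)$, hence preserves both the orientation of $D$ and the form $\vol \in \bigwedge^3 V^\ast$, so the characterization propagates.

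Applying this to $v_i=t_\x,t_\y$, the definition of $\mult_z(D_\x,D_\y)$ becomes $\mult_z(D_\x,D_\y)=\sgn \vol(t_\x,t_\y,z)$. Next, using $Jt_\x=c\x$ and $Jt_\y=c'\y$ with $c,c'>0$ (coming from the convention that $(t_\x,\x)$ is positively oriented in $T_z(D)$, together with the fact that $J$ is counterclockwise rotation by $\pi/2$), I would invoke the preceding lemma $\vol(-Ju,-Jv,w)=\vol(u,v,w)$ with $u=t_\x, v=t_\y, w=z$:
\[
\vol(t_\x,t_\y,z)=\vol(-Jt_\x,-Jt_\y,z)=\vol(-c\x,-c'\y,z)=cc'\,\vol(\x,\y,z).
\]
Since $cc'>0$ we conclude $\mult_z(D_\x,D_\y)=\sgn \vol(\x,\y,z)$.

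Finally, unwinding the definitions $\x\times\y=g^{-1}(\iota_{\x\wedge\y}\vol)$ and $s(\x,\y,z)=(\x\times\y,z)$ gives $(\x\times\y,z)=\iota_{\x\wedge\y}\vol(z)=\vol(\x,\y,z)$, and the chain of identities $\mult_z(D_\x,D_\y)=\sgn \vol(\x,\y,z)=\sgn(\x\times\y,z)=\sgn s(\x,\y,z)$ is complete. The only delicate point is the orientation characterization in the first step; after that the argument is a direct application of the lemma together with the positivity of $c,c'$, and no further calculation is required.
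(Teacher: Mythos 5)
Your proof is correct and takes essentially the same route as the paper: both reduce $\mult_z(D_\x,D_\y)$ to $\sgn\vol(t_\x,t_\y,z)$, use the preceding lemma $\vol(-Ju,-Jv,w)=\vol(u,v,w)$ together with $Jt_\x=c\x$, $c>0$, to pass to $\sgn\vol(\x,\y,z)$, and then unwind the cross-product definition. You spell out the orientation characterization and the positivity of the scalars more explicitly than the paper's one-line chain of equalities, but the underlying argument is the same.
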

\begin{proof}
We have 
$$\mult_z(D_{\x}, D_\y) = \sgn~\vol( - J \x, - J \y, z) = \sgn~\vol( \x,\y,z) =  \sgn~( \x \times \y ,z).$$
\end{proof}
Noting that the inner product between any two timelike vectors  depends only whether or not their third
components agree (negative inner product) or disagree (positive inner product). Hence, since $z$ is timelike
with third-component positive (like $e_3$)
 we may replace $z$ by $e_3$ in the last expression and obtain

\begin{proposition} \label{intmult}
\[ \mult(D_\x, D_\y)= \begin{cases} +1 & \text{ if  $ \x \times \y$ ``points down'', that is has third component negative}\\ 
-1 & \text{ if  $ \x \times \y$ ``points up'', that is has third component positive.} 
\end{cases}
\]
\end{proposition}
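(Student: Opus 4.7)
The plan is to combine the previous lemma, $\mult_z(D_\x, D_\y) = \sgn(\x \times \y, z)$, with the observation that the intersection point $z$ is necessarily proportional to $\x \times \y$. This reduces the sign calculation to an inner product with the basis vector $e_3$ which, by the coordinate formula for the Minkowski cross-product, depends only on the third component of $\x \times \y$.

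First I would note that since by hypothesis the plane $\Span(\x, \y)$ is positive definite (spacelike), its Minkowski orthogonal complement is a negative definite (timelike) line, so $\x \times \y$ is a timelike vector. On the other hand, the intersection point $z \in D_\x \cap D_\y$ satisfies $z \perp \x$ and $z \perp \y$, so $z$ lies on the same orthogonal line. Hence $\x \times \y = \lambda z$ for some nonzero scalar $\lambda \in \R$. Then
\[
(\x \times \y, z) = \lambda(z,z) = -\lambda, \qquad (\x \times \y, e_3) = \lambda(z, e_3),
\]
and since $z \in D$ satisfies $(z, e_3) < 0$ by the definition of the hyperboloid model, both inner products have sign $-\sgn(\lambda)$. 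Therefore
\[
\sgn\,(\x \times \y, z) \;=\; \sgn\,(\x \times \y, e_3).
\]

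Finally, writing $\x \times \y = w_1 e_1 + w_2 e_2 + w_3 e_3$, the relation $(e_3, e_3) = -1$ yields $(\x \times \y, e_3) = -w_3$. Thus $\sgn(\x \times \y, z) = +1$ precisely when $w_3 < 0$ (i.e.\ $\x \times \y$ ``points down''), and $=-1$ precisely when $w_3 > 0$ (``points up''), which is the asserted formula. The only genuine content of the argument is the identification of the line through $z$ with the Minkowski orthogonal complement of $\Span(\x, \y)$; everything else is bookkeeping of signs and the fact that any two timelike vectors in the same half-cone have negative inner product. There is no serious obstacle here, and this reduction makes the criterion purely combinatorial on coordinates.
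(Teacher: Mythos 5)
Your proof is correct, and it takes a cleaner, more self-contained route than the paper's. The paper's proof replaces $z$ by $e_3$ in $\sgn(\x\times\y,z)$ by invoking the general Minkowski fact that the inner product of two timelike vectors is negative precisely when their third components share a sign (i.e.\ they lie in the same nappe of the light cone); this implicitly assumes without comment that $\x\times\y$ is timelike. You instead make this explicit by observing that $\x\times\y$ lies on the timelike line $\Span(\x,\y)^\perp$, which is exactly the line through $z$, so $\x\times\y=\lambda z$. The sign chase then reduces to $(z,z)=-1$ and $(z,e_3)<0$, both of which are built into the definition of the hyperboloid model, and no appeal to the two-sheeted cone structure is needed. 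What this buys you is a proof whose only inputs are the definitions; the proportionality $\x\times\y\in\R z$ is the observation that the paper's argument circles around without stating, and making it explicit both justifies the (tacit) timelike claim and makes the final sign computation mechanical.
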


\begin{definition}\label{epsilon}
Let $\x$ and $\y$ be a pair of spacelike vectors spanning a positive two-plane. 
Then we define $\epsilon(\x,\y)$ by
$$ \epsilon(\x,\y) = \begin{cases} +1 & \text{ if  $ \x \times \y$ ``points down''}\\
-1 & \text{ if  $ \x \times \y$ ``points up''.}   
\end{cases} $$
\end{definition}

Thus from  Proposition \ref{intmult}, we see that  $\epsilon(\x,\y)$ is the multiplicity of the intersection of $D_\x$ and
$D_\y$ at their unique point of intersection.

\subsection{A bit of the representation theory of $G$}\label{reptheory}

We let $\Sym^{k}(V)$ be the $k$-th symmetric power of $V$. We write $v^k$ for the vector $v^{\otimes k}$ in $\Sym^k(V)$. Of course, $\Sym^{k}(V)$ is not an irreducible irreducible representation of $\underline{G}(\Q)$, but contains the (rational) harmonic tensors $\calH_{k}(V)$ (with respect to the indefinite Laplacian defined by $(\,,\,)$) as irreducible representation of highest weight $2k$. We have the orthogonal decomposition
\begin{equation}\label{fundinv}
\Sym^{k}(V_{\C}) = \calH_{k}(V_\C) \oplus r^2 \Sym^{k-2}(V_{\C}),
\end{equation}
where $r^2 = e_1^2+e_2^2-e_3^2$. We let $\pi_{k}$ be the orthogonal projection map from $\Sym^{k}(V)$ to $\calH_{k}(V)$. The bilinear form $(\,,\,)$ extends to $\Sym^k(V)$ such that $(\x^k,\y^k) =(\x,\y)^k$.
Viewed differently, we have $V \simeq \Sym^2(\Q^2)$ as $\underline{G}(\Q)$-modules (For the standard symplectic basis $\frake_1,\frake_2$ of $\Q^2$, we have $\frake_1^2 \leftrightarrow u, \frake_2^2 \leftrightarrow u', \frake_1\frake_2 \leftrightarrow -\tfrac{1}{\sqrt{2}} e_2$, the standard symplectic form becomes a symmetric form on $\Sym^2(\Q^2)$, which coincides up to a multiple with the symmetric form on $V$), so that $\Sym^{k}(V) \simeq \Sym^{k}(\Sym^2(\Q^2))$. Then $\calH_{k}(V)$ as representation of $\underline{G}(\Q)$ can be identified with $\Sym^{2k}(\Q^2)$ (which has highest weight $2k$ for $\SL_2$), contained in $\Sym^{k}(\Sym^2(\Q^2))$ with multiplicity one.

Note that the stabilizer of a vector $\x \in V$ of positive length defines a torus $A_\x$ in $G$ (which is rational if and only if $\x^{\perp}$ splits over $\Q$). We obtain an element $\x^k \in \Sym^k(V)$ and then by the projection $\pi_k(\x^k)$ a vector of weight $0$ for $A_\x$ in $\Hk(V)$. We therefore can choose in the weight decomposition of $\calH^{k}(V_{\C})$ with respect to $A_\x$-weight vectors $v_{-2k}, v_{-2k+2},\dots, v_{-2},v_0,v_2,\dots, v_{2k}$ in a way such that 
 $v_0 = \pi_k(\x^k)$ and $v_{-2k+2i} = \tfrac{1}{i!} R^{i} v_{-2k}$, that is, $v_{2i+2} = \tfrac1{i+k+1}Rv_{2i}$, for the raising operator $R$. Note that for $ \x = \kzxz{1}{0}{0}{-1}$, we have $A = \{ \kzxz{t}{}{}{t^{-1}}\}$ and $R =\kzxz{0}{1}{0}{0}$.
 
\begin{lemma}\label{repformulas}
Let $ \x = \kzxz{1}{0}{0}{-1} \in V$. Then 
\[
\pi_k (\x^k) = \frac{(-2)^{k}k!}{(2k)!} R^k u'^k.
\]
More generally,
\[
v_{2i} = \frac{(-2)^{k}(k!)^2}{(2k)!(k+i)!}  R^{k+i} u'^k.
\]
In particular, $v_{-2k}= c_k u'^k$ and $v_{2k} = c_k u^k$, where $c_k = \tfrac{(-2)^k(k!)^2}{(2k)!}$. Moreover, the weight vectors are all rational. Finally,
\[
(v_{2i},v_{-2i}) = \frac{(-1)^ic_k^2 (2k)!}{(k+i)!(k-i)!}
\]

\end{lemma}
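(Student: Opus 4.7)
The plan is to transport the computation from $\Sym^k(V)$ to the explicit model $\Sym^{2k}(\Q^2)$ via the $\SL_2$-equivariant multiplication map
\[
m\colon \Sym^k(V) = \Sym^k(\Sym^2(\Q^2)) \lo \Sym^{2k}(\Q^2),
\]
where both the $R$-action and the weight decomposition become transparent.

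The key preliminary step will be to show that $m$ restricts to an isomorphism $\Hk(V) \cong \Sym^{2k}(\Q^2)$. Under the identifications $u \leftrightarrow \frake_1^2$, $u' \leftrightarrow \frake_2^2$ and $e_2 \leftrightarrow -\sqrt 2\,\frake_1\frake_2$, one computes $m(r^2) = m(e_2^2 - 2uu') = 2\frake_1^2\frake_2^2 - 2\frake_1^2\frake_2^2 = 0$, so $r^2\Sym^{k-2}(V) \subseteq \ker m$; a dimension count shows both have codimension $2k+1$ in $\Sym^k(V)$, forcing equality. As a result, $\pi_k(T)$ is uniquely characterised by the condition $m(\pi_k(T)) = m(T)$.

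Tracking the relevant vectors through $m$ gives $m(\x^k) = (-2)^k\frake_1^k\frake_2^k$, $m(u'^k) = \frake_2^{2k}$ and $m(u^k) = \frake_1^{2k}$. Since $R$ acts on $\Sym^{2k}(\Q^2)$ as the derivation with $R\frake_1 = 0$, $R\frake_2 = \frake_1$, an elementary calculation gives
\[
m(R^k u'^k) = \tfrac{(2k)!}{k!}\frake_1^k\frake_2^k, \qquad m(R^{2k}u'^k) = (2k)!\,\frake_1^{2k} = (2k)!\,m(u^k).
\]
Comparing with $m(\x^k)$ and inverting $m$ on $\Hk(V)$ immediately yields $\pi_k(\x^k) = \tfrac{(-2)^k k!}{(2k)!}\,R^k u'^k$ and the relation $R^{2k}u'^k = (2k)!\,u^k$ inside $\Hk(V)$.

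The remaining assertions are then formal. Iterating $v_{2i+2} = \tfrac{1}{k+i+1}R v_{2i}$ gives $v_{-2k+2j} = \tfrac{1}{j!}R^j v_{-2k}$; setting $v_{-2k} = c_k u'^k$ and matching $v_0 = \pi_k(\x^k)$ with the formula from the previous step forces $c_k = \tfrac{(-2)^k(k!)^2}{(2k)!}$. Substitution with $j = k+i$ yields the general formula for $v_{2i}$, and $j = 2k$ combined with $R^{2k}u'^k = (2k)!\,u^k$ gives $v_{2k} = c_k u^k$. Rationality is immediate, since $u, u' \in V(\Q)$, $R \in \mathfrak{sl}_2(\Q)$ and the scalar factors are rational. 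For the inner product, $G$-invariance of $(\,,\,)$ makes $R$ antisymmetric on $V$ and hence on $\Sym^k(V)$; shifting all $k+i$ copies of $R$ from $v_{2i}$ to $v_{-2i}$ and applying $R^{k+i}v_{-2i} = \tfrac{c_k(2k)!}{(k-i)!}u^k$, then using $(u'^k, u^k) = (u',u)^k = (-1)^k$, produces the stated value. The main obstacle will be the initial identification of $\ker m$ with $r^2\Sym^{k-2}(V)$; once it is secured, the rest is direct bookkeeping with the derivation $R$ on $\Sym^{2k}(\Q^2)$.
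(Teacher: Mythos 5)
Your proof is correct, and it takes a genuinely different route from the paper's. The paper works entirely inside $\calH_k(V)$ using the bilinear form: it notes that $R^{k+i}u'^k$ are the weight vectors, then pins down the single unknown constant by computing the two pairings $(R^k u'^k, \x^k)$ and $(R^ku'^k, R^ku'^k)$ via the trick $n(1) = \sum_n R^n/n!$ (weight considerations isolate one term of the exponential, reducing each pairing to $(n(1)u', e_2)^k$ resp.\ $(n(1)u',u')^k$, which are elementary). You instead transport the whole computation through the multiplication map $m\colon \Sym^k(\Sym^2(\Q^2)) \to \Sym^{2k}(\Q^2)$, identify $\Hk(V)$ as the unique complement to $\ker m = r^2\Sym^{k-2}(V)$, and then carry out all the bookkeeping with the derivation $R$ acting on $\frake_2^{2k}$. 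This has the virtue of making the abstract identification $\Hk(V)\cong\Sym^{2k}(\Q^2)$ from the paper's Section~\ref{reptheory} completely concrete, and of replacing the $n(1)$-trick with the transparent binomial computation $R^j\frake_2^{2k} = \tfrac{(2k)!}{(2k-j)!}\frake_1^j\frake_2^{2k-j}$; the paper's method is somewhat more compact but less self-explanatory. One small loose end in your argument: deducing $\ker m = r^2\Sym^{k-2}(V)$ from the codimension count presupposes that $m$ is surjective, which you don't justify. This is easily repaired --- either observe directly that $m(u^au'^b) = \frake_1^{2a}\frake_2^{2b}$ together with $m(e_2 u^au'^b)$ spans $\Sym^{2k}(\Q^2)$, or more simply use the paper's decomposition~\eqref{fundinv} to note that $m$ induces an $\SL_2$-map $\Hk(V)\to\Sym^{2k}(\Q^2)$ between irreducibles of the same highest weight which is nonzero because $m(u^k)=\frake_1^{2k}\neq 0$, hence an isomorphism. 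With that point secured, the rest of your computations (the constant $c_k$, the identity $R^{2k}u'^k = (2k)!\,u^k$, rationality, and the value of $(v_{2i},v_{-2i})$ via antisymmetry of $R$) are all correct.
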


\begin{proof}
Of course $R^{k+i} u'^k$ are weight vectors of weight $2i$. Hence we only need to determine the constant $c_k$. We first compute  $(R^k u'^k, v_0) = (R^k u'^k, (\sqrt{2} e_2)^k)$. Writing $n(1) = \sum_{n=0}^{\infty} \tfrac1{n!} R^n$, we see immediately $(R^k u'^k, (\sqrt{2} e_2)^k) =  (2)^{k/2} k!(n(1) u'^k, e_2^k) = (-2)^{k} k!$. But now $(R^k u'^k, R^k u'^k) = (-1)^k (R^{2k} u'^k, u'^k) = (-1)^k (2k)! (n(1)u'^k, u'^k)  = (2k)!$. The lemma follows.
\end{proof}

\subsection{Cycles with coefficients}

We first describe how one can equip the special cycles with coefficients. We let $E$ be a rational (finite-dimensional) representation of $\G(\Q)$ which factors through $\overline{G}$. Hence $E$ is self-dual, i.e., $E \simeq E^{\ast}$, and by slight abuse we won't distinguish between $E$ and $E^{\ast}$. We write $\widetilde{E}$ for the associated local system of $E$. This gives rise to (simplicial) homology and cohomology groups of $X$ and $\overline{X}$ with local coefficients in $\widetilde{E}$. We refer the reader to \cite{JM,FMcoeff} for more details.

\begin{lemma}
Let $C_\x = \G_\x \back D_\x$ be a special cycle.

\begin{itemize}

\item[(i)] Assume $C_\x$ is closed and let $v \in E^{\G_\x}$, a $\G_\x$-invariant vector in $E$. Then the pair $(C_\x,v)$ defines a class in $H_{1}(X,\widetilde{E})$ (and also in $H_{1}(X,\partial X,\widetilde{E})$). 

\item[(ii)]
Assume $C_\x$ is infinite (so that $\overline{\G}_\x$ is trivial) and let $v$ be any vector in $E$. Then the pair $(C_\x,v)$ defines a class in $H_{1}(X,\partial X,\widetilde{E})$. 
\end{itemize}

In both cases we denote the resulting cycle by $C_\x \otimes v$ or  $[C_\x \otimes v]$ if we want to emphasize its class in homology.

\end{lemma}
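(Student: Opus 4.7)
The plan is to work in the simplicial chain complex with local coefficients in $\widetilde{E}$ (in the sense of \cite{JM,FMcoeff}). In this setup a $1$-chain with coefficients in $\widetilde{E}$ is a formal $\Q$-linear combination of oriented edges each equipped with a parallel section of the pullback of $\widetilde{E}$, and the boundary operator sends an edge-with-section to the signed difference of the values of that section at its endpoints. A closed cycle is then one for which all these endpoint contributions cancel, while a relative cycle (modulo $\partial\bar{X}$) only needs them to cancel at interior vertices.

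First I would fix a $\G$-invariant smooth triangulation of $\bar{D}$ compatible with the geodesic $D_\x$ and its closure in $\bar{D}$, so that both $C_\x$ and its closure $\overline{C_\x}$ in $\bar{X}$ become subcomplexes; this lets us model the candidate cycles by genuine simplicial chains.

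For part (i), the assumption that $C_\x$ is closed means $\bar{\G}_\x$ is infinite cyclic and acts on $D_\x\simeq\R$ by translations, so that $C_\x = \G_\x\bs D_\x$ is a compact topological circle embedded in the interior of $\bar{X}$. A parallel section of $\widetilde{E}$ over $C_\x$ corresponds, upon pulling back to the universal cover $D_\x$, to a constant $E$-valued function whose value is preserved by the deck group $\G_\x$, that is, to a vector $v\in E^{\G_\x}$. For such a $v$, the simplicial chain $C_\x\otimes v$ has vanishing boundary (the endpoint contributions around each vertex of the triangulated circle cancel because the section is parallel and $\G_\x$-invariant), so it represents a class in $H_1(X,\widetilde{E})$; the inclusion of pairs $(X,\emptyset)\hookrightarrow(X,\partial X)$ then gives the relative class.

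For part (ii), the triviality of $\bar{\G}_\x$ means $D_\x$ embeds in $X$ with image $C_\x$, and its closure $\overline{C_\x}$ in $\bar{X}$ is a compact arc with endpoints $c_\x\in X_{\ell_\x}$ and $c'_\x\in X_{\ell'_\x}$. Because the monodromy is trivial, an arbitrary $v\in E$ determines a parallel section of $\widetilde{E}\vert_{C_\x}$. The boundary of $C_\x\otimes v$ is then supported on $\partial\bar{X}$ (it equals the difference of the values of the section at $c'_\x$ and $c_\x$), so $C_\x\otimes v$ represents a class in $H_1(\bar{X},\partial\bar{X},\widetilde{E})=H_1(X,\partial X,\widetilde{E})$.

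The main thing to verify carefully is that the simplicial chain complex with local coefficients really does compute the asserted (co)homology groups and that the boundary formula above is the correct one; this is the content of the local-coefficient machinery of \cite{JM,FMcoeff}, and in the orbifold case the necessary refinements come from \cite{FMlocal} as recalled in Section~\ref{lastsection}.
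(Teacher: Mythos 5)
Your proof is correct and follows essentially the same approach as the paper: the paper's proof compresses your explicit simplicial verification into the chain of isomorphisms $E^{\G_\x}\simeq H^0(\G_\x,E)\simeq H^0(C_\x,\widetilde{E})\simeq H_1(C_\x,\partial C_\x,\widetilde{E})$ (using that $D_\x$ is simply connected) followed by pushforward to $X$, whereas you unroll these steps by hand at the level of a compatible triangulation and parallel sections, but the mathematical content is identical.
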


\begin{proof}
Since $D_\x$ is simply connected, we have
\[
E^{\G_\x} \simeq H^0(\G_\x,E) \simeq H^0(C_\x,\widetilde{E}) \simeq H_{1}(C_\x, \partial C_\x, \widetilde{E}).
\]
($\partial C_\x$ can be empty). So $v \in E^{\G_\x}$ gives rise to an element in $H_{1}(C_\x, \partial C_\x, \widetilde{E})$ which can be pushed over to define an element in $H_{1}(X,\partial X,\widetilde{E})$ (or $H_{1}(X,\widetilde{E})$ if $C_\x$ is closed).
\end{proof}

\begin{remark}
These cycles are special cases of `decomposable cycles', see \cite{FMcoeff}. In there we explain how a (simplicial) $p$-chain with values in $\widetilde{E}$ is a formal sum $\sum_{i=1}^m  \sigma_i \otimes s_i$, where $\sigma_i$ is an  oriented $p$-simplex and $s_i$ is a flat section over $\sigma_i$. In this setting, the $\G_\x$-fixed vector $v \in E$ 
gives rise to a parallel section $s_v$ of $\widetilde{E}$. Namely, for $z \in C_\x$, the section $s_v$ for the bundle $C_\x \times_{\G_\x} E \to C_\x$ is given by $s_v(z) = (z,v)$. Thus $s_v$ is constant, hence
parallel. So in this setting the notation $C_\x \otimes s_v$ is more precise. 

\end{remark}

We now consider $E=\calH_{k}(V)$. In that case $\pi_{k}(\x^{k}) \in \calH_{k}(V)$ is ${\G_\x}$-invariant. In fact, if $\overline{\G}_\x$ is non-trivial, then $\pi_{k}(\x^{k})$ is up to a constant the only such vector. We obtain a cycle
\begin{equation*}
C_{\x,[k]} := C_\x \otimes \pi_{k}(\x^{k}) 
\end{equation*}
with values in $\calH_{k}(V)$. We then define for $n>0$ the composite cycle $C_{n,[k]} = \sum_{\x \in \G \back L_{h,n}} C_{\x,[k]}$ as before.

We write $\langle\,,\,\rangle$ for the pairing between $H_{1}(X, \partial X, \widetilde{\calH_{k}(V)})$ and $H_c^{1}(X,\widetilde{\calH_{k}(V)})$ and also between $H_{1}(X, \widetilde{\calH_{k}(V)})$ and $H^{1}(X,\widetilde{\calH_{k}(V)})$. Then in \cite{FMcoeff} we explain that for $\eta$ a closed (compactly supported) differential $1$-form on $X$ with values in $\calH_{k}(V)$ representing a class $[\eta]$ in $H^{1}(X,\widetilde{\calH_{k}(V)})$ (or $H_c^{1}(X,\widetilde{\calH_{k}(V)})$), one has
\begin{equation}\label{coeffpairing}
\langle {[C_\x \otimes v]}, [\eta] \rangle = \int_{C_\x} \left(\eta,v\right).
\end{equation}
Here $(\eta,v)$ is the scalar-valued differential form obtained by taking the pairing $(\,,\,)$ in the fiber. 

\subsection{Spectacle cycles}

We let $\x \in V$ with $q(\x)=n>0$ such that $C_\x$ is an infinite geodesic connecting the cusps $\ell'_\x$  and $\ell_\x$. In that case $\overline{\G}_\x$ is trivial, so any rational vector $v \in \calH_{k}(V)$ gives rise to a cycle
\[
C_\x \otimes v.
\]
We obviously have

\begin{lemma}
\[
\partial \left(C_{\x} \otimes v\right) =c_\x \otimes  v  - c'_\x \otimes  v .
\]
\end{lemma}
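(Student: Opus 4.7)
The plan is to unpack the definition of the boundary operator on simplicial $1$-chains with coefficients in the local system $\widetilde{\calH_k(V)}$ and then identify its two terms with $c_\x \otimes v$ and $-c'_\x \otimes v$.

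First, I would check that under the hypothesis that $\overline{\G}_\x$ is trivial, the stabilizer $\G_\x$ itself is trivial. This uses that $\G$ is torsion-free, so in particular $-I \notin \G$ and the projection $\G \to \overline{\G}$ is injective on $\G_\x$. Consequently, $C_\x = \G_\x \back D_\x$ is just $D_\x$ embedded in $X$, and in the Borel--Serre compactification it becomes a properly embedded topological arc whose two endpoints are $c_\x \in X_{\ell_\x}$ and $c'_\x \in X_{\ell'_\x}$. Thus, as an oriented $1$-chain in $\overline{X}$, the topological boundary is $\pm(c_\x - c'_\x)$, and the only real issue is to pin down the sign.

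Second, I would verify the sign from the orientation conventions fixed in Section~\ref{Spec-Section}. The convention declares $(u_{\ell_\x},\x,u_{\ell'_\x})$ to be a positively oriented basis of $V$ and requires that at every $z \in D_\x$ the tangent vector $t_\x \in z^\perp \cap \x^\perp$ be chosen so that $(t_\x,\x)$ is positively oriented in $T_z(D) = z^\perp$. Parametrizing $D_\x$ as $z(s) = a(s) u_{\ell_\x} + b(s) u_{\ell'_\x}$ subject to $2a(s)b(s)(u_{\ell_\x},u_{\ell'_\x}) = -1$, running toward the cusp $\ell_\x$ corresponds to $a(s) \to \infty$ (so $\dot z$ becomes asymptotic to a positive multiple of $u_{\ell_\x}$), while running toward $\ell'_\x$ corresponds to $\dot z$ becoming asymptotic to a positive multiple of $-u_{\ell'_\x}$. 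Comparing $\dot z \wedge \x$ to the orientation of $T_z(D)$ under the cyclic rule forced by $(u_{\ell_\x},\x,u_{\ell'_\x})$ being positively oriented in $V$, the directed geodesic $C_\x$ runs from $c'_\x$ to $c_\x$. Hence $\partial C_\x = c_\x - c'_\x$ in $\overline{X}$.

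Third, because $\G_\x = \{1\}$, there is no monodromy obstruction on $v$: the constant assignment $z \mapsto v$ on $D_\x$ descends to a genuinely parallel section $s_v$ of $\widetilde{\calH_k(V)}$ over $C_\x$. Applying the standard formula for the boundary of a chain with local coefficients, $\partial(\sigma \otimes s) = \sum_i \varepsilon_i\, (\partial_i \sigma) \otimes s|_{\partial_i \sigma}$, and using that $s_v$ takes the value $v$ at both endpoints, one obtains
\[
\partial(C_\x \otimes v) \;=\; c_\x \otimes v - c'_\x \otimes v,
\]
as claimed. The only real content lies in the orientation check in the second step; the coefficient piece is automatic because $s_v$ is literally constant on $C_\x$, so no higher-order parallel-transport correction can contribute to $\partial$.
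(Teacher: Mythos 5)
The paper gives no proof of this lemma at all---it is stated immediately after the words ``We obviously have''---so there is no argument in the text to compare against. Your verification is correct: under the hypothesis that $\overline{\G}_\x$ is trivial and $\G$ is torsion-free, $\G_\x$ is trivial, so $C_\x$ is a singular $1$-chain with two boundary points, the section $s_v$ is globally defined and constant on it, and the only content is the orientation check. Your second step carries this out correctly: in the basis $(u_{\ell_\x},\x,u_{\ell'_\x})$ the oriented tangent vector at $z=a\,u_{\ell_\x}+b\,u_{\ell'_\x}$ is $t=a\,u_{\ell_\x}-b\,u_{\ell'_\x}$ (one checks $\det$ of $(t,\x,z)$ equals $2ab>0$ times $\det(u_{\ell_\x},\x,u_{\ell'_\x})$), so along the positive flow $a$ grows and $b$ shrinks, i.e.\ $C_\x$ runs from $c'_\x$ toward $c_\x$, giving $\partial(C_\x\otimes v)=c_\x\otimes v - c'_\x\otimes v$. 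One small slip in wording: when you first describe the approach to $\ell'_\x$ you say $\dot z$ is asymptotic to a positive multiple of $-u_{\ell'_\x}$, which is only true once the orientation has already been fixed (a geodesic \emph{approaching} $\ell'_\x$ with $b\to\infty$ has $\dot z$ asymptotic to $+u_{\ell'_\x}$); this reads as slightly circular, but the decisive step---the determinant comparison with the orientation of $T_z(D)$---is sound, so the conclusion stands.
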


For $c \in X_\ell$, we will now study $c \otimes  v$ as a $0$-cycle at a boundary component $X_{\ell}$ with coefficients in $\Hk(V)$.

\begin{proposition}\label{exactness}
The $0$-cycle $c\otimes  v $ is trivial in $H_0(X_{\ell}, \widetilde{\calH_{k}(V)})$ if and only if $v$ is perpendicular to the highest weight vector, i.e., if and only if the component of the lowest weight space in the weight decomposition of $v$ is zero. So $c \otimes  v $ is a boundary of a $1$-chain with values in $\widetilde{\calH_{k}(V)}$  if and only if  $v \in (u_{\ell}^k)^{\perp}$. In this case there exists $w \in \calH_{k}(V)$ such that 
\[
v = \gamma^{-1} w -w.
\]
Here $\gamma$ is the positively oriented generator of $\Gamma_{\ell}$.
\end{proposition}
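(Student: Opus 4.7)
The plan is to identify $H_0(X_\ell,\widetilde{\calH_k(V)})$ with the $\Gamma_\ell$-coinvariants of $\calH_k(V)$ and then to reduce the proposition to the linear algebra of a single Jordan block.

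First I would note that $X_\ell=N_\ell/\Gamma_\ell$ is a circle with fundamental group generated by the image of $\gamma$, so the cellular chain complex of $S^1$ with local coefficients yields
\[
H_0(X_\ell,\widetilde{\calH_k(V)})\simeq \calH_k(V)/(\gamma^{-1}-\id)\calH_k(V).
\]
Hence $c\otimes v$ vanishes in $H_0(X_\ell,\widetilde{\calH_k(V)})$ precisely when there is a $w\in\calH_k(V)$ with $v=\gamma^{-1}w-w$, which is also the final assertion of the proposition (and exhibits the bounding $1$-chain explicitly as a flat section on $X_\ell$ with jump $v$ at $c$).

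Next I would compute the image of $\gamma^{-1}-\id$. After conjugating by $\sigma_\ell$ we may assume $\ell=\ell_\infty$ and $\gamma=n(M_\ell)$; via the identification $\calH_k(V)\simeq\Sym^{2k}(\Q^2)$ recalled in Section~\ref{reptheory}, the unipotent element $\gamma$ acts as a single Jordan block of size $2k+1$, whose unique fixed line is spanned by the highest weight vector $u_\ell^k$ (cf.\ Lemma~\ref{repformulas}). In particular $\gamma^{-1}-\id$ is nilpotent of rank $2k$. Using the $\Gamma$-invariance of $(\,,\,)$,
\[
(\gamma^{-1}w-w,\,u_\ell^k)=(w,\,\gamma u_\ell^k-u_\ell^k)=0,
\]
so $(\gamma^{-1}-\id)\calH_k(V)\subseteq(u_\ell^k)^\perp$. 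Since $(\,,\,)$ is non-degenerate on $\calH_k(V)$, both sides have dimension $2k$, and equality follows.

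Finally I would translate $v\perp u_\ell^k$ into the weight decomposition of $v$ under the torus $A_\ell$ fixing $\ell$. Since $(\,,\,)$ is $A_\ell$-invariant, distinct weight spaces are mutually orthogonal except for the pairing of weight $2j$ with weight $-2j$. As $u_\ell^k$ is (a scalar multiple of) the highest weight vector $v_{2k}$, the quantity $(v,u_\ell^k)$ is a nonzero multiple of the lowest weight component $v_{-2k}$ of $v$, so $v\perp u_\ell^k$ is equivalent to the vanishing of $v_{-2k}$, as claimed. The only delicate point in the argument is bookkeeping of conventions (action on $\calH_k$ versus its dual, $\gamma$ versus $\gamma^{-1}$, highest versus lowest weight, and the choice of positively oriented generator); once these are fixed at the outset, the rest of the proof is formal.
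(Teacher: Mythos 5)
Your proof is correct and follows essentially the same route as the paper: reduce to the group homology of $\Gamma_\ell\cong\Z$ via the circle being a $K(\Z,1)$, observe that the highest weight vector spans the $N_\ell$-invariants, and use the invariant pairing on $\calH_k(V)$ to pass between invariants and coinvariants. The only cosmetic difference is that the paper computes $H^0$ first (invariants $=\Q u^k$) and then invokes duality, whereas you work directly with $H_0$ and make the duality concrete via the Jordan block rank count on the image of $\gamma^{-1}-\id$; this rank argument is in fact the same one the paper uses a page later in the proof of Lemma~\ref{v_x-lemma}, so your version arguably makes the structure more transparent. Your final paragraph spelling out why $v\perp u_\ell^k$ is equivalent to the vanishing of the lowest weight component (opposite weight spaces pair nondegenerately under the $A_\ell$-invariant form) is omitted in the paper as obvious, but it is the right justification.
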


\begin{proof}
We may assume $\ell = \ell_{\infty}$ whence $N_{\ell}= N_{\infty}$. Since the circle is a space of type $K(\Z,1)$ and $\Gamma_{\infty}$ is Zariski dense in $N_{\infty}$, we have
\[
H^0(X_{\ell_{\infty}},\widetilde{\Hk(V)}) \simeq H^0(\Gamma_{\infty},\Hk(V)) = H^0(N_{\infty}, \Hk(V)) \cong \Hk(V)^{N_{\infty}}.
\]
But $u^{ k}$ is a highest weight vector of $\Hk(V)$ so that
\[
\Hk(V)^{N_{\infty}} = \Q u^{ k}
\]
Hence (dually using the inner product $(\ ,\ )$ on the coefficients),
\[
H^0(X_{\ell_{\infty}},\widetilde{\Hk(V)}) \simeq H_0(\Gamma_{\infty},\Hk(V)) = H_0(N_{\infty}, \Hk(V))
\]
is also $1$-dimensional, and we conclude that the vector $v$ is zero in the space of coinvariants $H_0(N_{\infty}, \Hk(V))$ if and only if $v  \in (u^k)^{\perp}$. Since $\Gamma_{\ell}$ is infinite cyclic it follows from the standard
resolution of the $\Z$ over the integral group ring, see \cite{Brown}, Example 1, page 58, that $v$ is the boundary of a one chain if and
only if it may be written $v = \gamma^{-1} w-w$ for some $w$ as above.
\end{proof}

\begin{corollary}
As a special case of the above, $c_\x \otimes \pi_{k}(\x^{k})$ is trivial in $H_0(X_{\ell_\x}, \widetilde{\calH_{k}(V)})$.
\end{corollary}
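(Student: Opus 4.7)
The plan is to apply Proposition \ref{exactness} directly: the corollary reduces to the orthogonality statement $\pi_k(\x^k) \perp u_{\ell_\x}^k$, where $u_{\ell_\x}^k$ is the highest weight vector of $\calH_k(V)$ relative to the parabolic stabilizing $\ell_\x$ (as established in the proof of the proposition for $\ell_\infty$, and then transported by $\sigma_{\ell_\x}$ to any cusp).

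First I would verify that $u_{\ell_\x}^k$ actually lies in $\calH_k(V) \subset \Sym^k(V)$. This is immediate from the decomposition \eqref{fundinv} together with the observation that $u_{\ell_\x}$ is isotropic, so $u_{\ell_\x}^k$ cannot have a component in the image of the multiplication by $r^2$; alternatively, one recalls that $u^k$ was already identified as the highest weight vector of $\calH_k(V)$ in the proof of Proposition \ref{exactness}, and $u_{\ell_\x} = \sigma_{\ell_\x} u_\infty$ just moves this vector by an element of $G$ which preserves $\calH_k(V)$.

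Next, since $\pi_k: \Sym^k(V) \to \calH_k(V)$ is the orthogonal projection with respect to $(\,,\,)$, and $u_{\ell_\x}^k \in \calH_k(V)$ already, we have
\[
\bigl(\pi_k(\x^k), u_{\ell_\x}^k\bigr) = \bigl(\x^k, u_{\ell_\x}^k\bigr) = (\x, u_{\ell_\x})^k,
\]
using the defining property $(\x^k,\y^k)=(\x,\y)^k$ of the extension of $(\,,\,)$ to $\Sym^k(V)$.

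Finally, by the very definition of $\ell_\x$ (and $\ell'_\x$) as the isotropic lines in $\x^\perp$, one has $(\x, u_{\ell_\x}) = 0$, so the pairing vanishes. Invoking Proposition \ref{exactness} in the form ``$c \otimes v$ is trivial iff $v \perp u_\ell^k$,'' we conclude that $c_\x \otimes \pi_k(\x^k)$ is a boundary in $H_0(X_{\ell_\x}, \widetilde{\calH_k(V)})$. There is no real obstacle here; the only conceptual point is matching the geometric condition $\ell_\x \subset \x^\perp$ (which is what makes $C_\x$ land at this particular cusp) with the algebraic condition appearing in the proposition, and this is precisely what makes the spectacle construction work for $\pi_k(\x^k)$ specifically.
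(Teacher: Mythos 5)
Your proof is correct and follows exactly the reasoning the paper leaves implicit: the paper simply invokes Proposition~\ref{exactness}, and you supply the chain $\bigl(\pi_k(\x^k), u_{\ell_\x}^k\bigr) = (\x^k, u_{\ell_\x}^k) = (\x, u_{\ell_\x})^k = 0$, using the orthogonality of the decomposition \eqref{fundinv}, the multiplicativity of the extended pairing, and $\ell_\x \subset \x^\perp$. This is precisely the intended verification, so there is nothing to correct.
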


\subsection{$1$-chains at the boundary and an explicit primitive for $c \otimes  v$}

We will need an actual simplicial $1$-chain on $X_{\ell}$ with coefficients in $\widetilde{\calH_{k}(V))}$ whose boundary is $c_\x \otimes  v$. 
We have previously defined $1$-cycles with coefficients $C_\x \otimes s_{v}$ where $s_v$ was a parallel
section {\it globally defined on $C_\x$}.  We will now extend this notation to define certain one-chains
$X_{\ell,c} \otimes s_w$, where $s_{w}$ is a possibly multi-valued parallel-section on the horocircle $X_{\ell}$ obtained by parallel translating a vector $w \in\Hk(V)$ in the fiber over the point $c \in X_{\ell}$ around $X_{\ell}$.  This produces a possible jump at $c$. We construct the chain $X_{\ell,c} \otimes s_{w}$ by triangulating $X_{\ell}$ by using three vertices $u_0, u_1,u_2$ with $u_0 = c$.  We then define single-valued parallel sections on each of the three one simplices $(u_0,u_1)$, $(u_1,u_2)$ and $(u_2,u_0)$ by parallel translation of $w$. More precisely, we start at $u_0$ and parallel translate
along $(u_0,u_1)$ to get the required coefficient parallel section on the $1$-simplex $(u_0,u_1)$, we take the resulting
value at $u_1$ and parallel translate along $(u_1,u_2)$ to get the section we attach to $(u_1,u_2)$.
Finally  we take the resulting value at $u_2$ and parallel translate along $(u_2,u_0)$ to get
the required section on $(u_2,u_0)$.
We obtain a $1$-chain with coefficients to be denoted $X_{\ell,c} \otimes s_{w}=:X_{\ell,c} \otimes w $ as the sum of the three resulting one-simplices with coefficients.
We leave the proof of the following lemma to the reader.

\begin{lemma}\label{1-boundary}
Let $\g$ be the (positively oriented) generator of ${\G_{\ell}}$. Then 
\[
\partial (X_{\ell,c} \otimes {w}) = c \otimes  (\gamma^{-1} - Id)w.
\]
\end{lemma}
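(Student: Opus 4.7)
The plan is a direct simplicial computation. Write
\[
X_{\ell,c} \otimes s_w = \sum_{i=0}^{2} (u_i, u_{i+1}) \otimes s_i
\]
with indices modulo $3$ (so $u_3 = u_0 = c$), where by construction each $s_i$ is the single-valued parallel section on the $1$-simplex $(u_i, u_{i+1})$ determined recursively from $s_0(u_0) = w$ by the matching rule $s_{i+1}(u_{i+1}) = s_i(u_{i+1})$.

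The first step is to apply the standard simplicial boundary formula with local coefficients, $\partial((a,b) \otimes s) = b \otimes s(b) - a \otimes s(a)$, to each of the three summands. At each interior vertex $u_1$, $u_2$ the incoming and outgoing edges contribute with opposite signs and the coefficients agree by the matching rule for the $s_i$, so these contributions cancel pairwise. Only the two terms at $u_0 = c$ survive, and one obtains
\[
\partial(X_{\ell,c} \otimes s_w) = c \otimes s_2(u_0) - c \otimes w.
\]

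The remaining step is to identify $s_2(u_0)$ with $\gamma^{-1} w$. By construction this is the value at $c$ produced by parallel transporting $w$ once around $X_\ell$ in the positive direction, i.e.\ the monodromy of $\widetilde{\calH_k(V)}$ along $\gamma$. Realizing the local system as the quotient $\widetilde{X}_\ell \times_{\Gamma_\ell} \calH_k(V)$, a flat section corresponds to a $\Gamma_\ell$-equivariant function on the universal cover, and unfolding the equivalence $(\gamma \tilde u_0, e) \sim (\tilde u_0, \gamma^{-1} e)$ shows that crossing one fundamental domain in the positive direction rewrites the fiber value as $\gamma^{-1}$ times its old value. The one place requiring care is pinning down this sign (that it is $\gamma^{-1}$ rather than $\gamma$); the convention is exactly the one forced on us by Proposition \ref{exactness}, where $v = \gamma^{-1} w - w$ characterizes when $c \otimes v$ bounds, and as a consistency check the identity is immediate on the highest weight vector $u_\ell^k$, which the unipotent $\gamma$ fixes so that both sides vanish.
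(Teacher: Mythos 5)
Your proposal is correct, and since the paper leaves this lemma to the reader, your direct simplicial computation — applying the boundary formula to the three $1$-simplices, observing telescoping cancellation at $u_1, u_2$, and identifying the monodromy around $X_\ell$ as $\gamma^{-1}$ via the balanced-product description $\widetilde{D}\times_{\Gamma_\ell} \calH_k(V)$ — is exactly the argument the authors intended. Your sanity check against Proposition~\ref{exactness} (and on the $\gamma$-fixed vector $u_\ell^k$) correctly pins down the potentially tricky sign, so nothing is missing.
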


In particular, for $v = u_{\ell}^k$, a highest weight vector for $\Hk(V)$, we obtain a $1$-cycle which we denote by 
$X_{\ell} \otimes u_{\ell}^k$. We let $\omega_{\ell,k}$ be the unique $N$-invariant $1$-form on $X_{\ell}$ (generating $H^1(X_{\ell}, \widetilde{\Hk(V_{\C})})$) which is a Poincar\'e dual form of $X_{\ell} \otimes u_{\ell}^k$. That is, $\int_{X_{\ell} \otimes u_{\ell}^k} \omega_{\ell,k} =1$. Note that at the cusp $\infty$ we have $\omega_{\infty,k} =\tfrac{(-1)^k}{M_{\infty}} dx \otimes n(x)u'^k$. Here $M_{\infty}$ is the width of the cusp $\infty$.

\begin{lemma}\label{v_x-lemma}

Let $v \in \Hk(V)$ be a rational vector such that $v \in (u_{\ell}^k)^{\perp}$. Then there exists a unique rational vector $w$ in $\Hk(V)$ such that
\[
\partial (X_{\ell,c} \otimes w) =  c \otimes  v
\]
and
\[
 \int_{X_{\ell,c} \otimes w} \omega_{\ell,k} = 0.
 \]
 For the vector $v = \pi_{k}(\x^{k})$ (and  $\ell=\ell_\x$, $c =c_\x$) we write $w=w_\x$ for this vector.

\end{lemma}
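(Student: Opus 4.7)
The plan is to split the statement into an existence-with-ambiguity step coming from the previous lemmas, and a uniqueness step using the integral normalization to pin down the ambiguity.

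First, by Lemma \ref{1-boundary}, the boundary condition $\partial(X_{\ell,c}\otimes w) = c\otimes v$ is equivalent to the algebraic equation $(\gamma^{-1}-\mathrm{Id})w = v$ in $\Hk(V)$. The hypothesis $v\in (u_{\ell}^k)^{\perp}$ is exactly the cokernel condition from Proposition \ref{exactness}, so some solution $w_0\in\Hk(V)$ exists; since $\gamma^{-1}-\mathrm{Id}$ is a $\Q$-linear operator on the rational vector space $\Hk(V)$ and $v$ is rational, one may choose $w_0$ rational (lift any rational basis of the image and apply the usual linear algebra).

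Next, the ambiguity of $w_0$ is controlled: two solutions differ by an element of $\ker(\gamma^{-1}-\mathrm{Id})$. Because $\G_{\ell}=\langle \gamma\rangle$ is Zariski dense in $N_\ell$, this kernel equals $\Hk(V)^{N_\ell} = \Q u_{\ell}^k$, as used in the proof of Proposition \ref{exactness}. So any other solution has the form $w_0 + \alpha u_{\ell}^k$ with $\alpha\in\Q$.

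Now we use the normalization condition. The key observation is that $u_{\ell}^k$ is $N_\ell$-fixed, hence $\gamma$-fixed, so parallel translating it around $X_{\ell}$ produces a single-valued (genuinely parallel) section with no jump at $c$; consequently the chain $X_{\ell,c}\otimes u_{\ell}^k$ is the honest cycle $X_{\ell}\otimes u_{\ell}^k$, and by the defining property of $\omega_{\ell,k}$ we get
\[
\int_{X_{\ell,c}\otimes u_{\ell}^k}\omega_{\ell,k}=\int_{X_{\ell}\otimes u_{\ell}^k}\omega_{\ell,k}=1.
\]
Therefore the functional $w\mapsto \int_{X_{\ell,c}\otimes w}\omega_{\ell,k}$, which is linear in $w$ (by construction of the chain $X_{\ell,c}\otimes w$ as a sum of three simplices weighted by parallel translates of $w$), is non-zero on $\Q u_{\ell}^k$. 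Setting $\alpha := \int_{X_{\ell,c}\otimes w_0}\omega_{\ell,k}$, the vector $w := w_0 - \alpha u_{\ell}^k$ satisfies both conditions, and any two choices differ by a multiple of $u_{\ell}^k$ that must vanish by the normalization, giving uniqueness.

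The one point that needs care is the rationality of $\alpha$, which I expect to be the main (small) obstacle. Using the explicit $N$-invariant formula for $\omega_{\ell,k}$ at infinity (and its $\sigma_\ell$-translate at a general cusp), together with the fact that the parallel transport of the rational vector $w_0$ by $n(x)\in N_\ell$ is polynomial in $x$ with rational coefficients, the integrand $(\omega_{\ell,k},s_{w_0})$ is a rational $1$-form on $X_\ell$ whose integral over the triangulated circle is rational; hence $\alpha\in\Q$ and $w$ is rational as required. For the last clause, specializing to $v=\pi_k(\x^k)$, $\ell=\ell_{\x}$, $c=c_{\x}$ — which satisfies the hypothesis by the Corollary after Proposition \ref{exactness} — yields the distinguished rational vector $w_{\x}$.
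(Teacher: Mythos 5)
Your proof is correct and follows essentially the same route as the paper: reduce via Lemma~\ref{1-boundary} to solving $(\gamma^{-1}-\mathrm{Id})w=v$, identify the ambiguity as multiples of $u_\ell^k$, and normalize by subtracting the appropriate multiple using $\int_{X_\ell\otimes u_\ell^k}\omega_{\ell,k}=1$. The only small addition you make is spelling out the rationality of the normalization constant $\alpha$, which the paper leaves implicit.
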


\begin{proof}
 Considering Lemma~\ref{1-boundary}, we first find any rational vector $w$ in $\Hk(V)$ such that
\begin{equation}\label{v_x-eq}
(\gamma^{-1} - Id)w = v.
\end{equation}
But the endomorphism $\gamma^{-1} - Id$ on $\Hk(V)$ has corank $1$ and takes values in $(u^k)^{\perp}$ (since $((\gamma^{-1} - Id)w, u_{\ell}^k) = (\g^{-1}w,u_{\ell}^k) - (w,u_{\ell}^k) = (w,\g u_{\ell}^k) - (w,u_{\ell}^k) =0$), hence its image is equal to $(u_{\ell}^k)^{\perp}$. Hence there exists a vector $w$ satisfying \eqref{v_x-eq}, unique up to a multiple of $u_{\ell}^k$. That is, we can modify the $1$-chain $X_{\ell,c} \otimes w$ by any multiple of the $1$-cycle $X_{\ell} \otimes u_{\ell}^k$ without changing \eqref{v_x-eq}. This amounts to changing $w$ by a multiple of $u_{\ell}^k$. We set $ \beta:= \int_{X_{\ell,c} \otimes w} \omega_{\ell,k}$.
Then by construction
\[
\int_{X_{\ell,c} \otimes (w-\beta u^k)}  \omega_{\ell,k} = 0. \qedhere
\]
\end{proof}

We now give an explicit formula for the vector $w$ in Lemma~\ref{v_x-lemma}.

\begin{proposition}\label{Propv_x}
 Let $v_{2i} \in (u_{\ell}^k)^{\perp}$, $i=-k+1,\dots,k$ be one of the rational weight vectors for $\Hk(V)$ given in Section~\ref{reptheory}. Consider the vector $v=n(r)v_{2i}$ for some real number $r$. Then for the boundary point $c=n(r)z_{\ell}$, the vector $w$ as in Lemma~\ref{v_x-lemma} is given by
\[
w= \sum_{j=i-1}^{k} (-M_{\ell})^{j-i} \begin{pmatrix} k+j \\ j+1-i \end{pmatrix}  \frac{B_{j+1-i}(-\tfrac{r}{M_{\ell}})}{k+i} v_{2j}.
\]
Here $B_j(x)$ is the $j$-th Bernoulli polynomial. Note that for $i=0$, the highest weight vector component of $w$ is given by $(2M_{\ell})^k\tfrac{B_{k+1}(-r/M_{\ell})}{k+1}u_{\ell}^k$.

 \end{proposition}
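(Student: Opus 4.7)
My plan is to recognize the sum in the proposition as the expansion of a single closed-form expression built from a Bernoulli polynomial, and then verify both defining conditions of $w$ by appealing to classical Bernoulli polynomial identities. I would carry this out in the polynomial model $\Hk(V) \simeq \Sym^{2k}(\Q^2)$, realizing $\Hk(V)$ as homogeneous polynomials of degree $2k$ in $\frake_1,\frake_2$. Under this identification $u_\ell = \frake_1^2$ and $u'^k = \frake_2^{2k}$; by Lemma~\ref{repformulas} the weight vector is $v_{2j} = c_k\binom{2k}{k+j}\frake_1^{k+j}\frake_2^{k-j}$, and $n(t)$ acts as the substitution $\frake_2 \mapsto \frake_2 + t\frake_1$, so $n(r)v_{2i} = c_k\binom{2k}{k+i}\frake_1^{k+i}(\frake_2 + r\frake_1)^{k-i}$.

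The core step is to rewrite the sum in the proposition as
\[
w \;=\; \frac{c_k\binom{2k}{k+i}(-M_\ell)^{k-i}}{k-i+1}\, \frake_1^{2k}\, B_{k-i+1}\!\left(-\frac{\frake_2 + r\frake_1}{M_\ell \frake_1}\right),
\]
which is a genuine polynomial of degree $2k$ in $\frake_1,\frake_2$ since the lowest power of $\frake_1$ appearing is $\frake_1^{k+i-1}$ (nonnegative for $i \geq -k+1$). The equivalence with the explicit sum follows from the translation identity $B_n(x+y) = \sum_j \binom{n}{j} B_j(x) y^{n-j}$ applied with $x = -r/M_\ell$ and $y = -\frake_2/(M_\ell\frake_1)$, together with the binomial identity $\binom{2k}{k+i}\binom{k-i+1}{j-i+1}(k+i) = \binom{2k}{k+j}\binom{k+j}{j+1-i}(k-i+1)$.

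Condition (a), $(\gamma_\ell^{-1} - I)w = n(r)v_{2i}$, is then immediate from the Bernoulli relation $B_n(z+1) - B_n(z) = nz^{n-1}$: since $\gamma_\ell^{-1} = n(-M_\ell)$ acts as $\frake_2 \mapsto \frake_2 - M_\ell\frake_1$, the argument $z = -(\frake_2+r\frake_1)/(M_\ell\frake_1)$ shifts to $z+1$, and $(k-i+1)z^{k-i}$ after clearing the $(M_\ell\frake_1)^{k-i+1}$ denominator becomes exactly $c_k\binom{2k}{k+i}\frake_1^{k+i}(\frake_2 + r\frake_1)^{k-i} = n(r)v_{2i}$.

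For the integral condition, I use that in the polynomial model $(\frake_2^{2k}, f) = (-1)^k[\frake_1^{2k}]f$, the coefficient of $\frake_1^{2k}$ in $f$, which is immediate from Lemma~\ref{repformulas}. On the universal cover of $X_\ell$ the section underlying $X_{\ell,c}\otimes w$ is the constant $w$ (the triangulation jump at $c$ contributes nothing to the integral), and $\omega_{\ell,k}$ pulls back to $\tfrac{(-1)^k}{M_\ell}dx \otimes n(x)u'^k$. Extracting the $\frake_1^{2k}$-coefficient of $n(-x)w = w(\frake_1,\frake_2 - x\frake_1)$ by setting $\frake_2 = 0$ in the shifted closed form gives
\[
(u'^k,n(-x)w) \;=\; \frac{(-M_\ell)^{k-i}\, c_k\binom{2k}{k+i}(-1)^k}{k-i+1}\, B_{k-i+1}\!\left(\tfrac{x-r}{M_\ell}\right).
\]
The change of variables $t = (x-r)/M_\ell$ reduces $\int_r^{r+M_\ell}\tfrac{(-1)^k}{M_\ell}(u'^k,n(-x)w)\,dx$ to a nonzero constant times $\int_0^1 B_{k-i+1}(t)\,dt$, which vanishes since $k-i+1 \geq 1$ and $\int_0^1 B_n(t)\,dt = 0$ for every $n \geq 1$. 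The only real technical point is the initial identification of the closed-form expression for $w$; once this is in hand, both conditions reduce to well-known Bernoulli polynomial identities.
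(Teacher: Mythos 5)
Your proposal is correct, and it is the same underlying idea as the paper's proof but organized in the opposite direction. The paper works with the scalar polynomial $p_k(t) = (n(t)u_\ell'^k, w)$, derives that it satisfies the Bernoulli difference relation $p_k(t)-p_k(t-M_\ell) = \text{const}\cdot(t-M_\ell-r)^{k-i}$, uses the integral condition $\int_r^{r+M_\ell}p_k\,dt=0$ to pin it down as a multiple of $B_{k+1-i}((t-r)/M_\ell)$, and then solves for the coefficients $\alpha_j$ by comparing expansions; in other words, it \emph{derives} $w$. You instead write down a single closed-form for $w$ directly,
\[
w = \frac{c_k\binom{2k}{k+i}(-M_\ell)^{k-i}}{k-i+1}\,\frake_1^{2k}\,B_{k-i+1}\!\Bigl(-\tfrac{\frake_2+r\frake_1}{M_\ell\frake_1}\Bigr),
\]
and \emph{verify} that it satisfies both defining conditions of Lemma~\ref{v_x-lemma} using the same two Bernoulli identities (the difference equation for $(\g^{-1}-\mathrm{Id})w = v$, and $\int_0^1 B_n = 0$ for the integral condition). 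The two points of view are dual: your expression for $w$ and the paper's $p_k(t)$ are related by $p_k(t) = (u_\ell'^k, n(-t)w) = (-1)^k\,w(\frake_1,\frake_2)\big|_{\frake_1=1,\ \frake_2=-t}$, so the scalar polynomial is just the dehomogenization of your vector. Your packaging has the small advantage of making the translation invariance and the vanishing of the mean manifest at once, while the binomial identity needed to unwind the closed form into the stated sum over $j$ is elementary. I checked the identification $v_{2j}=c_k\binom{2k}{k+j}\frake_1^{k+j}\frake_2^{k-j}$, the pairing $(\frake_2^{2k},f)=(-1)^k[\frake_1^{2k}]f$, the homogeneity claim, and the binomial reindexing; all are correct, and the argument is complete.
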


\begin{proof}
We can assume $\ell=\ell_{\infty}$. We can write $\gamma = \exp(M_{\infty}R)$ so that
\begin{equation}\label{raising}
\gamma^{-1} -Id =  \exp(-M_{\infty}R) -Id = -M_{\infty}R + \tfrac12 M_{\infty}^2 R^2 -  \tfrac16 M_{\infty}^3 R^3 + \dots
\end{equation}
From this it is clear that $w$ can be written in the form
\begin{equation*}
w = \sum_{j = i-1}^{k}\alpha_j  M_{\infty}^{j+i}  v_{2j} = \frac{1}{M_{\infty}} \sum_{j= 0}^{k} \frac{\alpha_{j+i-1}}{j!} (M_{\infty}R)^{j} v_{2(i-1)}
\end{equation*}
for some scalars $\alpha_j$. We consider $p_k(t)=(n(t)u'^k, w)$. Note that $p_k$ is a polynomial of degree $k+1-i$ such that $\int_r^{r+M_{\ell}} p_k(t) dt =0$. We have
\begin{align*}
p_k(t)= (n(t-M_{\infty})u'^k, \g^{-1} w) &= (n(t-M_{\infty})u'^k, w+n(r)v_{2i}) \\
& = p_k(t-M_{\infty}) + (n(t-M_{\infty}-r)u'^k, v_{2i}).
\end{align*}
Since 
\[
(n(t-M_{\infty}-r)u'^k, v_{2i}) = (\tfrac1{(k-i)!} (t-M_{\infty}-r)^{k-i} R^{k-i} u'^k,v_{2i}) = \tfrac{(-1)^ic_k (2k)!}{(k+i)!(k-i)!}(t-M_{\infty}-r)^{k-i}
\]
(see Lemma~\ref{repformulas}), we see
\[
p_k(t) -  p_k(t-M_{\infty}) = \tfrac{(-1)^ic_k (2k)!}{(k+i)!(k-i)!}(t-M_{\infty}-r)^{k-i}
\]
But this is (up to constant, shift, and scaling) the difference equation which is satisfied by the $(k+1-i)$-th Bernoulli polynomial $B_{k+1-i}(t)$. Hence, since $\int_r^{M_{\infty}+r} p_k(t) dt =0$, we obtain
\begin{align*}
p_k(t) & = M_{\infty}^{k-i}\tfrac{(-1)^ic_k (2k)!}{(k+i)!(k+1-i)!}B_{k+1-i}\left( \tfrac{t-r}{M_{\infty}} \right) 
\\&=   M_{\infty}^{k-i} \tfrac{(-1)^{k+i}2^k (k!)^2}{(k+i)!(k+1-i)!} 
 \sum_{j=0}^{k+1-i} \left( \begin{smallmatrix} k+1-i \\ j \end{smallmatrix} \right) B_j\left(-\tfrac{r}{M_{\infty}}\right) \left(\tfrac{t}{M_{\infty}}\right) ^{k+1-i-j}.
\end{align*}

On the other hand, we can easily express $p_k(t)$ explicitly in terms of the coefficients $\alpha_j$ (again by Lemma~\ref{repformulas}). We obtain
\[
\alpha_{j+i-1} = (-1)^{j-1} \begin{pmatrix} k+j+i-1 \\ j \end{pmatrix}  \frac{B_{j}\left(-\tfrac{r}{M_{\infty}}\right)}{k+i}
\]
for $j=0,\dots,k+1-i$. The proposition follows.
\end{proof}

Let $\x \in V$ be a rational vector of positive length such that $\x^{\perp}$ is $\Q$-split. We have shown that for $v \in (u_{\ell_\x}^k)^{\perp}$, we can find $w$ such that $\partial(X_{\ell_\x,c_\x} \otimes {w})=c_\x \otimes  v$ and $ \int_{X_{\ell_\x,c_\x} \otimes w} \omega_{\ell_x,k} = 0$. On the other hand, if in addition $v \in (u_{\ell_\x}'^k)^{\perp}$, then we can apply Lemma~\ref{v_x-lemma} also for the other endpoint of the geodesic $C_\x$ and obtain a $1$-chain $X_{\ell'_\x,c'_\x} \otimes {w'}$ such that $\partial(X_{\ell'_\x,c'_\x} \otimes {w'})=c'_\x \otimes  v$ and $ \int_{X_{\ell'_\x,c'_\x} \otimes w'} \omega_{\ell'_x,k} = 0$.

\begin{definition}[Spectacle cycles]
Let $\x \in V$ be a rational vector of positive length such that $\x^{\perp}$ is $\Q$-split. For $v \in  (u_{\ell_\x}^k)^{\perp}\cap(u_{\ell'_\x}^k)^{\perp}$ we define the spectacle cycle $C^c_\x \otimes s_v$ by 
\[
C^c_\x \otimes v = C_\x \otimes v - X_{\ell_\x,c_\x} \otimes {w} + X_{\ell'_\x,c'_\x} \otimes {w'}.
\]
Then $C^c_\x \otimes v$ defines by construction a {\it closed} cycle in $\overline{X}$. So
\[
[C^c_\x \otimes v] \in H_1(\overline{X}, \widetilde{\calH_{k}(V)}).
\]

\end{definition}

In particular, for $v = \pi_k(\x^k)$, we make the following 

\begin{definition}\label{specdef}
Let $\x \in V$ be a rational vector of positive length. Then the spectacle cycle $C^c_{\x,[k]}$ is given by
\[
C^c_{\x,[k]} =
 \begin{cases}
C_{\x,[k]} - X_{\ell_\x,c_\x} \otimes {w_\x} + X_{\ell'_\x,c'_\x} \otimes {w'_\x} & \text{if $C_x$ is infinite} \\
C_{\x,[k]} & \text{if $C_x$ is closed.} 
\end{cases}
\]
We define the composite cycle $C_{n,[k]}^c$ in the same way as for trivial coefficients. 
\end{definition}

\begin{remark}
{ The image of $C^c_\x \otimes v$ in the relative homology group $H_1(\overline{X},\partial \overline{X}, \widetilde{\calH_{k}(V)})$ is homologous to the original relative cycle $C^c_\x \otimes v$}. 
\end{remark}

\begin{example}
Assume $\G = \G_0(N)$. Let $\x = \sqrt{2} e_2$ so that $C_\x$ is the geodesic joining the cusps $\ell_0$ and $\ell_{\infty}$. Let $k=1$ so that $\Hk(V) = V$. Then 
\[
w_\x = u'-\tfrac1{{2}} \x + \tfrac16 u.
\]
At the other cusp with width $N$ we have 
\[
w'_\x = -\tfrac1{N}u-\tfrac1{{2}} \x  - \tfrac16Nu'.
\]
\end{example}

\section{Modular forms as vector valued differential forms and pairings with modular symbols and spectacle cycles}\label{L-Section}

We consider $f \in M_k(\G) \, (S_k(\G))$ a holomorphic modular (cusp) form for $\G \subseteq \SL_2(\Z)$ of weight $2k+2$. Then 
\[
\eta_f := f(z)dz \otimes (z\frake_1+\frake_2)^{2k} = f(z)dz \otimes (z^2u - \sqrt{2}ze_2 + u')^{k}
\]
defines a closed holomorphic $1$-form on $X$ with values in the local system associated to $\Sym^{2k}(\C^2) \simeq \Hk(V_\C)$. Note that
\[
n(z)u'^k = (z^2u - \sqrt{2}ze_2 + u')^{k}.
\]
It is well-known that this assignment induces the Eichler-Shimura isomorphisms 
\begin{align*}
M_{2k+2}(\G) \oplus \overline{S_{2k+2}(\G)} &\simeq H^{1}(X,\widetilde{\Sym^{2k}(\C^2)}), \\
 S_{2k+2}(\G) \oplus \overline{S_{2k+2}(\G)} &\simeq H_!^{1}(X,\widetilde{\Sym^{2k}(\C^2)}).
\end{align*}
Here $H_!^{1}(X,\widetilde{\Sym^{2k}(\C^2)})$ is the image of the compactly supported cohomology in the absolute cohomology. It is isomorphic to $H^{1}(\widetilde{X},\widetilde{\Sym^{2k}(\C^2)})$, the cohomology of the smooth compactification $\widetilde{X}$.

\subsection{Cohomological periods of cusp forms}

Let $\x = \kzxz{b}{2c}{-2a}{-b} \in V$ be of positive length and consider the associated cycle $C_{\x,[k]}$ with values in $\calH^k(V)$. Since 
\[
(n(z)u'^k, \pi_k(\x^k))= (n(z)u'^k, \x^k) = (n(z)u',\x)^k = (-2)^k (az^2+bz+c)^k,
\]
we immediately see that for $f \in S_{2k+2}(\G)$ the cohomological pairing with the special cycle $C_{\x,[k]}$ is given by
\begin{equation}\label{Shin-period}
\langle {C_{\x,[k]}}, [\eta_f] \rangle = (-2)^k \int_{C_\x} f(z) (az^2+bz+c)^k dz.
\end{equation}
This is of course the classical formula for the weighted period of $f$ over the cycle $C_\x$, see e.g. \cite{Shin}. A little calculation using Lemma~\ref{repformulas} also yields the well-known

\begin{proposition}\label{cuspLvalues}
Let $\x = e_2$ such that $C_\x$ is the imaginery axis. Let $f \in S_{2k+2}(\G)$ be a cusp form. Then for any weight vector $v_{2j} \in \Hk(V)$, $j=-k,\dots,k$, the cohomoligical pairing $\langle [\eta_f], [C_{\x} \otimes {v_{2j}}] \rangle$ is up to a constant equal to a critical value of the L-function of the cusp form $f$:
\[
\langle  [\eta_f],[C_{\x} \otimes v_{2j}] \rangle = c_{k,j} \Lambda(f,k+1-j).
\]
Here $\Lambda(f,s)=\int_0^{\infty} f(iy)y^s \tfrac{dy}{y}$ is the completed Hecke L-function associated to $f$ and
\[
c_{k,j} = \frac{i(-i)^{k-j} 2^{k} (k!)^2}{(k-j)!(k+j)!}.
\]
\end{proposition}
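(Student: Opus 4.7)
The plan is to reduce the pairing to an explicit one-variable integral by computing the pointwise pairing of the coefficient parts $n(z)u'^k$ and $v_{2j}$, and then to recognize that integral as $\Lambda(f,k+1-j)$. By \eqref{coeffpairing} we have
\[
\langle [\eta_f],[C_{\x}\otimes v_{2j}]\rangle \;=\; \int_{C_\x}\bigl(\eta_f,v_{2j}\bigr)\;=\;\int_{C_\x} f(z)\bigl(n(z)u'^k,v_{2j}\bigr)\,dz,
\]
so the proof reduces to (i) an algebraic identity for $(n(z)u'^k,v_{2j})$ and (ii) the change of variables $z=iy$.

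First I would carry out the algebraic identity. Writing $n(z)=\exp(zR)$, expand
\[
\bigl(n(z)u'^k,v_{2j}\bigr)=\sum_{m\ge 0}\frac{z^m}{m!}\bigl(R^m u'^k,v_{2j}\bigr).
\]
Since the bilinear form on $\Hk(V)$ is $\SL_2$-invariant, the infinitesimal adjointness $(Ru,w)+(u,Rw)=0$ holds, giving $(R^m u'^k,v_{2j})=(-1)^m(u'^k,R^m v_{2j})$. A weight argument (the invariant form pairs weight spaces of opposite weight, $u'^k$ has weight $-2k$ and $R^m v_{2j}$ has weight $2j+2m$) forces all terms with $m\ne k-j$ to vanish, so only a single monomial survives. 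Using Lemma~\ref{repformulas} to write $v_{2j}=\tfrac{(-2)^k(k!)^2}{(2k)!(k+j)!}R^{k+j}u'^k$ and the identity $(R^{2k}u'^k,u'^k)=(-1)^k(R^k u'^k,R^k u'^k)=(-1)^k(2k)!$ (the latter coming from the computation in the proof of Lemma~\ref{repformulas}), one obtains
\[
\bigl(n(z)u'^k,v_{2j}\bigr)\;=\;\frac{(-1)^{k-j}2^k(k!)^2}{(k-j)!(k+j)!}\,z^{k-j}.
\]

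Finally, I would parameterize $C_\x$ as $z=iy$, $y\in(0,\infty)$, so $dz=i\,dy$ and $z^{k-j}=i^{k-j}y^{k-j}$. Inserting the identity and using the Mellin presentation $\Lambda(f,s)=\int_0^\infty f(iy)y^{s-1}dy$ yields
\[
\langle [\eta_f],[C_{\x}\otimes v_{2j}]\rangle
\;=\;\frac{i\cdot i^{k-j}(-1)^{k-j} 2^k(k!)^2}{(k-j)!(k+j)!}\,\Lambda(f,k+1-j),
\]
and the prefactor equals $c_{k,j}=\tfrac{i(-i)^{k-j}2^k(k!)^2}{(k-j)!(k+j)!}$ since $(-1)^{k-j}i^{k-j}=(-i)^{k-j}$. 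The cuspidality of $f$ ensures convergence of the integral at both $y\to 0$ and $y\to\infty$, so no regularization is needed. The main (albeit mild) obstacle is the bookkeeping of signs in the adjointness/weight argument for $R$; once the identity for $(n(z)u'^k,v_{2j})$ is pinned down, the rest is a direct Mellin transform.
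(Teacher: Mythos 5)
Your proof is correct and follows the same route the paper has in mind (the paper dismisses this as ``a little calculation using Lemma~\ref{repformulas}''). The chain of steps --- \eqref{coeffpairing} to reduce to a line integral, the weight/adjointness argument showing only the $z^{k-j}$ monomial survives in $(n(z)u'^k,v_{2j})$, the value $(R^{2k}u'^k,u'^k)=(-1)^k(2k)!$ from Lemma~\ref{repformulas}, and the Mellin substitution $z=iy$ --- is exactly what the authors intend, and your bookkeeping of signs (including $(-1)^{k-j}i^{k-j}=(-i)^{k-j}$ and $\partial(C_\x\otimes v)=c_\x\otimes v-c'_\x\otimes v$ giving the orientation from $0$ to $i\infty$) reproduces $c_{k,j}$ correctly.
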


\subsection{Cohomological periods of arbitrary modular forms}

We would like to pair also an arbitrary modular form with a special cycle with coefficients and extend \eqref{Shin-period} and Lemma~\ref{cuspLvalues}. For this it is natural to consider our spectacle cycles $C^c_{\x} \otimes v$ because they define absolute homology classes in $H_1(\overline{X},\widetilde{\calH_k(V)})$. However, for a modular form $f \in M_{2k+2}(\G)$, the differential form $\eta_f$ does not extend to a form on the Borel-Serre compactification $\overline{X}$, and hence defines (only) a class in $H^1(X, \widetilde{\calH_k(V)})$. In particular, the integral of $\eta_f$ over the spectacle cycle does not converge. Therefore the cohomological pairing $\langle [\eta_f], [C^c_{\x} \otimes v] \rangle$ only makes sense using the isomorphisms of the (co)homology groups of $X$ and $\overline{X}$. Hence to obtain an integral formula for $\langle  [\eta_f], [C^c_{\x} \otimes v] \rangle$ one has two principal approaches. On one hand one can modify the form $\eta_f$ to extend to $\overline{X}$ (hence defining a class in $H^1(\overline{X}, \widetilde{\calH_k(V)})$) and then integrate this modified form over the cycle $[C^c_{\x} \otimes v]$ in $\overline{X}$. This is the approach in e.g. \cite{Harder,Kaiser} or in somewhat different context of \cite{Stevens}, where they carry this out for Eisenstein series. We proceed differently by modifying the spectacle cycles to have support on $X$ (thus defining classes in $H_1(X, \widetilde{\calH_k(V)})$). 

It suffices to consider the case when the infinite geodesic $D_\x$ is a vertical line in the upper half plane. Close to the cusp at $\infty$, we truncate $C_\x$ at some (sufficiently large) height $T_1$ and do the same at the other cusp at $T_2$ to obtain the truncated geodesic $C^{T_1,T_2}_\x$. Furthermore, we push in the cap $X_{\infty,c_\x} \otimes {w}$ to this (finite) height $T_1$ to obtain 
$X^{T_1}_{\infty,c_\x} \otimes {w}$. We do the same at the other cusp to obtain $X^{T_2}_{\ell'_\x,c'_\x} \otimes {w'}$. This gives the ``pushed in'' cycle 
\[
C^{c,T_1,T_2}_{\x} \otimes v =C^{T_1,T_2}_{\x} \otimes v - X^{T_1}_{\ell_\x,c_\x} \otimes w + X^{T_2}_{\ell'_\x,c'_\x} \otimes w'.
\]

\begin{lemma}\label{homology1}
The cycles $C^c_{\x} \otimes v$ and $C^{c,T_1,T_2}_{\x} \otimes v$ are homologous in the absolute homology of $\overline{X}$. 
\end{lemma}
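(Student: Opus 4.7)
The plan is to produce an explicit $2$-chain on $\overline{X}$ with coefficients in $\widetilde{\calH_k(V)}$ whose boundary is the difference of the two cycles. Since the two cycles agree on the compact part of $X$ away from the cusps, the work reduces to a local construction at each of the two cusps $\ell_\x$ and $\ell'_\x$ independently, and I will describe it in detail only at $\ell_\x$; the cusp $\ell'_\x$ is handled by the same argument.

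After applying $\sigma_{\ell_\x}$ we may assume $\ell_\x=\ell_\infty$ and that, in the upper half plane, the tail of $C_\x$ near this cusp is the vertical ray through $c_\x$ from height $T_1$ up to the boundary point in $X_{\ell_\infty}$. The collar
\[
\calC_{T_1}=\{z\in X: \im(z)\geq T_1\}\cup X_{\ell_\infty}\subset \overline{X}
\]
is a cylinder $S^1\times [T_1,\infty]$, on which the flat bundle $\widetilde{\calH_k(V)}$ restricts to a flat bundle with monodromy given by $\gamma$ around the $S^1$-factor. In this collar I would build the $2$-chain $S_{\ell_\x}$ by sweeping the $1$-chain $X_{\ell_\x,c_\x}\otimes w$ radially inward to its pushed-in version $X^{T_1}_{\ell_\x,c_\x}\otimes w$; explicitly, triangulate the product $X_{\ell_\x,c_\x}\times [T_1,\infty]$ by prisms and extend the (possibly multi-valued) parallel section defining $w$ on the top cap to a parallel section on the whole $2$-chain by parallel transport along the radial fibers. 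This is well-defined because each radial fiber is simply connected and the bundle is flat.

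Now I would compute $\partial S_{\ell_\x}$ piece by piece. The horizontal boundary components are precisely
\[
X_{\ell_\x,c_\x}\otimes w \;-\; X^{T_1}_{\ell_\x,c_\x}\otimes w,
\]
which are (up to the sign already built into the two spectacle cycles) exactly the top and bottom caps that appear in the difference $C^c_\x\otimes v-C^{c,T_1,T_2}_\x\otimes v$. The vertical boundary components consist of the radial segment through $c_\x$, traversed once at the ``start'' of the cap with coefficient $w$ and once at the ``end'' with coefficient $\gamma^{-1}w$ (the latter being the result of parallel transport once around $X_{\ell_\x}$, as in Lemma~\ref{1-boundary}), with opposite orientations. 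Their sum is therefore the radial segment from height $T_1$ to the boundary, with coefficient $(\gamma^{-1}-\mathrm{Id})w=v$ by the defining equation \eqref{v_x-eq} for $w=w_\x$. But this is exactly the missing piece $C_\x^{[T_1,\infty)}\otimes v$ of the geodesic. An analogous $2$-chain $S_{\ell'_\x}$ at the other cusp accounts for the remaining pieces $C_\x^{(-\infty,T_2]}\otimes v$ and the difference of caps there.

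Combining, with appropriate signs, the $2$-chain $S_{\ell_\x}+S_{\ell'_\x}\in C_2(\overline{X},\widetilde{\calH_k(V)})$ satisfies
\[
\partial(S_{\ell_\x}+S_{\ell'_\x})=(C^c_\x\otimes v)-(C^{c,T_1,T_2}_\x\otimes v),
\]
which proves the two cycles are homologous in $\overline{X}$. The conceptual content is simple, but the main obstacle is careful bookkeeping of orientations and signs, together with verifying that the radial extension of the parallel section on the cap matches the coefficient $v$ that sits on the tail of the geodesic; this matching is guaranteed by precisely the equation $(\gamma^{-1}-\mathrm{Id})w=v$ that characterizes $w_\x$, so the construction is, in a sense, forced on us by the definition of the spectacle cycle.
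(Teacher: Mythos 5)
Your proof is correct and follows essentially the same strategy as the paper's: both construct a $2$-chain supported on the annulus between the Borel-Serre boundary circle and the pushed-in horocircle, equipped with the multi-valued parallel section obtained by sweeping $s_w$ inward, and both identify the jump along the radial segment $\gamma_{T_1}$ as contributing the coefficient $(\gamma^{-1}-\mathrm{Id})w=v$, which accounts for the truncated piece of the geodesic.
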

\begin{proof}
We assume that the first vertex of the triangulation $u_0$ is the point $c_\x$ at infinity of the upper half plane, and we write $\gamma_T$ for the geodesic ray from $iT$ to $u_0$. The quotient $X^{T_1}_{\infty}$ of the horocircle at height $T_1$ and the Borel boundary circle $X_{\infty}$ bound an annulus
$A^{T_1}$. We orient the annulus so that $\partial A^{T_1} = X_{\infty} - X^{T_1}_{\infty}$.
We break this annulus into three cellular `rectangular' regions by vertical geodesics joining $u_i, i =0,2,1$ to corresponding points $u_i^{T_1}$ on the horocircle at height $T_1$. The first of these geodesic segments will be $\gamma_{T_1}$. We will let $u_0^{T_1} = c_\x^{T_1}$, the
intersection of $C_\x$ with the quotient of the horocircle at height $T_1$. We then
extend the three parallel sections on the simplices $(u_0,u_1),(u_1,u_2),(u_2,u_0)$ by parallel
translation within the region that has the corresponding simplex as a part of its boundary.
We again denote the resulting multivalued section on $A^{T_1}$ by $s_w$. 
The section $s_w$ has a parallel jump along $\gamma_{T_1}$ with value $s_v$.  After
refining the above cellular decomposition  of $A^{T_1}$ to a triangulation we obtain a simplicial 
two-chain with coefficients to be  denoted
$A^{T_1}_{\infty,\gamma_{T_1}}\otimes s_w$ with coefficients such that
\[
 \partial A^{T_1}_{\infty,\gamma_{T_1}} \otimes s_w = \gamma_{T_1} \otimes s_v + X_{\infty, c_\x} \otimes s_w- X^{T_1}_{\infty,c_\x^{T_1}} \otimes s_w.
\]
We repeat the construction at the other end of the infinite geodesic to
obtain the required primitive, roughly the union of the two annuli with coefficients
$A^{T_1}_{\infty, \gamma_{T_1}}\otimes s_w$ and $A^{T_2}_{0,\gamma_{T_2}} \otimes s_{w'}$ where $w'$ is the solution
of the jump equation with jump $v$  at the cusp $0$. 
\end{proof}

As a consequence we obtain a critical lemma which will enable us to painlessly evaluate a limit
in the following Theorem \ref{Lvalue}.

\begin{lemma}
Let $f \in M_{2k+2}(\G)$. Then the cohomological pairing $\langle  [\eta_f], [C^c_{\x} \otimes v] \rangle$ is given by 
\[
\langle [\eta_f], [C^c_{\x} \otimes v] \rangle =\langle [\eta_f], [C^{c,T_1,T_2}_{\x} \otimes v]  \rangle = \int_{C^{c,T_1,T_2}_{\x} \otimes v}\eta_f
\]
for any sufficiently large $T_1,T_2>0$. 
\end{lemma}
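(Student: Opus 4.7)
The plan is to reduce the two equalities to (i) Lemma~\ref{homology1}, and (ii) the pairing formula \eqref{coeffpairing} for decomposable cycles with coefficients.

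First I would verify that, for $T_1, T_2$ sufficiently large---namely, large enough that the horocircles at those heights lie in the cuspidal neighborhoods where the parallel sections $w, w'$ of Lemma~\ref{v_x-lemma} are well-defined and the annular homotopies of Lemma~\ref{homology1} fit inside $\overline{X}$---the chain $C^{c,T_1,T_2}_{\x} \otimes v$ is actually a closed $1$-cycle with coefficients in $\widetilde{\calH_k(V)}$, supported entirely in the interior $X$. This is a routine verification that the three decomposable pieces cancel at the shared endpoints where the truncated geodesic meets the pushed-in triangulated caps, by the same coefficient-matching argument used to define the spectacle cycle itself.

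The first equality is then immediate from Lemma~\ref{homology1}, which states precisely that $[C^c_{\x}\otimes v] = [C^{c,T_1,T_2}_{\x}\otimes v]$ in $H_1(\overline{X},\widetilde{\calH_k(V)})$. Since $X \hookrightarrow \overline{X}$ is a homotopy equivalence, this identity is compatible with the Kronecker pairing against $[\eta_f] \in H^1(X,\widetilde{\calH_k(V)}) \simeq H^1(\overline{X},\widetilde{\calH_k(V)})$. For the second equality, both $\eta_f$ and the cycle $C^{c,T_1,T_2}_{\x}\otimes v$ live on the open manifold $X$, with $\eta_f$ smooth and the cycle compact; applying \eqref{coeffpairing} to each decomposable summand of $C^{c,T_1,T_2}_{\x}\otimes v$ and summing with the appropriate signs expresses the Kronecker pairing as the de Rham integral $\int_{C^{c,T_1,T_2}_{\x}\otimes v}\eta_f$.

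The main obstacle is purely bookkeeping: one must check that the Kronecker pairing is defined consistently across the isomorphism $H^1(X, \widetilde{\calH_k(V)}) \simeq H^1(\overline{X}, \widetilde{\calH_k(V)})$ induced by the homotopy equivalence, and that the integration formula \eqref{coeffpairing} extends additively to decomposable chains whose summands carry distinct coefficient sections ($v$, $w$, and $w'$ respectively). Once these points are settled, no analytic estimates are needed---each piece of the chain is compact and $\eta_f$ is smooth on $X$---so the lemma follows by assembling the two ingredients.
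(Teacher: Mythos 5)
Your proof is correct and follows essentially the same route as the paper: the paper's proof is a one-line appeal to Lemma~\ref{homology1} (the two cycles are homologous in $\overline{X}$) plus the observation that the pairing is cohomological, which is exactly what you do, fleshed out with the bookkeeping about why the pushed-in chain is a closed cycle in $X$ and why \eqref{coeffpairing} gives the integral formula on it.
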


\begin{proof}
The cycles $ C^c_{\x} \otimes v$ and its pushed-in incarnation $C^{c,T_1,T_2}_{\x} \otimes v$ are homologous and since the pairing is cohomological it does not depend on $T_1,T_2$.
\end{proof}

We now see that $\langle [\eta_f], [C^c_{\x} \otimes v] \rangle$ gives a cohomological interpretation of the critical values of the L-function of $f$. The point here is to deal with the case of noncuspidal $f$, especially Eisenstein series, extending Proposition~\ref{cuspLvalues}.

\begin{theorem}\label{Lvalue}
Let $f = \sum_{n=0}^{\infty} a_n e^{2\pi i n z/N} \in M_{2k+2}(\G(N))$, not necessarily a cusp form. Let $\x = e_2$ as before and let $v_{2j}$, $j = -k+1,\dots,k-1$ be a rational weight vector.
Then 
\[
\langle  [\eta_f],[ C^c_{\x} \otimes v_{2j}] \rangle = c_{k,j} \Lambda(f,k+1-j).
\]
Here $\Lambda(f,s)$ is the completed Hecke L-function associated to $f$ which (for $Re(s) \gg 0$) is given by $\int_0^{\infty} (f(iy)-a_0) y^s \tfrac{dy}{y}$ and $c_{k,j}$ is the constant defined in Proposition~\ref{cuspLvalues}.
\end{theorem}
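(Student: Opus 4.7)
The strategy is to apply the preceding lemma to express the pairing as $\int_{C^{c,T_1,T_2}_\x \otimes v_{2j}}\eta_f$ for any sufficiently large $T_1,T_2$, decompose this integral into three pieces, and match the result against the standard Mellin integral representation of $\Lambda(f,s)$. The three pieces are: (i) the integral over the truncated imaginary axis carrying the section $v_{2j}$; (ii) the cap integral over $X^{T_1}_{\ell_\x,c_\x}\otimes w$ at the cusp $\infty$, where $w$ is the vector produced by Lemma~\ref{v_x-lemma} and made explicit in Proposition~\ref{Propv_x}; and (iii) the analogous cap at the cusp $0$, understood via conjugation by $\sigma_0$.

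For the geodesic piece, since $n(iy)=\exp(iyR)$ commutes with $R$ and $v_{2j}=\tfrac{c_k}{(k+j)!}R^{k+j}u'^k$ from Section~\ref{reptheory}, the pairing $(n(iy)u'^k, v_{2j})$ picks out only the weight $-2j$ component of $\exp(iyR)u'^k$. Lemma~\ref{repformulas} then yields an identity of the form $(n(iy)u'^k, v_{2j}) = \tfrac{(-1)^j(-2)^k(k!)^2}{(k+j)!(k-j)!}(iy)^{k-j}$, and combined with $dz=i\,dy$ the truncated geodesic integral becomes $c_{k,j}$ times a truncated Mellin transform of $f$ at $s=k+1-j$. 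For the cap at $\infty$, I parametrize the horocircle at height $T_1$ by $x\in[0,M_\infty]$, note that the parallel section at $z=x+iT_1$ is $n(x)w$ (with a measure-zero jump at $c_\x$), and use the isometry of $(\,,\,)$ to conclude that $(n(z)u'^k, n(x)w)=(n(iT_1)u'^k, w)$ is independent of $x$. Hence the cap integral equals $M_\infty a_0(n(iT_1)u'^k, w)$, where $a_0$ is the constant Fourier coefficient of $f$ at $\infty$; substituting the explicit formula for $w$ from Proposition~\ref{Propv_x} yields $a_0$ times a polynomial in $iT_1$ whose coefficients involve the Bernoulli numbers $B_{j'+1-j}(0)$. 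The cap at $0$ is treated identically after applying $\sigma_0$ and produces $b_0$ times a polynomial in $i/T_2$, where $b_0$ is the constant Fourier coefficient of $f$ at the cusp $0$.

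To conclude, I match the sum of these three pieces with $c_{k,j}\Lambda(f,k+1-j)$. Splitting the geodesic integral near $y=T_1$ as $\int(f(iy)-a_0)y^{k+1-j}\tfrac{dy}{y} + a_0\int y^{k+1-j}\tfrac{dy}{y}$, and performing the symmetric split near $y=1/T_2$ after applying the modular transformation of $f$ under $\sigma_0$, I verify that the polynomial divergences at $T_1\to\infty$ and $1/T_2\to 0$ produced by the $a_0$ and $b_0$ terms are cancelled precisely by the Bernoulli-polynomial cap contributions; this cancellation is forced by the normalization $\int_{X_{\ell,c}\otimes w}\omega_{\ell,k}=0$ imposed in Lemma~\ref{v_x-lemma}. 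What remains is exactly $c_{k,j}\Lambda(f, k+1-j)$, understood via analytic continuation. The restriction $j\in\{-k+1,\dots,k-1\}$ is essential, since it guarantees $v_{2j}\perp u^k$ and $v_{2j}\perp u'^k$, i.e.\ that the spectacle cycle is well-defined at both cusps (Proposition~\ref{exactness}). The main technical obstacle is the bookkeeping of the Bernoulli-polynomial cancellation at the cusp $0$, where the transformation of $f$ under $\sigma_0$ and the explicit form of $w'$ must be made to line up term-by-term with the analytic continuation of $\Lambda(f,s)$.
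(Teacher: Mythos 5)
Your overall strategy is the same as the paper's: pass to the pushed-in cycle, use the $T_1,T_2$-independence of the pairing, split into the truncated geodesic and the two caps, and match with the Mellin integral. Your computation of the geodesic integrand $(n(iy)u'^k, v_{2j}) = \tfrac{(-1)^j(-2)^k(k!)^2}{(k+j)!(k-j)!}(iy)^{k-j}$ is correct and matches $c_{k,j}$.

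However, there is a genuine gap in your cap computation. You assert that the parallel section on the horocircle at height $T_1$ is $n(x)w$ at $z=x+iT_1$, and by isometry deduce that the integrand $(n(z)u'^k, n(x)w) = (n(iT_1)u'^k, w)$ is independent of $x$. This is not consistent with the paper's conventions. Lifted to a fundamental domain on the universal cover, a parallel section of a flat bundle is \emph{constant}: the section is $w$, not $n(x)w$, and the jump at $c_\x$ records the monodromy. The correct integrand is therefore $f(x+iT_1)\,(n(x+iT_1)u'^k,\,w)\,dx$, which is a genuine polynomial in $x$. This distinction is exactly what makes the normalization of Lemma~\ref{v_x-lemma} do its work: $\int_{X_{\ell,c}\otimes w}\omega_{\ell,k}=0$ is precisely $\int_0^{N}(n(x)u'^k,w)\,dx=0$, which says the cap contribution has vanishing constant term as a polynomial in $T_1$. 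With your formula the cap equals $Na_0(n(iT_1)u'^k,w)$, whose constant term is $Na_0(u'^k,w)$; by Proposition~\ref{Propv_x} the $u^k$-component of $w$ is a nonzero multiple of $B_{k+1-j}$, which does not vanish in general, so the claimed cancellation against the divergence in the geodesic piece would fail. (It is also worth noting that, having established that the cap polynomials vanish at $T_1=T_2=0$ and that the total integral is $T_1,T_2$-independent and finite, the paper concludes the cap polynomial must be the exact monomial cancelling the geodesic divergence without any explicit Bernoulli-polynomial bookkeeping; the term-by-term verification you propose is possible with the corrected integrand, but is considerably more work.)
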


\begin{proof}
We let $g(z) = f|_{2k+2} \kzxz{}{-1}{1}{}(z) = z^{-(2k+2)}f(-1/z) = \sum_{n=0}^{\infty}b_n e^{2\pi i nz/N}$. 
We consider the individual components of $\int_{C^{c,T_1,T_2}_{\x} \otimes v_{2j}}\eta_f$. We easily see
\begin{align}
c_{k,j}^{-1}  \int_{C^{T_1,T_2}_{\x} \otimes v_{2j}} \eta_f &=   \int_1^{T_1} f(iy) y^{k+1-j} \tfrac{dy}{y} +
(-1)^{k+1} \int_1^{T_2} g(iy) y^{k+1+j}  \tfrac{dy}{y} \\
 &= \int_1^{T_1} (f(iy)-a_0) y^{k+1-j} \tfrac{dy}{y} + (-1)^{k+1} \int_1^{T_2} (g(iy)-b_0)  y^{k+1+j}  \tfrac{dy}{y} \label{L1}\\
&\quad - \frac{a_0}{k+1-j} - (-1)^{k+1} \frac{b_0}{k+1+j} \label{L2} \\
&\quad + \frac{a_0}{k+1-j}T_1^{k+1} + \frac{b_0}{k+1+j}T_2^{k+1}. \label{L3}
\end{align}
We also have
\begin{align}
c_{k,j}^{-1} \int_{- X^{T_1}_{\ell_\x,c_\x} \otimes {w_{2j}}} \eta_f & = -c_{k,j}^{-1}\int_0^N f(x+iT_1) (n(x+iT_1) u'^k, w_{2j}) dx \\ &= - c_{k,j}^{-1} a_0 \int_0^N (n(x+iT_1) u'^k, w_{2j}) dx + O(e^{-C_1T_1}) \label{L4}
\end{align}
for some constant $C_1$. Similarly,
\begin{equation}
c_{k,j}^{-1}\int_{X^{T_2}_{\ell'_\x,c'_\x} \otimes {w'_{2j}}} \eta_f = c_{k,j}^{-1} b_0 \int_0^N (n(x+iT_2) u'^k,  \kzxz{}{-1}{1}{}w'_{2j}) dx + O(e^{-C_2T_2}). \label{L5}
\end{equation}
Since $\int_{C^{c,T_1,T_2}_{\x} \otimes v_{2j}} \eta_f$ is finite and independent of $T_1,T_2$,  the limit as $T_1,T_2 \to \infty$ must exist. The limit for the terms \eqref{L1} and \eqref{L2} exists and gives the value $s=k+1-j$ of the standard expression in the proof of the analytic continuation of $\La(f,k+1-j)$. The leading terms in \eqref{L4} and \eqref{L5} are polynomials of degree $k+1-j$ in $T_1$ and of degree $k+1+j$ in $T_2$ respectively. By the characterization of $w_{2j}$ and $w'_{2j}$ given in Lemma~\ref{v_x-lemma}. and setting $T_1=T_2=0$ we see that these polynomials have constant terms zero. Since the limits $T_1,T_2 \to \infty$ exist we see that the leading terms of \eqref{L4} and \eqref{L5} are in fact monomials und must cancel \eqref{L3}. (One can see this also by the explicit characterization of $(n(x+iT_1) u'^k, w_{2j})$ in terms of Bernoulli polynomials given in the proof of Proposition~\ref{Propv_x}). In summary, in the limit the terms \eqref{L3}, \eqref{L4}, \eqref{L5} cancel and do not contribute. This proves the theorem.
\end{proof}

\section{Schwartz forms}\label{Schwartz-Section}

In this section all vector spaces and associated groups are defined over $\R$. 

\subsection{Schwartz forms for $D$}

The following is a very special case of the construction of special Schwartz forms with coefficients given in \cite{FMcoeff}, \S5. 

We let $\calS(V)$ be the space of Schwartz functions on $V$. We let $G'= \Mp_2(\R)$, the metaplectic cover of $\SL_2(\R)$ and let $K'$ be the pullback of $\SO(2)$ under the covering map. Note that $K'$ admits a character $\chi_{1/2}$ whose square descends to the character which induces the isomorphism $\SO(2) \simeq \Uni(1)$. Then $G' \times G$ acts on $\calS(V)$ via the Weil representation $\omega$ for the additive character $t \mapsto e^{2\pi i t}$. Note that $G$ acts naturally on $\calS(V)$ by $\omega(g)\varphi(x) = \varphi(g^{-1}x)$.

We first consider the standard Gaussian $\varphi_0 = \varphi_0^V$ on $V$,
\begin{equation*}
\varphi^V_0(\x,z) = e^{-\pi (\x,\x)_z},
\end{equation*}
where $(\x,\x)_z$ is the majorant associated to $z \in D$. At the base point $z_0$ we also write $(\x,\x)_0$ for $(\x,\x)_{z_0}$, and for $\varphi^V_0$ we also just drop the argument $z_0$. Note $\varphi^V_0(\x,z) = \varphi_0(g_z^{-1}\x)$. Here $g_z = n(x)a(\sqrt{y})=\kzxz{1}{x}{}{1} \kzxz{\sqrt{y}}{}{}{\sqrt{y}^{-1}}$ is the standard element which moves the basepoint $i$ in the upper half plane model to $z$. 

We denote the coordinate functions for a vector $\x$ with respect to the basis $e_1,e_2,e_3$ by $x_i$. 
We define the Howe operators $\calD_{j}$ on $\calS(V)$ by
\begin{equation*}
\calD_{j} =   x_{j} - \frac1{2\pi}\frac{\partial}{\partial x_{j}},
\end{equation*}
We define a Schwartz form $\varphi_{1,k}= \varphi_{1,k}^V$ taking values in $\calA^1(D,\widetilde{\Sym^{k}(V)})$, the differential $1$-forms with values in the local system for $\Sym^{k}(V)$. More precisely,
\begin{equation*}
\varphi^V_{1,k} \in
 [\mathcal{S}(V) \otimes \calA^1(D) \otimes  \Sym^{k}(V)]^{G} \simeq
 [\mathcal{S}(V) \otimes \mathfrak{p^{\ast}} \otimes \Sym^{k}(V)]^{K}.
 \end{equation*}
Here $G$ and $K$ act diagonally on all three factors, and the isomorphism is given by evaluation at the basepoint $z_0$ of $D$. Here $\mathfrak{g} =\mathfrak{k}  \oplus \mathfrak{p}$ is the Cartan decomposition of $\mathfrak{g}$, the Lie Algebra of $G$. We identify $\mathfrak{g}$ with $\wwedge{2}{} V$. Then $e_1 \wedge e_3$ and $e_2 \wedge e_3$ is a basis of $\mathfrak{p}$. We write $\omega_1, \omega_2$ for the corresponding dual basis of $\mathfrak{p}^{\ast}$.

At the basepoint, $ \varphi^V_{1,k}$ is explicitly given by 
\begin{equation*}
\varphi^V_{1,k}= \frac1{2^{k+1/2}} \sum_{\alpha=1}^2 \sum_{\beta_1,...,\beta_{k}=1}^{2}(\calD_{\alpha} \circ \calD_{\beta_1} \circ \cdots \circ \calD_{\beta_{k}})(\varphi_{0}) \otimes \omega_{\alpha} \otimes \left(e_{\beta_1} \cdots e_{\beta_{k}} \right).
\end{equation*}
The form $\varphi_{1,k}^V$ is closed (\cite{FMcoeff}, Theorem~5.7):
\begin{equation}
d \varphi_{1,k}^V(\x) =0
\end{equation} 
for all $\x \in V$. Furthermore, $\varphi^V_{1,k}$ has weight $k+\tfrac{3}2$ under the Weil representation of $K'$ (\cite{FMcoeff}, Theorem~5.6). That is,
\begin{equation}
\omega(k') \varphi^V_{1,k} = \chi^{2k+3} _{1/2}(k')\varphi^V_{1,k}.
\end{equation}
We then project onto $\calH_{k}(V)$ in the coefficients to obtain $\varphi^V_{1,[k]}$. That is, 
 \begin{equation*}
\varphi^V_{1,[k]}= \frac1{2^{k+1/2}} \sum_{\alpha=1}^2 \sum_{\beta_1,...,\beta_{k}=1}^{2}(\calD_{\alpha} \circ \calD_{\beta_1} \circ \cdots \circ \calD_{\beta_{k}})(\varphi_{0}) \otimes \omega_{\alpha} \otimes  \pi_{k} \left(e_{\beta_1} \cdots e_{\beta_{k}} \right).
\end{equation*} 
Thus
\begin{equation*}
\varphi^V_{1,[k]} \in
[\mathcal{S}(V) \otimes \mathfrak{p^{\ast}} \otimes \calH_{k}(V)]^{K}.
\end{equation*}
Of course, $\varphi^V_{1,[k]}$ is also closed and has weight $k+3/2$ as well. 
Note 
\begin{equation*}
\calD_{\alpha}^{j} (e^{-\pi x_{\alpha}^2}) = (2\pi)^{-j/2} H_j\left(\sqrt{2\pi}x_{\alpha}\right) e^{-\pi x_{\alpha}^2},
\end{equation*}
where ${H}_j(t) =(-1)^j e^{t^2} \tfrac{d^j}{dt^j}e^{-t^2}$ is the $j$-th Hermite polynomial. Hence the Schwartz function component of $\varphi_{1,k}$ consists of products of Hermite polynomials times the Gaussian $\varphi_0$. We write $
\widetilde{H}_j(x_{\alpha}) =  (2\pi)^{-j/2} H_j\left(\sqrt{2\pi}x_{\alpha}\right)$. Then explicitly we have
\begin{align*}
\varphi^V_{1,k}(\x,z) & =  \frac1{2^{k+1/2}} \sum_{j=1}^{k+1} \widetilde{H}_j((g_z^{-1}\x)_1) \widetilde{H}_{k+1-j}((g_z^{-1}\x)_2) \varphi_0(g_z^{-1}\x) \frac{dy}{y} \otimes g_z(e_1^{j-1}e_2^{k+1-j}) \\
& -
\frac1{2^{k+1/2}} \sum_{j=0}^{k} \widetilde{H}_j((g_z^{-1}\x)_1) \widetilde{H}_{k+1-j}((g_z^{-1}\x)_2) \varphi_0(g_z^{-1}\x) \frac{dx}{y} \otimes g_z(e_1^{j}e_2^{k-j}). 
\end{align*}

\subsection{Schwartz forms at the boundary}

In this subsection we discuss certain Schwartz forms at the boundary components of the Borel-Serre enlargement of $D$. We consider the boundary component $D_{\ell} = N_{\ell} \simeq \R$ associated to the cusp $\ell$. The (rational) isotropic vectors $u_{\ell}$ and $u'_{\ell} =u_{\ell'}=\sigma_\ell u'$ define a positive definite subspace $W_{\ell} = \ell^{\perp} \cap \ell'^{\perp}$. This gives rise to a Witt splitting of $V$:
\[
V = \ell \oplus W_{\ell} \oplus \ell'.
\]
For $\ell_{\infty}$ we have $W = \R e_2$, and we use this to identify $W$ with $\R$. Hence $(\w,\w)=\w^2$. We define a Schwartz form 
\[
\varphi^{W_{\ell}}_{j} \in  [\mathcal{S}(W_{\ell}) \otimes \Sym^j(W_{\ell})],
\]
on $W_{\ell}$ which for $\ell_{\infty}$ is given by 
\[
\varphi_{j}^W(\w) =  -\frac1{2^{j+1/2}}  \widetilde{H}_{j+1}(\w) e^{-\pi \w^2} \otimes e_2^{j+1},
\]
and similarly at the other cusps. It is easy to see (\cite{FMcoeff}, Theorem~5.6) that $\varphi_j^{W_{\ell}}$ has weight $j+3/2$.

The Schwartz function $\varphi^{W_{\ell}}_{k+1}$ gives rise to a differential $1$-form $\varphi^{N_{\ell}}_{1,[k]}$ on $W_\ell$ on the boundary component $D_{\ell}$ of $D$ with values in the $\calH_{k}(V)$. At $\infty$ it is given by 
\[
\varphi_{1,[k]}^N(\w,x) =  -\frac1{2^{k+1/2}}  \widetilde{H}_{k+1}(\w) e^{-\pi \w^2} \otimes {dx} \otimes n(x)\pi_k(e_2^{k}).
\]
Here $\w \in W_\R$ and $x \in D_{\infty} \simeq \R$. We therefore have
\[
\varphi^{N_{\ell}}_{1,[k]} \in  [\mathcal{S}(W_{\ell}) \otimes \calA^1(D_{\ell}) \otimes \calH_{k}(V)]^{N_{\ell}}
\]
Here $N_{\ell}$ acts diagonally, where the action on the first factor is trivial. This construction is a very special case of our general construction given in \cite{FMres}, which assigns to a Schwartz form for the smaller orthogonal space $W$ a form on a boundary component of the Borel-Serre enlargement of the symmetric space associated to the larger space $V$. In the present situation something special occurs. For $k>0$, the form $\varphi^{N_{\ell}}_{1,[k]}$ is an {\it exact} differential form on $D_{\ell}$. At $\infty$, a primitive is given by 
\[
\phi_{[k]}^N(\w,x) :=  -\frac1{2^{3k/2+1/2}k}  \widetilde{H}_{k+1}(\w) e^{-\pi \w^2} \otimes  1 \otimes n(x)v_{-2} \in  [\mathcal{S}(W) \otimes \calA^0(D_{\infty}) \otimes \calH_{k}(V)]^{N}.
\]
Here $v_{-2}$ is the weight $-2$ vector in the weight decomposition of $\calH_{k}(V)$ such that $\tfrac1{k}R v_{-2} = v_0=2^{k/2}\pi_k(e_2^{k})$. We easily see
\begin{proposition}\label{primitive}
Assume $k>0$. The form $\varphi_{1,[k]}^{N_{\ell}}(\x,x)$ is an exact differential form on $D_{\ell}$ with a primitive $\phi_{[k]}^{N_{\ell}}$. Thus  
\[
d \phi_{[k]}^{N_{\ell}} = \varphi_{1,[k]}^{N_{\ell}}.
\]
\end{proposition}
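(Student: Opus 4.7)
The plan is to reduce the claim to a direct computation at the cusp $\infty$, since every other boundary component $D_\ell$ is obtained from $D_\infty$ by the action of $\sigma_\ell$, under which both $\varphi_{1,[k]}^{N_\ell}$ and $\phi_{[k]}^{N_\ell}$ are defined by transport of the forms at $\infty$. So it suffices to verify $d\phi_{[k]}^{N} = \varphi_{1,[k]}^{N}$ in the model at $\ell_\infty$.

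On $D_\infty \simeq \R$ with coordinate $x$, the exterior derivative acts only via $\partial/\partial x$. The Schwartz-function factor $\widetilde{H}_{k+1}(\w) e^{-\pi \w^2}$ depends only on the orthogonal variable $\w \in W$ and is therefore inert under $d$. Similarly the prefactor constants are inert. Hence the whole computation reduces to differentiating the coefficient $n(x) v_{-2} \in \calH_k(V)$ with respect to $x$.

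Now $n(x) = \exp(xR)$, where $R$ is precisely the raising operator that appears in the weight decomposition of $\calH_k(V)$ discussed in Section~\ref{reptheory}. Thus
\[
\frac{d}{dx}\bigl[n(x) v_{-2}\bigr] \;=\; n(x)\, R\,v_{-2}.
\]
By the normalization $v_{2i+2} = \tfrac{1}{i+k+1} R v_{2i}$ (with $i = -1$), we get $R v_{-2} = k\, v_0$, and by the choice $v_0 = 2^{k/2} \pi_k(e_2^k)$ recorded in the statement, this is $k \cdot 2^{k/2}\pi_k(e_2^k)$. Note that the hypothesis $k>0$ is used precisely here, to guarantee that $v_{-2}$ exists and that the coefficient $k$ is nonzero.

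Putting these pieces together,
\[
d\phi_{[k]}^{N}(\w,x) \;=\; -\frac{1}{2^{3k/2+1/2}\,k}\,\widetilde{H}_{k+1}(\w)\,e^{-\pi \w^2}\otimes dx \otimes n(x)\bigl(k\cdot 2^{k/2}\pi_k(e_2^k)\bigr),
\]
and after cancelling $k$ and combining the powers of $2$ one obtains exactly the factor $-\tfrac{1}{2^{k+1/2}}$ in front of $\widetilde{H}_{k+1}(\w) e^{-\pi \w^2}\otimes dx \otimes n(x)\pi_k(e_2^k)$, which is the defining expression for $\varphi_{1,[k]}^{N}$. There is essentially no obstacle beyond bookkeeping of normalization constants; the only conceptual point is identifying $\tfrac{d}{dx} n(x)$ with the raising operator $R$ acting on $\calH_k(V)$ and invoking the weight-vector normalization from Lemma~\ref{repformulas}.
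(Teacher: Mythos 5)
Your proof is correct and follows essentially the same approach as the paper: reduce to $\ell_\infty$, note that $N$ acts trivially on the Schwartz factor so that $d$ only sees the $x$-dependence of the coefficient $n(x)v_{-2}$, and then use $\frac{d}{dx}n(x) = n(x)R$ together with the normalization $Rv_{-2} = k\,v_0 = k\cdot 2^{k/2}\pi_k(e_2^k)$ to match the constant in $\varphi^{N}_{1,[k]}$. You spell out the constant bookkeeping which the paper leaves implicit (``$\tfrac1{k}2^{-k/2}v_{-2}$ is by definition a solution''), but the underlying argument is the same, and your identification of where $k>0$ is used (existence of $v_{-2}$ and nonvanishing of $k$) is apt.
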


\begin{proof}
It is enough to check this for $\ell_{\infty}$ at the base point of $D_{\infty}$. Since $N$ acts trivial on $\mathcal{S}(W)$, it suffices to solve the equation $Rv= \pi_k(e_2^k)$ for the infinitesimal generator $R$ of $N$. But $\tfrac1{k}2^{-k/2}v_{-2}$ is by definition a solution.
\end{proof}

\begin{remark}
The primitive is (of course) not unique. We could add any multiple of the highest weight vector $u^k$ to $v_{-2}$. 
\end{remark} 

\begin{remark}
For $k=0$ the form $\varphi_{1,[k]}^{N_{\ell}}$ is {\it not} exact. 
\end{remark}

\subsection{Schwartz forms for the hyperbolic line}\label{hyperbolicline}

We consider a real quadratic space $U$ of signature $(1,1)$. Hence $U = \ell \oplus \ell'$ for two isotropic lines $\ell = \R u$ and $\ell' = \R u$ with $(u,u')=-1$. We obtain an orthogonal basis $\eps_1 = (u-u')/\sqrt{2}$, $\eps_2=(u+u')/\sqrt{2}$. We can realize the symmetric space $D_{U}$ associated to $U$ as 
\[
D_{U}=\{\x \in U; \; (\x,\x) = -1, (\x, \eps_2)<0\}
\]
with base point $\eps_2$. We have $\SO_0(U) \simeq \R_+$, the connected component of the identity, and we identify $D_U$ with $\R_+$ in this way. The isomorphsim $\R_+ \to D_U$ is explicitly given by 
\[
t \mapsto \x(t) := (tu+t^{-1}u')/\sqrt{2}.
\]

The Schwartz forms $\varphi^{U}_{1,k}$ constructed in \cite{FMcoeff} for this signature satisfy\begin{equation*}
\varphi^{U}_{1,k} \in
 [\mathcal{S}(U) \otimes \calA^1(D_{U}) \otimes  \Sym^{k}(U)]^{A},
\end{equation*}
and are given by 
\[
\varphi^U_{1,k}(\x,t) =
\frac1{2^{k+1/2}}  \widetilde{H}_{k+1}((a(\sqrt{t})^{-1}\x)_1)\varphi_0^U(a(\sqrt{t})^{-1}\x) \otimes 
\frac{dt}{t} \otimes
a(\sqrt{t})  \eps_1^{k}.
\]
Here $\varphi^U_0$ is the Gaussian for $U$. Note that $\varphi^{U}_{1,k}$ has weight $k+1$ under the Weil representation of $K'$ (\cite{FMcoeff}, Theorem~5.6) and defines a closed $1$-form on $D_{U}$ (\cite{FMcoeff}, Theorem~5.7). Projecting the coefficients to $\calH_k(U)$, we obtain the forms $\varphi^{U}_{1,[k]}$. Of course neither $\Sym^{k}(U)$ nor $\calH_k(U)$ are irreducible representations of $\SO_0(U)$. In fact, $\calH_k(U)$ is two-dimensional spanned by $u^k$ and $u'^k$. 

Now view $U$ as a subspace of $V$ and $D_U$ as a subsymmetric space of $D$. Specifically, if $U^{\perp}$ is spanned by a vector $\w \in V$ of positive length, then $D_U = D_{\w}$ in the notation of Section~3.1. We easily derive from the definitions

\begin{lemma}\label{varphi-splitting}
The form $\varphi^V_{1,k}$ on D is functorial with respect to the restriction to the subsymmetric  space $D_{U}$. We have 
\[
r_{D_{U}}\varphi^V_{1,k}(\x+\w) = \sum_{j=0}^k  \varphi^{U}_{1,j}(\x)  \cdot \varphi^{U^{\perp}}_{k-j}(\w).
\]
Here $\x \in U$ and $\w \in U^{\perp}$, and the product on the left hand side arises from the natural product structure on the Schwartz spaces and on the symmetric algebra $\Sym^{\bullet}(V)$. 
\end{lemma}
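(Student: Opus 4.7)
The plan is to verify the identity by a direct computation at the base point of $D_U$, exploiting the factorization of the Gaussian and the commutativity of the Howe operators across the orthogonal decomposition $V = U \oplus U^\perp$. By the $\SO_0(U)$-equivariance enjoyed by both sides (the left side because $\varphi^V_{1,k}$ is $K$-invariant and $\SO_0(U)\subset K$ acts through its natural action on $D_U$; the right side by construction of $\varphi^U_{1,j}$ and the fact that $\SO_0(U)$ fixes $U^\perp$ pointwise), it suffices to verify the identity at the base point $z_0$ of $D_U$. I would choose the orthogonal basis adapted to the splitting: $e_1,e_3 \in U$ (with $e_1$ spacelike and $e_3$ timelike) and $e_2 \in U^\perp$, so that the base point of $D_U$ coincides with $z_0$ and the tangent line $T_{z_0}D_U$ corresponds to $e_1\wedge e_3 \in \mathfrak{p}$, which is dual to $\omega_1$.

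Next, under the restriction $r_{D_U}$ the form $\omega_2$, being dual to $e_2\wedge e_3$ and therefore annihilating $T_{z_0}D_U$, vanishes. Consequently only the $\alpha=1$ terms of the defining sum for $\varphi^V_{1,k}$ survive. Writing $\x = \x_U + \w$ with $\x_U \in U$ and $\w \in U^\perp$, the Gaussian factors as $\varphi_0(\x) = \varphi_0^U(\x_U)\,\varphi_0^{U^\perp}(\w)$. The operators $\calD_1$ and $\calD_2$ commute, and $\calD_1$ (only depending on $x_1$) acts trivially on the $U^\perp$-factor while $\calD_2$ (only depending on $x_2$) acts trivially on the $U$-factor. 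Thus for each multi-index $(\beta_1,\ldots,\beta_k)\in\{1,2\}^k$, if $j := \#\{i:\beta_i=1\}$ then
\[
(\calD_1\circ \calD_{\beta_1}\circ\cdots\circ\calD_{\beta_k})\varphi_0 \;=\; \bigl(\calD_1^{j+1}\varphi_0^U\bigr)(\x_U)\,\bigl(\calD_2^{k-j}\varphi_0^{U^\perp}\bigr)(\w).
\]

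Third, I would group the sum over the $\beta$-tuples by the value of $j$. Since all $\binom{k}{j}$ orderings sharing a given $j$ produce the same operator and the same symmetric monomial $e_1^{j}e_2^{k-j}\in \Sym^j(U)\otimes \Sym^{k-j}(U^\perp)\subset \Sym^k(V)$, the restriction becomes, at $z_0$,
\[
r_{D_U}\varphi^V_{1,k}(\x+\w) \;=\; \frac{1}{2^{k+1/2}}\sum_{j=0}^{k}\binom{k}{j}\bigl(\calD_1^{j+1}\varphi_0^U\bigr)(\x_U)\bigl(\calD_2^{k-j}\varphi_0^{U^\perp}\bigr)(\w)\otimes \omega_1\otimes e_1^{j}e_2^{k-j}.
\]
Unpacking the definition of $\varphi^U_{1,j}$ from Section~\ref{hyperbolicline} (whose Schwartz factor is exactly $\calD_1^{j+1}\varphi_0^U$, whose form factor is $\omega_1$ at the base point, and whose coefficient is $\epsilon_1^j=e_1^j$) and the parallel definition of $\varphi^{U^\perp}_{k-j}$ (a Schwartz function on the positive line with Hermite-polynomial factor $\calD_2^{k-j}\varphi_0^{U^\perp}$ and coefficient $e_2^{k-j}$), one then matches the summand to $\varphi^U_{1,j}(\x_U)\cdot \varphi^{U^\perp}_{k-j}(\w)$. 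Finally, $\SO_0(U)$-equivariance propagates the identity from $z_0$ to every point of $D_U$.

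The only delicate point is the bookkeeping of normalizations: verifying that the binomial coefficient $\binom{k}{j}$ together with the overall factor $1/2^{k+1/2}$ combines exactly with the normalizations $1/2^{j+1/2}$ appearing in $\varphi^U_{1,j}$ and the analogous constant in $\varphi^{U^\perp}_{k-j}$. This is routine given the Hermite-polynomial form of the $\calD^{m}\varphi_0$'s recalled in Section~\ref{Schwartz-Section}; everything else is formal from the orthogonal factorization and the triviality of $r_{D_U}\omega_2$.
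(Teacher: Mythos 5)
Your strategy matches what the paper (which says only ``we easily derive from the definitions'') is doing: reduce by $\SO_0(U)$-equivariance to the base point, split the Gaussian across $V=U\oplus U^\perp$, note that $\calD_1$ and $\calD_2$ act on separate factors, and observe that $r_{D_U}\omega_2=0$. You also correctly intuit what $\varphi^{U^\perp}_{k-j}$ must be (Schwartz factor $\widetilde H_{k-j}$, coefficient $e_2^{k-j}$, normalization $2^{-(k-j)}$) --- and it is worth stressing that this is \emph{not} the same object as the $\varphi^{W_\ell}_j$ of Section~\ref{Schwartz-Section}, whose coefficient lives in degree $j+1$ with Hermite factor $\widetilde H_{j+1}$ and carries a minus sign; conflating them would break the degree count.

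The gap is your third step, the display
\[
r_{D_U}\varphi^V_{1,k}(\x+\w)=\frac{1}{2^{k+1/2}}\sum_{j=0}^k\binom{k}{j}\,(\calD_1^{j+1}\varphi_0^U)(\x_U)\,(\calD_2^{k-j}\varphi_0^{U^\perp})(\w)\otimes\omega_1\otimes e_1^{j}e_2^{k-j},
\]
and the subsequent claim that the $\binom{k}{j}$ ``combines exactly with the normalizations.'' It does not. The relevant powers of $2$ are $2^{-(j+1/2)}$ from $\varphi^U_{1,j}$ and $2^{-(k-j)}$ from $\varphi^{U^\perp}_{k-j}$; their product is already $2^{-(k+1/2)}$, leaving no room to absorb a binomial coefficient. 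Compare with the paper's own explicit Hermite expansion of $\varphi^V_{1,k}$ in Section~\ref{Schwartz-Section}, which for the $dy/y$ piece reads (at $g_z=1$)
\[
\frac{1}{2^{k+1/2}}\sum_{j=1}^{k+1}\widetilde H_j(x_1)\,\widetilde H_{k+1-j}(x_2)\,\varphi_0\otimes\tfrac{dy}{y}\otimes e_1^{j-1}e_2^{k+1-j},
\]
with \emph{no} binomial coefficient; restriction to $D_U$ and the splitting of the Gaussian then match the right-hand side of the lemma term by term and the constants close up immediately. The $\binom{k}{j}$ you wrote arises from taking literally the abstract sum over ordered $\beta$-tuples $\{1,2\}^k$ with coefficient $e_{\beta_1}\cdots e_{\beta_k}\in\Sym^k(V)$; this is inconsistent with the explicit formula and with the lemma's right-hand side, so the abstract display cannot be read that naively (the symmetric-algebra coefficient must implicitly be normalized so that the sum over tuples does not overcount). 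Concretely, you should base the verification on the explicit Hermite formula rather than on a literal ordered-tuple sum, and drop the binomial coefficient; once you do, the ``bookkeeping'' is genuinely routine and the rest of your argument (equivariance, choice of adapted basis $e_1,e_3\in U$, $e_2\in U^\perp$, vanishing of $\omega_2$, commutation of the Howe operators, factorization of $\varphi_0$) is correct.
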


\section{Theta series and integrals for the hyperbolic line}\label{1-1-Section}

In \cite{FMcoeff} we developed a general theory of generating series of cycles with {\it non-trivial coefficients} inside locally symmetric spaces associated to orthogonal groups of arbitrary signature. In this section, we develop rather completely the easiest case of an extension of these results to include boundary contributions. Namely, we consider the $\Q$-split case of signature $(1,1)$. This case is not included in \cite{FMcoeff}. The results of this section should rather directly generalize to signature $(1,q)$ with nontrivial coefficients. 

We let $U$ be a rational split space of signature $(1,1)$ spanned by two isotropic vectors $u,u'$ with $(u,u')=-1$. We set $\eps_1= (u-u')\sqrt{2}$ and $\eps_2 = (u+u')\sqrt{2}$ as before.
For the associated symmetric space $D_U$ we will also write $X_U$ (if we think of it as a locally symmetric space of infinite volume). For an even lattice $L_U$ and $h_U \in L_U^{\#}$, we define the theta series 
\[
\theta_{\calL_U}(\tau,t,\varphi^U_{1,[k]}) = v^{-k/2} \sum_{\x \in \calL_U} \varphi^U_{1,[k]}(\sqrt{v}\x,t) e^{\pi i(\x,\x)u},
\]
where $\calL_U = L_U + h_U$. This defines a closed $1$-form on $X_U$, which in the $\tau$-variable transforms like a non-holomorphic modular form of weight $k+1$ for $\G(M')$, where $M'$ is the level of $L_U$. Assume (for simplicity)
\[
\calL_U = (\Z M_1 + h_1) u \oplus (\Z M_2 + h_2) u',
\]
with $0\leq h_i < M_i$ so that $M'= M_1M_2$.

\begin{proposition}\label{partialPoissonU}
With the notation above, we have
\begin{multline*}
\theta_{\calL_U}(\tau,\varphi^U_{1,[k]}) \\
=  (-i)^{k+1} 2^{-k/2}v^{-k-1}\left( \frac{t}{M_1} \right)^{k+2}  \sum_{\substack{ m \equiv h_2M_1 \; (M')\\ n \in \Z} } e^{2\pi i nh_1 /M_1} \left(m\bar{\tau} +n
\right)^{k+1} e^{ -\pi \frac{t^2}{vM_1^2} |m\bar{\tau} +n|^2}\\  \otimes \frac{dt}{t} \otimes (a(\sqrt{t})\eps_1)^k.
\end{multline*}
In particular, $\theta_{\calL_U}(\tau,\varphi^U_{1,[k]})$ is a rapidly decreasing $1$-form on $X_U$.

\end{proposition}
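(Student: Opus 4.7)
Apply partial Poisson summation in one isotropic direction. Parameterize $\x \in \calL_U$ as $\x = mu + nu'$ with $m \in M_1\Z + h_1$ and $n \in M_2\Z + h_2$; then $(\x,\x) = -2mn$, the majorant at $\x(t)$ equals $m^2/t^2 + n^2t^2$, and $(a(\sqrt t)^{-1}\x)_1 = (m/t - nt)/\sqrt 2$. Writing $\tau = u + iv$ (where now $u$ stands for $\operatorname{Re}\tau$, not the isotropic vector), the theta phase $e^{\pi i(\x,\x)u}$ contributes $e^{-2\pi imnu}$, and substituting the explicit formula for $\varphi^U_{1,[k]}$ from Section~\ref{hyperbolicline} the theta series becomes the explicit double sum
\[
\theta_{\calL_U}(\tau,\varphi^U_{1,[k]}) = \frac{v^{-k/2}}{2^{k+1/2}} \sum_{\substack{m \in M_1\Z+h_1 \\ n \in M_2\Z+h_2}} \widetilde H_{k+1}\!\Big(\tfrac{\sqrt v(m/t-nt)}{\sqrt 2}\Big) e^{-\pi v(m^2/t^2+n^2t^2) - 2\pi imnu} \otimes \tfrac{dt}{t} \otimes a(\sqrt t)\pi_k(\eps_1^k).
\]

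Now Poisson-sum in $m$: the dual lattice of $M_1\Z + h_1$ is $\tfrac{1}{M_1}\Z$ and introduces the character $e^{2\pi i n' h_1/M_1}$ in the dual index $n'$. The required Fourier transform in $m$, of a Hermite polynomial with affine argument times a Gaussian times a linear exponential, is handled uniformly in $k$ via the Hermite generating identity
\[
\sum_{j \geq 0} \widetilde H_j(y)\frac{s^j}{j!} \;=\; e^{2ys - s^2/(2\pi)},
\]
which converts the Hermite factor into a pure exponential; the Fourier transform then reduces to an elementary Gaussian integral, and the $(k+1)$-st Hermite piece is recovered as the coefficient of $s^{k+1}/(k+1)!$ at the end.

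The antiholomorphic variable $\bar\tau$ emerges from two clean cancellations. First, upon completing the square in $m$, the linear-in-$s$ coefficient reorganizes via the identity $v + iu = i\bar\tau$ into the factor $i(n\bar\tau + y)$, producing the coefficient $[-i\sqrt 2 t(n\bar\tau+y)/\sqrt v]^{k+1}$ after extracting the Hermite piece. Second, the $n$-dependent prefactor $e^{-\pi v n^2 t^2}$ (the $n$-diagonal term of the majorant) combines with the real quadratic $-\pi t^2(nu+y)^2/v$ produced by the $m$-integration to give exactly $-\pi t^2|n\bar\tau + y|^2/v$. Under the natural relabeling ``$m$'' $= M_1 n_{\mathrm{orig}}$ (so that $m \equiv h_2 M_1 \pmod{M'}$) and ``$n$'' $= n'_{\mathrm{Poisson}}$ (running over $\Z$ with character $e^{2\pi i n h_1/M_1}$), tracking powers of $v$, $t$, $M_1$, $i$ and $\sqrt 2$ assembles the stated prefactor $(-i)^{k+1} 2^{-k/2} v^{-k-1} (t/M_1)^{k+2}$ and summand $(m\bar\tau+n)^{k+1} e^{-\pi t^2|m\bar\tau+n|^2/(vM_1^2)}$.

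Rapid decay of the $1$-form then follows at once from the Poisson-transformed form: as $t \to \infty$, the factor $e^{-\pi t^2|m\bar\tau+n|^2/(vM_1^2)}$ decays exponentially for every $(m,n) \neq (0,0)$ and the $(0,0)$-term is killed by the vanishing of $(m\bar\tau+n)^{k+1}$; as $t \to 0$, the polynomial prefactor $(t/M_1)^{k+2}$ dominates the uniformly convergent remaining sum. The main technical point is the careful bookkeeping of the $u$-versus-$\bar\tau$ algebra through the completion of the square -- in particular invoking the crucial identity $v + iu = i\bar\tau$ at the correct step to insert the antiholomorphic variable -- but once this is properly tracked, the rest of the assembly is routine.
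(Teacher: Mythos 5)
Your derivation of the Poisson-summed formula follows essentially the paper's route: partial Poisson summation over the isotropic direction $\ell = \Q u$, reducing to the Fourier transform of a Hermite polynomial times a Gaussian. Using the generating identity $\sum_{j\geq 0}\widetilde H_j(y)\,s^j/j! = e^{2ys - s^2/(2\pi)}$ to convert the Hermite factor into a pure exponential, integrate, and re-extract the coefficient is a tidy self-contained alternative to the one-line Fourier transform formula for $H_j(\sqrt{\pi}\,\cdot)e^{-\pi(\cdot)^2}$ that the paper cites from Lebedev; the bookkeeping of $v$, $t$, $M_1$ and the identity $v+iu=i\bar\tau$ is correct and in the same spirit.

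However, your rapid-decay argument at the cusp $t\to 0$ has a genuine gap. You assert that the prefactor $(t/M_1)^{k+2}$ dominates ``the uniformly convergent remaining sum'', but the sum $\sum_{m,n}(m\bar\tau+n)^{k+1}e^{-\pi t^2|m\bar\tau+n|^2/(vM_1^2)}$ is \emph{not} uniformly bounded as $t\to 0$: the Gaussian envelope has width $\sim 1/t$, so the number of effectively contributing lattice points grows like $t^{-2}$ while each contributes up to $\sim t^{-(k+1)}$, giving a crude bound of order $t^{-(k+3)}$ that overwhelms the $t^{k+2}$ prefactor. The decay at $t\to 0$ does hold, but only because of cancellations inside the sum (a two-dimensional Poisson argument shows the $\mu=0$ dual term vanishes by angular integration of $w^{k+1}$), which your stated reasoning does not invoke. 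The paper handles this symmetrically and cleanly: Poisson-sum over multiples of $u'$ instead of $u$ (``switching cusps''), producing an analogous expression in which the roles of $t$ and $1/t$ are reversed and decay at $t\to 0$ is manifest. You need that second application of Poisson summation -- or an explicit 2D Poisson-summation estimate on the transformed sum -- to justify the claim of rapid decay on all of $X_U$.
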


\begin{proof}

The formula follows by applying Poisson summation over the sum $(\Z M_1 + h_1) u$. It boils down to the fact that the Fourier transform of $H_j(\sqrt{\pi} x) e^{-\pi x^2}$
is given by $(-2i \sqrt{\pi}  x)^j e^{-\pi x^2}$ (which
can be easily seen by \cite{Lebedev}, (4.11.4)).

The rapid decay as $t\to \infty$ is now obvious. The decay at the other end $t=0$ follows from switching the cusps. That is, using Poisson summation for the sum over multiples of $u'$.
\end{proof}

From the proposition, we see that $\theta_{\calL_U}(\tau,\varphi^U_{1,[k]})$ defines a class in the compactly supported cohomology of $X_U$:
\[
[\theta_{\calL_U}(\tau,\varphi^U_{1,[k]}) ] \in H^1_c(X_U, \widetilde{\calH_k(U_\C)}).
\]
We can therefore pair the theta series with classes $X_U \otimes w$ in the relative homology $H_1(X_U, \partial X_U,\widetilde{\calH_k(U)})$. Recall that $\calH_k(U)$ is spanned by $u^k$ and $u'^k$. In the following we will only consider $X_U \otimes u'^k$. The pairing with $X_U \otimes u^k$ is analogous.

\begin{theorem}\label{SW}
We associate to $\calL_U$ the level $M'$ Eisenstein series of weight $k+1$ by 
\[
G_{k+1}(\tau,\calL_U,u'):= \lim_{s \to 0} 
 \sideset{}{'}\sum_{\substack{ m \equiv h_2M_1 \; (M')\\ n \in \Z} } e^{2\pi i nh_1/M_1} \left(m{\tau} +n \right)^{-k-1} |m{\tau} +n|^{-2s}.
\]
Here $\sideset{}{'}\sum$ means that we only sum over pairs $(m,n) \ne (0,0)$. (Of course, we only need Hecke summation for $k=0,1$). Then 
\[
\langle \theta_{\calL_U}(\tau,\varphi^U_{1,[k]}),X_U \otimes u'^k  \rangle =
\int_{X_U \otimes u'^k} \theta_{\calL_U}(\tau,z,\varphi^U_{1,[k]}) = - M_1^{k} k! \left( \frac{i}{2\pi} \right)^{k+1}  G_{k+1}(\tau,\calL_U,u').
\]

\end{theorem}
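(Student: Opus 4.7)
\emph{Plan.} The approach is to apply the partial Poisson summation of Proposition~\ref{partialPoissonU}, which rewrites $\theta_{\calL_U}(\tau,t,\varphi^U_{1,[k]})$ as a sum indexed by pairs $(m,n)\in\Z^2$ with Gaussian decay in $t$. Since $X_U\cong\R_+$, the period then reduces to one-dimensional Mellin integrals that can be computed term-by-term.

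First I would compute the pointwise pairing of coefficients in the fiber at $t$. The coefficient of the theta form lies in the line $\C\cdot(a(\sqrt{t})\eps_1)^k\subset\Sym^k(U_\C)$, while the parallel section $s_{u'^k}$ takes the constant value $u'^k\in\calH_k(U_\C)$. Using that $a(\sqrt{t})$ scales $u$ by $t$ and $u'$ by $t^{-1}$, together with the extension $(\x^k,\y^k)=(\x,\y)^k$ of the bilinear form, one finds $a(\sqrt{t})\eps_1=(tu-t^{-1}u')/\sqrt{2}$ and hence
\[
\big((a(\sqrt{t})\eps_1)^k,\,u'^k\big)=\big(-t/\sqrt{2}\big)^k=(-1)^k t^k 2^{-k/2}.
\]
Since $u'^k\in\calH_k(U_\C)$, the projection $\pi_k$ does not affect this pairing.

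Next, substituting into the Poisson formula of Proposition~\ref{partialPoissonU} and collecting $t$-powers, the period integrand becomes a sum over $(m,n)$ of terms of the form
\[
C_{m,n}\, t^{2k+1}\, e^{-\pi t^{2}|m\bar\tau+n|^{2}/(vM_{1}^{2})}\, dt,
\]
where $C_{m,n}$ absorbs the $\tau$-dependent prefactor $e^{2\pi inh_{1}/M_{1}}(m\bar\tau+n)^{k+1}$ together with the constants $(-i)^{k+1}(-1)^{k}\,2^{-k}\,v^{-k-1}\,M_{1}^{-(k+2)}$. Evaluating each term via $\int_{0}^{\infty}t^{2k+1}e^{-at^{2}}dt=k!/(2a^{k+1})$ with $a=\pi|m\bar\tau+n|^{2}/(vM_{1}^{2})$ produces a factor $k!\,v^{k+1}M_{1}^{2k+2}/\bigl(2\pi^{k+1}|m\bar\tau+n|^{2k+2}\bigr)$. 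The powers of $v$ cancel, the powers of $M_{1}$ collapse to $M_{1}^{k}$, and the key simplification
\[
\frac{(m\bar\tau+n)^{k+1}}{|m\bar\tau+n|^{2k+2}}=\frac{1}{(m\tau+n)^{k+1}}
\]
identifies the $(m,n)$-sum with $G_{k+1}(\tau,\calL_U,u')$. A short bookkeeping of constants, using $(-i)^{k+1}(-1)^{k}=-i^{k+1}$, produces the claimed prefactor $-M_{1}^{k}k!(i/(2\pi))^{k+1}$.

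The only nontrivial analytic point is the interchange of sum and integral when $k\in\{0,1\}$, where the resulting Eisenstein sum is only conditionally convergent (the $(m,n)=(0,0)$ contribution, which can occur when $h_{2}=0$, is also an indeterminate $0\cdot\infty$ in this regime). I would handle both issues uniformly by inserting the Hecke regulator $|m\bar\tau+n|^{-2s}$ (equivalently, a factor $v^{s}$ in the theta integrand), carrying out the computation above in the region of absolute convergence, and taking the analytic continuation to $s=0$; this is exactly the regularization built into the definition of $G_{k+1}(\tau,\calL_U,u')$. For $k\ge 2$ the interchange is automatic and the same term-by-term computation applies directly.
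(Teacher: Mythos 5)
Your proof is correct and follows essentially the same route as the paper: apply Proposition~\ref{partialPoissonU}, pair the coefficient $(a(\sqrt{t})\eps_1)^k$ against $u'^k$, and evaluate each Mellin integral $\int_0^\infty t^{2k+1}e^{-at^2}dt$ term-by-term, with a regulator for $k=0,1$ (the paper inserts $t^s$ into the $t$-integral, which after Mellin transform produces the $|m\bar\tau+n|^{-s}$ damping directly, rather than your $v^s$, which would not separate the terms). One small inaccuracy: the $(m,n)=(0,0)$ term, when it occurs, is already identically zero in the Poisson-summed theta form since $(m\bar\tau+n)^{k+1}=0$ for $k\geq 0$ before any integration, so there is no genuine $0\cdot\infty$ indeterminacy to worry about.
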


\begin{proof}
We use Proposition~\ref{partialPoissonU}. For $k\geq 2$, termwise integration is valid and easily yields the result. For $k=0,1$, one needs to include a term $t^s$ with $Re(s)$ large into the integral before interchanging integration and summation.
\end{proof}

\begin{remark}
Theorem~\ref{SW} is a very simple special case of the extended Siegel-Weil formula of Kudla-Rallis \cite{KR} in the isotropic case. In our particular situation, the choice of the Schwartz function implies that no regularization of the theta integral is necessary.
\end{remark}

We now give an interpretation of the cohomological pairing $\langle  \theta_{\calL_U}(\tau,\varphi^U_{1,[k]}),X_U \otimes u'^k \rangle$ as a generating series of intersection numbers. 

A rational vector $\x \in U$ of positive length defines a point $C_\x^U$ (and hence a $0$-cycle) by the condition
\[
(\x,\x(t))=0.
\]
We set 
\[
\eps(\x) = \sgn(\x,\eps_1).
\]
Then we define for $n>0$
\[
C_{n,[k]}^U= \sum_{\substack{\x \in \calL_U \\ q(\x)=n}} \eps(\x) C^U_{\x} \otimes \pi_k(\x^k) \in H_0(X_U, \calH_k(U)).
\]
Note here that since $U$ is split there are only finitely many vectors $\x$ in $\calL_U$ of length $n$. 

For $n=0$, we define $C^U_{0,[k]}$ as follows. We can compactify $X_U$ by two "endpoints" $X_{U,\ell}$ (at $t=\infty$) and $X_{U,\ell'}$ (at $t=0$). We set
\[
C^U_{0,[k]}= - \delta_{0,h_2} M_1^{k} \frac{B_{k+1}\left(\frac{h_1}{M_1}\right)}{k+1}  \left(X_{U,\ell} \otimes u^k\right)
-\delta_{0,h_1} M_2^{k}\frac{B_{k+1}\left(\frac{h_2}{M_2}\right)}{k+1}  \left(X_{U,\ell'} \otimes u'^k \right).
\]
Here $\delta_{i,j}$ is the Kronecker delta.

\begin{theorem}\label{11-theorem}

Let $w \in \calH_{k}(U)$ and consider the relative $1$-cycle $X_U \otimes w$. For $k \ne 1$, we have 
\begin{align*}
\int_{X_U \otimes w} \theta_{\calL_U}(\tau,z,\varphi^U_{1,[k]}) &= \sum_{n \geq0} \langle C^U_{n,[k]},  X_U \otimes w \rangle e^{2\pi i n \tau} \\
&=
- \delta_{0,h_2} M_1^{k} \frac{B_{k+1}\left(\frac{h_1}{M_1}\right)}{k+1} (w,u^k)
- \delta_{0,h_1} M_2^{k}\frac{B_{k+1}\left(\frac{h_2}{M_2}\right)}{k+1}  (w,u'^k) \\ &\quad  + \sum_{n >0} \sum_{ \substack{\x \in \calL_U \\ (\x,\x)>0 }}  \eps(\x) (w,\x^k) e^{\pi i (\x,\x)\tau}
\end{align*}
is a holomorphic modular form of weight $k+1$ for $\G(M')$. That is, the cohomology class $[\theta(\tau,z,\varphi^U_{1,[k]})] \in H_c^1(X_U,\widetilde{\calH_{k}(U_\C)})$ defines a holomorphic modular form of weight $k+1$ for $\G(M')$ and is equal to the generating series of the $0$-cycles
\[
\sum_{n \geq 0} \PD[C^U_{n,[k]}] e^{2\pi i n \tau}.
\]
For $k=1$, the integral $\int_{X_U \otimes w} \theta_{\calL_U}(\tau,z,\varphi^U_{1,[1]})$ contains an additional term 
\begin{align*}
 \delta_{0,h_1} \frac{(w,u)}{4M_2\pi v} + \delta_{0,h_2} \frac{(w,u')}{4M_1\pi v}.
\end{align*}

\end{theorem}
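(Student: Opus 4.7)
My strategy is to compute the Fourier expansion of the integral $\int_{X_U \otimes w} \theta_{\calL_U}(\tau, \varphi^U_{1,[k]})$ in two complementary ways: termwise on the original theta sum for the positive Fourier coefficients, and through the Poisson-transformed expression of Proposition~\ref{partialPoissonU} for the constant term.

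For the positive coefficients, I would insert the explicit formula for $\varphi^U_{1,[k]}(\sqrt{v}\x, t)$ (Hermite polynomial times Gaussian times $\tfrac{dt}{t} \otimes \pi_k\bigl((a(\sqrt{t})\eps_1)^k\bigr)$) into $\theta_{\calL_U}$ and integrate termwise. For $\x \in \calL_U$ with $q(\x) > 0$, the combination $\varphi^U_{1,[k]}(\sqrt{v}\x,t) e^{\pi i(\x,\x)u} v^{-k/2}$ is a Schwartz one-form in $t$ whose integral along $X_U \simeq \R_+$, paired with the constant section $w$, yields exactly $\eps(\x)(w, \pi_k(\x^k)) e^{2\pi i q(\x)\tau}$. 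This is the signature-$(1,1)$ analogue of the Kudla-Millson Thom-form identity: the form is a localized Thom representative at the unique zero of $t \mapsto (\x, \x(t))$, the coefficient vector $\pi_k(\x^k)$ appears because at the critical point $a(\sqrt{t_\x})\eps_1$ is a positive multiple of $\x$, and the sign $\eps(\x) = \sgn(\x,\eps_1)$ records on which side of the critical point the Gaussian is concentrated. A short calculation with the generating function of Hermite polynomials and a Gaussian integral confirms the identity; for $q(\x)<0$ the analogous integral vanishes since there is no critical point. Summing over $\x$ of length $n > 0$ then reproduces $\langle C^U_{n,[k]}, X_U \otimes w\rangle$ by Definition~\ref{specdef}.

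For the constant term, I would instead use the Poisson-summed expression from Proposition~\ref{partialPoissonU}. The $m \neq 0$ terms contribute only to positive Fourier coefficients (matching what was computed above), while the $m=0$ term, which is nonzero only when $h_2 = 0$ (since $h_2 M_1 \equiv 0 \pmod{M_1 M_2}$ forces $h_2 = 0$), gives
\[
(-i)^{k+1} 2^{-k/2} v^{-k-1} (t/M_1)^{k+2} \sum_{n \in \Z \setminus\{0\}} e^{2\pi i n h_1/M_1} n^{k+1} e^{-\pi t^2 n^2/(v M_1^2)} \otimes \tfrac{dt}{t} \otimes (a(\sqrt{t})\eps_1)^k.
\]
Expanding $(a(\sqrt{t})\eps_1)^k$ and pairing with $w$, the pieces proportional to $(w,u^k)$ survive the $t$-integration to give, after a Mellin transform and use of $\zeta(-k,a) = -B_{k+1}(a)/(k+1)$, precisely $-\delta_{0,h_2} M_1^k B_{k+1}(h_1/M_1)/(k+1) \cdot (w, u^k)$. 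Applying the symmetric Poisson summation over the other isotropic line produces the corresponding $\delta_{0,h_1}$ contribution, matching $C^U_{0,[k]}$ from the statement. The extra $k=1$ term $\delta_{0,h_1}(w,u)/(4M_2\pi v) + \delta_{0,h_2}(w,u')/(4M_1\pi v)$ arises from Hecke regularization at $s=0$ for the (non-absolutely convergent) weight-$2$ case, as is familiar from the derivative term in the analytic continuation of the weight-two Eisenstein series.

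The main obstacle I anticipate is the bookkeeping of constants, especially in matching the orientation sign $\eps(\x)$ with the sign produced by the Hermite-Gaussian integral, and in tracking the powers of $i$, $2$, and $\pi$ between the two computations. To keep this manageable I would first carry out the proof for $k \geq 2$, where every sum and integral converges absolutely and the Hurwitz-zeta values are attained directly; once that case is verified, the identification for $k = 0, 1$ follows by invoking Theorem~\ref{SW} (which already identifies the full integral with a Hecke-regularized Eisenstein series $G_{k+1}$) and then reading off the Fourier expansion of $G_{k+1}$ via the standard Hurwitz-zeta analytic continuation, which simultaneously produces the $k=1$ non-holomorphic correction.
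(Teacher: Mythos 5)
Your proposal is essentially correct and follows the same overall architecture as the paper's proof: termwise integration of the original theta sum over vectors of positive (resp.\ negative) norm to obtain the positive Fourier coefficients (resp.\ vanishing of negative coefficients), with the constant term handled separately through the Eisenstein-series structure. The minor differences are as follows. For the non-constant coefficients the paper evaluates the $t$-integral by iterating the Hermite recurrence $H_j(y)=2yH_{j-1}(y)-2(j-1)H_{j-2}(y)$ down to a pure Gaussian integral, whereas you invoke the Hermite generating function; both routes land on the same $\sgn(m)(\x,u')^k$ after the Gaussian collapse, and your heuristic that $a(\sqrt{t_\x})\eps_1$ is (up to sign $\eps(\x)$) a multiple of $\x$ at the critical point is the right picture. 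For the constant term, the paper splits the isotropic contribution by line: it Poisson-sums only the multiples-of-$u$ piece (quoting the $m=0$ Eisenstein constant term à la Apostol), and computes the multiples-of-$u'$ piece directly as an $s$-regularized Hurwitz-zeta integral, which is also where the $k=1$ correction $-1/(4M_2\pi v)$ emerges; you propose to Poisson-sum both pieces symmetrically, which is fine, and the paper itself remarks that one "could of course just refer to the standard calculation for the constant term of $G_{k+1}$." Finally, your plan to dispose of $k=0,1$ by invoking Theorem~\ref{SW} and reading off the Hecke-regularized Fourier expansion of $G_{k+1}$ (in particular the familiar non-holomorphic $1/v$ term of weight two) is a legitimate shortcut the paper does not explicitly take, but you should be aware that it merely relocates, rather than avoids, the $s$-regularized Hurwitz-zeta computation that the paper performs inline.
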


\begin{proof}
We only do $w=u'^k$. The case of coefficient $u^k$ is analogous. 
Let $\x \in\calL_U$ such that $q(\x)>0$. We need to calculate $
\int_{D_U} (\varphi_{1,[k]}(\x,\tau),u'^k)$, where $\varphi_{1,[k]}(\x,\tau) = v^{-k/2} \varphi^U_{1,[k]}(\sqrt{v}\x,t) e^{\pi i(\x,\x)u}$. We can assume $\x = m \eps_1$ for some nonzero $m \in \R$. We write $\varphi^0_{1,[k]}(\x,v) = \varphi_{1,[k]}(\x,\tau) e^{-\pi i (\x,\x) \tau}$.
Switching variables to $t = e^r$, we obtain
\begin{align*}
\int_{D_U} (\varphi^0_{1,[k]}&(\x,v),u'^k) \\ &= \frac{(-1)^kv^{-\tfrac{k}{2}}(2\pi)^{-\tfrac{k+1}{2}} }{2^{(3k+1)/2}}\int_{-\infty}^{\infty} H_{k+1}\left(\sqrt{2\pi v}m \cosh(r)\right) e^{-2\pi m^2 v \sinh^2(r)}e^{kr} dr.
\end{align*}
We denote the integrand by $\Phi_k(r)$. Using the recurrence relation $H_j(y)=2yH_{j-1}(y)-H'_{j-1}(y)=2yH_{j-1}(y)-2(j-1)H_{j-2}(y)$ for the Hermite polynomials, a little calculation yields
\[
\Phi_k(r) =  2\sqrt{2\pi v}m\Phi_{k-1}(r) - 2\frac{\partial}{\partial r} \left( \Phi_{k-2}(r)e^{2r}\right).
 \]
Applying this recursion $k$ times we obtain
\begin{align*}
\int_{D_U} (\varphi^0_{1,[k]}(\x,v),u'^k)&= (\x,u')^k \pi^{-1/2} \int_{-\infty}^{\infty} \sqrt{2\pi v}m \cosh(r) e^{-2\pi m^2 v \sinh^2(r)} dr \\
&=  \sgn(m) (\x,u')^k \pi^{-1/2} \int_{-\infty}^{\infty} e^{-r^2}dr =  \sgn(m) (\x,u')^k.
\end{align*}
For the negative Fourier coefficients of the integral, we consider $\x \in\calL_U$ such that $q(\x)<0$. We can assume $\x = m \eps_2$ for some nonzero $m \in \R$. Then a similar recursion, reduces $\int_{D_U} (\varphi_{1,[k]}(\x,\tau),u'^k)$ to the case $k=0$, which is directly seen to vanish. 

For the constant coefficient we could of course just refer to the standard calculation for the constant term of the Eisenstein series $G_{k+1}(\tau,\calL_U,u)$. We give a more geometric approach. The constant coefficient arises from the isotropic vectors in $\calL_U$. Hence it is given by 
\[
\int_{X_U} \left[\delta_{0,h_2}\sum_{n \in M_1\Z+h_1}  (\varphi^0_{1,[k]}(nu,v),u'^k) + \delta_{0,h_1}\sideset{}{'}\sum_{m \in M_2\Z+h_2}  (\varphi^0_{1,[k]}(mu',v),u'^k)\right],
\]
where $\sideset{}{'}\sum$ indicates that the term $m =0$ is omitted if $h_2=0$. For the first summation one applies Poisson summation and exactly obtains the $m=0$ contribution to the constant coefficient in the definition of the Eisenstein series $G_{k+1}(\tau,\calL_U,u')$:
\[
- M_1^{k} k! \left( \frac{i}{2\pi}\right)^{k+1} \sum_{\substack{  n \in \Z} } e^{2\pi i nh_1/M_1} n ^{-k-1}=
(-1)^{k+1} M_1^{k} \frac{B_{k+1}\left(\frac{h_1}{M_1}\right)}{k+1},
\]
see for example \cite {Apostol}, chapter 12. The other term is equal to 
\begin{align*}
\lim_{s \to 0} \frac{(-1)^kv^{-\tfrac{k}{2}}(2\pi)^{-\tfrac{k+1}{2}} }{2^{(3k+1)/2}} \int_0^{\infty}
\sideset{}{'}\sum_{m \in M_2\Z+h_2} H_{k+1}(-\sqrt{\pi v} mt)e^{-\pi v m^2t^2} t^{k+s} \frac{dt}{t}.
\end{align*}
We introduced the complex variable $s$ so that we can now interchange summation and integration (which would not be possible for $k=1$). We obtain

\[
\begin{split}
 \frac{-v^{-\tfrac{k}{2}}(2\pi)^{-\tfrac{k+1}{2}} }{2^{(3k+1)/2}}(\sqrt{\pi v}M_2)^{-k-s} \left( H\left((k+s,\tfrac{h_2}{M_2}\right)+ (-1)^{k+1}H\left(k+s,1-\tfrac{h_2}{M_2}\right) \right)  \\ \times \int_{0}^{\infty} H_{k+1}(t)t^{k+s-1}e^{-t^2} dt.
\end{split}
\]
Here $H(s,x) = \sum_{n=1}^{\infty} (n+x)^{-s}$ is the Hurwitz zeta function. Now for $k\geq1$ and $s=0$ we have $\int_{0}^{\infty} H_{k+1}(t)t^{k-1}e^{-t^2} dt = \tfrac12\int_{-\infty}^{\infty} H_{k+1}(t)t^{k-1}e^{-t^2} dt =0$ by the orthogonality of the Hermite polynomials. This gives vanishing for $k>1$, while for $k=1$, we easily compute $\int_{0}^{\infty} H_{2}(t)t^{s}e^{-t^2} dt = s \G((s+1)/2)$. We then obtain $-1/(4M_2\pi v)$ as $s\to 0$.  
\end{proof}

\begin{remark}
The theorem also holds for $k=0$. This case was initially considered by Kudla \cite{KAnnalen}, Theorem~3.2. In that case one obtains Hecke's Eisenstein series of weight $1$. Kudla considered in \cite{KAnnalen,KShintani} more generally $0$-cycles for $\SO(n,1)$ which give rise to holomorphic Siegel modular forms of degree $n$. 
\end{remark}

\begin{example}
We consider the level $1$ situation with $\calL_U=\Z u \oplus \Z u'$. Then 
\[
\langle\theta_{\calL_U}(\tau,\varphi^U_{1,[k]}), X_U \otimes u'^k  \rangle = -\frac{B_{k+1}}{k+1} E_{k+1}(\tau) = -\frac{B_{k+1}}{k+1} + 2 \sum_{x,y>0} x^k e^{2\pi i xy\tau}
\]
Here $E_{k+1}(\tau)$ is the standard Eisenstein series of weight $k+1$ for $\SL_2(\Z)$ given for $k>1$ by $\tfrac12\sum_{\g \in {\G}_{\infty} \back {\SL_2(\Z)}} j(\g,\tau)^{-k-1}$.
\end{example}

\section{The generating series of the spectacle cycles}\label{Main-Section}

In this section we state our main result of this paper.

\subsection{Generating series of modular symbols}

We first recall the classical result of Shintani in our setting.

We define the theta series associated to $\varphi_{1,[k]}^V$ and the (coset of the) lattice $\calL_V=\calL=L+h$ in the usual way by 
\[
\theta_{\calL}(\tau,z,\varphi^V_{1,[k]}) = v^{-k/2}\sum_{\x \in \calL} \varphi^V_{1,[k]}(\sqrt{v}\x,z) e^{\pi i (\x,\x)u}.
\]
Then the principal result of \cite{FMcoeff} (in much greater generality) realizes the cohomology class of this theta series as a holomorphic modular form and generating series of special cycles. In the present case, we recover the Shintani lift \cite{Shin}.

\begin{theorem}\cite{Shin,FMcoeff}\label{Shintani}
The cohomology class $[\theta(\tau,z,\varphi^V_{1,[k]})] \in H^1(X,\widetilde{\calH_{k}(V_\C)})$ defines a holomorphic cusp form of weight $k+3/2$ for $\G(M)$ and is equal to the generating series of the (Poincar\'e duals of the) modular symbols 
\[
\sum_{n \in \Q_+} [C_{n,[k]}] e^{2\pi i n \tau}.
\]
Here $M$ is the level of lattice $L$. That is, for any compactly supported or rapidly decreasing closed $1$-form $\eta$ on $X$ with values in 
$\calH_{k}(V_\C)$ representing a class in $H^1_c(X,\widetilde{\calH_{k}(V_\C)})$ we have
\[
\int_X \eta \wedge \theta(\tau,z,\varphi^V_{1,[k]}) =  \sum_{n \in \Q_+} \left(\int_{C_{n,[k]}} \eta \right) e^{2\pi i n \tau} \in S_{k+3/2}(\G(M)).
\]
Equivalently, for any absolute $1$-cycle $C$ in $X$ with coefficients representing a class in $H_1(X,\widetilde{\calH_{k}(V)})$, we have 
\[
\int_C \theta(\tau,z,\varphi^V_{1,[k]}) =
\sum_{n \in \Q_+}  ({C_{n,[k]}}\bullet C)e^{2\pi i n \tau} \in S_{k+3/2}(\G(M)).
\]
Here $\bullet$ denotes the intersection product of cycles. 
\end{theorem}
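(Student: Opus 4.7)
The plan is to establish three facts: modularity of $\theta_\calL(\tau, z, \varphi^V_{1,[k]})$ of weight $k+3/2$ in $\tau$, closedness as a $\calH_k(V_\C)$-valued $1$-form on $X$, and a Fourier expansion whose $n$-th coefficient is the Poincar\'e dual of $C_{n,[k]}$. Modularity follows by the standard Poisson summation argument in the Weil representation: the Schwartz form $\varphi^V_{1,[k]}$ has weight $k+3/2$ under the action of $K' \subset \Mp_2(\R)$ (Section~\ref{Schwartz-Section}), and the coset $\calL = L + h$ is stable under $\G(M)$ through the Weil representation, where $M$ is the level of $L$. Closedness in the $z$-variable is inherited term-by-term from $d\varphi^V_{1,[k]} = 0$, with the rapid Gaussian decay justifying the interchange of sum and exterior derivative. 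Hence for each $\tau \in \h$ the theta series represents a class $[\theta_\calL(\tau)] \in H^1(X, \widetilde{\calH_k(V_\C)})$.

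The heart of the argument is the geometric identification of the Fourier coefficients. Grouping $\x \in \calL$ by the value $q(\x) = n$, and observing that on the geodesic $D_\x$ the majorant $(\x,\x)_z$ reduces to $(\x, \x) = 2n$, one sees that the Gaussian times $e^{\pi i (\x,\x) u}$ yields the expected factor $e^{2\pi i n \tau}$ after integrating transversally to $D_\x$. It remains to prove the Kudla--Millson-type Thom form identity: for each rational $\x \in V$ with $q(\x) > 0$ and every compactly supported closed $1$-form $\eta$ on $X$ with values in $\widetilde{\calH_k(V_\C)}$,
\[
\int_{\G_\x \backslash D} \eta \wedge v^{-k/2}\, \varphi^V_{1,[k]}(\sqrt{v}\,\x, z)\, e^{\pi i (\x,\x) u} \;=\; \left( \int_{C_\x} (\eta, \pi_k(\x^k)) \right) e^{2\pi i q(\x) \tau}.
\]
I would establish this by computing in a tubular neighborhood of $D_\x$ using the explicit Hermite-polynomial structure of $\varphi^V_{1,[k]}$, together with the identification of $\pi_k(\x^k)$ as the (up to scalar) unique $\G_\x$-invariant weight-zero vector in $\calH_k(V)$ from Section~\ref{reptheory}. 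A standard $\G$-unfolding $\sum_{\x \in \calL} = \sum_{\x \in \G \backslash \calL} \sum_{\g \in \G_\x \backslash \G}$ then converts the theta sum into a sum of such integrals indexed by $\G$-orbits, and by \eqref{coeffpairing} this is precisely $\sum_{n > 0} \langle [C_{n,[k]}], [\eta] \rangle e^{2\pi i n \tau}$. The third (intersection-number) formulation follows by Poincar\'e duality.

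Holomorphy in $\tau$ is verified by showing $\bar\partial_\tau \theta_\calL$ is cohomologically trivial, which reduces to checking that the lowering operator of the Weil representation annihilates $\varphi^V_{1,[k]}$ modulo $d_z$-exact terms, a structural feature of this particular Schwartz form. Cuspidality amounts to showing that the $n = 0$ Fourier coefficient is cohomologically zero: the contribution comes only from isotropic $\x \in \calL$, and the explicit form of $\varphi^V_{1,[k]}$ at such $\x$, combined with summation along isotropic lines, produces vanishing (alternatively, the image lies in the cuspidal part of the Weil representation for positive $k$). The main obstacle is the Thom form identity with nontrivial coefficients, which requires tracking the $\mathfrak{p}^\ast$-component and the $\calH_k(V)$-coefficient simultaneously; this is the technical core of \cite{FMcoeff}, and in the current signature it reduces to a careful calculation with the Howe operators $\calD_\alpha$ and the weight vectors $v_{2i}$ of Lemma~\ref{repformulas}.
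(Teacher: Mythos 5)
The paper does not prove this theorem; it is quoted from \cite{Shin} and \cite{FMcoeff}, so there is no internal proof to compare against. Your outline is a reasonable reconstruction of the \cite{FMcoeff} route (the vector-valued Kudla--Millson theory with $\calH_k(V)$-coefficients), as opposed to Shintani's original method using a scalar-valued theta kernel and direct period computations, which the paper contrasts in the remark immediately following the theorem. The list of steps you give --- weight $k+3/2$ from the Weil representation, term-by-term closedness, a per-orbit Thom form identity plus unfolding, holomorphy via a lowering-operator identity, cuspidality from the $n=0$ coefficient --- is the correct skeleton.

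Two of your steps are so thin as to be genuine gaps. First, the Thom form identity
\[
\int_{\G_\x\backslash D}\eta\wedge v^{-k/2}\varphi^V_{1,[k]}(\sqrt{v}\,\x,z)\,e^{\pi i(\x,\x)u} \;=\; \Bigl(\int_{C_\x}(\eta,\pi_k(\x^k))\Bigr)e^{2\pi i q(\x)\tau}
\]
is essentially the entire technical content of \cite{FMcoeff}; ``computing in a tubular neighborhood'' is a plan, not a proof, and one must control the noncompact ends of $C_\x$ when $\x^\perp$ is $\Q$-split, carry through the Mathai--Quillen rapid-decay structure of the Schwartz form, and bookkeep the $\mathfrak p^\ast\otimes\calH_k(V)$ components simultaneously. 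Second, the cuspidality step is genuinely wrong as stated: neither ``summation along isotropic lines produces vanishing'' nor ``the image lies in the cuspidal part of the Weil representation for positive $k$'' is an argument, and the claim cannot be uniform in $k$. The paper's own remark after the theorem records that for $k=0$ the lift of classes coming from $H^0(\partial\overline X,\widetilde{\calH_k(V_\C)})$ under the long exact sequence into $H^1_c$ is a nonzero unary theta series of weight $3/2$, not a cusp form; it is only for $k>0$ that these contributions vanish, and that vanishing has to be exhibited (e.g.\ via the boundary restriction formula of Theorem~\ref{restriction}). Your sketch asserts cuspidality uniformly, contradicting the paper's own caveat at $k=0$.
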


\begin{remark}
Shintani formulates his result not in terms of cohomological pairings but rather in terms of weighted periods of holomorphic cusp forms. Of course he also uses a theta lift to obtain this result. However, he employs a different, scalar-valued, theta kernel $\theta(\tau,z,\varphi_S)$ which is integrated against a holomorphic cusp form $f$. Shintani's kernel function at the base point $z=i$ is given by
\[
\varphi_S(\x) = (x_1+ix_2)^{k+1} \varphi_0(\x).
\]
For such input, the kernels are closely related, namely one has 
\[
\eta_f \wedge \theta(\tau,z,\varphi^V_{1,[k]}) =  (-1)^{k} 2^{(k+1)/2}\theta(\tau,z,\varphi_S) f(z) y^{k+1} d\mu(z),
\]
where $d\mu(z) = \tfrac{dxdy}{y^2}$ is the invariant measure on $D$. This can be seen by a direct calculation of $ dz \otimes n(z)u'^k \wedge \varphi_{1,[k]}(\x,z)$. It boils down to the Hermite identity
\[
\sum_{j=0}^k \begin{pmatrix} k+1 \\ j \end{pmatrix} (-i)^{j} \widetilde{H}_{k+1-j}(x_1)\widetilde{H}_{j}(-x_2) =(x_1+ix_2)^{k+1}.
\]
 \end{remark}

\begin{remark}
Since Shintani only considers the lift of cusp forms, he actually obtains a priori a slightly weaker result because $H^1_c(X,\widetilde{\calH_{k}(V_\C)})$ not only consists of classes arising from cusp forms but also of the image of $H^0(\partial \overline{X},\widetilde{\calH_{k}(V_\C)})$ in $H^1_c(X,\widetilde{\calH_{k}(V_\C)})$ under the long exact cohomology sequence. However, this makes no difference for $k>0$, since in that case it is easy to see that the lift of classes arising from $H^0(\partial \overline{X},\widetilde{\calH_{k}(V_\C)})$ vanishes. For $k=0$, one obtains via the restriction formula $\theta(\tau,z,\varphi^V_{1,0})$ (see Theorem~\ref{restriction} below) for the lift of $H^0(\partial \overline{X},\C)$ unary theta series of weight $3/2$. 
\end{remark}

\subsection{Restriction of the theta series and the construction of a class in the mapping cone}\label{restriction99}

For the coset $\calL_V=\calL = L +h$ of the lattice $L$ in $V$, we can write
\[
\calL_V\cap \ell^{\perp} = \coprod_j \left(L \cap \ell+ h_{\ell,j} \right)
\oplus \left(L \cap W_{\ell}+ h_{W_\ell,j}\right)
\]
with vectors $h_{\ell,j} \in (L\cap \ell)^{\#}$ and $h_{W_\ell,j} \in (L\cap W_\ell)^{\#}$ (if $\calL_V\cap \ell^{\perp} \ne \emptyset$). Then we set
\[
\hat{{\calL}}_{W_\ell} = \frac{1}{  \det L_\ell}   
\sum_j
\left(L_{W_\ell} +
h_{W_\ell,j}\right),
\]
where $L_{W_\ell} = L \cap W_\ell$. Here $\det L{_\ell} :=M_1$ where $L_{\ell} = L \cap \ell = M_1\Z u_{\ell}$. Here we view the sum as a sum of characteristic functions of sets on $W$. We then define the positive theta series associated to $\varphi^{N_{\ell}}_{1,[k]}$ and $\hat{{\calL}}_{W_\ell}$ by 
\begin{align*}
\theta_{\hat{\calL}_{W_\ell}}(\tau,\varphi^{N_{\ell}}_{1,[k]}) &= v^{-(k+1)/2}  \sum_{\w \in W} \hat{{\calL}}_{W_\ell}(\w)
\varphi^{N_{\ell}}_{1,[k]}(\sqrt{v}\w) e^{\pi i (\w,\w)u} \\ &= \frac{v^{-(k+1)/2}}{  \det L_\ell}\sum_{j}  \sum_{\w \in 
L_{W_\ell} +h_{W_\ell,j}}\varphi^{N_{\ell}}_{1,[k]}(\sqrt{v}\w) e^{\pi i (\w,\w)u}.
\end{align*}
Then $\theta_{\hat{\calL}_{W_\ell}}(\tau,\varphi^{N_{\ell}}_{1,[k]})$ transforms like a non-holomorphic modular form of weight $k+3/2$. In the same way we define $\theta_{\hat{\calL}_{W_\ell}}(\tau,\phi^{N_{\ell}}_{[k]})$.

\begin{theorem}\label{restriction}
The differential $1$-form $(\theta_{\calL_V}(\tau,z,\varphi^V_{1,[k]})$ on $X$ extends to a form on the Borel-Serre compactification $\overline{X}$. More precisely, we let $\iota_{\ell}: X_{\ell} \hookrightarrow \overline{X}$ be the natural inclusion of the boundary face $X_{\ell}$. Then for the restriction we have 
 \[
\iota^{\ast}_{\ell} \theta_{\calL_V}(\tau,\varphi^V_{1,[k]})= \theta_{\hat{\calL}_{W_\ell}}(\tau,\varphi^{N_{\ell}}_{1,[k]}).
\]
 Moreover, $\iota^{\ast}_{\ell}\theta_{\calL_V}(\tau,z,\varphi^V_{1,[k]})$ is an exact form on $X_{\ell}$, and we have 
\[
\iota^{\ast}_{\ell}\theta_{\calL_V}(\tau,\varphi^V_{1,[k]}) = d \left( \theta_{\hat{\calL}_{W_\ell}}(\tau,\phi^{N_{\ell}}_{[k]}) \right).
\]
\end{theorem}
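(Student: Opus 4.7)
The first two assertions are special cases of the general restriction theorem of \cite{FMres}; what follows is a direct verification by partial Poisson summation, closely paralleling Proposition~\ref{partialPoissonU}. By $\SL_2(\Z)$-equivariance (via $\sigma_\ell$) it suffices to treat $\ell = \ell_\infty$; set $W = \R e_2$, $L\cap\ell = M_1\Z u$, and drop the $\ell$-subscripts throughout. Decompose $\x \in \calL_V$ in the Witt splitting $V = \R u \oplus W \oplus \R u'$ as $\x = au + \w + cu'$. A direct computation in Iwasawa coordinates at $z = x + iy$ shows that $\varphi_0(\sqrt{v}\, g_z^{-1}\x)$ contains a factor $e^{-\pi v c^2 y^2}$, forcing $c = 0$ in the limit $y \to \infty$; the surviving sum is over $\calL_V \cap \ell^\perp = \coprod_j (L\cap\ell+h_{\ell,j}) \oplus (L\cap W+h_{W,j})$.

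Next I would apply Poisson summation in the $a$-variable over each coset $M_1\Z + h_{\ell,j}$. A direct calculation using the generating function $\sum_j H_j(x) z^j/j! = e^{2xz - z^2}$ gives
\[
\widehat{\widetilde H_j(t/\sqrt{2}) e^{-\pi t^2}}(s) = (-i\sqrt{2})^j s^j e^{-\pi s^2},
\]
which is the form that arises after rescaling to the Iwasawa variable $t = \sqrt{v} A/y$ (with $A = a + \sqrt{2}bx$). This is a \emph{monomial} times the Gaussian, so at $s = 0$ it vanishes for all $j \ge 1$ and equals $1$ only for $j = 0$. Hence, in the Poisson-summed series, the zero Fourier mode $m = 0$ survives only for $j = 0$; all $m \neq 0$ modes are damped by $\exp(-\pi m^2 y^2/(M_1^2 v))$, decay rapidly as $y \to \infty$, and give the smooth extension to $\overline X$ with no boundary contribution. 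The same vanishing applied to the $dy/y$ component of $\varphi^V_{1,k}$, whose Hermite index runs over $j = 1, \dots, k+1$, kills every $m = 0$ mode, so the $dy/y$ part pulls back to zero on $X_\ell$. The surviving $j = 0$, $m = 0$ contribution from the $dx/y$ component carries the Poisson prefactor $1/M_1 = 1/\det L_\ell$ (matching the normalization in $\hat\calL_W$) together with a factor of $y$ from the Fourier dual spacing, which cancels the $1/y$ in $dx/y$. Using $\widetilde H_0 = 1$, the $G$-equivariance of $\pi_k$, and that $\pi_k(e_2^k)$ is the weight-zero vector fixed by $a(\sqrt{y})$, the surviving expression collects into precisely $\theta_{\hat\calL_W}(\tau, \varphi^{N_\ell}_{1,[k]})$.

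For exactness, apply $d$ termwise to the series defining $\theta_{\hat\calL_W}(\tau, \phi^{N_\ell}_{[k]})$; Schwartz decay of $\widetilde H_{k+1}(\w) e^{-\pi \w^2}$ justifies the exchange of $d$ and $\sum$, and by Proposition~\ref{primitive} each summand $\phi^{N_\ell}_{[k]}(\sqrt{v}\w, x)$ differentiates to $\varphi^{N_\ell}_{1,[k]}(\sqrt{v}\w, x)$, yielding the second identity. The main obstacle is the Poisson summation step above: the rescaling from $\widetilde H_j(t) e^{-\pi t^2}$ (whose Fourier transform $(-i)^j \widetilde H_j(s) e^{-\pi s^2}$ is nonvanishing at $s = 0$ for $j$ even) to the variant $\widetilde H_j(t/\sqrt{2}) e^{-\pi t^2}$ (whose Fourier transform is a pure monomial) is essential for the smoothness at $\overline X$, since the coefficient vectors $g_z(e_1^j e_2^{k-j})$ grow like $y^j$ and would otherwise produce polynomial blow-up at the boundary; tracking the various normalizations and verifying this crucial vanishing is the principal bookkeeping task.
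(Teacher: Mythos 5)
Your proposal follows exactly the route the paper indicates (partial Poisson summation on the $u$-summation, i.e.\ the $a$-variable, for the extension and restriction, and Proposition~\ref{primitive} for exactness), but whereas the paper simply cites \cite{FMres} and Proposition~\ref{primitive}, you carry out the Iwasawa-coordinate computation and Poisson summation in detail, paralleling the paper's Proposition~\ref{partialPoissonU}. The calculation is correct: the rescaled Fourier transform $\widehat{\widetilde H_j(t/\sqrt{2})\,e^{-\pi t^2}}(s)=(-i\sqrt{2}\,s)^j e^{-\pi s^2}$ agrees with the $H_j(\sqrt\pi\,x)e^{-\pi x^2}\mapsto(-2i\sqrt\pi\,s)^je^{-\pi s^2}$ formula used in Proposition~\ref{partialPoissonU}, the Poisson prefactor $y/(\sqrt{v}M_1)$ supplies both the $1/\det L_\ell$ in $\hat\calL_W$ and the $v^{-1/2}$ needed for the $v^{-(k+1)/2}$ normalization, the monomial vanishing at $s=0$ for $j\ge1$ kills the entire $dy/y$ component and the $j\ge1$ pieces of the $dx/y$ component, and $a(\sqrt{y})$ fixing $e_2$ yields $g_z\pi_k(e_2^k)=n(x)\pi_k(e_2^k)$ as required. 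Your remark about the monomial form of the rescaled Fourier transform being what tames the $y^j$-growth of $g_z(e_1^je_2^{k-j})$ is the right observation, and your treatment of exactness via termwise differentiation and Proposition~\ref{primitive} (which requires $k>0$) matches the paper. The only point worth tightening is the order of operations: one should first Poisson-sum the $a$-variable for all cosets (including $c\ne0$), then observe that the $c\ne0$ terms and the $m\ne0$ terms decay superpolynomially while the $c=0$, $m=0$, $j\ge1$ terms vanish identically, rather than first discarding $c\ne0$; but the two framings are easily seen to be equivalent here.
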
 

\begin{proof}
The extension to $\overline{X}$ and the restriction formula are (in much greater generality) the main themes of \cite{FMres}. To obtain such a result we employ partial Poisson summation on $L \cap \ell$, that is, on the $u$-summation. The exactness at the boundary face $X_{\ell}$ follows from Proposition~\ref{primitive}.
\end{proof}

From the appendix we can therefore conclude

\begin{corollary}
The pair $\left(\theta_{\calL_V}(\tau,\varphi^V_{1,[k]}), \sum_{[\ell]}  \theta_{\hat{\calL}_{W_\ell}}(\tau,\phi^{N_{\ell}}_{[k]}) \right)$ defines a cohomology class of the mapping cone associated to $\iota: \partial \overline{X} \to \overline{X}$. Here the sum extends over all $\G$-equivalence classes of rational isotropic lines.
\end{corollary}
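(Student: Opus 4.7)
The plan is to verify the two conditions required for a pair $(\omega,\phi)\in \calA^{1}(\overline{X},\widetilde{\calH_k(V_\C)})\oplus \calA^{0}(\partial\overline{X},\widetilde{\calH_k(V_\C)})$ to be a cocycle in the mapping cone complex associated to $\iota:\partial\overline{X}\to\overline{X}$, namely
\[
d\omega = 0 \qquad\text{and}\qquad \iota^{\ast}\omega = d\phi,
\]
with $\omega=\theta_{\calL_V}(\tau,\varphi^V_{1,[k]})$ and $\phi=\sum_{[\ell]}\theta_{\hat{\calL}_{W_\ell}}(\tau,\phi^{N_{\ell}}_{[k]})$. The first step is to observe that $\omega$ really does define a form on $\overline{X}$: this is precisely the extension statement of Theorem~\ref{restriction}, which was obtained by partial Poisson summation along the isotropic line $L\cap\ell$ at each cusp $\ell$. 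With the form living on the compactification, the restriction $\iota^{\ast}_{\ell}\omega$ to each boundary circle $X_{\ell}$ makes sense.

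Next I would verify the closedness $d\omega=0$. Since $d\varphi^{V}_{1,[k]}=0$ as recalled in Section~\ref{Schwartz-Section} (this was \cite{FMcoeff}, Theorem~5.7), the theta series $\theta_{\calL_V}(\tau,\varphi^{V}_{1,[k]})$ is termwise closed on $X$, hence closed on $\overline{X}$ by continuity of the extension.

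For the compatibility condition $\iota^{\ast}\omega=d\phi$, I would invoke the second half of Theorem~\ref{restriction}, which computes the restriction to each boundary face as
\[
\iota^{\ast}_{\ell}\theta_{\calL_V}(\tau,\varphi^V_{1,[k]})= \theta_{\hat{\calL}_{W_\ell}}(\tau,\varphi^{N_{\ell}}_{1,[k]}) = d\bigl(\theta_{\hat{\calL}_{W_\ell}}(\tau,\phi^{N_{\ell}}_{[k]})\bigr),
\]
the second equality being the termwise exactness of $\varphi^{N_{\ell}}_{1,[k]}=d\phi^{N_{\ell}}_{[k]}$ established in Proposition~\ref{primitive} (valid for $k>0$). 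Summing over the $\G$-equivalence classes $[\ell]$ of rational isotropic lines, which parametrise the connected components of $\partial\overline{X}$, gives $\iota^{\ast}\omega=d\phi$ globally.

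The only real subtlety, which I expect to be the main obstacle to present cleanly, is the bookkeeping at the boundary: one must check that the sum $\sum_{[\ell]}$ is a well-defined smooth form on $\partial\overline{X}$ (independent of the choices of representatives $\sigma_\ell$ and of the primitive $\phi^{N_\ell}_{[k]}$, which by the remark after Proposition~\ref{primitive} is unique up to a multiple of the highest weight vector), and that the exactness statement of Theorem~\ref{restriction} matches the convention for the mapping cone differential discussed in the appendix. With these compatibilities in hand, the corollary is immediate: the pair is a cocycle and thus represents a class in the mapping cone cohomology, which the appendix identifies with $H^{1}_{c}(X,\widetilde{\calH_{k}(V_\C)})$.
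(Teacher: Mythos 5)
Your proposal is correct and follows exactly the paper's (implicit) argument: the paper simply states the corollary with ``From the appendix we can therefore conclude,'' meaning that the cocycle conditions $d\omega=0$ and $\iota^{\ast}\omega=d\phi$ (in the sign/grading convention of the appendix) are read off from Theorem~\ref{restriction} and the closedness of $\varphi^V_{1,[k]}$, just as you describe.
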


\subsection{The main result}

For a rational isotropic line $\ell$ in $V$, we define
\[
C_{\ell,[k]} = 
\begin{cases}
-M_{\ell}^k\frac{B_{k+1}\left(\frac{h_{\ell}}{M_{\ell}}\right)}{k+1} (X_{\ell} \otimes u_{\ell}^k)
 & \text{if $\calL \cap \ell = (M_{\ell}\Z +h_{\ell}) u_{\ell}$} \\
0 & \text{if $\calL \cap \ell = \emptyset$}.
\end{cases}
\]
We then set 
\[
C_{0,[k]}^c = \sum_{[\ell]} C_{\ell,[k]} \in H_1(\overline{X},\widetilde{\calH_k(V)}),
\]
where the sum extends over all $\G$-equivalence classes $[\ell]$ of rational isotropic lines in $V$.

The main result of the paper is

\begin{theorem}\label{Main}
The mapping cone element $\left[\theta_{\calL_V}(\tau,\varphi^V_{1,[k]}), \sum_{[\ell]}  \theta_{\hat{\calL}_{W_\ell}}(\tau,\phi^{N_{\ell}}_{[k]}) \right]$ representing a class in $H^1_c({X},\widetilde{\calH_{k}(V_\C)})$ defines a holomorphic modular form of weight $k+3/2$ for $\G(M)$ and is equal to the (Poincar\'e duals of the) generating series of the spectacle cycles with coefficients
\[
\sum_{n \geq0} [C^c_{n,[k]}] e^{2\pi i n \tau}.
\]
That is, for any closed $1$-form $\eta$ in $\overline{X}$ with values in 
$\calH_{k}(V_\C)$ representing a class in $H^1(\overline{X},\widetilde{\calH_{k}(V_\C)})\simeq H^1(X,\widetilde{\calH_{k}(V_\C)})$ we have
\[
\left\langle \eta, \left[\theta_{\calL_V}(\tau,\varphi^V_{1,[k]}), \sum_{[\ell]}  \theta_{\hat{\calL}_{W_\ell}}(\tau,\phi^{N_{\ell}}_{[k]}) \right] \right\rangle =
\sum_{n \geq0} \left(\int_{C^c_{n,[k]}} \eta \right) e^{2\pi i n \tau} \in M_{k+3/2}(\G(M)).
\]
Equivalently, for any relative $1$-cycle $C$ in $\overline{X}$ with coefficients representing a class in $H_1(\overline{X},\partial \overline{X},\widetilde{\calH_{k}(V)})$, we have 
\[
\left\langle \left[\theta_{\calL_V}(\tau,\varphi^V_{1,[k]}), \sum_{[\ell]}  \theta_{\hat{\calL}_{W_\ell}}(\tau,\phi^{N_{\ell}}_{[k]}) \right],C \right\rangle =
\sum_{n \geq0} ({C^c_{n,[k]}} \bullet C) e^{2\pi i n \tau} \in M_{k+3/2}(\G(M)).
\]
\end{theorem}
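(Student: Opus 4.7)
The plan is to reduce Theorem~\ref{Main} to the already-established Shintani lift (Theorem~\ref{Shintani}) combined with an explicit computation using the signature-$(1,1)$ theta lift of Section~\ref{1-1-Section}. The starting observation is that the image of the mapping cone class in the ordinary cohomology $H^1(X,\widetilde{\calH_k(V)})$ is precisely $\theta_{\calL_V}(\tau,\varphi^V_{1,[k]})$, whose class equals $\sum_{n>0}[C_{n,[k]}]e^{2\pi i n\tau}$ by Theorem~\ref{Shintani}; moreover $C^c_{n,[k]}$ and $C_{n,[k]}$ coincide in $H_1(\overline{X},\partial\overline{X},\widetilde{\calH_k(V)})$ because the spectacle caps live entirely in $\partial\overline{X}$. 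Therefore the identity in Theorem~\ref{Main} already holds after pairing with every absolute class in $H_1(\overline{X},\widetilde{\calH_k(V)})$. Via the long exact sequence of the pair $(\overline{X},\partial\overline{X})$, it suffices to verify the formula on a family of test cycles that together with absolute homology generate the full relative homology. By Proposition~\ref{exactness} these may be chosen to be modular symbols $C_\x\otimes v$ for which $v\in\calH_k(V)$ has a nonzero projection onto one of the highest/lowest weight lines $\Q u_{\ell_\x}^k$, $\Q u_{\ell'_\x}^k$, together with the boundary classes $X_\ell\otimes u_\ell^k$ that register the constant term $C^c_{0,[k]}$.

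For such a test cycle $C=C_\x\otimes v$, the mapping cone pairing (whose explicit formula is recorded in the appendix) takes the form $\int_{C_\x\otimes v}\theta_{\calL_V}(\tau,\varphi^V_{1,[k]})$ plus boundary correction terms at the endpoints $c_\x, c'_\x$ built from the primitives $\theta_{\hat{\calL}_{W_\ell}}(\tau,\phi^{N_\ell}_{[k]})$. I would evaluate the interior integral by introducing the Witt decomposition $V=U\oplus U^\perp$ with $U=\x^\perp$ a rational split space of signature $(1,1)$ and $U^\perp=\R\x$, and applying the factorization of Lemma~\ref{varphi-splitting}. Summing over $\calL_V$, the $U^\perp$-part contributes a Gaussian-weighted sum in the $\x$-direction that collapses to a $(1,1)$-theta kernel on $U$; after pairing with $v$ and extracting weight components via Lemma~\ref{repformulas}, the integral over $C_\x=\G_\x\backslash D_U$ reduces to a finite linear combination (with explicit Hermite-polynomial prefactors) of the integrals $\int_{X_U\otimes w}\theta_{\calL_U}(\tau,\varphi^U_{1,[j]})$ computed in Theorem~\ref{11-theorem}.

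The positive Fourier coefficients produced this way enumerate vectors $\y_U$ in the projected lattice, which lift bijectively to those $\y\in\calL_V$ whose geodesic $D_\y$ meets $D_\x$ transversally in $D$; the sign $\epsilon(\y_U)$ of Theorem~\ref{11-theorem} agrees with the intersection multiplicity $\epsilon(\y,\x)$ of Proposition~\ref{intmult}, and the coefficient weight $(\pi_k(\y^k),v)$ reproduces the transverse intersection contribution $\sum_{n>0}(C_{n,[k]}\bullet(C_\x\otimes v))e^{2\pi i n\tau}$ through~\eqref{coeffpairing}. The main obstacle is to identify, Fourier coefficient by Fourier coefficient, the remaining pieces: the constant term of the $(1,1)$-integral together with the cuspidal correction $\theta_{\hat{\calL}_{W_\ell}}(\tau,\phi^{N_\ell}_{[k]})$ must equal the intersection of $C_\x\otimes v$ with the spectacle lenses and with $C^c_{0,[k]}$. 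Both sides are ultimately governed by Bernoulli polynomials: on the cycle side through the explicit formula of Proposition~\ref{Propv_x} for the lens vectors $w_\y, w'_\y$, and on the theta side through the raising-operator inversion of Proposition~\ref{primitive} (which underlies the Bernoulli constants appearing in Theorem~\ref{11-theorem}). Completing this bookkeeping term by term proves the identity on the chosen representatives, and the theorem follows by $\Q$-linearity.
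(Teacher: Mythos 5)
Your proposal follows essentially the same route as the paper: reduce via Theorem~\ref{Shintani} to test cycles generating the cokernel of $H_1(\overline{X})\to H_1(\overline{X},\partial\overline{X})$, take these to be infinite modular symbols whose boundary is nontrivial, compute the theta pairing by the Witt factorization of Lemma~\ref{varphi-splitting} and the $(1,1)$-lift of Theorem~\ref{11-theorem}, and match Fourier coefficients against transverse intersections plus the Bernoulli-governed lens and constant-term contributions. The only simplification the paper makes that you leave open is the specific choice $v=u_{\ell'}^k$ (a highest weight vector for the opposite cusp), which kills one of the two boundary $0$-cycles and collapses your ``finite linear combination'' of $(1,1)$-integrals to a single product; this, together with the embedding of $C_{\y}$ in $\overline{X}$ (Lemma~\ref{embedLemma}) and a brief treatment of multiple points of $C_v$, is what keeps the paper's bookkeeping manageable.
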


\begin{proof}
By Theorem~\ref{Shintani} the homology version of the assertion holds for the image of the full homology  $H_1(\overline{X},\widetilde{\calH_{k}(V)})$ inside the relative homology $H_1(\overline{X},\partial \overline{X},\widetilde{\calH_{k}(V)})$. Therefore it suffices to show the theorem for representatives of the quotient  \linebreak $H_1(\overline{X},\partial \overline{X},\widetilde{\calH_{k}(V)})/H_1(\overline{X},\widetilde{\calH_{k}(V)})$. This means it suffices to consider preimages under the (surjective) boundary map
\[
\partial: H_1(\overline{X},\partial \overline{X},\widetilde{\calH_{k}(V)}) \to H_1(\partial \overline{X},\widetilde{\calH_{k}(V)}) = \bigoplus_{[\ell]} H_0(X_\ell,\widetilde{\calH_k(V)}).
\]
Now $H_0(X_\ell,\calH_k(V))$ is one-dimensional and spanned by $c \otimes u_{\ell}'^k$ for any point $c \in X_\ell$. We consider the Witt splitting $V = \ell \oplus W_{\ell} \oplus \ell'$ and take a nonzero $\y\in W_\ell$. Then 
\[
\partial [C_\y \otimes u_{\ell}'^k] = [c_\y \otimes u_{\ell}'^k] - [c'_\y \otimes u_{\ell}'^k] =[c_\y \otimes u_{\ell}'^k],
\]
since $(c'_\y \otimes u_{\ell}'^k)$ is trivial in $H_0(X_{\ell'},\widetilde{\calH_k(V)})$ because $u_{\ell}'^k$ is a highest weight vector for the cusp $\ell'$. Hence it suffices to compute the lift for $C_\y \otimes u_{\ell}'^k$. We will carry this out in the next section. 
\end{proof}

\begin{remark}
There are other approaches to this result (for which we did not check details). One can consider $\left\langle \eta, \left[\theta_{\calL_V}(\tau,\varphi^V_{1,[k]}), \sum_{[\ell]}  \theta_{\hat{\calL}_{W_\ell}}(\tau,\phi^{N_{\ell}}_{[k]}) \right] \right\rangle$ for a closed $1$-form $\eta$ on $\overline{X}$ which in a neighborhood of $X_{\ell}$ is equal to the pullback of $\omega_{\ell,k}$, the standard generator of $H^1(X_{\ell}, \widetilde{\Hk(V_{\C})})$. For example, the holomorphicity of the lift can be shown directly via an argument very similar to the one given in \cite{KM90,FMcoeff}. Alternatively, using Lemma~\ref{integralformula} one can also consider the lift of a holomorphic $1$-form $\eta_f$ associated to a modular form $f$. Since the lift of an Eisenstein series is again an Eisenstein series (see Section~\ref{Eisenstein-Section}) an explicit calculation of Fourier coefficients should be also feasible. 

We choose the present approach (which is a bit in the spirit of \cite{HZ}), since it gives us the opportunity to discuss in detail the very pretty example of the lift of a modular symbol. 

\end{remark}

\section{Lift of modular symbols}\label{Modular-Section}

In this section, we prove Theorem~\ref{Main} by considering the lift for a non-compact cycle of the form $C_{\y} \otimes u_{\ell}'^k$. 

More precisely, starting from an isotropic line $\ell= \Q u$ we find an $\G$-inequivalent isotropic line $\ell' = \Q u'$ with $(u,u')=-1$ (the torsion-free congruence subgroup $\G$ has at least two cusps). We then obtain a rational Witt splitting $V = \ell \oplus W \oplus \ell'$, with $W= \Q\y$ so that $u,\y,u'$ is an oriented basis of $V$. Hence $D_{\y}=D_W$  joins the two {\it inequivalent} cusps $\ell'$ and $\ell$.

\begin{lemma}\label{embedLemma}
The infinite geodesic $C_\y$ connecting two distinct cusps embeds into $\overline{X}$.
\end{lemma}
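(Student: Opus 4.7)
The plan is to establish injectivity of the natural continuous map $\iota\colon\overline{D_\y}\to\overline{X}$, where $\overline{D_\y}$ denotes the closure of $D_\y$ in $\overline{D}$; this closure is homeomorphic to a closed interval with boundary points $c_\y\in D_{\ell}$ and $c'_\y\in D_{\ell'}$. Since $\overline{D_\y}$ is compact and $\overline{X}$ is Hausdorff, injectivity will automatically upgrade to an embedding. Accordingly, I assume $\g\in\G$ and $z_1,z_2\in\overline{D_\y}$ with $\g z_1=z_2$, and show $z_1=z_2$ by a case analysis on the strata of the Borel--Serre stratification.

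First I would handle the cases involving the boundary. The $\G$--action preserves the Borel--Serre stratification, so $\g$ cannot mix boundary and interior points, and it sends each $D_{\ell''}$ to $D_{\g\ell''}$. Thus either both $z_i$ are interior, or both are boundary points and hence $\{z_1,z_2\}\subset\{c_\y,c'_\y\}$. If in the boundary case $z_1=c_\y$ and $z_2=c'_\y$, then $\g\ell=\ell'$, contradicting the $\G$--inequivalence of $\ell$ and $\ell'$; the remaining possibility on the boundary is $z_1=z_2$.

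Next consider the interior case $z_1,z_2\in D_\y$. If $\g D_\y=D_\y$, then $\g$ preserves the geodesic and hence acts on the unordered pair of endpoints $\{\ell,\ell'\}$. Swapping is excluded by the $\G$--inequivalence, so $\g$ fixes each of $\ell$ and $\ell'$ and thus stabilises $\y$ up to sign. Because $\y^\perp=\ell\oplus\ell'$ is $\Q$--split, Section~\ref{Spec-Section} tells us $\overline{\G}_\y$ is trivial, and combined with the torsion--freeness of $\G$ this yields $\g=1$, hence $z_1=z_2$.

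The only remaining possibility is $\g D_\y\neq D_\y$, and ruling this out is the main obstacle: a transverse intersection $D_\y\cap\g D_\y$ at $z_2$ would create a genuine self--crossing of $C_\y$ inside $X$. I would deal with this by exploiting the cuspidal behaviour at both ends of $D_\y$. For a sufficiently small horoball neighbourhood $\calH_\ell$ of the cusp $\ell$, the quotient $\calH_\ell/\G_\ell$ injects into $\overline{X}$, and inside it the single ray of $D_\y$ going out to the cusp cannot meet any other $\G$--translate of itself except via $\G_\ell$, which preserves $D_\y$; the same applies at $\ell'$. Removing such horoballs leaves a compact subsegment of $D_\y$, on which proper discontinuity of the $\G$--action allows only finitely many candidate $\g$, and each such $\g$ would force $\g\y=\pm\y$ (since otherwise the two geodesics $D_\y$ and $D_{\g\y}$ through $z_2$ would determine different orthogonal complements in $V$, i.e.\ $\g\ell\in\{\ell,\ell'\}$ and $\g\ell'\in\{\ell,\ell'\}$, which by the stabiliser analysis of the previous paragraph reduces to $\g D_\y=D_\y$), contrary to our assumption.
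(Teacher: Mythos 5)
Your case analysis is a genuinely different strategy from the paper's. The paper makes the embedding essentially trivial by choosing a convex fundamental domain $\calF$ (a Dirichlet domain) having both $\ell$ and $\ell'$ among its ideal vertices: convexity then forces the geodesic joining those two vertices --- which is $D_\y$ --- to lie inside $\overline{\calF}$, so it already parametrises $C_\y$ one-to-one and there is no self-crossing to rule out. Your boundary case and your $\g D_\y = D_\y$ case are handled correctly; the trouble is that the remaining case, $\g D_\y\ne D_\y$, is precisely where all the content of the lemma sits, and your argument there does not close.

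Concretely, the parenthetical justification for ``each such $\g$ would force $\g\y=\pm\y$'' is a non-sequitur. You argue that otherwise $D_\y$ and $D_{\g\y}$ through $z_2$ ``would determine different orthogonal complements in $V$, i.e.\ $\g\ell\in\{\ell,\ell'\}$ and $\g\ell'\in\{\ell,\ell'\}$.'' But having different orthogonal complements says exactly that $\{\g\ell,\g\ell'\}\neq\{\ell,\ell'\}$; it implies no relation whatsoever between the four ideal endpoints. Two geodesics can cross at an interior point $z_2\in D$ while having four completely unrelated endpoints, and that transverse-crossing configuration is what you still need to exclude. The horoball step and the properness/finiteness step correctly confine a putative crossing to finitely many $\g$ acting on a compact core of $D_\y$, but they supply no mechanism for eliminating those finitely many candidates. (A smaller slip: a nontrivial $\g\in\G_\ell$ does \emph{not} preserve $D_\y$ --- it carries $D_\y$ to a different geodesic asymptotic to $\ell$; what rescues the horoball step is the elementary fact that two geodesics sharing an ideal endpoint never cross inside $\h$.) To repair your route you would need a genuine non-crossing argument --- for example a congruence bound of the shape $|(\y,\g\y)|\geq(\y,\y)$ for all $\g\in\G$ --- or you can simply adopt the paper's convex-fundamental-domain observation, which bypasses the issue.
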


\begin{proof}
Let $\calF$ be a convex fundamental domain for $\G$ (note that the Dirichlet domain is convex).  The cusps $\ell$ and $\ell'$ correspond to vertices of $\calF$. These vertices may be joined in $\calF$ by a unique infinite geodesic (because $\calF$ is convex) which maps one-to-one onto $C_\y$.
\end{proof}

Then Theorem~\ref{Main} will follow from

\begin{theorem}\label{TT1}
\begin{equation}\label{33}
\left\langle  \left[\theta_{\calL_V}(\tau,\varphi^V_{1,[k]}), \sum_{[\ell]}  \theta_{\hat{\calL}_{W_{\ell}}}(\tau,\phi^{N_{\ell}}_{[k]}) \right],(C_\y \otimes u'^k) \right\rangle = \sum_{n \geq0} ({C^c_{n,[k]}} \bullet (C_\y \otimes u'^k)) e^{2\pi i n \tau}.
\end{equation}
\end{theorem}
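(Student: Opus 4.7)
The plan is to compute the LHS of \eqref{33} directly via the mapping cone pairing formula and match the resulting generating series term-by-term against the RHS.

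First, I would invoke the mapping cone pairing formula from the appendix: since $C_\y\otimes u'^k$ is a relative cycle whose boundary is concentrated at $c_\y\in X_\ell$ and $c'_\y\in X_{\ell'}$, only the summands associated to these two cusps contribute, and one obtains
\[
\mathrm{LHS}=\int_{\overline{C_\y}}(\theta_{\calL_V}(\tau,\varphi^V_{1,[k]}),u'^k)-\bigl(\theta_{\hat{\calL}_{W_\ell}}(\tau,\phi^{N_\ell}_{[k]})|_{c_\y},u'^k\bigr)+\bigl(\theta_{\hat{\calL}_{W_{\ell'}}}(\tau,\phi^{N_{\ell'}}_{[k]})|_{c'_\y},u'^k\bigr),
\]
with signs dictated by the boundary orientation of $C_\y$. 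Convergence of the main integral is guaranteed by Theorem~\ref{restriction}, which extends $\theta_{\calL_V}$ smoothly to $\overline{X}$.

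Next, I would reduce the main integral to the $(1,1)$-theta integral of Section~\ref{1-1-Section}. Taking the rational Witt decomposition $V=\ell\oplus W\oplus\ell'$ and setting $U=\ell\oplus\ell'$, we have $U^\perp=W=\Q\y$ and $D_\y=D_U$. Lemma~\ref{varphi-splitting} gives $r_{D_U}\varphi^V_{1,k}(\x+\w)=\sum_{j=0}^k\varphi^U_{1,j}(\x)\cdot\varphi^{U^\perp}_{k-j}(\w)$; since the bilinear form on $\Sym^k(V)$ respects the orthogonal decomposition $V=U\oplus U^\perp$ and $u'^k\in\Sym^k(U)$, only the $j=k$ term survives after the coefficient pairing with $u'^k$. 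Regrouping the lattice sum over $\calL_V$ by its projections to $U$ and $U^\perp$ (applying partial Poisson summation on $U^\perp$, in analogy with the proof of Theorem~\ref{restriction}) yields an identity of the shape
\[
\int_{\overline{C_\y}}(\theta_{\calL_V}(\tau,\varphi^V_{1,[k]}),u'^k)=\int_{X_U}(\theta_{\tilde{\calL}_U}(\tau,\varphi^U_{1,[k]}),u'^k)
\]
for an explicitly determined lattice coset $\tilde{\calL}_U\subset U$.

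Applying Theorem~\ref{11-theorem} then identifies this with a holomorphic modular form of weight $k+1$ whose $n$-th Fourier coefficient $(n>0)$ is the sum $\sum_{\x\in\tilde{\calL}_U,\,q(\x)=n}\eps(\x)(u'^k,\x^k)$. Via Proposition~\ref{intmult} and the pairing formula \eqref{coeffpairing}, this matches $C_{n,[k]}\bullet(C_\y\otimes u'^k)$, provided one verifies that the enumeration of intersection points of $C_\y$ with $\Gamma$-orbits of $D_\x$ corresponds to the enumeration of vectors of norm $n$ in $\tilde{\calL}_U$. Crucially, for $n>0$ the cap portions of $C^c_{n,[k]}$ live on the boundary circles, hence do not meet the interior of $C_\y$, so $C^c_{n,[k]}\bullet(C_\y\otimes u'^k)=C_{n,[k]}\bullet(C_\y\otimes u'^k)$, which matches all positive Fourier coefficients of the RHS.

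The main obstacle will be the $n=0$ term: the constant term produced by Theorem~\ref{11-theorem} (expressed via Bernoulli values $M^k B_{k+1}(h/M)/(k+1)$) must precisely combine with the mapping cone boundary contributions from $\theta_{\hat{\calL}_{W_\ell}}(\tau,\phi^{N_\ell}_{[k]})|_{c_\y}$ and $\theta_{\hat{\calL}_{W_{\ell'}}}(\tau,\phi^{N_{\ell'}}_{[k]})|_{c'_\y}$ to yield $C^c_{0,[k]}\bullet(C_\y\otimes u'^k)$ as defined in Section~\ref{Main-Section}. This bookkeeping requires consistent tracking of lattice cosets and their Poisson duals, sign conventions for orientation, and an explicit comparison with the cap vectors $w_\x$ via Proposition~\ref{Propv_x}. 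The geometric core of the argument -- the $(1,1)$-reduction and the absence of cap intersections for $n>0$ -- is clean; what is delicate is the matching of all $n=0$ constants, which is where the Bernoulli polynomial values from the spectacle caps must align exactly with the Eisenstein constant term on the hyperbolic line.
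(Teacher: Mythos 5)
Your overall strategy matches the paper's: use the mapping cone pairing formula to split the left side into a main integral over $C_\y$ plus two boundary terms, reduce the main integral via Lemma~\ref{varphi-splitting} to the $(1,1)$-theta integral of Section~\ref{1-1-Section}, apply Theorem~\ref{11-theorem}, and match Fourier coefficients against intersection numbers with spectacle cycles. However, there is a genuine gap in your matching step that would cause the proof to fail.

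You write that ``for $n>0$ the cap portions of $C^c_{n,[k]}$ live on the boundary circles, hence do not meet the interior of $C_\y$, so $C^c_{n,[k]}\bullet(C_\y\otimes u'^k)=C_{n,[k]}\bullet(C_\y\otimes u'^k)$.'' This is false. The cycle $C_\y\otimes u'^k$ is a relative cycle whose boundary lies on the circles $X_\ell$ and $X_{\ell'}$, so to compute the intersection number one must push the caps of $C^c_{n,[k]}$ slightly into the interior (as in Lemma~\ref{homology1}); after doing so, any cap attached to a cusp shared with $C_\y$ crosses $C_\y$ transversally at the horocircle level. This produces nonzero cap-contributions to the intersection number for infinitely many $n>0$, namely whenever $n$ is a square and the corresponding modular symbols $C_\x$ touch the cusps of $C_\y$. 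These are computed in Proposition~\ref{TT5} and are exactly what matches the ``extra'' Bernoulli-weighted theta series
\[
(-1)^{k+1}\,\delta_{0,h_2}\,M_1^{k}\,\frac{B_{k+1}\bigl(\tfrac{h_1}{M_1}\bigr)}{k+1}\;\sum_{\w\in\calL_W}e^{\pi i(\w,\w)\tau}
\]
appearing in Proposition~\ref{TT2}, which contributes to Fourier coefficients at all $n$ in the image of $q|_{\calL_W}$, not just $n=0$. Without recognizing that the $(1,1)$-integral factors as a $(1,1)$-theta integral \emph{times} the definite theta $\theta_{\calL_W}(\tau,\varphi^W_0)$ (rather than reducing to a single ``lattice coset $\tilde\calL_U$''), you also cannot see where these $n>0$ Bernoulli terms come from. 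A smaller but related point: the nonholomorphic term that appears for $k=1$ in Proposition~\ref{TT2} must be cancelled against the boundary-restriction term from the mapping cone (Lemma~\ref{TT3}), and the remaining $-(\w,\w)$ piece must match the $\ell'$-cap intersections for $n>0$ from Proposition~\ref{TT5}(ii); this cancellation is also not confined to $n=0$.
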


The proof will occupy the rest of the section. We will compute both sides of \eqref{33} explicitly. For the left hand side we have by Lemma~\ref{integralformula}
\begin{align}
&\left\langle\left[\theta_{\calL_V}(\tau,\varphi^V_{1,[k]}), \sum_{[\ell]}  \theta_{\hat{\calL}_{W_\ell}}(\tau,\phi^{N_{\ell}}_{[k]}) \right],  (C_{\y} \otimes u'^k)\right\rangle \notag \label{11}\\
& = \int_{C_{\y}}\left( \theta_{\calL_V}(\tau,\varphi^V_{1,[k]}),u'^k\right) \\
& \quad - \left( \theta_{\hat{\calL}_{W_{\ell}}}(\tau,c_\ell,\phi^{N_{\ell}}_{[k]}),u'^k\right)  + \left(\theta_{\hat{\calL}_{W_{\ell'}}}(\tau,c_{\ell'},\phi^{N_{\ell'}}_{[k]}), u'^k\right). \label{22}
\end{align}
For simplicity we assume
\[
\calL_V = (M_1\Z + h_1)u \oplus L_W + h_{W} \oplus  (M_2\Z + h_2)u',
\]
where $L_W = L \cap W, h_W \in L_W^\#$ and $0\leq h_i <M_i$. (The general case goes along the same lines but requires more notation). We set $\calL_{W} =  L_W + h_{W}$ and $U = W^{\perp}$.

\begin{proposition}\label{TT2}
For the integral \eqref{11} we have 
\begin{align*}
& \int_{C_{\y}}\left( \theta_{\calL_V}(\tau,\varphi^V_{1,[k]}),u'^k\right)\\ & = 
\sum_{\substack{ \x \in U, \w \in W \\ \x+\w \in \calL_V \\ (\x,\x)>0}} \eps(\x) (\x,u')^k e^{\pi i(\x+\w,\x+\w)\tau} + 
(-1)^{k+1} \delta_{0,h_2} M_1^{k} \frac{B_{k+1}\left(\frac{h_1}{M_1}\right)}{k+1} 
       \sum_{\w \in \calL_W} e^{\pi i (\w,\w)\tau}.
\end{align*}
For $k=1$, we have an additional term 
\[
\frac{-\delta_{0,h_1}}{4M_2\pi v} \sum_{\w \in \calL_W} e^{\pi i (\w,\w)\tau}.
\]
Here $\eps(\x)= \eps(\x, \y) = \pm 1$ depending on whether $\x,\y$ defines a properly oriented basis of the tangent space for the point $z\in D$ determined by $\{\x,\y\}^{\perp}$. This coincides with the definition of $\eps(\x)$ for $\x \in U$ given in Section~\ref{1-1-Section}.

\end{proposition}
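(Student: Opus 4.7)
The plan is to reduce the three-dimensional theta integral over $C_{\y}$ to the product of a one-dimensional integral over the hyperbolic line $D_U$ (where Theorem~\ref{11-theorem} already provides the answer) and a holomorphic unary theta series on the positive-definite line $W$. The key point is that pairing the coefficient system against $u'^k$ annihilates all contributions from $W$.

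First, since $C_{\y}$ connects the two inequivalent cusps $\ell$ and $\ell'$, Lemma~\ref{embedLemma} gives that $\G_{\y}$ is trivial, so $C_{\y}$ is identified with $D_{\y} = D_U$ where $U = W^{\perp} = \Q u \oplus \Q u'$. Under the lattice assumption $\calL_V = (M_1\Z + h_1)u \oplus \calL_W \oplus (M_2\Z + h_2)u'$, every $\x_V \in \calL_V$ splits as $\x + \w$ with $\x \in \calL_U$ and $\w \in \calL_W$, and $(\x_V, \x_V) = (\x,\x) + (\w,\w)$ since $U \perp W$.

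Next, I would apply Lemma~\ref{varphi-splitting} to factor the restriction of $\varphi^V_{1,k}$ to $D_U$ as $\sum_{j=0}^k \varphi^U_{1,j}(\x) \cdot \varphi^{U^{\perp}}_{k-j}(\w)$. The crucial observation is that pairing with $u'^k$ kills every summand with $j<k$. On the one hand $u'^k$ is harmonic (being the $k$-th power of an isotropic vector), so $(\pi_k v, u'^k) = (v, u'^k)$ and the $[k]$-projection may be dropped. On the other hand, in the decomposition $\Sym^k(V) = \bigoplus_j \Sym^j(U) \otimes \Sym^{k-j}(W)$, the pairing $(\cdot, u'^k)$ vanishes on every summand with $k-j>0$, because any $W$-factor of a monomial is orthogonal to $u' \in U$. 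This leaves only the $j=k$ term and yields
\[
(r_{D_U}\varphi^V_{1,[k]}(\x + \w), u'^k) = (\varphi^U_{1,k}(\x), u'^k) \cdot \varphi^{U^{\perp}}_0(\w),
\]
with $\varphi^{U^{\perp}}_0(\w)$ a pure Gaussian factor in $\w$.

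This factorization then propagates to the theta series, and $\int_{C_{\y}}$ splits as a product
\[
\int_{C_{\y}}(\theta_{\calL_V}(\tau, \varphi^V_{1,[k]}), u'^k) = \theta_{\calL_W}(\tau) \cdot \int_{X_U \otimes u'^k} \theta_{\calL_U}(\tau, \varphi^U_{1,[k]}),
\]
where $\theta_{\calL_W}(\tau) = \sum_{\w \in \calL_W} e^{\pi i (\w,\w)\tau}$ is the unary holomorphic theta series of the positive-definite line $W$. Applying Theorem~\ref{11-theorem} with $w = u'^k$ finishes the computation: since $(u'^k, u^k) = (-1)^k$ and $(u'^k, u'^k) = 0$, only the first constant term survives with coefficient $(-1)^{k+1}\delta_{0,h_2} M_1^k B_{k+1}(h_1/M_1)/(k+1)$, and the positive Fourier coefficients take the form $\eps(\x)(\x,u')^k e^{\pi i(\x,\x)\tau}$ (using $(u'^k,\x^k) = (\x, u')^k$). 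Combining with $\theta_{\calL_W}(\tau)$ via $(\x,\x) + (\w,\w) = (\x+\w,\x+\w)$ produces the stated formula. For $k=1$ the extra term of Theorem~\ref{11-theorem} specialized to $w = u'$ contributes $-\delta_{0,h_1}/(4M_2\pi v)$, which after multiplication by $\theta_{\calL_W}(\tau)$ matches the claimed correction.

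The main obstacle in carrying this through is verifying the compatibility of signs: the $\eps(\x)$ appearing in the statement is the intersection multiplicity of $D_\x$ and $D_{\y}$ in $V$, while the one used in Theorem~\ref{11-theorem} and Section~\ref{1-1-Section} is $\sgn(\x, \eps_1)$ for $\x \in U$. The last line of the proposition asserts these agree, and checking this amounts to comparing the orientation of $D_U \subset D$ induced by Minkowski cross-product with $\y$ with the intrinsic orientation of $D_U$ coming from $\eps_1$. Beyond this, the remaining work is routine bookkeeping of the Weil-representation scaling in $v$ and the combinatorial factors in the pairing against $u'^k$.
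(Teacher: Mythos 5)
Your proposal is correct and takes essentially the same route as the paper: the paper's proof simply invokes Lemma~\ref{varphi-splitting} to obtain the factorization $\int_{C_\y}(\theta_{\calL_V},u'^k)=(\int_{C_\y}(\theta_{\calL_U},u'^k))\,\theta_{\calL_W}(\tau,\varphi^W_0)$ and then applies Theorem~\ref{11-theorem}. You have merely filled in the (correct) reasons the factorization holds — that $u'^k$ is harmonic so $\pi_k$ can be dropped, and that $(\cdot,u'^k)$ annihilates every summand in $\Sym^j(U)\otimes\Sym^{k-j}(W)$ with $j<k$ because $W\perp U$ — and flagged the orientation compatibility $\eps(\x,\y)=\sgn(\x,\eps_1)$, which the paper asserts in the statement but does not reverify in the proof.
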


\begin{proof}
By Lemma~\ref{varphi-splitting} we immediately obtain
\[
 \int_{C_{W}}\left( \theta_{\calL_V}(\tau,\varphi^V_{1,[k]}),u'^k\right) = \left( \int_{C_{W}}\left( \theta_{\calL_U}(\tau,\varphi^U_{1,[k]}),u'^k\right) \right) \theta_{\calL_W}(\tau,\varphi^W_0),
\]
where  $\calL_{U} = (M_1\Z + h_1)u \oplus  (M_2\Z + h_2)u'$. The proposition now follows from Theorem~\ref{11-theorem}.
\end{proof}

\begin{lemma}\label{TT3}
For $k>1$, we have for the integral \eqref{11} 
\[
\left( \theta_{\hat{\calL}_{W_{\ell}}}(\tau,c_\ell,\phi^{N}_{[k]}),u'^k\right)=\left(\theta_{\hat{\calL}_{W_{\ell'}}}(\tau,c_{\ell'},\phi^{N_{\ell_0}}_{[k]}), u'^k\right)=0.
\]
For $k=1$, we have
\[
\left( \theta_{\hat{\calL}_{W_{\ell}}}(\tau,c_\ell,\phi^{N}_{[1]}),u'\right) =0,
\]
and 
\[
\left(\theta_{\hat{\calL}_{W_{\ell'}}}(\tau,c_{\ell'},\phi^{N_{\ell'}}_{[1]}), u'\right) =  \frac{\delta_{0,h_1} }{M_2}  \sum_{\w \in \calL_W}\left(-(\w,\w) + \frac1{4\pi v}\right) e^{\pi i (\w,\w) \tau}.
\]
\end{lemma}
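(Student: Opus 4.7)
Both pairings in the lemma have the form $\bigl(\phi^{N_\ell}_{[k]}(\sqrt v\w,c),u'^k\bigr)$ for $c$ the Borel--Serre endpoint of the imaginary-axis geodesic $C_\y$ at the cusp $\ell$. The proof rests on two elementary ingredients: (i) since $\y \propto e_2$ spans $W$, the geodesic $C_\y$ is the imaginary axis in $\h$, and its boundary points $c_\ell$ (at $\ell=\ell_\infty$) and $c_{\ell'}$ (at $\ell'=\ell_0$) both coincide with the corresponding basepoints $z_\ell,z_{\ell'}$, hence correspond to the scalar parameter $0$ under $N_\ell\simeq\R$; (ii) the coefficient factor of $\phi^{N_\ell}_{[k]}(\w,x)$ lies in the line through $n_\ell(x)\,v_{-2,\ell}\in\Hk(V)$, where $v_{-2,\ell}$ is the weight $-2$ vector for the torus $A_\ell$ at the cusp $\ell$.

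\textbf{Weight analysis and vanishing.} I expand $n_\ell(x)v_{-2,\ell}=\sum_{j\ge 0}\tfrac{x^j}{j!}R_\ell^j v_{-2,\ell}$, where $R_\ell^j v_{-2,\ell}$ has weight $-2+2j$ with respect to $A_\ell$. The vector $u'^k$ has weight $-2k$ for $A_{\ell_\infty}$ but weight $+2k$ for $A_{\ell_0}$, so by orthogonality of distinct weight spaces the pairing $(n_\ell(x)v_{-2,\ell},u'^k)$ is supported on the unique index $j=k+1$ at $\ell_\infty$ and $j=1-k$ at $\ell_0$. At $\ell=\ell_\infty$ and $x=c_\ell=0$, only the $x^{k+1}$-term survives and vanishes, proving the first assertion for all $k$. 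At $\ell=\ell_0$ with $k>1$ there is no valid nonnegative $j$, so the pairing vanishes identically.

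\textbf{The $k=1$ case at $\ell_0$.} For $k=1$ the surviving term at $\ell_0$ is $j=0$, so $(n_{\ell_0}(0)v_{-2,\ell_0},u')=(v_{-2,\ell_0},u')$, a constant independent of $c_{\ell'}$. I will identify $v_{-2,\ell_0}$ via conjugation: since by Lemma~\ref{repformulas} one has $v_{-2,\ell_\infty}=-u'$ (using $c_1=-1$), and since conjugation by $\sigma_{\ell_0}=\kzxz{0}{-1}{1}{0}$ swaps $u$ and $u'$, it follows that $v_{-2,\ell_0}=-u$ and hence $(v_{-2,\ell_0},u')=-(u,u')=1$. Substituting into
\[
\phi^{N_{\ell_0}}_{[1]}(\w,0)=-\tfrac14\,\widetilde H_2(\w)\,e^{-\pi\w^2}\otimes v_{-2,\ell_0}
\]
with $\widetilde H_2(t)=4t^2-1/\pi$, summing over $\hat{\calL}_{W_{\ell_0}}=\delta_{0,h_1}\tfrac{1}{M_2}\calL_W$ (nonempty precisely when $h_1=0$, from the computation of $\calL_V\cap\ell_0^\perp$), collecting Gaussian and exponential factors into $e^{\pi i(\w,\w)\tau}$, and incorporating the $v^{-1}$ prefactor of the theta series produces the stated formula.

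\textbf{Main obstacle.} The real subtlety is the sign in the identification of $v_{-2,\ell_0}$: since $\sigma_{\ell_0}$ acts by $-1$ on $e_2\in W$, the intrinsic definition of $v_{-2,\ell_0}$ (using the fixed $v_0=2^{k/2}\pi_k(e_2^k)$ common to both cusps) disagrees with the conjugation definition by a sign, and one must use the conjugation convention — compatible with the definition of $R_{\ell_0}=\sigma_{\ell_0}R\sigma_{\ell_0}^{-1}$ — to obtain the stated formula. Beyond this bookkeeping there is no analytic difficulty; the lemma reduces entirely to finite weight-vector algebra together with the explicit polynomial form of $\widetilde H_2$.
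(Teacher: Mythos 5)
Your proposal is correct and follows essentially the same route as the paper: identify the boundary points $c_\ell, c_{\ell'}$ with the basepoints (normal coordinate $0$), observe that the coefficient of $\phi^{N_\ell}_{[k]}$ at that point is a weight $-2$ vector for the relevant torus, and conclude the vanishing for $k>1$ (and for $k\geq 1$ at $\ell_\infty$) from orthogonality of distinct weight spaces, then evaluate $k=1$ explicitly via $\widetilde H_2$. The paper phrases the weight argument with respect to the single diagonal torus $A=A_\y$ (calling the two coefficient vectors ``$v_{-2}$'' and ``a multiple of $v_2$'') rather than using the two cusp-dependent tori $A_\ell$ as you do, but these are equivalent bookkeepings of the same orthogonality. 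Your explicit worry about the sign of $v_{-2,\ell_0}$ (conjugation versus intrinsic normalization with a shared $v_0$) is a real normalization subtlety that the paper does not spell out; your choice — $v_{-2,\ell_0}=\sigma_{\ell_0}(v_{-2})=-u$, giving $(v_{-2,\ell_0},u')=1$ — is the one consistent with the claimed formula, and your arithmetic reproducing $\tfrac{\delta_{0,h_1}}{M_2}\sum_\w(-(\w,\w)+\tfrac{1}{4\pi v})e^{\pi i(\w,\w)\tau}$ checks out (including the prefactor $-\tfrac14$ from $-1/(2^{3k/2+1/2}k)$ at $k=1$, the $v^{-(k+1)/2}=v^{-1}$ factor, and $\widetilde H_2(\sqrt v\w)=4v(\w,\w)-1/\pi$).
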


\begin{proof}
The coefficients for the boundary theta series at $X_\ell$ and $X_{\ell'}$ are the weight vectors $v_{-2}$ and a multiple of $v_{2}$ respectively. Then the claim for $k>1$ is obvious, since $u'^k$ is perpendicular to both. If $k=1$, the restriction of $\theta_{\calL_V}(\tau,\varphi^V_{1,[k]})$ to the cusp $\ell'$ is non zero if and only $\calL_V \cap (\ell')^{\perp} \ne \emptyset$, that is $h_1=0$. Then
\begin{align*}
\left(\theta_{\hat{\calL}_{W_{\ell'}}}(\tau,c_{\ell'},\phi^{N_{\ell'}}_{[1]}), u'\right) 
= \frac{\delta_{0,h_1} }{M_2} \sum_{\w \in \calL_W} \left(-(\w,\w) + \frac1{4\pi v}\right) e^{\pi i (\w,\w) \tau}. 
\end{align*} 
This follows easily from $ {H}_{2}(t) = 4t^2-2$.
\end{proof}

The following results compute the intersection number $C^c_{n,[k]} \bullet (C_\y \otimes u'^k)$ (as defined by algebraic topology). Recall that by Lemma~\ref{embedLemma} the geodesic $C_\y$ embeds into $X$. We first note

\begin{lemma}
Assume that one of the components $C_v$ of $C_n$ coincides with $C_\y$ (i.e., $v$ is a multiple of $\y$). Then the intersection
$C^c_{\y}\bullet C_{\y}$ consists only of the intersections of the caps of $C^{c,T_1,T_2}_{\y}$ with $C_\y$.
\end{lemma}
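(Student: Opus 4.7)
The strategy is to exploit the homology invariance of the intersection pairing together with the homology $C^c_\y \sim C^{c,T_1,T_2}_\y$ provided by Lemma~\ref{homology1}. Thus I may compute $C^c_\y \bullet C_\y$ as $C^{c,T_1,T_2}_\y \bullet C_\y$, and I split this into the contribution of the truncated geodesic segment $C^{T_1,T_2}_\y$ and the contributions of the two horocircle caps $X^{T_j}_{\ell,c}$ at heights $T_j$. The goal is then to show that the geodesic segment contributes zero, leaving only the cap--$C_\y$ intersections.

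The key geometric observation is that the truncated geodesic $C^{T_1,T_2}_\y$ is literally contained in $C_\y$, so the two pieces are not transverse and the intersection number must be computed via perturbation. Using that $C_\y$ is embedded in $\overline{X}$ by Lemma~\ref{embedLemma}, I would push $C^{T_1,T_2}_\y$ off to one side of $C_\y$ inside a small tubular neighborhood, extending the deformation to the rest of the spectacle cycle by the identity away from small neighborhoods of the matching points $c_\y^{T_j}$. The caps, being horocircles at height $T_j$, meet the vertical geodesic $C_\y$ transversely at a single point each, and these crossings survive any sufficiently small isotopy. The perturbed geodesic segment, now displaced to a nearby parallel curve, no longer meets $C_\y$ and so contributes nothing to the intersection count.

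The main technical point to verify will be that the extension of the perturbation near the endpoints $c_\y^{T_j}$ can be arranged so as to neither introduce spurious new intersections with $C_\y$ nor cancel the single transverse crossings contributed by the caps. Since the isotopy can be carried out in arbitrarily small neighborhoods of these endpoints, where locally the horocircle and the geodesic look like two transverse coordinate lines in $\R^2$, this reduces to a direct check in a local model. Once this is in place, the intersection number decomposes exactly as claimed, with only the cap--$C_\y$ intersections surviving.
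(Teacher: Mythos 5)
Your proposal follows the same route as the paper: push the spectacle cycle in to its truncated version $C^{c,T_1,T_2}_\y$ via Lemma~\ref{homology1}, invoke the embeddedness of $C_\y$ from Lemma~\ref{embedLemma} to justify a small normal perturbation separating the coincident geodesic segments, and observe that only the transverse crossings with the horocircle caps survive. The paper's own proof is terser (it does not spell out the local check near the truncation points $c_\y^{T_j}$), but the decomposition and the key ingredients are identical.
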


\begin{proof}
We first push-in $C^c_{\y}$ at the cusps (as described in Lemma~\ref{homology1}) to obtain $C^{c,T_1,T_2}_{\y}$. Then we can apply a small normal deformation to $C_\y$ obtaining a cycle $C'_\y$ which is disjoint from $C_\y$. Here we need that $C_\y$ has no self-intersections. We then see that $C^c_{\y}\bullet C_{\y}$ consists only of the intersections of $C_\y$ with the caps of $C^{c,T_1,T_2}_{\y}$. 
\end{proof}

Note that two modular symbols $C_{\x_1}$ and $C_{\x_2}$ either coincide or intersect in a finite number of points in the interior of the Borel-Serre compactification $\overline{X}$. These intersections are generally transverse (and in this case we may compute this intersection number by counting intersection points with signs). However, it is possible that $C_\y$ intersects the component $C_v$ of $C_n$ at an $m$-fold multiple point $p$ of $C_v$ (note that $C_v$ may have self-intersections). 
We claim the contribution of $p$ to the global intersection number is the sum of these $n$ multiplicities
(in the sense that if we add up the results over all $p$ we get the global intersection number).
This may be seen in two ways.  First we may triangulate  each of $C_\y$ and $C_v$ and use
the simplicial intersection number of transverse simplicial cycles.  More geometrically we can replace
$C_\y$ by a path which agrees with $C_\y$ except in a small neighborhood of $p$ where we replace
a segment of $C_\y$ by a small semicircle that ``hops over $p$ ''.  The deformed chain is
clearly homologous to $C_\y$ and intersects $C_v$ transversally in $m$ points close to $p$.
We leave to the reader to check that the multiplicities of these $m$ intersections agree with
the ones below. By slight abuse of notation we still refer to these intersections as transverse.

In conclusion, to compute $C^c_{n,[k]} \bullet (C_\y \otimes u'^k) $ we need to compute the transversal intersection numbers in $X$ (as modified above if $C_v$ has multiple points) and the intersections of $C_W \otimes u'^k$ with the boundary caps of $C^c_{n,[k]}$.

We first treat the intersection numbers in the interior of $X$. Recall that in subsection \ref{intermult} we
defined a function  $\eps(\x,\y)$ of pairs $\x,\y$ that span a positive two-plane with values in $\{ \pm 1\}$ in terms of the cross-product
 $\x \times \y$ in Minkowski three-space and showed this function was the intersection multiplicity of of $D_\x$ and $D_\y$. 

\begin{proposition}\label{TT4}
The transversal intersection number of $C^c_{n,[k]}$ with $C_\y \otimes u'^k$ in the interior of $\overline{X}$ is given by 
\[
\sum_{\substack{ \x \in U, \w \in W \\ \x+\w \in \calL_V \\  (\x,\x)>0 \\ q(\x+\w)=n}} \eps(\x, \y) (\x,u')^k
\]

\end{proposition}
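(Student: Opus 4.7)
The plan is to parametrize intersection points of the lift of $C_{n,[k]}$ with the embedded arc $\tilde{C}_\y \subset D$ (which exists by Lemma~\ref{embedLemma}) via the rational Witt decomposition $V = U \oplus W$, and then to match up the local intersection sign with the coefficient pairing.

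First I would unfold the intersection. Pick a lift $\tilde{C}_\y \subset D$ of $C_\y$; since $\G$ is torsion-free and $\tilde{C}_\y \to C_\y$ is injective, intersection points of $C_{[v],[k]}$ with $C_\y$ in $X$ correspond bijectively to pairs $(\tilde z, v')$ with $v' \in \G v$ and $\tilde z \in \tilde{C}_\y \cap D_{v'}$. Summing over orbit representatives $[v] \in \G\backslash L_{h,n}$ and then over the orbit $\G v$ (the stabilizer $\G_v$ is trivial since $C_v$ is an infinite modular symbol in the present set-up) gives a single sum over all $v \in L_{h,n}$.

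Next I would compute each such intersection. For $v \in L_{h,n}$, write $v = \x + \w$ with $\x \in U$, $\w \in W = \Q\y$. Since $\tilde{C}_\y = D_W$, a point $z \in \tilde{C}_\y$ lies on $D_v$ iff $z \perp \x$, i.e., iff $\x$ has positive length in $U$ (so $D_\x$ is a geodesic, intersecting $D_W$ transversally in a unique point). If $\x = 0$ or $(\x,\x) \le 0$, the intersection is empty; so the sum is restricted to $(\x,\x) > 0$. The transverse intersection sign at this point is exactly $\eps(v,\y) = \eps(\x,\y)$ by Proposition~\ref{intmult}, using $v \times \y = \x \times \y$ because $\w \in \Q\y$.

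Finally I would evaluate the coefficient pairing. The contribution to the intersection of the coefficient systems is $(\pi_k(v^k), u'^k)$. Using the orthogonal decomposition \eqref{fundinv} and the fact that $u'^k \in \calH_k(V)$, we have $(\pi_k(v^k),u'^k) = (v^k,u'^k) = (v,u')^k$, and since $(\w,u') = 0$ for $\w \in W$, this equals $(\x,u')^k$. Assembling these ingredients, the transverse contribution to $C^c_{n,[k]} \bullet (C_\y \otimes u'^k)$ from the interior of $\overline{X}$ is
\[
\sum_{\substack{\x \in U,\; \w \in W \\ \x+\w \in \calL_V \\ (\x,\x)>0 \\ q(\x+\w)=n}} \eps(\x,\y)\,(\x,u')^k,
\]
as claimed. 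The main technical point is the bookkeeping in the first step: one must be careful that multiple representatives $\g v$ of a single orbit can contribute at distinct points of $\tilde{C}_\y$, and that self-intersections of the modular symbol $C_v$ (which can occur when $C_v$ meets itself at non-simple points) are absorbed correctly into the sum over $L_{h,n}$; the small-semicircle deformation described immediately before the statement handles this, so no separate argument is needed here.
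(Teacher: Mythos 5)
Your argument follows the same outline as the paper's: lift to the embedded geodesic $D_\y$, parametrize intersections via the Witt decomposition $v = \x + \w$ with $(\x,\x)>0$, get the sign from $v\times\y = \x\times\y$ via Proposition~\ref{intmult}, and pair coefficients. The only point to correct is the parenthetical claim that ``the stabilizer $\G_v$ is trivial since $C_v$ is an infinite modular symbol in the present set-up'': the composite cycle $C_n$ contains closed geodesics $C_v$ whenever $n$ is not a square, and for those $\bar\G_v$ is infinite cyclic, not trivial. Fortunately this assertion is not actually needed. The identity $\sum_{[v]\in\G\backslash L_{h,n}}\sum_{v'\in\G v}=\sum_{v\in L_{h,n}}$ is just the partition of $L_{h,n}$ into $\G$-orbits, independent of stabilizer size, and the bijection between interior intersections of $C_\y$ with $C_v$ (counted with branch multiplicity) and $\{v'\in\G v : D_{v'}\cap D_\y\neq\emptyset\}$ likewise holds without assuming $\G_v$ trivial — it uses only that $C_\y$ embeds (Lemma~\ref{embedLemma}) and that distinct elements of $\G v$ give distinct oriented geodesics through $D_\y$. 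So the parenthetical should simply be deleted. The explicit evaluation $(\pi_k(v^k),u'^k)=(v,u')^k=(\x,u')^k$, which the paper leaves implicit, is a welcome addition.
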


\begin{proof}
We first show that such vectors $\x,\w$ indeed parameterize all transversal intersections. A transversal intersection point $p \in X $ of $C_\y$ and $C_n$ arises from an intersection of $D_\y$ with a $D_v$ at a {\it unique} point $z$ in $D_\y \in D$. (Unique since $C_\y$ embeds into $X$). Here $v \in \calL$ with $q(v)=n$. Now $v \notin W$, for otherwise we would have $D_v=D_\y$. Hence we can write $v=\x+\w$ with $\x \in U=W^{\perp}$ and $\w\in W$ with $\x \ne 0$. If $\x$ did not have positive length, then $\y$ and $v$ would not span a positive definite space of dimension $2$ and hence would not determine a unique point in $D$. Thus $\x$ and $\w$ are as above. If $p$ is a simple point, then the vector $v$ is unique. However, if $p$ is a multiple point of order $m$ of $C_v$, then each of the branches of $C_v$  corresponds to a geodesic $D_{v_i}$ meeting $D_\y$ at $z \in D$. The set of $D_{v_i}'s$ lie in a single $\G$-orbit so all the $v_i$'s satisfy $q(v_i) = n$. Then we decompose the $v_i$'s as above. 

Conversely, given $\x$ and $\w$ as above we put $v = \x + \w$ and we obtain  a transversal intersection point $z \in D$ of $D_v$ and  $D_\y$.  Thus we have established a one-to-one correspondence between the interior transversal intersections
of $C_n$ and $C_\y$ and the index set of the above sum.

As noted above we computed the intersection multiplicity $\eps(v,\y)$ of $D_v$ and $D_{\y}$ in subsection \ref{intermult}.  To complete the proof we have only to note
$$\eps(v,\y)= \eps(\x,\y)$$
since we have an equality of cross-products $v \times \y = \x \times \y$. 
\end{proof}

We finally turn to the boundary intersections. We denote the components of $C_{n,[k]}^c$ at $X_{\ell}$ and $X_{\ell'}$ by $C_{n,[k]}^{\ell}$ and $C_{n,[k]}^{\ell'}$ respectively. Clearly only these will contribute to the intersection with $C_\y \otimes u'^k$.

\begin{proposition}\label{TT5}

We define a constant $c$ by $c=2$ if $2h_W \in L_W $ and $c=1$ otherwise. Then

\begin{itemize}

\item[(i)]

The intersection of $C_{n,[k]}^{\ell}$ with $C_W \otimes u'^k$ is given by 
\[
c (-1)^{k+1} \delta_{0,h_2} M_1^{k} \frac{B_{k+1}\left(\frac{h_1}{M_1}\right)}{k+1},
\]

\item[(ii)]

The intersection of $C_{n,[k]}^{\ell'}$ with $C_W \otimes u'^k$ is zero if $k>1$. For $k=1$, we have $-2 c n\frac{\delta_{0,h_1} }{M_2}$.

\end{itemize}

\end{proposition}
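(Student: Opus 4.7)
The plan is to enumerate the caps of $C^c_{n,[k]}$ landing at each of the cusps $\ell$ and $\ell'$, then compute their topological intersections with $c_W\otimes u'^k$ directly, using Proposition \ref{Propv_x} to express the jump vectors and the distribution relation for Bernoulli polynomials to collapse the sum over $\Gamma_\ell$-orbits.

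For part (i), a vector $v=\x+\w\in U\oplus W$ contributes to the cap at $\ell$ iff the modular symbol $C_v$ has an endpoint at a $\Gamma$-equivalent of $\ell$, equivalently $(v,u_\ell)=0$. Choosing representatives so that this endpoint is $\ell$ itself forces $\x=mu$; the integrality condition $v\in\calL_V$ yields $h_2=0$, $m\in M_1\Z+h_1$ and $\w=b\y\in\calL_W$ with $b^2q(\y)=n$. The two orientations of $D_v$ correspond to the two signs of $b$; both are simultaneously admissible in $\calL_W$ exactly when $2h_W\in L_W$, accounting for the constant $c$. For each admissible sign of $b$, $\Gamma_\ell=\langle n(M_1)\rangle$ acts by $m\mapsto m-2bM_1$, so the $\Gamma_\ell$-orbits are parametrized by $j=0,\dots,2|b|-1$ via $m=h_1+jM_1$, with cap position $c_v=-m/(2b)\in X_\ell\cong\R/M_1\Z$.

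Each cap's intersection with $c_W\otimes u'^k$ at $X_\ell$ reduces to $\pm\sigma(w_v,u'^k)$, where $\sigma\in\{\pm1\}$ is the (orbit-independent) sign of the transverse crossing of $C_W$ with $X_\ell$. Since $(v_{2i},v_{2j})\ne 0$ only when $i+j=0$, only the $u_\ell^k$-component of $w_v$ pairs non-trivially with $u'^k$, contributing with $(u_\ell^k,u'^k)=(-1)^k$. Writing $\pi_k(v^k)=b^k n(-m/(2b))v_0$ and applying Proposition \ref{Propv_x} with $i=0$, this component equals $b^k(2M_1)^k B_{k+1}(-c_v/M_1)u_\ell^k/(k+1)$. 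The distribution relation
\[
\sum_{j=0}^{N-1}B_{k+1}\!\left(x+\tfrac{j}{N}\right)=N^{-k}B_{k+1}(Nx)
\]
with $N=2|b|$ and $x=h_1/(2bM_1)$ collapses the sum over $j$; the factors of $b$ cancel, and each admissible sign of $b$ contributes $(-1)^{k+1}M_1^k B_{k+1}(h_1/M_1)/(k+1)$. Summing over the $c$ signs yields (i).

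For part (ii), caps at $\ell'$ arise from vectors $v=mu'+\w$, which forces $h_1=0$. The analogous analysis produces a coefficient $w'_v$ whose $u'^k$-component pairs trivially with $u'^k$ (since $(u'^k,u'^k)=0$), and for $k>1$ the remaining weight components of $w'_v$ are all $(\,,\,)$-orthogonal to $u'^k$, giving vanishing intersection. For $k=1$ the local coefficient space $\Hk(V)$ is three-dimensional and the $u_\ell$-component of $w'_v$ pairs non-trivially with $u'$ via $(u_\ell,u')=-1$; computing this component via the $k=1$ specialisation of Proposition \ref{Propv_x} and evaluating $B_2(x)=x^2-x+\tfrac{1}{6}$ explicitly, the distribution-shift in the analog of the argument above produces a linear term in $n=q(\w)$, yielding $-2cn\delta_{0,h_1}/M_2$, in direct parallel with the $k=1$ exceptional term of Theorem \ref{11-theorem}. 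The main technical obstacle is the careful tracking of the orientation sign $\sigma$ and the verification that start-cap and end-cap contributions combine coherently to produce the factor $c$; the $k=1$ exceptional calculation at $\ell'$ is also delicate, being tied to the degeneracy $(u'^k,u'^k)=0$.
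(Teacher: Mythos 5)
Your overall strategy coincides with the paper's: enumerate the modular symbols whose endpoints land at $\ell$ (resp.\ $\ell'$), parametrize the $\Gamma_\ell$-orbits, read off the highest-weight component of the cap coefficient from Proposition~\ref{Propv_x}, and collapse the resulting Bernoulli sum via the distribution relation. The identification of the constant $c$ with the condition $2h_W \in L_W$ is also the same as in the paper.

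However, there is a genuine gap in part (i). You write \emph{``$\Gamma_\ell=\langle n(M_1)\rangle$''} and \emph{``$X_\ell\cong\R/M_1\Z$''}, thereby identifying the cusp width with the lattice constant $M_1$ (defined by $L\cap\ell = M_1\Z\, u_\ell$). These are independent data: the cusp width $M_{\ell}$ is determined by $\G$, not by $L$, and one has only $M_1\mid 2mM_{\ell}$ from invariance of $\calL$. In the paper's computation the $\Gamma_\ell$-orbit count is $2mM_{\ell}/M_1$, the cap coefficient from Proposition~\ref{Propv_x} carries $(2mM_{\ell})^k$ and a Bernoulli argument with modulus $2mM_{\ell}$, and it is precisely the distribution relation that cancels all occurrences of $M_{\ell}$ to leave the $M_1$-dependent answer. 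Your version, with $N = 2|b|$ and Bernoulli modulus $2bM_1$, is only correct under the unstated assumption $M_{\ell}=M_1$.

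There is also a smaller imprecision in part (ii). The $k=1$ term is not governed by $B_2$. The $u^k=u$-component of the cap coefficient at $\ell'$ is the $j=-1$ term of Proposition~\ref{Propv_x}, whose Bernoulli index is $j+1-i=0$, i.e.\ $B_0=1$; the linearity in $n$ comes from the product of the scaling $m^k$ in $\pi_k(\x^k)$ with the orbit count $2mM_{\ell'}/M_2$, using $m^2=n$, not from the quadratic term of $B_2$. You should also be aware that the paper's proof text contains what appears to be a typo at this point (``if $h_1\ne 0$'' should read ``if $h_1=0$''), consistent with the factor $\delta_{0,h_1}$ you correctly identify.
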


\begin{proof}

We can assume that $C_\y$ connects the cusps $0$ and $i\infty$, i.e., $\y$ is a positive multiple of $e_2$ and $\ell = \ell_{\infty}$ etc. The vectors $\x \in \calL_n$ such that $C_\x$ intersects $X_{\ell}$ are $\G$-equivalent to vectors which are perpendicular to $u$. So if $h_2 \ne 0$, there are no such vectors. Otherwise we can assume  $\x=\pm m\sqrt{2}e_2 + (M_1 j+h_1)u \in \calL$ with $j \in \Z$ and $\sqrt{n}=m \in \Q_+$. We assume "$+$" for the moment. These are exactly the geodesics which terminate in $X_{\ell}$ (taking the orientation of $C_\x$ into account). For the generator $n(M_{\ell})$ of the stabilizer of the cusp $\G_{\ell}$ we have $n(M_{\ell})(m\sqrt{2}e_2 +h_1u) = (m\sqrt{2}e_2 +(-2mM_{\ell}+h_1)u$. Hence we have $2mM_{\ell}/M_1$ $\G$-inequivalent vectors of this form (with the "$+$" condition). Namely,
\[
\x = n\left(\tfrac{-jM_1}{2m}- \tfrac{h_1}{2m}\right) (m\sqrt{2}e_2) \qquad \qquad (j=0,\dots, 2mM_{\ell}/M_1-1).
\]
 According to Proposition~\ref{Propv_x} the $u^k$ component for the cap-coefficient is given by 
 \[
 -(2mM_{\ell})^k\tfrac{B_{k+1}\left(\tfrac{jM_1}{2mM_{\ell}}+ \tfrac{h_1}{2mM_{\ell}}\right)}{k+1}u^k.
 \]
The negative sign arises since we have to take $-w_\x$ in the definition of the cap, see Definition~\ref{specdef}. Taking the inner product with $u'^k$, the coefficient of $C_W$, and summing over $j$ yields  
\[
(-1)^{k+1} M_1^{k} \frac{B_{k+1}\left(\frac{h_1}{M_1}\right)}{k+1}
\]
by the multiplication property of the Bernoulli polynomial. The vectors with the "$-$" condition give exactly the geodesics which originate from $\ell$. (Note that these vectors are not necessarily $\G$-inequivalent to the ones satisfying the "$+$" condition. If equivalent, the geodesic both originates from and returns to $X_{\ell}$). A similar analysis yields the same answer. But both cases can only occur simultaneously if $2h_W \in L_W$. This shows (i).
 
For (ii), we first note that for $k>1$, the cap vector $w_{\x}$ for $C_{\x}$ at the cusp $\ell'$ only involves weight vectors of weight at most $2$. So the pairing with $u'^k$ will be zero. For $k=1$ we apply a similar analysis as for (i). We first note that we have 
$2mM_{\ell'}/M_2$ $\G$-inequivalent vectors $\x$ of the form $m\sqrt{2}e_2+ jM_2u'$ giving rise to cycles which originate from $X_{\ell'}$ (if $h_1 \ne 0$). Using Proposition~\ref{Propv_x} the corresponding caps for these vectors involve all $-\tfrac{m}{M_{\ell'}}u$. Pairing with $u'$ hence gives $-\frac{2m^2}{M_2}$ (taking the incidence number $-1$ into account). The vectors with the "$-$" condition yield the same.
\end{proof}

Now Theorem~\ref{TT1} follows from comparing the combined Fourier coefficients of the modular forms computed in Proposition~\ref{TT2} and Lemma~\ref{TT3} with the intersections given in Proposition~\ref{TT4} and Proposition~\ref{TT5}.

\section{Lift of Eisenstein series}\label{Eisenstein-Section}

In this section we discuss the lift of Eisenstein cohomology classes. For $z \in D$, we define the Eisenstein series for the cusp $\infty$ by 
\[
\calE(z,k) =  \frac12 \sum_{\g \in \G_{\infty} \back \G} \g^{\ast}(dx \otimes u'^k).
\]
It is well known (eg \cite{Stevens}, \S 6) that for $k>0$ the series $\calE(z,k)$ converges absolutely and defines a closed differential $1$-form on the Borel-Serre compactification $\overline{X}$. Its restriction to $X_{\infty}$ is equal to $dx \otimes u'^k$ and zero at the other cusps. Finally, $\calE(z,k)$ is cohomologous to the differential form $\eta_{E_{2k+2}}=E_{2k+2} \otimes dz \otimes n(z)u'^k$ defined by the usual holomorphic Eisenstein series $E_{2k+2}$ for $\G$ at the cusp $\infty$. Note that the holomorphic $1$-form $\eta_{E_{2k+2}}$ does not extend to $\overline{X}$.

For simplicity we restrict to the case $\G=\SL_2(\Z)$. Furthermore, it is quite convenient to use Borcherds vector-valued modular forms/theta series setting. To compute the lift $\calE(z,k)$ we adapt the argument given in \cite{BFCrelle}, section ~7 to our situation.

We let $\Mp_2(\R)$ be the two-fold cover of $\SL_2(\R)$ realized by the two choices of holomorphic square roots of $\tau \mapsto j(g,\tau) = c\tau + d$, where $g = \left( \begin{smallmatrix} a&b \\ c&d \end{smallmatrix} \right) \in \SL_2(\R)$. Hence elements of $\Mp_2(\R)$ are of the form $(g,\phi)$ with $\phi^2= j(g,\tau)$. Then given an even lattice $L$ there is the Weil representation $\rho_L$ of the inverse image $\G'$ of $\SL_2(\Z)$ in $\Mp_2(\R)$, acting on the group algebra $\C[L^{\#}/L]$ (see \cite{Bo1}). We denote the standard basis elements of $\C[L^{\#}/L]$ by $\frake_{h}$, where $h \in {L^{\#}/L}$.

In our situation we consider the lattice 
\[
L=\left\{\zxz{b}{c}{a}{-b};\quad a,b,c\in \Z\right\}.
\]
We have $L^\#/L\cong \Z/2\Z$, the level of $L$ is $4$, and $\Gamma=\Sl_2(\Z)$ takes $L$ to itself and acts trivially on $L^\#/L$. We let $\mathfrak{e}_0, \mathfrak{e}_1$ be the standard basis of  $\C[L^\#/L]$ corresponding to the cosets $h= \left( \begin{smallmatrix} h_1 & 0 \\ 0& -h_1 \end{smallmatrix} \right)$ with $h_1 = 0$ and $h_1 = 1/2$, respectively.
We let $K =K_W= \Z\kzxz{1}{}{}{-1}$ be the $1$-dimensional lattice in the positive definite subspace $W$ in $V$. We frequently identify $K$ (resp. $W$) with $\Z$ ( resp. $\Q$) so that $(b,b')= 2bb'$. We naturally have $L^{\#}/L \simeq K^{\#} / K$ and $\rho_L \simeq \rho_K$. Finally we easily see $\hat{L}_{W}=K_W$, see Subsection~\ref{restriction99}. 

We then define a vector valued theta series by
\[
\left[\Theta_{L_V}(\tau,\varphi^V_{1,[k]}),  \Theta_{K_{W}}(\tau,\phi^{N}_{[k]}) \right]  =
\sum_{h\in L^{\#}/L} \left[\theta_{L_V+h}(\tau,\varphi^V_{1,[k]}),  \theta_{K_W+h}(\tau,\phi^{N}_{[k]}) \right]
 \frake_{h},
\]
which transforms like a modular form of weight $k+3/2$ with respect to the representation $\rho_L$. That is, for $(\g',\phi) \in \G'$,
\[
\left[\Theta_{L_V}(\g'\tau,\varphi^V_{1,[k]}),  \Theta_{K_W}(\g'\tau,\phi^{N}_{[k]}) \right] 
= \phi^{2k+3}(\tau) \rho_L(\g',\phi) \left[\Theta_{L_V}(\tau,\varphi^V_{1,[k]}),  \Theta_{K_W}(\tau,\phi^{N}_{[k]}) \right].
\]
Note that the theta series vanishes identically unless $k$ is odd. We want to compute the cohomological pairing
\[
\left\langle \calE(z,k), \left[\Theta_{L_V}(\tau,\varphi^V_{1,[k]}),  \Theta_{K_{W}}(\tau,\phi^{N}_{[k]}) \right] \right\rangle.
\]
We also define a vector valued Eisenstein series $\calE_{k+3/2,K}(\tau)$ of half-integral weight $k+3/2$ for the representation $\rho_{K}$ by
\[
\calE_{k+3/2,K}(\tau) = \frac12 \sum_{\g' \in \G'_{\infty} \back \G'} \phi(\tau)^{-2k-3} \rho^{-1}_{K}(\g') \mathfrak{e}_0.
\]
Here $\g'=(\g,\phi)\in \G'$ and $\G_{\infty}'$ is the inverse image of $\G_{\infty} = \{ \kzxz{1}{n}{0}{1} \}$  inside $\G'$.

\begin{remark}
Let $k$ be odd. We can view the theta series and the Eisenstein series naturally as scalar-valued forms of weight $k+3/2$ for $\G_0(4)$ satisfying the Kohnen plus space condition, which means that the $n$-th Fourier coefficient vanishes unless $n \equiv 0,1 \pmod{4}$. Namely, it is not too hard to see that the sum of the two components of the vector-valued form (evaluated at $4\tau$) $\left(\calE_{k+3/2,K}(4\tau) \right)_{0} + \left( \calE_{k+3/2,K}(4\tau) \right)_{1} $ is a Cohen Eisenstein series for $\G_0(4)$ \cite{Cohen}. The same procedure for the theta series gives $\left[\theta_{L'_V}(\tau,\varphi^V_{1,[k]}),  \theta_{K_{W}}(\tau,\phi^{N}_{[k]}) \right]$ for the lattice $L'  =\left\{\kzxz{b}{2c}{2a}{-b};\, a,b,c\in \Z\right\}$.
\end{remark}

\begin{theorem}\label{Eisensteinlift}
Let $k$ be odd. Then with the notation as above, we have
\[
\left\langle \calE(z,k), \left[\Theta_{L_V}(\tau,\varphi^V_{1,[k]}),  \Theta_{K_{W}}(\tau,\phi^{N}_{[k]}) \right] \right\rangle = \frac12 \frac{B_{k+1}}{k+1} \calE_{k+3/2,K}(\tau).
\]
\end{theorem}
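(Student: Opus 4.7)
The plan is to adapt the standard unfolding technique for Eisenstein series to the mapping cone setting, following the computational strategy of \cite{BFCrelle}, Section~7.

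First, I would apply the integral formula of Lemma~\ref{integralformula} to write the pairing as
\begin{equation*}
\int_X \left(\mathcal{E}(z,k), \Theta_{L_V}(\tau,\varphi^V_{1,[k]})\right) \pm \int_{\partial \overline{X}} \left(\iota^*\mathcal{E}(z,k), \Theta_{K_W}(\tau,\phi^N_{[k]})\right),
\end{equation*}
where $(\,,\,)$ combines the bilinear form on $\mathcal{H}_k(V)$ with the wedge product. Since $\mathcal{E}(z,k)$ extends to $\overline{X}$ with $\iota^*\mathcal{E} = dx \otimes u'^k$ at $\infty$ and zero elsewhere, only the cusp $\infty$ contributes to the boundary term. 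Because $u'^k = c_k^{-1}v_{-2k}$ is a weight vector of weight $-2k$, a weight-space calculation shows that for $k \geq 2$ the pairing $(u'^k, n(x)v_{-2})$ vanishes identically (the vector $n(x)v_{-2}$ has no weight $+2k$ component), so the boundary contribution is zero; for $k = 1$ it is an explicit polynomial integral that must be computed separately.

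Second, I would unfold the Eisenstein series in the interior integral. Using $\mathcal{E}(z,k) = \frac{1}{2}\sum_{\gamma \in \Gamma_\infty\backslash \Gamma} \gamma^*(dx\otimes u'^k)$ and the $\Gamma$-invariance of $\Theta_{L_V}$, the interior integral becomes $\frac{1}{2}\int_{\Gamma_\infty\backslash D}(dx\otimes u'^k, \Theta_{L_V}(\tau,\varphi^V_{1,[k]}))$. Applying partial Poisson summation on the rank-one sublattice $L\cap \ell_\infty$ (as in the proofs of Theorem~\ref{restriction} and Proposition~\ref{partialPoissonU}) reexpresses the theta kernel so that the $x$-integration over $[0,1]$ selects the $N_\infty$-invariant Fourier components. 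The remaining $y$-integral from $0$ to $\infty$ can then be evaluated using the explicit Hermite polynomial form of $\varphi^V_{1,[k]}$, with the resulting expression organizing itself into a sum indexed by $\Gamma'_\infty \backslash \Gamma'$: the compatibility of partial Poisson summation with the Weil representation $\rho_K$ produces exactly $\rho_K^{-1}(\gamma')\mathfrak{e}_0$ for each double-coset representative, matching the series defining $\mathcal{E}_{k+3/2,K}(\tau)$. The scalar factor $\frac{1}{2}B_{k+1}/(k+1)$ emerges from the standard values $\zeta(-k) = -B_{k+1}/(k+1)$ appearing in the constant-term computation, exactly as in the constant-term identification of Theorem~\ref{11-theorem} and the example at the end of Section~\ref{1-1-Section}, combined with the $\frac{1}{2}$ from the Eisenstein series definition.

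The main obstacle lies in the convergence analysis: for $k = 1$ the defining sum for $\mathcal{E}(z,k)$ converges only conditionally (Hecke summation is needed), and the unfolding must be justified by inserting an auxiliary factor $|c\tau + d|^{-2s}$ and taking the analytic continuation to $s=0$; simultaneously, the nonvanishing boundary term in the $k=1$ case must be shown to combine with the analytically continued interior contribution to reproduce the same final formula, analogous to the role played by the extra term in the $(1,1)$-theta integral in Theorem~\ref{11-theorem}. A secondary bookkeeping challenge is correctly tracking all normalization and sign constants (the $c_k$ of Lemma~\ref{repformulas}, the orientation conventions from the appendix, and the Weil representation factor $\phi(\tau)^{-2k-3}$) through the unfolding and Poisson summation steps to land precisely on $\frac{1}{2}B_{k+1}/(k+1)$.
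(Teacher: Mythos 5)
Your overall strategy matches the paper's: apply the integral formula from the appendix, discard the boundary term, unfold the Eisenstein series, Poisson-sum on the isotropic direction, and recognize the Weil representation action giving rise to the half-integral-weight Eisenstein series. However, your handling of the boundary term has a genuine error in both directions.

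You assert that the pairing $(u'^k, n(x)v_{-2})$ vanishes for $k\geq 2$ ``because $n(x)v_{-2}$ has no weight $+2k$ component,'' and that for $k=1$ the boundary term must be computed separately. Both claims are incorrect. In fact $n(x)v_{-2} = \sum_{j\geq 0}\frac{x^j}{j!}R^j v_{-2}$ does contain a nonzero weight $+2k$ component, namely the $j=k+1$ term proportional to $v_{2k}$ (since $v_{-2}$ lies strictly inside the ladder of $\Hk(V)$ for every $k\geq 1$). So $(u'^k, n(x)v_{-2})$ does \emph{not} vanish by your weight argument. What \emph{is} true is that the correct pairing in the integrand pairs the boundary value of $\calE$, which at $n(x)z_\infty$ has coefficient $n(x)u'^k$, with the coefficient $n(x)v_{-2}$ of $\Theta_{K_W}(\tau,\phi^N_{[k]})$. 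By $G$-invariance of the form, $(n(x)u'^k, n(x)v_{-2}) = (u'^k, v_{-2})$, and this is zero for every $k\geq 1$ because weight $-2k$ can pair nontrivially only with weight $+2k$, and $v_{-2}$ has weight $-2$. Thus the boundary contribution vanishes uniformly, including $k=1$; there is no special case.

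Your concern about convergence for $k=1$ is likewise misplaced. The paper (citing Stevens) records that $\calE(z,k)$ converges absolutely for all $k>0$; the interior integral can then be unfolded directly without inserting an auxiliary $|c\tau+d|^{-2s}$ factor. The $s$-regularization you propose is the technique used elsewhere in the paper (e.g.\ for the $k=0,1$ cases of the $(1,1)$-lift), but it is not needed here. Finally, the Bernoulli-number factor in the paper's computation arises from $\zeta(k+1)\Gamma(k+1)\pi^{-k-1}$ via Euler's formula for $\zeta$ at even positive integers, which is equivalent to but different in mechanism from your invocation of $\zeta(-k)$.
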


\begin{remark}
The constant coefficient of the $\frake_0$-component of $\calE_{k+3/2,K}(\tau)$ is $2$. Hence the constant coefficient of the lift of $\calE(z,k)$ is $\tfrac{B_{k+1}}{k+1}$. This corresponds to the geometric interpretation of the period of $\calE(z,k)$ over boundary cycle $C^c_{0,[k]}$ given in Theorem~\ref{Main}.

\end{remark}

\begin{proof}

We use Lemma~\ref{integralformula} to compute the pairing. First note that the integral over the boundary $X_{\infty}$ does not contribute since the pairing in the coefficients of $\calE(z,k)$ and $\Theta_{K_{W}}(\tau,\phi^{N}_{[k]})$ vanishes:  $(n(x)u'k, n(x)v_{-2}) = 0$ ($u'^k$ has weight $-2k$). It remains to compute 
\[
\int_X \calE(z,k) \wedge \Theta_{L_V}(\tau,\varphi^V_{1,[k]}).
\]
We unfold the Eisenstein summation in the usual way and obtain for the $\frake_h$ component

\begin{multline*}
  \int_{\G_{\infty} \back D}  dx \otimes n(x)u'^k \wedge \Theta_{L+h}(\tau,\varphi^V_{1,[k]}) \\
= 2^{-k-3/2}v^{-k/2} \sum_{j=1}^{k+1}
\int_{\G_{\infty} \back D}  \sum_{\x \in L+h} \widetilde{H}_j(\sqrt{v}(g_z^{-1}\x)_1) \widetilde{H}_{k+1-j}(\sqrt{v}(g_z^{-1}\x)_2)   \varphi_0(\x,z,\tau)  \\ \times (n(x) u'^k, g_z(e_1^{j-1}e_2^{k+1-j})  )\frac{dxdy}{y},
\end{multline*}
where $\varphi_0(\x,z,\tau) =  \varphi_0(\sqrt{v}g_z^{-1}\x) e^{2\pi i (\x,\x)u)}$. But now $(n(x) u'^k, g_z(e_1^{j-1}e_2^{k+1-j}) =0$ unless $j=k+1$ in which case we obtain $(-1)^k 2^{-k/2} y^k$. Hence so far
\begin{multline}\label{firstapprox}
\left\langle \calE(z,k), \left[\Theta_{L+h}(\tau,\varphi^V_{1,[k]}),  \Theta_{K+h}(\tau,\phi^{N}_{[k]}) \right] \right\rangle \\
= (-1)^k 2^{-(3k+3)/2}v^{-k/2} 
\int_{\G_{\infty} \back D} 
 \sum_{\x \in L+h} \widetilde{H}_{k+1}(\sqrt{v}(g_z^{-1}\x)_1)
 \varphi_0(\x,z,\tau) y^{k-1} dxdy.
\end{multline}
We write $\x = \kzxz{b}{c}{-a}{-b}$. Then 
\[
\sqrt{v}(g_z^{-1}\x)_1 = \frac{\sqrt{v}}{\sqrt{2}y}(c+a(x^2-y^2) +2bx)
\]
and 
\[
\varphi_0(\x,z,\tau) = e^{-\pi \frac{v}{y^2}(c+a(x^2-y^2)+2bx)^2} e\left(-ac\bar{\tau}\right) e\left(2ia^2x^2v\right) e(b^2\tau).
\]
Here $e(t) = e^{2\pi i t}$. We apply Poisson summation on the summation on $c\in \Z$. This goes similarly as in \cite{BFCrelle}, section ~7 (which in turn is a special case of the considerations in \cite{Bo1}, section~5). After some tedious manipulations and using that the Fourier transform of $H_{k+1}(\sqrt{\pi} t) e^{-\pi t^2}$ is given by $(-2i\sqrt{\pi}\alpha)^{k+1} e^{-\pi \alpha^2}$ we obtain for \eqref{firstapprox} 
\begin{multline}\label{secondapprox}
-i^{k+1} 2^{-k-1}v^{-k-1}  \sum_{h\in K^{\#}/K} \biggl( \int_{\G_{\infty} \back D} \sum_{\substack{a,\alpha \in \Z \\ b \in K+h}}  (a\bar{\tau} +\alpha)^{k+1} e^{-\pi |a\bar{\tau}+\alpha|^2 y^2/v} \\
\times e\left( \tau(b+ax)^2+ 2 \alpha x(b+ \tfrac{ax}{2}) \right) y^{2k+1} dxdy \biggr) \frake_h.
\end{multline}
We define the unary theta series
\[
\Theta_{K}(\tau, \alpha,\beta) = \sum_{h \in  K^{\#} / K}
\sum_{b \in K + h} e\left( {\tau} (b+\beta)^2\right) e\left( -2\alpha(b +\beta/2)  \right) \mathfrak{e}_h,
\]
as in \cite{Bo1}, section~4. 
For \eqref{secondapprox} we then get
\begin{multline}\label{thirdapprox}
-i^{k+1} 2^{-k-1}v^{-k-1}  \int_{\G_{\infty} \back D} \sum_{n=1}^{\infty} n^{k+1} \sum_{\substack{c,d \in \Z \\ \gcd(c,d)=1}} (c\bar{\tau} +d)^{k+1} e^{-\pi n^2 |c\bar{\tau}+d|^2 y^2/v} \\
\times \Theta_{K}(\tau,-dnx ,cnx)
y^{2k+1} dxdy. 
\end{multline}
Now we complete each coprime pair $(c,d)$ to an element $\g' =  \left( \kabcd , \sqrt{c\tau+d}\right)\in \Gamma'$. By \cite{Bo1}, Theorem~4.1 we find
\[
 \Theta_{K}(\tau, -dnx,cnx) = \left(c{\tau} +d \right)^{-1/2} \rho_{K}^{-1} \left( \g' \right)\Theta_{K}(\g'\tau, -nx,0).
\]
Using this and $\int_0^1 \Theta_{K}(\g'\tau, -nx,0)dx = \frake_0$, we then easily obtain for \eqref{thirdapprox}
\begin{align*}
&-i^{k+1} 2^{-k-1}v^{-k-1} \sum_{n=1}^{\infty} n^{k+1} \frac12
\sum_{\g' \in \G'_{\infty} \back \G'}  \left(c{\tau} +d \right)^{-1/2}(c\bar{\tau} +d)^{k+1}  
 \\& \quad \times 
 \int_0^{\infty} e^{-\pi n^2 |c\bar{\tau}+d|^2 y^2/v} y^{2k+1} dy 
 \left( \rho_{K}^{-1} \left( \g' \right) \int_0^1 \Theta_{K}(\g'\tau, -nx,0)dx \right) \\
 &= -i^{k+1} 2^{-k-1} \zeta(k+1) \pi^{-k-1} \G(k+1)  \calE_{k+3/2,K}(\tau). 
\end{align*}
The theorem follows. 
\end{proof}

\section{The extension of the main theorem to the orbifold case}\label{lastsection}

In this section we will prove that if $\G'$ is a normal subgroup of the congruence subgroup $\G$ with finite index and the main theorem holds for $\G'$ then it holds for $\G$ even if $\G$ has elements of finite order, say $\G = \SL_2(\Z)$. To this end we develop in \cite{FMlocal} a theory of simplicial homology with local coefficients for orbifolds.  

In the following we abreviate the theta series element $[\theta(\varphi_{1,[k]}^V), \sum_{[\ell]} \theta(\phi_{[k]}^{N_{\ell}})]$ in the cohomology of the mapping cone complex by $\theta(\varphi,\phi,\G)$ emphasizing the level $\G$ (and omitting all the other data). We want to prove 
\begin{equation}
[\theta(\varphi,\phi,\G)] = \sum_{n=0}^{\infty} \PD(C_n^c(\G))~q^n \tag{$\ast_{\G}$},
\end{equation}
where $q = e^{2\pi i \tau}$. 

\begin{theorem}
Suppose that for some normal subgroup $\G'$ of $\G$ of finite index we have
\begin{equation}
[\theta(\varphi,\phi, \G')] = \sum_{n=0}^{\infty} \PD(C_n^c(\G'))~q^n.  
\tag{$\ast_{\G'}$}
\end{equation}
Then $(\ast_{\G})$ holds as well for a proper definition of the cycles $C_n^c(\G)$ and the (co)homology groups for $X$, see below.
\end{theorem}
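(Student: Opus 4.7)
The plan is to deduce $(\ast_{\G})$ from $(\ast_{\G'})$ by means of a transfer/covering argument. Let $m = [\G:\G']$ and consider the natural projection $\pi: X' = \G' \backslash D \to X = \G \backslash D$, which is a finite covering of degree $m$ away from the orbifold locus and extends naturally to the Borel-Serre compactifications $\pi: \overline{X'} \to \overline{X}$. Using the simplicial local-coefficient homology theory for orbifolds developed in \cite{FMlocal}, one has a pullback $\pi^{\ast}$ on cohomology with local coefficients, a Gysin/transfer map $\pi^{!}$ on (co)homology, and an intersection pairing compatible with Poincar\'e duality such that $\pi^{!}$ is dual to $\pi^{\ast}$.

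The first key step is the pullback compatibility for theta: since the Schwartz forms $\varphi_{1,[k]}^V$ and $\phi_{[k]}^{N_\ell}$ depend only on the ambient quadratic space and the lattice (not on $\G$), and the boundary components at level $\G'$ cover those at level $\G$, direct inspection gives
\[
\pi^{\ast} \theta(\varphi,\phi,\G) = \theta(\varphi,\phi,\G')
\]
as cohomology classes in the respective mapping-cone complexes.

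The second key step is the Gysin compatibility for the spectacle cycles. Here the special cycles $C_n^c(\G)$ on the orbifold $X$ must be interpreted via \cite{FMlocal} as simplicial $1$-cycles with values in the push-forward local system. Once this is done, the special cycles at level $\G$ lift naturally to cycles at level $\G'$, and the Gysin map takes the form
\[
\pi^{!}\, C_n^c(\G) = C_n^c(\G'),
\]
since each $\G$-orbit in $L_{h,n}$ breaks into $\G'$-orbits whose corresponding geodesics and boundary caps are exactly the preimages under $\pi$. The caps transfer correctly because the widths of the cusps transform compatibly and the Bernoulli-polynomial coefficients in Proposition~\ref{Propv_x} are compatible with cusp subdivision via the standard multiplication formula for $B_{k+1}$ (indeed this was already used in Proposition~\ref{TT5}).

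With these two ingredients in hand, the proof is a direct computation. For any test class $C \in H_1(\overline{X}, \partial\overline{X}, \widetilde{\calH_k(V)})$ at level $\G$, one has
\[
\langle [\theta(\varphi,\phi,\G)], C\rangle_{X} = \tfrac{1}{m} \langle \pi^{\ast}[\theta(\varphi,\phi,\G)], \pi^{!} C\rangle_{X'} = \tfrac{1}{m}\langle [\theta(\varphi,\phi,\G')], \pi^{!} C\rangle_{X'},
\]
using the projection formula. Applying $(\ast_{\G'})$ to the right-hand side and then the Gysin compatibility $\pi^{!} C_n^c(\G) = C_n^c(\G')$ together with $\pi^{!} A \bullet \pi^{!} B = m\, (A \bullet B)$, this becomes
\[
\tfrac{1}{m}\sum_{n\geq 0}\bigl(C_n^c(\G')\bullet \pi^{!}C\bigr) q^n = \tfrac{1}{m}\sum_{n\geq 0}\bigl(\pi^{!}C_n^c(\G)\bullet \pi^{!}C\bigr) q^n = \sum_{n\geq 0}\bigl(C_n^c(\G)\bullet C\bigr) q^n,
\]
which is exactly $(\ast_{\G})$.

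The main obstacle is the second step: defining $C_n^c(\G)$ in the orbifold setting and verifying $\pi^{!} C_n^c(\G) = C_n^c(\G')$. The cycles with coefficients used up to now required $\G_\x$-invariant coefficient vectors, and for torsion elements fixing special geodesics or boundary points one must use the push-forward local-system construction of \cite{FMlocal}, with folded simplicial cycles to ensure that the would-be boundary terms at orbifold points vanish. Once the orbifold chain-level theory is in place, the rest of the argument is the standard covering-space formalism sketched above.
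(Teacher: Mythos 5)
Your proposal is correct and rests on the same two key ingredients as the paper's own proof, namely the pullback compatibility $\pi^{\ast}\theta(\varphi,\phi,\G) = \theta(\varphi,\phi,\G')$ and the Gysin compatibility $\pi^{!}C_n^c(\G) = C_n^c(\G')$ (Proposition~\ref{composite}), both of which defer the real work to the orbifold chain theory of \cite{FMlocal}. The only divergence is the final wrap-up: you pair both sides of $(\ast_\G)$ against an arbitrary relative test cycle $C$, push up to $X'$ via the projection formula, apply $(\ast_{\G'})$, and come back down using Lemma~\ref{upstairsdownstairs} and nondegeneracy of the Kronecker pairing, whereas the paper simply pulls the cohomological identity $(\ast_\G)$ back by $\pi^{\ast}$ and invokes injectivity of $\pi^{\ast}$ on $H^1$. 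These are dual ways of expressing the same formal consequence, and both are valid; the paper's is marginally lighter because it avoids explicit appeal to Poincar\'e--Lefschetz duality in the orbifold/local-coefficient setting, needing only injectivity of $\pi^{\ast}$ (a standard transfer fact for finite covers).
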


The proof will depend heavily on the constructions and results from \cite{FMlocal}.

\subsection{Simplicial homology with local coefficients}
\subsubsection{Local coefficient systems over a simplicial complex}

We will follow now the presentation of \cite{JM}, page 69. However in the presentation of \cite{JM}, loc.cit., homology and cohomology were treated at the same time. This requires the arrows in the local system to be bijective (see Remark \ref{notbijective} below). We let $M$ be a triangulated space. We will abuse notation and use $M$ to denote both the simplicial complex and its underlying space. We refine the triangulation of $M$ by taking the barycentric subdivision $sd~M$. We order the vertices of each simplex $\sigma$ of $sd~M$ so that the barycenter of a larger dimensional simplex precedes the barycenter of a smaller dimensional one. Note that $\Phi$ will preserve this partial order. The point is that the boundary operators $\partial_p$ depend only on this {\it partial} order.

Because of the above partial order the $1$-skeleton is a directed graph (quiver) (where each edge is directed from its smaller vertex towards its larger vertex)and the vertices in each simplex are totally ordered. We can consider a directed graph as a category where the objects are the vertices and the morphisms are the edges.

\begin{definition} A local system $\calE$ on $M$ is a covariant functor from the directed graph to the category of finite-dimensional vector spaces (so a representation of the quiver) satisfying the following ``zero curvature condition''.

Suppose $(v_0,v_1,v_2)$ is an ordered two simplex of $M$. Then we have
\begin{equation} \tag{  zero curvature }
 T((v_0,v_2)) = T((v_1,v_2)) \circ T((v_0,v_1))
\end{equation}
Here if  $(x,y)$ is a directed edge of $M$ then $T((x,y)): \calE(x) \to \calE (y)$ denotes the associated linear transformation.
\end{definition}

\begin{remark}\label{composition}
The reader will observe that in order to prove that $\partial_{p-1} \circ \partial_p =0 $ the zero curvature condition is needed.
\end{remark}

The standard example of a local system comes from a flat vector bundle $E$ over $M$. In this case $\calE (x)$ is the fiber $E_x$ over $x$ and $T((x,y)):E_x \to E_y$ is given by parallel translation. In this case $T((x,y))$ is invertible for all $(x,y)$.  
The following remark is crtical.

\begin{remark}\label{notbijective}
The morphisms corresponding to the edges need not be either injective or surjective (unlike the case of flat bundles). In the case we need here some edge morphisms will be the projections of the generic fiber onto its coinvariants under a (finite) group action.

\begin{remark}
Note that we have changed the notation for the local system associated to the flat bundle $E$ from $\widetilde{E}$ to $\calE$. The reason for doing this is that in this paper for the orbifold case (e.g. for the full modular curve) the local system we consider is not the local system associated to a flat bundle, it is the quotient of such a local system by a finite group, see below. For our later research (and for that of others) it is important to separate the two notions, for example it will be necessary to consider local systems with values in the category of  abelian groups (not necessarily free) - that is, the functor from the directed graph consisting of the one skeleton taking values in the category of abelian groups (not necessarily free).
\end{remark}

Suppose $M$ and $N$ are simplicial complexes and $f: M\to N$ is a simplicial map. Suppose $\calE$ is a local system over $M$ and $\calF$ is a local system over $N$. Then a morphism $\widetilde{f}:\calE \to \calF$ over $f$ consists of an assignment 
for each vertex $x \in M$  of a linear map $\widetilde{f}(x):\calE (x) \to \calF (f(x))$ such that
for any directed edge $(x,y)$  we have a commutative diagram
\[
\begin{CD}
\calE(x)                    @>T((x,y))>> \calF(y) \\
@V\widetilde{f}(x)VV     @VV\widetilde{f}(y))V\\
\calE(f(x))                            @>T((f(x),f(y))>>  \calF(f(y)).
\end{CD} 
\]
\end{remark}

\subsubsection{Simplicial chains with local coefficients}
Now we define the complex of ordered simplicial chains with coefficients in $\calE$ following \cite{JM}, also see \cite{FMcoeff}.
We define a $p$-simplex $s$ with coefficients in $\calE$ to be a pair $s,v$ where $s =(x_0,\cdots,x_p)$ is an ordered $p$-simplex in $M$
and $v$ is an element in the vector space $\calE (x_0)$.  We will denote ths above simplex by $s \otimes v$ and the group of simplicial
$p$-chains with coefficients in $\calE$ by $C_p(M,\calE)$. We define the boundary operator $\partial_p$ on $p$-simplices with coefficients by
\begin{equation}\label{boundary}
\partial_p ((x_0,\cdots,x_p) \otimes v) = (x_1,\cdots,x_p) \otimes T((x_0,x_1))(v) + \sum_{i=1}^p (-1)^i (x_0, \cdots,\hat{x_i},\cdots,x_p) \otimes v.
\end{equation}
The following lemma is proved using the usual proof together with Remark \ref{composition}.
\begin{lemma}
 \[
 \partial_{p-1} \circ \partial_p =0.
 \]
\end{lemma}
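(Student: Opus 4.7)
The plan is to adapt the classical simplicial argument and to isolate the single place where the coefficient system enters nontrivially, namely where the zero curvature condition is needed. Write $s = (x_0,\ldots,x_p)$, abbreviate $\partial_i s = (x_0,\ldots,\hat{x_i},\ldots,x_p)$, and write $T_{ij}$ for $T((x_i,x_j))$. Expanding the definition of $\partial_p$ gives
\[
\partial_p(s \otimes v) = (\partial_0 s) \otimes T_{01}(v) + \sum_{i=1}^p (-1)^i (\partial_i s) \otimes v.
\]
I would then apply $\partial_{p-1}$ to each summand separately and sort the resulting $(p-2)$-chains into three groups: those where the face $(\partial_i \partial_j s)$ is obtained by deleting two indices both $\geq 1$, those where the vertex $x_0$ is eventually deleted (so that the final chain does not involve $v$ being transported by a $T$-map), and the single ``leading'' term coming from deleting $x_0$ and then $x_1$ in the two possible orders.

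First I would handle the combinatorial bookkeeping, which is unchanged from the classical proof: for faces obtained by deleting two indices $1 \leq i < j$, the same $(p-2)$-simplex arises once from the $i$-then-$j$ route and once from the $j$-then-$i$ route, with signs $(-1)^i(-1)^{j-1}$ and $(-1)^j(-1)^i$, respectively; these cancel, and the coefficient vector $v$ (or $T_{01}(v)$, in the term starting from $(\partial_0 s)\otimes T_{01}(v)$) is untouched by either deletion, so the cancellation happens on the nose. The slightly more delicate bookkeeping is for faces deleting $x_0$ together with some $x_j$ with $j \geq 2$: one copy comes from applying $\partial_0$ to $(-1)^j(\partial_j s)\otimes v$, producing $(x_1,\ldots,\hat{x_j},\ldots,x_p)\otimes T_{01}(v)$ with sign $(-1)^j$, and another copy comes from applying the appropriate face operator to $(\partial_0 s)\otimes T_{01}(v)$, producing the same chain with sign $(-1)^{j-1}$; again these cancel, and crucially the coefficient $T_{01}(v)$ is the same in both, so no curvature condition is needed here.

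The only remaining contribution is the ``leading'' pair where both $x_0$ and $x_1$ get deleted. Applying $\partial_{p-1}$ to $(\partial_0 s)\otimes T_{01}(v)$ produces $+(x_2,\ldots,x_p)\otimes T_{12}(T_{01}(v))$, while applying $\partial_{p-1}$ to $-(\partial_1 s)\otimes v = -(x_0,x_2,\ldots,x_p)\otimes v$ produces $-(x_2,\ldots,x_p)\otimes T_{02}(v)$. These cancel precisely when
\[
T_{02} = T_{12} \circ T_{01},
\]
which is exactly the zero curvature condition on the $2$-simplex $(x_0,x_1,x_2)$.

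The hard part, such as it is, is purely notational: keeping track of signs and of when $T_{01}$ has already been applied to $v$. Once the three cases above are separated, the usual sign cancellation disposes of the generic terms, and the zero curvature condition disposes of the one exceptional pair. I would present the proof by first writing $\partial_{p-1}\partial_p(s\otimes v)$ as a sum indexed by ordered pairs of deleted vertices, then reading off the three cases, and finally invoking the zero curvature axiom to finish.
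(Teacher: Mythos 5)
Your proof is correct and spells out precisely what the paper leaves implicit (the paper's entire proof is the remark that one runs the usual simplicial argument and invokes the zero curvature condition). Your sorting of the $(p-2)$-chains by which pair of vertices is deleted, the observation that the classical sign cancellation works verbatim whenever the two copies carry the same coefficient, and the isolation of the $\{x_0, x_1\}$ pair as the unique place where $T_{02} = T_{12}\circ T_{01}$ is needed are exactly the content of the paper's one-sentence sketch. One small verbal slip: in describing the second group (faces that delete $x_0$ together with some $x_j$, $j\geq 2$) you write that ``the final chain does not involve $v$ being transported by a $T$-map,'' but in fact both copies carry the coefficient $T_{01}(v)$; your subsequent bookkeeping gets this right, so this is only an imprecision in the prose, not in the argument.
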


\subsubsection{ The quotient of a flat bundle by a finite group action}
Now assume that we are given a flat bundle $E$ over $M$ and that $\Phi$ is a finite group acting simplicially on $M$ and also on a flat bundle $p:E \to M$ such that the action on $E$ covers the action on $M$ and preserves the connection. We let $\calE$ be the associated local system. Let $N$ be the quotient of $M$ by $\Phi$. In our paper \cite{FMlocal} we have constructed a new local system $\calE_{\Phi}$ over $N$ which is the quotient of $\calE$ by $\Phi$. Roughly speaking (see \cite{FMlocal}
for the actual construction which is independent of a choice of $x$ below) the vector space attached to a vertex $y$ of $N$ is the space of coinvariants of the generic fiber (the fiber of the original flat bundle) by the isotropy subgroup of a vertex $x$ of $M$ lying over $y$.  Now suppose
$(y_1,y_2)$ is an ordered edge of $N$ and $(x_1,x_2)$ is an ordered edge of $M$ lying over 
$(y_1,y_2)$.  We claim (see below) that for the case of quotients of the upper half plane we may choose the orderings of simplices so that for any ordered edge as above we have an inclusion of isotropy subgroups
$$\Phi_{y_1} \subset \Phi{y_2}$$
and hence parallel translation along $(x_1,x_2)$ from $E_{x_1}$ to $E_{x_2}$ induces the required morphism
$$T((y_1,y_2)): \calE(y_1) \to \calE(y_2).$$  It is very important in what follows that 
{\it the morphism $T((y_1,y_2))$ can have a nonzero kernel}. 

The map $\pi:M\to N$ induces a morphism $\widetilde{\pi}: \calE \to \calE_{\Phi}$ over $\pi$. The morphism $\widetilde{\pi}$ induces a morphism to be denoted $\pi_*$ from the chain complex of simplical chains $C_{\bullet}(M,\calE)$ to the chain complex of simplicial chains $C_{\bullet}(N, \calE_{\Phi})$. In \cite{FMlocal} we prove

\begin{proposition}\label{homology}
The morphism of chain complexes $\pi_{\ast}: C_{\bullet}(M,\calE) \to C_{\bullet}(N, \calE_{\Phi})$ induces an isomorphism of chain complexes $\pi_{\ast}: C_{\bullet}(M, \calE)_{\Phi} \to C_{\bullet}(N, \calE_{\Phi})$. Here $C_{\bullet}(M, \calE)_{\Phi}$ denotes the coinvariants.  
\end{proposition}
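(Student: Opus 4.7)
The plan is to show that $\pi_\ast$ descends to a chain map on $\Phi$-coinvariants and that the induced map $\bar\pi_\ast\colon C_\bullet(M,\calE)_\Phi \to C_\bullet(N,\calE_\Phi)$ is bijective in each degree; from this it follows formally that $\bar\pi_\ast$ is an isomorphism of chain complexes.

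For the descent, on a basis chain $s\otimes v$ with $s=(x_0,\ldots,x_p)$ the action of $\phi\in\Phi$ yields $\phi\cdot(s\otimes v)=\phi s\otimes\phi_\ast v$, and applying $\pi_\ast$ gives $\pi(s)\otimes[\phi_\ast v]=\pi(s)\otimes[v]$, the second equality being built into the very definition of the coinvariant fiber $\calE_\Phi(\pi(x_0))=\calE(x_0)_{\Phi_{x_0}}$. Hence $\pi_\ast$ is $\Phi$-invariant (with trivial action on the target) and descends to $\bar\pi_\ast$.

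To produce a degree-wise inverse $\iota$, I would use the combinatorial set-up from \cite{FMlocal}: a $\Phi$-equivariant barycentric subdivision of $M$ together with a vertex ordering such that for every ordered simplex $(x_0,\ldots,x_p)$ of $M$ the vertex-stabilizer chain $\Phi_{x_0}\subseteq\Phi_{x_1}\subseteq\cdots\subseteq\Phi_{x_p}$ holds. Two features are then immediate: the pointwise stabilizer of the ordered tuple $s$ equals $\Phi_{x_0}$, and every ordered simplex $\bar s=(y_0,\ldots,y_p)$ of $N$ admits an ordered lift $s=(x_0,\ldots,x_p)$ upstairs. Fixing such a lift and any representative $\tilde v\in\calE(x_0)$ of $[v]\in\calE_\Phi(y_0)$, set
\[
\iota(\bar s\otimes[v]):=[\,s\otimes\tilde v\,]\in C_p(M,\calE)_\Phi.
\]
Well-definedness reduces to two standard coinvariance checks. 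A different representative $\tilde v'=\tilde v+(\phi-1)w$ with $\phi\in\Phi_{x_0}$ produces $s\otimes(\phi-1)w=\phi\cdot(s\otimes w)-s\otimes w$, using $\phi s=s$, which vanishes in the $\Phi$-coinvariants; and a different lift $s'=\psi s$ produces $[s'\otimes\psi_\ast\tilde v]=[\psi\cdot(s\otimes\tilde v)]=[s\otimes\tilde v]$. By construction $\bar\pi_\ast\circ\iota$ and $\iota\circ\bar\pi_\ast$ are the identity, so $\bar\pi_\ast$ is degree-wise bijective and therefore an isomorphism of chain complexes.

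The main technical obstacle is exactly the combinatorial input above: producing a subdivision and vertex ordering for which every simplex of $N$ lifts and the vertex-stabilizer chain is nested along each ordered simplex. This is what makes the pointwise stabilizer of $s$ collapse to $\Phi_{x_0}$ and the coefficient fiber downstairs match $\calE(x_0)_{\Phi_{x_0}}$ cleanly, and it is precisely the point where the hypothesis that $\Phi$ acts by isometries on the upper half plane (so that barycentric subdivision can be iterated equivariantly) enters. Once this set-up is in place, the homological argument above is purely formal.
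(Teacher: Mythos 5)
The paper does not contain a proof of Proposition~\ref{homology}: it is explicitly deferred to the companion paper \cite{FMlocal} (``in preparation''), with the surrounding text describing only the combinatorial set-up that the proof there depends on. So there is no in-text argument to compare against. Judged on its own terms, your argument is correct and, as far as one can tell from the set-up the paper sketches, is along the same lines as what \cite{FMlocal} must do.

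Your two halves are both sound. The descent of $\pi_\ast$ to $\Phi$-coinvariants is immediate once one notes that for $\phi\in\Phi$, $\pi(\phi s)=\pi(s)$ and the classes $[\phi_\ast v]$ and $[v]$ agree under the compatible identifications $\calE(\phi x_0)_{\Phi_{\phi x_0}}\cong\calE_\Phi(y_0)\cong\calE(x_0)_{\Phi_{x_0}}$. The construction of the degree-wise inverse $\iota$ is also right, and the two well-definedness checks (change of representative of $[v]$ using $\phi s=s$ for $\phi\in\Phi_{x_0}$, and change of lift via $\psi\cdot$) are exactly what is needed. The only combinatorial input required is that every ordered simplex of $N$ lifts and that the pointwise stabilizer of a lifted ordered simplex $(x_0,\dots,x_p)$ is $\Phi_{x_0}$, and this is precisely the nested-stabilizer condition $\Phi_{x_0}\subseteq\cdots\subseteq\Phi_{x_p}$ that the paper states as the output of the ordering convention on the barycentric subdivision. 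In fact, in the paper's concrete situation the condition is even stronger than you need: under the standing hypothesis that all fixed points of $\Phi$ on $X'$ are vertices of the original triangulation, the set-stabilizer of any positive-dimensional simplex $s_i$ of $X'$ is trivial (a nontrivial stabilizing element would fix the barycenter of $s_i$, which is not a vertex). Thus for an ordered $p$-simplex with $p\ge1$ one has $\Phi_{x_0}=\cdots=\Phi_{x_{p-1}}=\{1\}$ and only $\Phi_{x_p}$ can be nontrivial, so your well-definedness check on the choice of representative is vacuous for $p\ge1$ and bites only in degree zero. Your closing observation that $\bar\pi_\ast$ being a degree-wise bijective chain map forces its inverse to be a chain map is of course correct and closes the argument.
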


\subsubsection*{Special cycles with local coefficients in orbifold quotients of the upper half plane}

Now that we have developed the requisite theory of local coefficient systems we return to the modular curve and its finite covers. We have defined $\G, \G'$ and let $\Phi$ the quotient of $\G$ by $\G'$. We assume $\Phi$ has order $m$. We obtain the regular branched covering $\pi:X' \to X$. We assume that we have triangulated $X'$ so that $\Phi$ acts by simplicial automorphisms. Furthermore, {\it we
assume that all fixed points of $\Phi$ acting on $X'$ are vertices of $X'$}. We then refine the triangulation of $X'$ by taking the barycentric subdivision $sd~X'$.  The vertices $x$ of a simplex
in the barycentric subdivion of $X'$ correspond to a simplex $s$ in the orginal triangulation
so $x = x_s$ and an two vertices $x_s,x_{s'}$ form an unordered edge if and only if either $s \subset s'$ or $s' \subset s$. 
We order the vertices of edges (and a fortiori of two simplices) so that a bigger simplex corresponds to a smaller vertex in the order i.e. 
$$ x_{s'} < x_s   \iff s \subset s'.$$
Thus by construction if $(x_1,x_2)$ is an ordered edge then the isotropy of $x_1$ is trivial and
the above claim is proved.

 As before, we let $E_{2k} = \calH_k(V)$ be the irreducible representation of $\SL_2$ of highest weight $2k$. We obtain a flat bundle $E=E_{2k}$ on $X'$ given by $E= \h \times _{\G'} E_{2k}$. The bundle $E$ gives rise to the associated locally constant local system $\calE$ on $X'$ and hence by the theory of the previous subsection to the local system $\calE_{\Phi}$ on $X$.

Proposition~\ref{homology} then gives

\begin{corollary}\label{actualhomology}
The homology of the complex $C_{\bullet}(X, \calE)_{\Phi}$ is isomorphic to the group homology
$H_{\bullet}(\G ,E_{2k})$. 
\end{corollary}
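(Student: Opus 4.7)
The plan is to chain together three standard facts: simplicial homology with local coefficients on $X'$ computes the group homology of $\G'$, coinvariants by the finite group $\Phi$ commute with homology over rational coefficients, and the Lyndon--Hochschild--Serre spectral sequence for a finite quotient collapses rationally. (I read the statement as concerning $C_\bullet(X',\calE)_\Phi$, since $\calE$ is defined on $X'$ in the previous paragraph.)

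I would first interpret $C_\bullet(X',\calE)$. Since $\G'$ is torsion-free, it acts freely and properly discontinuously on the contractible space $\h$, so $X' = \G' \back \h$ is a $K(\G',1)$. The local system $\calE$ is attached to the flat bundle $\h \times_{\G'} E_{2k}$, and the classical identification between twisted simplicial homology and group homology gives
\[
H_\bullet\bigl( C_\bullet(X',\calE) \bigr) \;\cong\; H_\bullet(\G', E_{2k}).
\]
The simplicial-chain model of \cite{FMlocal} recovers the classical construction in the locally constant case because the edge morphisms here are invertible parallel transports satisfying the zero-curvature condition, so no new input is required.

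Next, I would use that $\Phi$ is a finite group acting on $C_\bullet(X',\calE)$ and that $E_{2k}$ is a rational vector space. The averaging projector $\tfrac{1}{|\Phi|}\sum_{\phi\in\Phi}\phi$ identifies invariants with coinvariants and makes the functor $(-)_\Phi$ exact on $\Q[\Phi]$-modules. Exactness of coinvariants means they commute with taking homology, so
\[
H_\bullet\bigl(C_\bullet(X',\calE)_\Phi \bigr) \;\cong\; H_\bullet\bigl(C_\bullet(X',\calE)\bigr)_\Phi \;\cong\; H_\bullet(\G',E_{2k})_\Phi .
\]

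To finish, I would invoke the Lyndon--Hochschild--Serre spectral sequence attached to $1 \to \G' \to \G \to \Phi \to 1$, whose $E^2$-page is $E^2_{p,q} = H_p(\Phi, H_q(\G',E_{2k})) \Rightarrow H_{p+q}(\G, E_{2k})$. Since $|\Phi|$ is invertible in the coefficients, $H_p(\Phi, V)=0$ for $p>0$ and every rational $\Phi$-module $V$; the sequence degenerates, leaving $H_n(\G, E_{2k}) = H_n(\G',E_{2k})_\Phi$. Concatenating the three isomorphisms yields the corollary. No step is a serious obstacle; the most delicate point is ensuring that the quiver-with-zero-curvature framework of \cite{FMlocal} reduces to the classical twisted-homology setup when $\calE$ comes from a flat bundle on a $K(\G',1)$, which is granted because all edge transports are invertible parallel translations.
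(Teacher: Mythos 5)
Your argument matches the paper's proof: both proceed through the same three-step chain of isomorphisms — exactness of $\Phi$-coinvariants in characteristic zero (hence commuting with homology), the identification $H_\bullet(X',\calE)\cong H_\bullet(\G',E_{2k})$ via $X'$ being a $K(\G',1)$, and finally $H_\bullet(\G,E_{2k})\cong H_\bullet(\G',E_{2k})_\Phi$. You correctly flag the typo in the statement (the complex should be read as $C_\bullet(X',\calE)_\Phi$, or equivalently $C_\bullet(X,\calE_\Phi)$ via Proposition~\ref{homology}); and your appeal to the Lyndon--Hochschild--Serre spectral sequence to justify the last isomorphism supplies a proof of a step the paper merely asserts.
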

\begin{proof}
Since taking coinvariants by a finite group is an exact functor on the category of finite-  dimensional vector spaces it commutes with homology and we have
\[
H_{\bullet}(X,\calE_{\Phi}) \cong H_{\bullet}(X',\calE)_{\Phi}.
\]
But since $X'$ is a space of type $K(\G',1)$ we have $H_{\bullet}(\G' ,E_{2k}) \cong H_{\bullet}(X',\calE)$. But $H_{\bullet}(\G ,E_{2k})\cong H_{\bullet}(\G' ,E_{2k})_{\Phi}$.
\end{proof}

We have already defined special cycles $ C'_{\x}$ in $X'$. They are either closed geodesics (possibly with self-intersections) or infinite geodesics joining two cusps - i.e. modular symbols. 
We will use the term ``special cycle'' to refer to either.
In what follows for the rest of this subsection it will be important to keep track of the coefficient 
$v$ and to write $C'_{\bf x} \otimes v$.

In the case that $ C'_{\bf x}$ is a closed geodesic we use only the coefficents $\pi_k(\x^{k})$. 
Suppose now that $ C'_\x$ is a modular symbol. Then the subgroup $G_\x \cong \R$  is the  
real points of a maximal torus $T$ defined over $\Q$. As before the torus $T$ gives rise to a weight deomposition of $E_{2k}$ defined over $\Q$
\[
E_{2k} = \bigoplus _{\ell= -2k}^{2k} E_{2k}(\ell),
\]
where $E_{2k}(\ell)$ denotes the $1$-dimensional weight space with weight $\ell$. 
We will now define the  special cycles $C_{\x} \otimes v$. We will separate into two cases, the case where  $C'_{\x}$ is noncompact ( modular symbols)
and the case where $C'_{\x}$ is compact (closed geodesics with transverse self-intersections).
The first case breaks up into two subcases, the ``unfolded'' case where the image of $C'_{\x}$ is
an infinite geodesic joining joining two distinct cusps and the ``folded'' case where of $C'_{\x}$
is a geodesic ray from an orbifold point with the label two (the image of a point with isotropy
$\Z/2$) to a point in the boundary $\partial X'$. 
The second case again breaks up  into two subcases,  the ``unfolded '' case where the image of 
$C'_{\x}$ is a closed geodesic with transverse intersections and the ``folded'' case where
the image of $C'_\x$ is the geodesic segment $\overline{xy}$ joining two orbifold singular points $x$ and $y$ each with label $2$. 

\bigskip

\paragraph{\bf Folded modular symbols}
Suppose the (rational) geodesic  $D_{\bf x}$ joins two cusps $c_1$ and $c_2$. Since $D_{\x}$
is oriented (by $\x$ as explained above) we may distinguish between the two cusps. We assume that
the oriented geodesic is directed from $c_1$ to $c_2$. 
In case $D_{\bf x}$
maps  into the quotient with transverse multiple points then it is a usual modular symbol.  We will now analyse the new phenomenon cause by the existence of elements of finite order in $\G$.  The problem is there may now
be elements in $\G$ that carry $D_{\bf x}$ into itself. As we will see below, the subgroup of 
$\G$ stabilizing $D_{\bf x}$ is either trivial or has order two.  In the first  case we  will get a
usual modular symbol (joining two distinct cusps), in the second case we will get   
 a ``folded modular symbol''. We now analyse this second case. Suppose  then is an element $\iota \in \G$ of order two that carries $D_\x$ into itself and reverses that cusps $c_1$ and $c_2$ and hence reverses the orientation of $D_{\bf x}$.  Clearly $\iota$ has a (unique) fixed point $z_0$ on $D_{\bf x}$ and rotates the tangent space $T_{z_0}(D)$ by the angle $\pi$ around this fixed point. 
Hence, the subgroup  $\{1,\iota \}$ of $\G$ is a subgroup of $\G$ that stabilizes the set $D_{\bf x}$ and hence is 
the full subgroup of $\G$ that carries $D_\x$ into itself.  Clearly the image of $D_\x$ in $X$
is the geodesic ray $C^+_\x$ joining $z_0$ to the point in the Borel-Serre boundary corresponding to
the geodesic ray from $z_0$ to $c_2$. 

\medskip

To analyse the relative cycles with coefficients corresponding to  this second case it suffices to analyse the case of the folded $y$-axis for the
case in which $\G = PSL(2,\Z)$.  We will defer the detailed analysis as a simplicial cycle with
local coefficients to the remark following and explain the key point in terms of the ray  $C^+_\x=(i,\infty)$ informally first. 
The  image $C_{\bf x}$ of
$D_{\bf x}$ in the modular curve is ``folded'', as a set it is a closed half line but as a simplicial chain each simplex has coefficient zero so we get the zero $1$-chain.
However if we have coefficient $v$ we have
\[
 C_{\x} \otimes v =  (0,i) \otimes v + (i,\infty) \otimes v.
 \]
But 
\[
 (0,i) \otimes v  \sim \iota( (0,i) \otimes v )= (\infty,i) \otimes \iota(v) = -(i,\infty) \otimes \iota(v).
 \]
Combining the two equations above we have
\[
C_{\x} \otimes v = (i,\infty) \otimes (v - \iota(v)).
\]
In general we let $D^+_{\bf x}$ denote the positively directed half-line from $z_0$ to the cusp at its end. The half-line $D^+_{\bf x}$ projects
one-to-one to a half line in $X$ and we have 
\[
C_{\bf x}\otimes v = C^+_{\bf x} \otimes (v - \iota(v)).
\]
In case the coefficients are trivial we interpret $v - \iota(v)$ as zero.  Unless the weight $\ell$ of $v$ is zero the coefficient $v - \iota(v)$ will be nonzero
but will not be a weight vector.  The point is that $\iota E_{2k}(\ell) = E_{2k}(-\ell)$.

\begin{definition}
We will call a relative cycle of the form $C^+_{\bf x} \otimes (v - \iota(v))$ a {\it folded} modular symbol (with coefficients) and denote it $C_{\bf x}\otimes v$. 
\end{definition}

In the following remark we refine the above discussion and give a careful description of the
above ``folded modular symbol'' as a  simplicial one-chain with coefficients which
is a cycle relative to the Borel-Serre boundary. 

\begin{remark}\label{foldedms}
The folded modular symbols are cycles relative to the Borel-Serre boundary of $X$ with coefficients in the system $\calE_{\Phi}$. We know since they are images of relative cycles in $X$ under the chain map $\pi_{\ast}$  that they have to be 
relative cycles but we will verify now this directly for the special case of the $y$-axis (the general case is no harder, just replace $i$ by $z_0$).  
In the Borel-Serre compactification the half-line $(i,\infty )$ has a boundary point $c$ and we have a closed (geodesic) interval joining $i$ and $c$
that we denote $[i,c]$. We choose an interior point $z$ to $[i,c]$ and form the two ordered one simplices $(z,i)$ and $(z,c)$, the ``barycentric subdivision'' of $[i,c]$. Recall the first vertex of any one-simplex in $sd X$ is the barycenter of the original edge ($(i,c)$) so
$z$ has to come first in each of the two above one simplices.
We then define (here $\x= e_2 = \kzxz{1}{0}{0}{-1} \in V$)
\[
 C_{\x}^+ \otimes (v -\iota(v)) := (z,i) \otimes (v -\iota(v)) + (z,c) \otimes (\iota(v) -v)).
 \]
Accordingly we have 
\begin{multline*}
\partial \left((z,i) \otimes (v -\iota(v)) + (z,c) \otimes (\iota(v)-v) \right)\\
 =  (i) \otimes T((z,i))(v -\iota(v)) - (z) \otimes (v - \iota(v)) \\
+  (c)\otimes T((z,c))( \iota(v)-v) -(z) \otimes (\iota(v)-v).
\end{multline*}
But $v-\iota(v)$ is zero in the $\Phi_i$ -covariants of the generic fiber and $T((z,c)) = I$ hence

\[
T((z_0,i))(v -\iota(v)) =0
\]
and
\[
\partial \left((z,i) \otimes (v -\iota(v)) + (z,c) \otimes (\iota(v)-v)\right) =  (c) \otimes (v - \iota(v)).
\]
Hence we have proved by a direct calculation in the complex $C_{\bullet}(X, \calE_{\Phi})$ that $C_{\bf x} \otimes (v -\iota(v))$ is a relative cycle.
\end{remark}

\begin{remark}
In case $v =\pi_k(\x^k)$ we have 

$$ C_{\x}^+ \otimes (v -\iota(v)) = 2 C_{\x}^+ \otimes v.$$
\end{remark}

Recalling that in all cases we have defined
$$C_\x \otimes v := C_{\x}^+ \otimes (v -\iota(v))$$
we have an equality of cycles with coefficients
\begin{equation}\label{imagems}
\pi \cdot C'_\x \otimes v = C_\x \otimes v.
\end{equation}

\paragraph{{\bf The stabilizer of the modular symbol $ C_{\x}^{\prime} \otimes v$}}

We will now need an analysis of the subgroup of $\Phi$ that stabilizes the cycle $ C_{\x}^{\prime} \otimes v $.  We may assume (by our choice of $\G'$) that  $C_{\x}^{\prime}$ is an infinite geodesic joining
two cusps. Thus the stabilizer of the {\it oriented} geodesic $D_{\x}$ in $\SL(2,\Q)$ is a rational torus $\mathbb{T}$ which is {\it split} over $\Q$.

 The stabilizer of the underlying unoriented geodesic is the normalizer of the torus $N(\mathbb{T})$ and we have an extension $\mathbb{T} \to N(\mathbb{T}) \to \Z/2$, which we may split by assigning the element $\iota = \iota_{\x}$ of order $4$ in $N(\mathbb{T})$ which for the case of the $y$-axis is the matrix $\kzxz{0}{-1}{1}{0}$. We note that $\iota$ exchanges the two cusps which are the ends of the geodesic. We will assume that $\iota \in \G$ otherwise $\iota = I$ in the lemma below.  We abuse notation and let $\iota$ denote the element of $\Phi$ represented by $\iota$. 

We first observe
\begin{lemma}
\[
\eta \cdot  C_{\x} \otimes v =  C_{\x} \otimes v \Longrightarrow   \ \text{either} \ \eta = \iota \ \text{or} \ \eta = I.
\]
\end{lemma}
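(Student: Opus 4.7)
The proof is by lifting $\eta$ to an element $\tilde\eta \in \G$ and reducing to a question about the set stabilizer of the geodesic $D_\x \subset \h$ inside $\G$ modulo $\G'$.

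First, the hypothesis that $\eta \in \Phi$ fixes the cycle $C'_\x \otimes v$ in $X'$ implies that any lift $\tilde\eta \in \G$ satisfies $\tilde\eta \cdot D_\x = \gamma' \cdot D_\x$ in $\h$ for some $\gamma' \in \G'$. Replacing $\tilde\eta$ by $(\gamma')^{-1}\tilde\eta$, still a lift of $\eta$, we may assume that $\tilde\eta$ stabilizes the underlying set $D_\x \subset \h$ as an unoriented geodesic. Thus $\tilde\eta$ lies in the set stabilizer of $D_\x$ in $\SL_2(\R)$, which is precisely the normalizer $N(\mathbb{T})$ of the one-dimensional rational torus $\mathbb{T}$ whose identity component pointwise fixes $\x$. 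We have the decomposition $N(\mathbb{T}) = \mathbb{T} \sqcup \iota\cdot\mathbb{T}$ recalled above, where $\mathbb{T}$ preserves the orientation of $D_\x$ while $\iota\cdot\mathbb{T}$ reverses it.

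The key input is now the rational dichotomy for $\bar{\G}_\x$ recalled at the start of Section~\ref{Spec-Section}: since $\G'$ has been chosen so that $C'_\x$ is an infinite modular symbol joining two inequivalent cusps, $\x^\perp$ is $\Q$-split (equivalently $q(\x) \in (\Q^\times)^2$), and hence $\bar{\G}_\x$ is trivial. This in turn forces $\mathbb{T}(\Q) \cap \G \subseteq \{\pm I\}$. Combining with the preceding step, $\tilde\eta \in \{\pm I,\,\pm \iota\}$. Passing to $\Phi = \G/\G'$, these four elements project to at most the two cosets $\{I, \iota\}$, either by working throughout in $\PSL_2$ or by using that $-I \in \G'$ in our setting; in either case $\eta \in \{I, \iota\}$, as asserted.

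The decisive ingredient is thus the rational-torus dichotomy: the $\Q$-split hypothesis on $\x^\perp$ kills every nontrivial orientation-preserving symmetry of $D_\x$ inside $\bar{\G}$, leaving only the orientation-reversing involution $\iota$ as a candidate further symmetry. The only bookkeeping subtlety — tracking $\pm I$ under the quotient by $\G'$ — is routine and handled by a choice of working projectively.
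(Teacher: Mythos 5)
Your proof is correct and follows essentially the same route as the paper's: lift $\eta$ to $\tilde\eta \in \G$ that stabilizes $D_\x$ as a set, conclude $\tilde\eta \in N(\mathbb{T}) \cap \G$, and use the $\Q$-splitness of $\mathbb{T}$ to see this group is as small as possible. You are somewhat more careful than the paper's very terse argument in tracking the $\pm I$ ambiguity when descending to $\Phi$ (the paper simply writes $N(\mathbb{T}) \cap \SL_2(\Z) = \{I,\iota\}$, which is literally the statement in $\PSL_2$), but this is a presentational refinement rather than a different idea.
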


\begin{proof}
$\eta \cdot  C_{\x} \otimes v =  C_{\x} \otimes v$ implies that we have an equality of submanifolds $\eta(C_{\x}^{\prime}) = C_{\x}^{\prime}$. But this implies that $\eta$ has a representative $\tilde{\eta}$ satisfying $\tilde{\eta}(D_\x) = D_{\x}$. This in turn implies that $\tilde{\eta} \in N(\mathbb{T})$.  But $N(\mathbb{T}) \cap \SL(2,\Z) = \{ I, \iota \}$ since $\mathbb{T}$ is split over $\Q$. 
\end{proof}

We then have

\begin{proposition}\label{msstabilizer}
The stabilizer of the modular symbol $ C_{\x}^{\prime} \otimes v$ is trivial unless $v$ has weight zero for $\mathbb{T}$. If $v$ has weight zero then the stabilizer is $\{I,\iota\} \cong \Z/2$ if $k$ is odd. Otherwise, it is trivial.  
\end{proposition}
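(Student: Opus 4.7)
The plan is to combine the preceding lemma with an explicit sign computation. By that lemma, any nontrivial element of the stabilizer must equal $\iota$, so the only question is whether $\iota$ itself stabilizes $C_{\x}^{\prime} \otimes v$.

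First I would check that $\iota$ acts as $-1$ on $\x$ and hence reverses the orientation of $D_{\x}$. Reducing to the standard case one may take $\x = \kzxz{1}{0}{0}{-1}$ and $\iota = \kzxz{0}{-1}{1}{0}$, for which a direct computation gives $\iota\, \x\, \iota^{-1} = -\x$; the general case follows by $G(\Q)$-conjugation. Since $\x$ determines the orientation of $D_{\x}$, this implies $\iota(C_{\x}^{\prime}) = -C_{\x}^{\prime}$ as oriented one-chains. Consequently, in the chain complex,
\[
\iota \cdot \bigl(C_{\x}^{\prime} \otimes v\bigr) \;=\; \iota(C_{\x}^{\prime}) \otimes \iota(v) \;=\; -\,C_{\x}^{\prime} \otimes \iota(v),
\]
so the stabilization condition reduces to the single linear equation $\iota(v) = -v$ in $E_{2k}$.

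Next I would use that $\iota$ represents the nontrivial Weyl element for the split torus $\mathbb{T}$, so it permutes the weight spaces by $E_{2k}(\ell) \leftrightarrow E_{2k}(-\ell)$. For a weight vector $v$ of weight $\ell$, the two sides of $\iota(v) = -v$ lie respectively in $E_{2k}(-\ell)$ and $E_{2k}(\ell)$, which forces $\ell = -\ell$, i.e.\ $\ell = 0$. This immediately yields the first assertion: a weight vector of nonzero weight has trivial stabilizer.

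Finally, for the one-dimensional weight zero subspace I would invoke Lemma~\ref{repformulas} to write $v_0 = \pi_k(\x^k)$ up to scalar, and use the $\SL_2$-equivariance of the harmonic projection together with $\iota(\x) = -\x$ to compute $\iota(v_0) = \pi_k\bigl(\iota(\x)^k\bigr) = (-1)^k v_0$. Hence $\iota(v_0) = -v_0$ precisely when $k$ is odd, producing stabilizer $\{I,\iota\}$ in that case and trivial stabilizer when $k$ is even. The argument is essentially bookkeeping; there is no real obstacle, the only conceptual point being that the sign from the orientation reversal of $D_{\x}$ must be absorbed by the representation-theoretic sign $(-1)^k$ acting on the weight zero line.
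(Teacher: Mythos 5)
Your argument is correct and follows the same route as the paper's: use the preceding lemma to reduce to deciding whether $\iota$ stabilizes the cycle, observe $\iota \cdot D_{\x} = -D_{\x}$ to turn this into the equation $\iota(v)=-v$, and then analyze that equation via the Weyl-group action on the weight decomposition together with $\iota(\pi_k(\x^k)) = (-1)^k \pi_k(\x^k)$. The paper compresses the last two steps into a single ``clearly''; you simply supply the details (the explicit conjugation $\iota\x\iota^{-1}=-\x$, the weight-swapping argument forcing weight zero, and the $(-1)^k$ sign on the weight-zero line), so there is no substantive divergence.
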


\begin{proof}

By the lemma either the isotropy group of the cycle is trivial or $\Z/2$.  Let us examine when the later holds that is $\iota$ is in the isotropy subgroup. 
Now we have 
\[
\iota \cdot  C_{\x} \otimes v = \tilde{\pi} \cdot (\iota \cdot  D_{\x}) \otimes \iota(v).
\]
Here we think of $D_\x$ as an oriented  subcomplex of $D$. But $\iota \cdot  D_{\x} = -  D_{\x} $ whence we have $\iota(v) = -v$. Clearly this occurs if and only if $v$ is the zero weight vector $\pi_k(\x^k)$ and $k$ is odd.
\end{proof}

We now deduce a corolllary of the proposition.
\begin{corollary}
For the case in which $C_{\x}$ is not compact the stabilizer of $\x$ in $\PSL(2,\Z)$ is trivial.
\end{corollary}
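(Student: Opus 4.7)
The plan is to identify the stabilizer of $\x$ in $\SL_2(\Q)$ as the $\Q$-rational points of the torus $\mathbb{T}$ that appears in the discussion preceding Proposition~\ref{msstabilizer}, and then use the non-compactness hypothesis to pin down its integral points. Recall that $\SL_2$ acts on $V$ by conjugation, and the stabilizer of a vector $\x\in V$ of positive length is precisely the (connected) torus $\mathbb{T}\subset \SL_2$ whose Lie algebra is $\Q\x\subset V$. The non-compactness of $C_\x$ is by definition equivalent to $\x^\perp$ being split over $\Q$, which in turn is equivalent to $\mathbb{T}$ being $\Q$-split.

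Once this identification is in place, the argument is essentially a statement about units. A $\Q$-split one-dimensional torus is isomorphic to $\mathbb{G}_m$ over $\Q$, so $\mathbb{T}(\Q)\cong \Q^\times$ and the integral points satisfy $\mathbb{T}(\Z)\cong \Z^\times=\{\pm 1\}$. In concrete terms, conjugating $\x$ into the normal form $\kzxz{1}{0}{0}{-1}$ (permissible over $\Q$ by the split hypothesis) makes $\mathbb{T}$ the diagonal torus, whose intersection with $\SL_2(\Z)$ consists exactly of $\pm I$. Passing to $\PSL_2(\Z)=\SL_2(\Z)/\{\pm I\}$ kills this $\{\pm I\}$, so the stabilizer of $\x$ in $\PSL_2(\Z)$ is trivial.

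The one point worth flagging is to make sure one stabilizes $\x$ itself and not merely the underlying unoriented geodesic $D_\x$; the latter is stabilized by the larger group $N(\mathbb{T})\cap \SL_2(\Z)=\{I,\iota\}$ discussed just before Proposition~\ref{msstabilizer}, since $\iota$ reverses the orientation of $D_\x$ and therefore sends $\x$ to $-\x$. As an element of $\PSL_2(\Z)$, $\iota$ does not fix $\x$ (it negates it in $V$, and negation in $V$ is a nontrivial action since $V$ is the adjoint representation, not the standard one modulo center). Thus $\iota$ contributes only to the stabilizer of the modular symbol (and only when the coefficient $v$ has weight zero and $k$ is odd), not to the stabilizer of the vector $\x$, which is the content of the corollary. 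No serious obstacle is anticipated: the whole argument is a one-line consequence of the fact that the integral units in a $\Q$-split rank-one torus are $\{\pm 1\}$.
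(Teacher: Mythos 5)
Your proof is correct and arrives at the same conclusion, but by a somewhat more direct route than the paper. You identify the stabilizer of $\x$ in $\SL_2(\Q)$ directly as $\mathbb{T}(\Q)$ for the centralizing torus $\mathbb{T}$, note that non-compactness of $C_\x$ forces $\mathbb{T}$ to be $\Q$-split, and then invoke the fact that a $\Q$-split one-dimensional torus has integral points $\{\pm I\}$, hence projects to the identity in $\PSL_2(\Z)$. The paper instead first bounds the stabilizer of $\x$ inside the stabilizer of the unoriented set $D_\x$, which by the immediately preceding lemma is $\{I,\iota\}$ (modulo center), and then rules out $\iota$ by a geometric orientation argument: $\iota$ preserves the orientation of $D$ but reverses that of $D_\x$, so it must negate the normal vector, giving $\iota(\x)=-\x\neq\x$. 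Both arguments rest on the same split-torus fact (which the paper already used to get $N(\mathbb{T})\cap\SL_2(\Z)$), but you apply it directly to $\mathbb{T}$ whereas the paper detours through $N(\mathbb{T})$ and then eliminates $\iota$. Your concluding paragraph, flagging that $\iota$ stabilizes $D_\x$ but negates $\x$, effectively recovers the paper's orientation step as a consistency check; the paper in fact makes that step the proof, while you make the torus computation the proof.
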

\begin{proof}
Clearly if $\gamma(\x) = \x$ then $\gamma(D_{\x}) = D_{\x}$. But we have seen that this latter equation holds if and only if $\gamma \in \{I,\iota\}$.
But we claim $\iota(\x) = - \x$. Indeed $\iota$ preserves the orientation of $D$ and reverses the orientation of $D_{\x}$. Hence it must also reverse the orientation of 
the normal direction $\x$ to $D_{\x}$.
\end{proof}

\paragraph{\bf Folded geodesics and closed geodesics  with transverse self-intersection} 
We will now assume that $C'_{\x}$ is compact. 
Now the torus $\mathbb{T} \cap \G $ is defined over $\Q$ {\it but not split}.  There are two cases
depending whether of not the intersection $N(\mathbb{T}) \cap \Gamma $ is an infinite cyclic group 
or an infinite dihedral group i.e. whether or not it contains an element $\iota$ of order two. We will
call the first case the ``unfolded case'' and the second the ``folded case''. 
The intersection will always contain an infinite cyclic group which is the subgroup
that fixes both ends of the infinite geodesic $D_{\x}$.  We will let $\gamma_\x$ be the
generator of this cyclic group which preserves the orientation of $D_{\x}$ (since we have oriented $D_{\x}$ this generator is well-defined. Note that since we have assumed $\G'$ is torsion free we have
$N(\mathbb{T}) \cap \Gamma' $ is infinite cyclic generated by a power of $\gamma_x$. Thus in both
cases the cycle $C'_\x$ will be a closed geodesic with at worst transverse self-intersections. The vector
$v =\pi_k(\x^k)$ will be invariant under $\gamma_x$ and we obtain a cycle with
coefficients $C'_{\x} \otimes v$ in $X'$ in both cases.  We let  $\eta_\x$
denote the element of $\Phi$ induced by $\gamma_\x$. 

\begin{definition} \label{coveringdegree}
For both cases we define the number $d_{\x}$ to be the order of the cyclic subgroup of $\Phi$
generated by $\eta_\x$ above. 
\end{definition} 

In the unfolded case the space $C_\x$ which is the direct image of $D_{\x}$ under the branched covering
$D \to X$ will be a closed geodesic with possible transverse self-intersections and we have
an induced covering $\pi:C'_\x \to C_\x$ of degree $d_\x$.  In this case the vector  $v =\pi_k(\x^k)$
induces a parallel section over $C_\x$ and we may define the cycle with coefficients
$C_\x \otimes v$ as usual. Note that we have

$$ \pi \cdot C'_\x \otimes v = d_\x C_\x \otimes v.$$

\paragraph{\bf Folded geodesic cycles with coefficients}
In the ``folded case '' we can no longer give the straight-forward definition 
of the {\it cycle} $C_\x \otimes v$ because the image of $D_{\x}$ in $X'$ is now a geodesic
segment.  It will take considerable effort to arrive at the correct definition of $C_\x \otimes v$.

By a folded 
geodesic we mean a geodesic segment $\overline{y_1y_2}$ joining two orbifold singular points 
$y_1$ and $y_2$ each  with label $2$.  This means that any inverse image $x_1$ of $y_1$ resp. $x_2$
of $y_2$ in $D$ is fixed by an element $\iota_1$, resp. $\iota_2$ of $\G$ of order $2$.  Let
$\alpha(x_1,x_2)$ be the infinite geodesic in $D$ joining $x_1$ and $x_2$.  Then the subgroup $\G(1,2)$
of $\G$ generated by $\iota_1$ and $\iota_2$ is an infinite dihedral group that acts on
the geodesic $\alpha(x_1,x_2)$ and with image the geodesic segment $\overline{y_1y_2}$.  This is
because a fundamental domain for $\G(1,2)$ acting on $\alpha(1,2)$ is the geodesic segment
$\overline{x_1x_2}$. We have  
\begin{lemma}
The orbifold $X$ contains a folded geodesic if and only if it has two distinct orbifold points with label $2$.
\end{lemma}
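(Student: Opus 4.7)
The plan is to handle the two implications separately; the forward direction is immediate and the converse is a short lift-and-project argument.

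The forward implication is definitional: a folded geodesic is by construction a geodesic segment $\overline{y_1y_2}$ whose endpoints are distinct orbifold points of $X$ with label $2$, so the existence of such a segment forces the existence of two distinct label-$2$ orbifold points. Hence the real content is the converse.

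For the converse, suppose $y_1 \neq y_2$ are orbifold points of $X$ with label $2$. I would choose preimages $x_1, x_2 \in D \simeq \h$, so that by hypothesis there exist order-two elements $\iota_1, \iota_2 \in \G$ with $\iota_i(x_i) = x_i$. Let $\alpha(x_1, x_2)$ be the unique (infinite) geodesic in $D$ through $x_1$ and $x_2$. Because $\iota_i$ is an isometry of order two fixing the point $x_i$ of $\alpha(x_1,x_2)$, it reverses the orientation of this geodesic at $x_i$; in particular each $\iota_i$ preserves $\alpha(x_1, x_2)$ setwise. Therefore $\G(1,2) := \langle \iota_1, \iota_2\rangle$ acts on $\alpha(x_1,x_2)$ as an infinite dihedral group with fundamental domain the compact segment $\overline{x_1 x_2}$, and the projection of $\overline{x_1 x_2}$ to $X$ is a geodesic segment $\overline{y_1 y_2}$, which is by definition a folded geodesic.

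The only (mild) step requiring verification is that $\G(1,2)$ is genuinely infinite dihedral, equivalently that the composition $\iota_1 \iota_2 \in \G$ is a nontrivial element of infinite order; this is the one place discreteness of $\G$ is used. The point is that $\iota_1\iota_2$ is a product of two involutions in $\mathrm{Isom}(\h)$ with distinct fixed points lying on a common geodesic, hence a hyperbolic translation along $\alpha(x_1,x_2)$ of positive translation length, and discreteness of $\G$ prevents any nontrivial power from being the identity. With this in hand, $\overline{x_1x_2}$ is a strict fundamental domain for the action of $\G(1,2)$ on $\alpha(x_1,x_2)$, so the image in $X$ is precisely the required segment $\overline{y_1 y_2}$ and not a longer curve.
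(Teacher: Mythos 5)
Your proof is correct, but it takes a more elaborate route than the paper. The paper's own proof is a one-liner: the forward direction is "obviously necessary" (as you say, it is the definition), and for the converse the authors simply join the two label-$2$ points by a shortest geodesic segment in $X$, which is then a folded geodesic by definition. What you do instead is rederive the lift-and-project picture that the paper builds into its \emph{definition} of a folded geodesic: lift $y_1,y_2$ to $x_1,x_2\in D$, observe that the elliptic involutions preserve the geodesic $\alpha(x_1,x_2)$, verify the infinite-dihedral structure of $\G(1,2)=\langle\iota_1,\iota_2\rangle$, and project $\overline{x_1x_2}$. This is fine and in fact makes explicit the structure the paper only asserts, at the cost of being longer.

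Two small remarks. First, discreteness of $\G$ is not actually needed to conclude that $\iota_1\iota_2$ has infinite order: the product of two hyperbolic involutions with distinct fixed points on a common geodesic is a hyperbolic translation of positive translation length, and every such element of $\mathrm{Isom}(\h)$ has infinite order automatically. Second, your last sentence overreaches slightly: the fact that $\overline{x_1x_2}$ is a fundamental domain for the $\G(1,2)$-action on $\alpha(x_1,x_2)$ does not by itself rule out identifications of interior points of $\overline{x_1x_2}$ by elements of $\G$ outside $\G(1,2)$, so the projected segment could a priori have self-intersections or pass through further orbifold points. This does not affect the conclusion — the image is still a geodesic segment joining $y_1$ and $y_2$, which is all the definition asks — but the phrasing "precisely the required segment and not a longer curve" is not justified by the fundamental-domain observation. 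The paper's use of a \emph{shortest} geodesic in $X$ sidesteps these concerns at the outset.
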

\begin{proof}
The condition is obviously necessary. But if $X$ has two orbifold points we simply join
them in $X$  by a shortest geodesic segment so the condition is sufficent as well.
\end{proof}

\begin{example}
The famous ``theta group'' consisting of matrices \[ \begin{pmatrix} a & b \\
                                                                     c & d \\
\end{pmatrix}\] with $ac$ and $bd$ even has two distinct orbifold points with label $2$ namely $i$ and $i+1$ and hence contains the folded geodesic $\overline{i i+1}$.    It is interesting that the modular curve itself does not contain a folded geodesic.  
\end{example}

Let  $\overline{y_1y_2}$ be a folded geodesic and $v =\pi_k(\x^k)$.  We will now define
a simplicial one chain with coefficients $\overline{y_1y_2} \otimes v$. 
 We let $z$ be the midpoint of  the geodesic
segment $\overline{y_1y_2}$.  We define the one chain with coefficients $\overline{y_1y_2} \otimes v$ by
$$ \overline{y_1y_2} \otimes v = (z,y_1) \otimes v - (z,y_2) \otimes v.$$

\begin{lemma}
The chain $\overline{y_1y_2} \otimes v$ is a cycle if and only if $k$ is odd.
\end{lemma}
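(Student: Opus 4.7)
The plan is to compute $\partial(\overline{y_1y_2} \otimes v)$ directly from the simplicial boundary formula \eqref{boundary} and then analyze when the result vanishes in the pushforward local system $\calE_{\Phi}$. Applying \eqref{boundary} to each one-simplex gives
\[
\partial((z,y_i) \otimes v) = (y_i) \otimes T((z,y_i))(v) - (z) \otimes v
\]
for $i=1,2$. The midpoint $z$ lies on the smooth locus of $X$, so its stalk is the generic fiber $E_{2k}$ and the two contributions $(z) \otimes v$ cancel. Hence
\[
\partial(\overline{y_1y_2} \otimes v) = (y_1) \otimes T((z,y_1))(v) - (y_2) \otimes T((z,y_2))(v),
\]
and the problem reduces to deciding, for each $i$, whether $T((z,y_i))(v) = 0$ in $\calE_{\Phi}(y_i)$.

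Next I would invoke the description of the pushforward local system from \cite{FMlocal}: the stalk at the orbifold point $y_i$ is the space of coinvariants $E_{2k}/(I-\iota_i)E_{2k}$, and the edge morphism $T((z,y_i))$ factors as parallel transport in the flat bundle $E$ to the fiber over a chosen lift $x_i$ of $y_i$, followed by projection to coinvariants. Since $\iota_i$ has order two on the rational representation $E_{2k}$, the space $E_{2k}$ decomposes into the $\pm 1$-eigenspaces of $\iota_i$, and the coinvariants are naturally isomorphic to the $(+1)$-eigenspace. Consequently, $T((z,y_i))(v) = 0$ in $\calE_{\Phi}(y_i)$ if and only if $\iota_i(v) = -v$.

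It then remains to compute $\iota_i(v)$ where $v = \pi_k(\x^k)$. Geometrically, $\iota_i$ acts on the symmetric space via $\SO(V)$ as an element fixing $x_i \in D$ whose derivative at $x_i$ is rotation by $\pi$; equivalently, $\iota_i$ fixes the line $\R x_i \subset V$ and acts as $-I$ on the orthogonal complement $x_i^{\perp}$. Because $x_i \in D_{\x}$ we have $\x \perp x_i$, so $\iota_i(\x) = -\x$, whence $\iota_i(\x^k) = (-1)^k \x^k$ and, by $G$-equivariance of $\pi_k$, $\iota_i(v) = (-1)^k v$. Combined with the previous step, $T((z,y_i))(v) = 0$ in the stalk precisely when $(-1)^k = -1$, i.e.\ when $k$ is odd; and since $v = \pi_k(\x^k) \neq 0$ (cf.\ Lemma~\ref{repformulas}), the parity of $k$ genuinely decides the question. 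This gives both directions of the claim.

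The only nontrivial step is the identification of the stalk and the edge morphism from the fairly intricate construction of $\calE_{\Phi}$ in \cite{FMlocal}; all subsequent algebra is elementary. Once one accepts that description, the proof is essentially the observation that the geometric reflection $\iota_i$ reverses the normal vector $\x$, so its induced action on the symmetric power $\x^k$ is multiplication by $(-1)^k$.
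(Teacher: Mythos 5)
Your proof is correct and reaches the conclusion via the same underlying fact as the paper, namely that $\iota_i(v) = (-1)^k v$; the difference is in how this sign is produced. The paper proceeds representation-theoretically: the Weyl group $N(\mathbb{T})/\mathbb{T}\cong\Z/2$ acts on the one-dimensional zero weight space of $\calH_k(V)$ by $(-1)^k$, and $\iota_1,\iota_2$ act the same way on $v$ because they are $\mathbb{T}(\R)$-conjugate and $v$ is $\mathbb{T}$-fixed. You instead argue geometrically: $\iota_i$ fixes $x_i \in D$ with derivative $-I$ on $T_{x_i}D\cong x_i^\perp$, and since $\x\in x_i^\perp$ you get $\iota_i(\x)=-\x$, hence $\iota_i(\pi_k(\x^k)) = (-1)^k\pi_k(\x^k)$. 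Both routes are computing the action of the nontrivial Weyl element on the normal vector $\x$; yours is more self-contained and avoids invoking the Weyl group explicitly, while the paper's is shorter given the representation-theoretic background already established in Section~\ref{reptheory}. You also make explicit the boundary calculation in $\calE_\Phi$ and the identification of $T((z,y_i))(v)=0$ with $v$ lying in the $(-1)$-eigenspace of $\iota_i$ (equivalently $v$ vanishing in the coinvariants), steps the paper leaves implicit. Both arguments are complete; the choice between them is a matter of taste.
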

\begin{proof}
Note
first that although the weight space splitting 
of $E_{2k}$ is not invariant under $\mathbb{T}$ the zero weight space is carried into itself
(and fixed)  and the Weyl group $N(\mathbb{T})/\mathbb{T} \cong \Z/2$ acts on the zero weight space 
by $(-1)^k$.  Thus the vector $v$ is
zero in the coinvariants of both $\iota_1$ and $\iota_2$ if $k$ is odd and nonzero in both spaces
of coinvariants if $k$ is even (we note that since $\iota_1$ and $\iota$ are conjugate by an
element of $\mathbb{T}(\R)$ and $\mathbb{T}(\R)$ fixes $v$ they both have to act the same way
on $v$). 
\end{proof}

Next we note the relation of  the chain $\overline{y_1y_2} \otimes v$ with 
the
direct image $\pi \cdot C'_{\x} \otimes v$. We leave the proof to the reader
\begin{lemma}
\hfill

\begin{enumerate}
\item $\pi \cdot C'_{\x} \otimes v = 2 d_\x \ \overline{y_1y_2} \otimes v$ if  $k$ is odd.
\item $\pi \cdot C'_{\x} \otimes v = 0\  \overline{y_1y_2} \otimes v$ if  $k$ is odd.
\end{enumerate}
\end{lemma}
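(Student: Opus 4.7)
The strategy is to compute the pushforward $\pi_*(C'_\x \otimes v)$ directly as a simplicial chain in $C_\bullet(X, \calE_\Phi)$ by triangulating a fundamental domain for $\G'_\x$ on $D_\x$ compatibly with the chosen triangulation of $\overline{y_1 y_2}$ in $X$, and by carefully tracking two independent sources of signs: the geometric orientation of each sheet, and the identification of coefficient fibres over the common midpoint $z$.

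First I would fix a canonical lift $p_1 = 0 \in D_\x \cong \R$ of $y_1$, so that $p_2 = L$ with $L := d(p_1,p_2)$, the reflections $\iota_1,\iota_2$ act on $D_\x$ as reflections at $0$ and $L$ respectively, and $\gamma_\x = \iota_2 \iota_1$ is translation by $2L$. A fundamental domain for $\G'_\x = \langle \gamma_\x^{d_\x}\rangle$ is $[0, 2 d_\x L]$, which breaks into $2 d_\x$ unit cells $\tau_i = [iL, (i+1)L]$. Each $\tau_i$ maps homeomorphically onto $\overline{y_1 y_2}$; the even-indexed cells preserve the orientation $p_1 \to p_2$ induced from $\x$, while each odd-indexed cell is a translate of $\iota_2 \cdot [0,L]$ and hence reverses it. Barycentric subdivision (with the ordering convention of the paper, barycenter of the larger simplex first) then gives $\tau_i \otimes v = (z_i,(i+1)L)\otimes v - (z_i, iL)\otimes v$ with $v$ appearing at each midpoint $z_i$: because $v = \pi_k(\x^k)$ is $\mathbb{T}$-invariant, hence in particular $\G'_\x$-invariant, it defines a globally parallel section on $C'_\x$ whose value in every lifted fibre is $v$.

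To add the pushforwards $\pi_*(\tau_i \otimes v)$ inside $C_1(X,\calE_\Phi)$, I need to identify each $\calE_\Phi(z)$ with $E_{2k}$ via one common lift of $z$ in $D$, for which I take $L/2$. The midpoint of an even cell $\tau_{2j}$ lifts to $(4j+1)L/2 = \gamma_\x^j \cdot (L/2)$, and since $\gamma_\x \in \mathbb{T}$ fixes the zero weight vector $v$, the coefficient pushes forward as $v$ itself. The midpoint of an odd cell $\tau_{2j+1}$ lifts to $(4j+3)L/2 = \gamma_\x^j \iota_2 \cdot (L/2)$, and $\iota_2$ represents the nontrivial element of the Weyl group $N(\mathbb{T})/\mathbb{T}$, which by the remark preceding the lemma acts on the zero weight line by $(-1)^k$. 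Thus $\pi_*(\tau_{2j}\otimes v)$ and $\pi_*(\tau_{2j+1}\otimes v)$ contribute to $\overline{y_1 y_2}\otimes v$ with opposite geometric orientation signs and with coefficient-identification scalars $1$ and $(-1)^k$ respectively.

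Summing over the $d_\x$ pairs gives
\[
\pi_*(C'_\x \otimes v) \;=\; d_\x \bigl((-1)^k - 1\bigr)\,\overline{y_1 y_2}\otimes v,
\]
up to an overall sign fixed by the conventions for orienting $C'_\x$ via $\x$ and for orienting $\overline{y_1 y_2}$ via the definition $(z,y_1)\otimes v - (z,y_2)\otimes v$. This vanishes when $k$ is even and equals $\pm 2 d_\x\,\overline{y_1 y_2}\otimes v$ when $k$ is odd, yielding both statements (the second clause of the lemma, as written, contains an obvious typographical error and should read ``$k$ is even''). The main difficulty is purely bookkeeping: the argument only works because the two alternating effects---orientation flip from $\iota_2$ as a geometric reflection, and Weyl scalar $(-1)^k$ from $\iota_2$ as a coset representative in $N(\mathbb{T})/\mathbb{T}$---conspire correctly, a conspiracy parallel to the folded-modular-symbol identity of Remark \ref{foldedms} that makes $\overline{y_1 y_2}\otimes v$ a cycle exactly when $k$ is odd.
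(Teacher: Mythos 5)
The paper leaves this lemma to the reader, so there is no proof in the text to compare against; your argument is the natural one and is correct. You decompose a fundamental domain for $\G'_\x$ on $D_\x$ into the $2d_\x$ geodesic segments covering $\overline{y_1 y_2}$ and track the two alternating effects -- the dihedral orientation flip and the Weyl scalar $(-1)^k$ on the zero-weight line -- whose product is $+1$ for $k$ odd (contributions add, giving $2d_\x$) and $-1$ for $k$ even (contributions cancel); your displayed constant $d_\x\bigl((-1)^k-1\bigr)$ is off by an overall sign from the paper's $+2d_\x$ (with your stated conventions each pair contributes $1-(-1)^k$), but you explicitly flag this as a convention issue and it does not affect the substance, and you correctly identify the typo in statement (2), which should read ``$k$ even.''
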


We now have
\begin{definition}
$$C_\x \otimes v := \begin{cases} & 2 \   \overline{y_1y_2} \otimes v  \ \text{if $k$ is odd}\\
& 0 \ \text{if $k$ is even}.
\end{cases} $$
\end{definition}

Thus with the above definition of $C_\x \otimes v$ in the folded case we have {\it both
the folded and unfolded cases}

\begin{equation}\label{imagecs}
\pi \cdot C'_\x \otimes v = d_\x C_\x \otimes v.
\end{equation}

We now introduce some terminology which we will use in the rest of the paper.

\begin{remark}
We will often abuse terminology and refer to $C_\x$ as a ``folded geodesic'' in the second case.
In a certain sense $C_\x$  {\it is} the folded geodesic, it goes back and forth across
the segment $\overline{y_1y_2}$.
\end{remark}

\paragraph{{\bf The stabilizer subgroups associated to closed and folded geodesics}}

We will summarize the facts we will need later about the stabilizers of $C'_\x \otimes v$ for
the case that $C'_\x$ is a closed geodesic in the next proposition.  We will leave its proof to the reader.  
 
\begin{proposition}\label{scstabilizer}
Let $\Phi_{C'_{\x} \otimes v}$ be the stabilizer of the cycle $C'_{\x} \otimes v$ in $\Phi$. Then we 
have  
\begin{enumerate}
\item Either $\Phi_{C'_{\x} \otimes v}$ is the finite cyclic group  $\Z/ d_\x \Z$, the deck group of the covering $C'_{\x} \to C_{\x}$ or
\item  a finite dihedral group which is the extension
by $\Z/2$ of the above finite cyclic group.
\end{enumerate}
In the second case we let $\iota \in \G $ be an element whose image modulo 
$\G'$ is  nontrivial element  of the $\Z/2$. 
In the first case we  have
$$\Phi_{C'_{\x} \otimes v} = \Phi_{\x}$$
and in the second we have
$$\Phi_{C'_{\x} \otimes v}\supset \Phi_{\x} \cong \Z/ d_\x \Z \  \text{and} \ \Phi_{C'_{\x} \otimes v}/ \Phi_{\x} 
\cong \Z /2.$$

In both cases we have 
$$d_\x = |\Phi_{\x}|.$$

\end{proposition}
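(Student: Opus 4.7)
The plan is to first reduce the question to understanding the normaliser $N(\mathbb{T}) \cap \Gamma$ modulo $\Gamma'$, and then use the explicit description of the coefficient $v = \pi_k(\x^k)$ to decide which of its elements actually preserve the cycle with its coefficient.

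I would begin from the observation that an element $\eta \in \Phi = \Gamma/\Gamma'$ stabilises the underlying image $C'_\x \subset X'$ if and only if any (equivalently every) lift $\gamma \in \Gamma$ satisfies $\gamma \cdot D_\x = \gamma_0 \cdot D_\x$ for some $\gamma_0 \in \Gamma'$, i.e.\ $\gamma \in \Gamma' \cdot (N(\mathbb{T}) \cap \Gamma)$. Hence the set-theoretic stabiliser of $C'_\x$ in $\Phi$ is canonically isomorphic to the quotient
\[
(N(\mathbb{T}) \cap \Gamma)/(N(\mathbb{T}) \cap \Gamma').
\]
Because $\Gamma'$ is torsion-free and every orientation-reversing element of the infinite dihedral group $N(\mathbb{T}) \cap \Gamma$ has order two, $N(\mathbb{T}) \cap \Gamma'$ contains no such reflections: it is the purely cyclic subgroup $\langle \gamma_\x^{d_\x}\rangle$, with $d_\x$ as in Definition~\ref{coveringdegree}. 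Therefore the set-stabiliser of $C'_\x$ in $\Phi$ is the cyclic group $\Phi_\x = \langle \eta_\x\rangle \cong \Z/d_\x\Z$ in the unfolded case, and its dihedral enlargement of order $2d_\x$ in the folded case.

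The next step is to check which of these set-stabilising elements actually preserve the coefficient. The rotational generator $\gamma_\x$ fixes $\x$ pointwise, hence fixes $v = \pi_k(\x^k)$, so every element of $\Phi_\x$ stabilises $C'_\x\otimes v$ with no further condition. A Weyl element $\iota \in N(\mathbb{T}) \cap \Gamma$ in the folded case, on the other hand, acts on $\x$ as $\iota\cdot \x = -\x$ (it maps $D_\x$ to itself while reversing both its orientation and the normal direction $\x$), whence $\iota\cdot v = \pi_k((-\x)^k) = (-1)^k v$. Combined with the orientation reversal $\iota \cdot C'_\x = -C'_\x$ of the underlying oriented cycle, this yields
\[
\iota\cdot (C'_\x\otimes v) = (-1)^{k+1}(C'_\x\otimes v),
\]
which equals $C'_\x\otimes v$ exactly when $k$ is odd.

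Assembling these pieces: in the unfolded case the stabiliser is automatically the cyclic group $\Phi_\x$; in the folded case it is $\Phi_\x$ when $k$ is even and the full dihedral extension (generated by $\Phi_\x$ and the image of $\iota$) when $k$ is odd. In either situation the outcome falls into one of the two listed alternatives, and the identity $d_\x = |\Phi_\x|$ is immediate from Definition~\ref{coveringdegree}. The main technical point in the argument is the torsion-free observation that isolates $\langle \gamma_\x^{d_\x}\rangle$ inside $N(\mathbb{T})\cap \Gamma'$; once this is in place, the coefficient calculation is a short application of the $\mathbb{T}$-equivariance of $\pi_k$ together with the sign convention for oriented cycles.
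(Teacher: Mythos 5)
Your proof is correct, and it fills in exactly the argument that the paper declines to write down (the statement is given with ``we leave its proof to the reader''). The structure you use — first reducing the set-stabiliser of $C'_{\x}$ in $\Phi$ to the quotient $(N(\mathbb{T})\cap\G)/(N(\mathbb{T})\cap\G')$, then using torsion-freeness of $\G'$ to see $N(\mathbb{T})\cap\G'$ is the rotation subgroup $\langle\gamma_{\x}^{d_{\x}}\rangle$, and finally tracking the coefficient under a reflection via $\iota\cdot\x=-\x$ and the orientation reversal of the chain — is precisely the extension to the compact case of the argument the paper does give for the non-compact modular symbol case in Proposition~\ref{msstabilizer} and its corollary. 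The sign bookkeeping $\iota\cdot(C'_{\x}\otimes v)=(-1)^{k+1}(C'_{\x}\otimes v)$ matches the paper's conclusion there that $\iota$ stabilises the cycle with coefficient $\pi_k(\x^k)$ exactly when $k$ is odd, and your deduction that the folded case yields the dihedral extension for odd $k$ and collapses to $\Phi_{\x}$ for even $k$ is what the two alternatives of the proposition are encoding. The identity $d_{\x}=|\Phi_{\x}|$ then follows from Definition~\ref{coveringdegree} as you say, since $\Phi_{\x}$ is generated by $\eta_{\x}$ whose order is by definition $d_{\x}$.
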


\bigskip

\paragraph{\bf Horocircles $X_{\ell}^{\prime}$}
We now analyse the case of the horocircle cycles with coefficients $X_{\ell}^{\prime} \otimes u_{\ell}^k$ where $\ell = \Q u_{\ell}$ is an isotropic rational line in $V$.

\begin{proposition}\label{hstabilizer}
The stablizer of the cycle $X_{\ell}^{\prime} \otimes u_{\ell}^k$ in $\Phi$ is a finite cyclic group $\Z/ d \Z$, the deck group of the covering 
$X_{\ell}^{\prime}\to X_{\ell}$ and coincides with the image of the stablizer $\G_{\ell}$ of $\ell$ in $\G$ 
modulo $\G'$. This latter stabilizer $\G_{\ell}$ equals $N_{\ell} \cap \G $,  is infinite cyclic and coincides with the fundamental group of $X_{\ell}$.
\end{proposition}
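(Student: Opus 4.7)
The plan is to prove the three assertions of Proposition~\ref{hstabilizer} in sequence: the identification $\G_\ell = N_\ell \cap \G$ together with its structure as $\pi_1(X_\ell)$, the computation of the stabilizer of $X'_\ell$ in $\Phi$ as $\G_\ell/\G'_\ell$, and the observation that the coefficient $u_\ell^k$ imposes no additional condition.

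First, I would identify $\G_\ell$. Since the action of $\SL_2$ on $V$ is by conjugation, it factors through $\PSL_2 \simeq \SO(V)$, and the stabilizer of the isotropic line $\ell \subset V$ inside $\PSL_2(\Q)$ is the image of the Borel subgroup $B_\ell = N_\ell \cdot T_\ell$. Any element of $T_\ell \cap \SL_2(\Z)$ is forced to be $\pm I$, which acts trivially on $V$; hence the stabilizer of $\ell$ in $\G$ (as acting on $V$) is exactly $N_\ell \cap \G$. Because $\G$ is a congruence subgroup and $N_\ell \simeq \R$ is abelian, $\G_\ell$ is a discrete cocompact subgroup of $N_\ell$, hence infinite cyclic. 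The quotient $X_\ell = N_\ell/\G_\ell$ is therefore topologically a circle with $\pi_1(X_\ell) = \G_\ell$.

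Next, I would compute the stabilizer of $X'_\ell$ in $\Phi = \G/\G'$. A class $\eta \in \Phi$ preserves the horocircle $X'_\ell \subset X'$ setwise if and only if some (equivalently, any) lift $\tilde\eta \in \G$ satisfies $\tilde\eta \ell \in \G' \cdot \ell$; by adjusting the lift by a suitable element of $\G'$, we may arrange $\tilde\eta \in \G_\ell$. Hence the set-theoretic stabilizer of $X'_\ell$ in $\Phi$ is the image of $\G_\ell$ in $\Phi$, namely $\G_\ell\G'/\G' \cong \G_\ell/(\G_\ell \cap \G') = \G_\ell/\G'_\ell$. Since $\G'$ has finite index in $\G$ and both $\G_\ell$ and $\G'_\ell$ are infinite cyclic cocompact lattices in $N_\ell$, the quotient $\G_\ell/\G'_\ell$ is a finite cyclic group $\Z/d\Z$ with $d = [\G_\ell : \G'_\ell]$. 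Since $N_\ell$ is abelian, the covering $X'_\ell = N_\ell/\G'_\ell \to N_\ell/\G_\ell = X_\ell$ is normal, and its deck group is precisely $\G_\ell/\G'_\ell$.

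Finally, I would verify that the coefficient $u_\ell^k$ is preserved by all elements of this set-theoretic stabilizer, so that the stabilizer of the cycle with coefficients equals the set-theoretic one. The essential fact is that $u_\ell$ is fixed by $N_\ell$ under the conjugation action: for any $n(x) \in N_\ell$ we have $n(x) u_\ell n(x)^{-1} = u_\ell$, because $u_\ell$ spans the abelian Lie algebra $\mathfrak{n}_\ell$. Hence $u_\ell^k$ is $N_\ell$-invariant and in particular fixed by every element of $\G_\ell \subset N_\ell$, so the parallel section persists under the action and the full stabilizer of $X'_\ell \otimes u_\ell^k$ in $\Phi$ is indeed $\G_\ell/\G'_\ell \cong \Z/d\Z$. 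The argument is essentially routine bookkeeping; the one point that requires care is the distinction between fixing $\ell$ itself and fixing its $\G'$-orbit, which is resolved by modifying the lift of $\eta$ within its $\G'$-coset to land inside $\G_\ell$.
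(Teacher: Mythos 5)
The paper leaves this proposition's proof to the reader, so there is no in-text argument to compare against; your argument is correct and is the natural one: identify $\G_\ell$ via the Borel decomposition, observe that an element of $\Phi$ stabilizing $X'_\ell$ lifts (after an adjustment within its $\G'$-coset) to an element of $\G_\ell$, identify $\G_\ell/\G'_\ell$ with the cyclic deck group of the circle cover $X'_\ell\to X_\ell$, and note that $u_\ell^k$ is $N_\ell$-fixed so the coefficient imposes no further constraint.

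Two small points worth making explicit in a write-up. (i) When $-I\in\G$, the group-theoretic stabilizer of the line $\ell$ in $\G$ is $\{\pm I\}\cdot(N_\ell\cap\G)$, so the stated equality $\G_\ell=N_\ell\cap\G$ really holds in $\PSL_2$; the paper adopts this convention tacitly throughout this section (for instance, its proof of the modular-symbol stabilizer asserts $N(\mathbb{T})\cap\SL_2(\Z)=\{I,\iota\}$, which is an equality in $\PSL_2(\Z)$), and your parenthetical ``as acting on $V$'' gestures at this but could be stated outright. (ii) Preserving the cycle $X'_\ell\otimes u_\ell^k$ requires preserving the orientation of $X'_\ell$, not only the underlying set and the coefficient; this holds here because the deck transformations of the cover $X'_\ell\to X_\ell$ are rotations of a circle, but it deserves a sentence, since orientation reversal is precisely what makes the companion stabilizer computations for modular symbols and closed geodesics (Propositions~\ref{msstabilizer} and~\ref{scstabilizer}) more delicate.
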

We leave the proof to the reader. We will often replace $d$ by $d_{\ell}$ to emphasize the dependence on $\ell$.

\subsubsection{The direct image formula, the homology transfer and the Gysin homomorphism}
We first need to compute $\pi \cdot  C'_{\x} \otimes v $. The next  proposition follows from the analysis immediately above.
Recall that if $C'_\x$ is compact then $d_\x$ is the order of the covering $C'_\x \to C_\x$
(and this is equal to the order of the isotropy group $\Phi_\x$). 
We will define $d_{\x}$ to be $1$ if $C'_{\x}$ is not compact. 
We have from equations \eqref{imagems} and \eqref{imagecs}

\begin{proposition}\label{directimage}
\hfill

\begin{enumerate}
 \item Suppose $C_{\x}$ is not compact, then we have
\[
\pi\cdot C'_{\x} \otimes v = C_{\x} \otimes v.
\]

\item  If $C_{\x}$ is compact then
 \[
 \pi\cdot C'_{\x} \otimes v = d_{\x} C_{\x} \otimes v.
 \]
\item \[ \pi \cdot X'_{\ell} \otimes u^k = d_{\ell} X_{\ell} \otimes u^k 
\]
\end{enumerate}

\end{proposition}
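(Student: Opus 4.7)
The plan is to observe that all three assertions are already essentially established in the running discussion, so the proof amounts to assembling the pieces and checking that the direct image formula on the level of simplicial chains with local coefficients matches the definitions. First I would unwind the definition of $\pi \cdot$ in terms of the morphism of chain complexes $\pi_* : C_\bullet(X',\calE) \to C_\bullet(X, \calE_\Phi)$ induced by the projection $\tilde\pi : \calE \to \calE_\Phi$ of local systems constructed in \cite{FMlocal} and summarized in Proposition~\ref{homology}. The point is that for an ordered simplex $(x_0,\ldots,x_p)$ in $X'$ with coefficient $v \in \calE(x_0)$, the image is $(\pi x_0,\ldots,\pi x_p) \otimes \tilde\pi(v)$, and for the cycles in question this yields exactly the objects $C_\x \otimes v$, $\overline{y_1 y_2} \otimes v$ and $X_\ell \otimes u^k$ we have defined.

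For (1) and (2), I would simply cite the identities \eqref{imagems} and \eqref{imagecs} which were derived in the subsections on folded modular symbols and on folded/unfolded closed geodesics. In the non-compact (modular symbol) case the stabilizer $\Phi_\x$ is trivial by the corollary following Proposition~\ref{msstabilizer}, so $d_\x = 1$ by convention and the covering $\pi : C'_\x \to C_\x$ is one-to-one onto its image; the ``folded'' case is absorbed into the definition $C_\x \otimes v := C_\x^+ \otimes (v - \iota(v))$. In the compact case, either $C_\x$ is a closed geodesic with $\pi : C'_\x \to C_\x$ a $d_\x$-fold covering (unfolded case), or $C_\x$ is a folded geodesic $\overline{y_1 y_2}$, and in both situations Proposition~\ref{scstabilizer} together with the definition absorbs the factor $d_\x$ on the right.

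For (3) I would argue analogously using Proposition~\ref{hstabilizer}. The covering $X'_\ell \to X_\ell$ has degree $d_\ell$ equal to the order of the image of $\G_\ell$ in $\Phi$, the coefficient $u_\ell^k$ is a highest weight vector and hence $N_\ell$-invariant, so in particular $\Phi$-equivariant at each vertex of a triangulation of $X'_\ell$ compatible with the $\Phi$-action. Therefore on chains the map $\pi_*$ wraps $X'_\ell$ around $X_\ell$ exactly $d_\ell$ times while preserving the parallel section $u_\ell^k$, giving $\pi \cdot X'_\ell \otimes u_\ell^k = d_\ell X_\ell \otimes u_\ell^k$.

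The only subtle point, and the one I would take most care with, is checking that these identities really hold at the level of simplicial chains with coefficients in $\calE_\Phi$ rather than only as cycles in homology. In particular one must verify that the triangulations chosen on $X'$ and $X$ are compatible in the sense required by the construction of $\calE_\Phi$ (i.e. the ordering convention on vertices forcing the inclusion $\Phi_{y_1} \subset \Phi_{y_2}$ along each directed edge), and that the coefficient transitions $T((y_1,y_2))$ agree with parallel translation upstairs followed by projection to the coinvariants. Once this compatibility is in place, the direct image formula is a tautology on each simplex and the three statements of the proposition follow immediately.
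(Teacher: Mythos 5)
Your proposal is correct and takes essentially the same route as the paper: the paper has no formal proof of this proposition, it is presented as an immediate consequence of equations \eqref{imagems} and \eqref{imagecs} (derived in the preceding subsections on folded/unfolded modular symbols and geodesics), together with the convention $d_\x = 1$ in the noncompact case, and the horocircle case is handled by the same kind of covering-degree argument invoking Proposition~\ref{hstabilizer}. Your assembly of the pieces and the closing remark about chain-level compatibility (ordering convention, coinvariant projections) accurately reflects what the paper's construction in \cite{FMlocal} is supposed to guarantee.
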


 We define the reducible cycles $ \tr^{\G}_{\G'}(C_{\x} \otimes v)$, the transfer of the cycle 
$C_{\x} \otimes v$  and $ \tr^{\G}_{\G'}(C_{\ell})$, the transfer of the cycle $C_{\ell} \otimes v$ by 

\begin{definition}\label{uncappedhomologytransfer}
\[
 \tr^{\G}_{\G'}(C_{\x} \otimes v) = \frac{1}{d_{\x}}\sum_{\eta \in  \Phi} \eta \cdot  
C_{\x}^{\prime} \otimes v
\]
and 
\[
 \tr^{\G}_{\G'}(X_{\ell} \otimes v) = \frac{1}{d_{\ell}}\sum_{\eta \in  \Phi} \eta \cdot  X_{\ell}^{\prime} \otimes v.
\]

\end{definition}

\begin{remark} In this remark we  justify using the symbol $ \tr^{\G}_{\G'}(C_{\x} \otimes v)$ for the above sum. $ \tr^{\G}_{\G'}(C_{\x} \otimes v)$ is the sum in the oriented simplicial chain complex for $X'$ of the oriented simplicial cycles $\eta \cdot  C_{\x}^{\prime} \otimes v$.  We have
\[
\pi \cdot \eta \cdot  C_{\x}^{\prime} \otimes v =  C_{\x} \otimes v.
\]
So all the oriented simplicial cycles $\eta \cdot  C_{\x}^{\prime} \otimes v$ are in the inverse image of the simplicial cycle $ C_{\x} \otimes v$ and moreover comprise the full inverse image (with multiplicity). We then add these cycles in the inverse image to obtain $ \tr^{\G}_{\G'}(C_{\x} \otimes v)$. Thus $\tr^{\G}_{\G'}$ is the operation from collections of simplicial chains to the group of simplicial chains described in \cite{Brown},  page 82, paragraph (E), (except Brown used cellular chains and assumes a free action). 
\end{remark}

 We have
\begin{lemma}\label{pushforward}
\hfill

\begin{enumerate}
\item$\pi \cdot  \tr^{\G}_{\G'}(C_{\x} \otimes v) =  m  C_{\x} \otimes v $.
\item $\pi \cdot  \tr^{\G}_{\G'}(X_{\ell} \otimes v) =  m  X_{\ell} \otimes v $.
\end{enumerate}
\end{lemma}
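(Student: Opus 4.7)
The plan is to argue both (1) and (2) directly from the definition of the transfer together with the case analysis already encoded in Proposition~\ref{directimage}. The fundamental observation that drives everything is that the simplicial chain map $\pi$ is invariant under the deck group action: for every $\eta \in \Phi$ and every simplicial chain $y$ on $X'$ with coefficients in $\calE$, we have $\pi \cdot (\eta \cdot y) = \pi \cdot y$, because $\pi \circ \eta = \pi$ and the morphism $\widetilde\pi: \calE \to \calE_\Phi$ is $\Phi$-equivariant by construction.

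For part (1) I will first unwind Definition~\ref{uncappedhomologytransfer} and apply $\pi$ termwise:
\[
\pi \cdot \tr^{\G}_{\G'}(C_{\x} \otimes v)
= \frac{1}{d_{\x}} \sum_{\eta \in \Phi} \pi \cdot \bigl(\eta \cdot (C'_{\x} \otimes v)\bigr)
= \frac{|\Phi|}{d_{\x}} \, \pi \cdot (C'_{\x} \otimes v)
= \frac{m}{d_{\x}} \, \pi \cdot (C'_{\x} \otimes v).
\]
I then substitute the direct image formula of Proposition~\ref{directimage}, splitting into the noncompact and compact cases. In the noncompact case, $d_{\x}=1$ by convention and $\pi \cdot (C'_{\x} \otimes v) = C_{\x} \otimes v$, so the right-hand side collapses to $m\, C_{\x} \otimes v$. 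In the compact case (whether unfolded or folded), $d_{\x}$ is the degree of the covering $C'_{\x}\to C_{\x}$ (equivalently $|\Phi_{\x}|$, see Proposition~\ref{scstabilizer}) and $\pi \cdot (C'_{\x} \otimes v) = d_{\x}\, C_{\x} \otimes v$; the two factors of $d_{\x}$ cancel and again the total is $m\, C_{\x} \otimes v$.

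Part (2) is the same argument applied to the horocircle: the $\Phi$-equivariance of $\pi$ yields $\pi \cdot \tr^{\G}_{\G'}(X_{\ell}\otimes v) = \tfrac{m}{d_{\ell}}\,\pi \cdot (X'_{\ell}\otimes v)$, and Proposition~\ref{directimage}(3) together with Proposition~\ref{hstabilizer} (which identifies $d_{\ell}$ as the degree of the cyclic cover $X'_{\ell}\to X_{\ell}$) gives $\pi \cdot (X'_{\ell}\otimes v)= d_{\ell}\, X_{\ell}\otimes v$, so the answer is $m\, X_{\ell}\otimes v$.

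There is no real obstacle here: the lemma is a bookkeeping statement, and once one is willing to multiply and divide by the orders of the isotropy groups the identity is transparent. The only place one must be careful is the case distinction between the folded and unfolded situations for $C'_{\x}$, where the underlying geometry of the image cycle $C_{\x}$ (defined by Remark~\ref{foldedms} and the subsequent discussion) is quite different; however in both cases the relevant covering degree $d_{\x}$ has already been packaged uniformly in Definition~\ref{coveringdegree} and Proposition~\ref{directimage}, so the formula $\pi \cdot (C'_{\x}\otimes v) = d_{\x}\,C_{\x}\otimes v$ is available with no further adjustment and the cancellation against the $1/d_{\x}$ in the definition of the transfer proceeds identically in every case.
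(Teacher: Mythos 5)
Your proof is correct and follows the same route as the paper: use $\Phi$-invariance of the quotient map $\pi$ (i.e.\ $\pi \circ \eta = \pi$) to collapse the transfer sum to $m/d_{\x}$ copies of $\pi\cdot(C'_{\x}\otimes v)$, then apply Proposition~\ref{directimage}. The paper handles the compact/noncompact split uniformly via the convention $d_{\x}=1$ in the noncompact case, whereas you spell the two cases out explicitly, but the substance is identical.
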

\begin{proof}
We will prove only the first formula.
\begin{align*}
\pi \cdot  \tr^{\G}_{\G'}(C_{\x} \otimes v) = &\pi \cdot \frac{1}{d_{\x}}\sum_{\eta \in\Phi} \eta \cdot  C_{\x}^{\prime} \otimes v =  \frac{1}{d_{\x}} \sum_{\eta \in\Phi}(\pi \circ \eta) \cdot   C_{\x}^{\prime} \otimes v \\
= &\frac{1}{d_{\x}} \sum_{\eta \in\Phi} \pi \cdot   C_{\x}^{\prime} \otimes v = \frac{1}{d_{\x}} \sum_{\eta \in\Phi} d_{\x}  C_{\x} \otimes v =  m   C_{\x} \otimes v. \qedhere
\end{align*}
\end{proof}
The following proposition is in \cite{FMlocal} but it is so important we give it again here.

\begin{proposition}\label{dualofinverseimage}
With the above notation we have
\begin{enumerate}
\item $\PD ( \tr^{\G}_{\G'}(C_{\x} \otimes v)) = \pi^{\ast} \PD(C_{\x} \otimes v) \quad  \text{or equivalently} \quad \frac{1}{d_{\x}} \sum_{\eta \in  \Phi}  \PD(\eta \cdot C_{\x}^{\prime} \otimes v) =  \pi^{\ast} \PD(C_{\x} \otimes v)$.

\item $\PD ( \tr^{\G}_{\G'}(X_{\ell} \otimes v)) = \pi^{\ast} \PD(X_{\ell} \otimes v) \ \ \text{or equivalently} \ \frac{1}{d_{\ell}} \sum_{\eta \in  \Phi}  \PD(\eta \cdot X_{\ell}^{\prime} \otimes v) =  \pi^{\ast} \PD(X_{\ell} \otimes v)$.
\end{enumerate}
\end{proposition}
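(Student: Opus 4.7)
The plan is to verify the proposed equality of cohomology classes by testing against all classes in dual homology via the Kronecker pairing, and then to reduce to a standard projection formula for the finite regular covering $\pi\colon X' \to X$ of degree $m$.

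First, I would identify the transfer cycle $\tr^{\G}_{\G'}(C_\x \otimes v)$ with the geometric preimage $\pi^{-1}(C_\x \otimes v)$ as a chain with coefficients in $\calE$. By Propositions~\ref{msstabilizer} and \ref{scstabilizer}, the $\Phi$-orbit of $C'_\x$ has cardinality $m/d_\x$, and each orbit element appears exactly $d_\x$ times in the sum $\sum_{\eta \in \Phi} \eta \cdot C'_\x$; dividing by $d_\x$ thus recovers the preimage with multiplicity one. The coefficient section is pulled back correctly by construction, using the compatibility of $\calE$ and $\calE_\Phi$ in Proposition~\ref{homology} and \cite{FMlocal}. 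The analogous identification $\pi^{-1}(X_\ell \otimes v) = \tr^{\G}_{\G'}(X_\ell \otimes v)$ follows from Proposition~\ref{hstabilizer}.

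Next, I would invoke the projection formula for the finite covering. Define the fiberwise pushforward $\pi_* \omega$ of a form $\omega$ on $X'$ with values in $\calE$; this is a form on $X$ with values in $\calE_\Phi$ satisfying both
\[
\int_Z \pi_*\omega \;=\; \int_{\pi^{-1}(Z)} \omega \qquad\text{and}\qquad \int_X \pi_*\omega \wedge \alpha \;=\; \int_{X'} \omega \wedge \pi^*\alpha
\]
for cycles $Z \subset X$ and forms $\alpha$ on $X$ with values in $\calE_\Phi$. Away from the isolated orbifold fixed points $\pi$ is a local diffeomorphism, so these are the classical formulas for Riemannian coverings; the push-forward local system $\calE_\Phi$ of \cite{FMlocal} is tailored so that the formulas persist across the fixed locus. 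Chaining these identities together, for any closed form $\omega$ on $X'$ of complementary degree (with appropriate support so that the pairings converge), we compute
\begin{align*}
\langle \pi^*\PD(C_\x \otimes v),[\omega]\rangle_{X'}
&= \int_{X'} \omega \wedge \pi^*\PD(C_\x \otimes v) \\
&= \int_X \pi_*\omega \wedge \PD(C_\x \otimes v) \\
&= \int_{C_\x \otimes v} \pi_*\omega
= \int_{\pi^{-1}(C_\x \otimes v)} \omega \\
&= \int_{\tr^{\G}_{\G'}(C_\x \otimes v)} \omega
= \langle \PD(\tr^{\G}_{\G'}(C_\x \otimes v)),[\omega]\rangle_{X'}.
\end{align*}
Since this holds for every class $[\omega]$, non-degeneracy of the Kronecker pairing yields part~(1); part~(2) follows by the same argument with $X_\ell$ in place of $C_\x$.

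The main obstacle will be ensuring that these manipulations are rigorous in the orbifold category with the not-locally-constant local system $\calE_\Phi$: the chain-level identification of $\pi^{-1}$ with the transfer, the adjointness of $\pi^*$ and $\pi_*$ between the paired local systems $\calE$ and $\calE_\Phi$ (which on fibers amounts to the duality between coinvariants and invariants of the relevant isotropy groups), and in the folded-geodesic case the orientation bookkeeping, where the prefactor $2$ in $C_\x \otimes v := 2\,\overline{y_1y_2} \otimes v$ for $k$ odd must cancel against the extra $\Z/2$ in the stabilizer beyond $\Phi_\x$ identified in Proposition~\ref{scstabilizer}. All of these technicalities are handled by the framework of \cite{FMlocal}.
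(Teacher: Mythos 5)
Your proposal is correct and close in spirit to the paper's, but it runs the duality in the opposite direction. The paper begins with the form $\tfrac{1}{d_\x}\sum_{\eta}\PD(\eta\cdot C'_\x\otimes v)$ on $X'$, observes it is $\Phi$-invariant and hence of the form $\pi^*\psi$, and then tests $\psi$ against closed forms $\alpha$ on the base $X$: using the degree formula $\int_X\alpha\wedge\psi=\tfrac{1}{m}\int_{X'}\pi^*\alpha\wedge\pi^*\psi$, the identity $\PD(\eta\cdot C)=(\eta^{-1})^*\PD(C)$, the $\Phi$-invariance $\eta^*\pi^*\alpha=\pi^*\alpha$, and finally the cycle push-forward $\pi_*(C'_\x\otimes v)=d_\x\,C_\x\otimes v$ (Equations \eqref{imagems}, \eqref{imagecs}), it concludes $\psi=\PD(C_\x\otimes v)$ and invokes injectivity of $\pi^*$. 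You instead test the class $\pi^*\PD(C_\x\otimes v)$ upstairs against forms $\omega$ on $X'$ and use the adjunction $\int_{X'}\omega\wedge\pi^*\alpha=\int_X\pi_*\omega\wedge\alpha$ for the fiberwise push-forward of \emph{forms}, plus the identification $\tr^{\G}_{\G'}(C_\x\otimes v)=\pi^{-1}(C_\x\otimes v)$. The two routes carry the same information — your adjunction plus $\tr=\pi^{-1}$ repackages the paper's degree formula plus cycle push-forward — but the paper's version has a slight advantage in the orbifold setting because it never needs to define $\pi_*$ on $\calE$-valued forms landing in the coinvariant system $\calE_\Phi$, which is exactly the sort of construction the framework of \cite{FMlocal} has to make precise. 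One small inaccuracy in your write-up: you claim the $\Phi$-orbit of $C'_\x$ has cardinality $m/d_\x$; in the folded case Proposition~\ref{scstabilizer} gives a dihedral stabilizer of order $2d_\x$, so the orbit has size $m/(2d_\x)$ and the sum $\tfrac{1}{d_\x}\sum_\eta\eta\cdot C'_\x\otimes v$ is \emph{twice} the sum over the orbit — it is only after using $C_\x\otimes v:=2\,\overline{y_1y_2}\otimes v$ that the identification $\tr=\pi^{-1}$ holds, a cancellation you flag as a technicality at the end rather than build into the earlier orbit count.
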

\begin{proof}
We will prove the second formula in (1). Note first the formula for any cycle $C$ in an oriented compact manifold (possibly with boundary)
\begin{equation} \label{upperandlowerstars}
PD(\eta \cdot C)= (\eta^{-1})^{\ast} PD ( C )
\end{equation}
We may write the left-hand side as $\pi^{\ast} \psi$ for some form $\psi$ on $X$.
Let $\alpha$ be a closed form on $X$ of the same degree as the dimension of $C_{\x}$. Then using \eqref{upperandlowerstars} we have
\begin{align*}
\int_X \alpha \wedge \psi & = \frac{1}{m}  \int_{X'} \pi^{\ast}\alpha \wedge \pi^{\ast} \psi =  \frac{1}{m} \frac{1}{d_{\x}} \sum_{\eta \in  \Phi} \int_{X'} \pi^{\ast}\alpha \wedge 
( \eta^{-1})^{\ast} \PD( C_{\x}^{\prime} \otimes v)\\
& = \frac{1}{m} \frac{1}{d_{\x}} \sum_{\eta \in \Phi}\int_{X'} \eta^{\ast} \pi^{\ast}\alpha \wedge  \PD (C_{\x}^{\prime} \otimes v)= \frac{1}{m}\frac{1}{d_{\x}} \sum_{\eta \in  \Phi}\int_{X'} \pi^{\ast} \alpha \wedge  \PD (C_{\x}^{\prime} \otimes v)\\
& = \frac{1}{m}(m) \frac{1}{d_{\x}}\int_{C'_{\x} \otimes v} \pi^{\ast} \alpha = 
\frac{1}{m}(m) \frac{1}{d_{\x}}\int_{
\pi_{\ast}(C'_{\x} \otimes v)} \alpha. 
\end{align*}
But by Equations \eqref{imagems} and \eqref{imagecs} we have
$$ \pi_{\ast}(C'_{\x} \otimes v) = d_\x C_\x \otimes v $$
and consequently 
$$\int_X \alpha \wedge \psi = \frac{1}{m}(m) \frac{1}{d_{\x}} d_\x \int_{C_\x \otimes v} \alpha
 = \int_{C_\x \otimes v} \alpha. $$
Hence $\psi = PD(C_\x \otimes v)$ and since $\pi^{\ast}$ is injective the proposition is proved.

\end{proof}

\begin{remark}\label{transfer} We may rewrite the formula in the Proposition as
\[
 \tr^{\G}_{\G'}(C_{\x} \otimes v) = \PD^{-1} (\pi^{\ast} \PD(C_{\x} \otimes v)).
 \]
The right-hand side is by definition the Gysin homomorphism usually denoted $\pi^{!}$. Thus we are proving that for the finite cover case the left-hand side computes this Gysin homomophism.
\end{remark}

Accordingly we have proved the following equation that will be of critical importance to us.
\begin{equation}\label{eqhomologytransfer}
\pi^{!}( C_{\x} \otimes v) = \tr^{\G}_{\G'}(C_{\x} \otimes v).
\end{equation}

The following lemma is in \cite{FMlocal}.

\begin{lemma}\label{upstairsdownstairs}
Let $A$ and $B$ be $1$-cycles with local coefficients in $X$. Then
 \[
 \pi^{!}A\bullet \pi^{!}B = m A \bullet B.
 \]
\end{lemma}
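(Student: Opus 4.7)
My plan is to derive the formula directly from the definition of $\pi^!$ via Poincaré duality together with the standard fact that pulling back a top form along a degree $m$ cover multiplies the integral by $m$.

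First I would recall that by Remark~\ref{transfer} (equivalently, Proposition~\ref{dualofinverseimage}), the Gysin homomorphism is characterized by
\[
\PD(\pi^! C) = \pi^{\ast} \PD(C)
\]
for every relevant cycle $C$ in $X$. The intersection pairing on $1$-cycles (with coefficients in the self-dual local system $\widetilde{\calH_k(V)}$) is computed via the cup product of their Poincaré duals, paired against the fundamental class: for $1$-cycles $P,Q$ in a surface $Y$ one has
\[
P \bullet Q \;=\; \int_Y \PD(P) \wedge \PD(Q),
\]
where $\wedge$ denotes the cup product combined with the inner product on coefficients.

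Applying this on $X'$ together with the intertwining above gives
\[
\pi^! A \bullet \pi^! B \;=\; \int_{X'} \PD(\pi^!A) \wedge \PD(\pi^!B) \;=\; \int_{X'} \pi^{\ast}\PD(A) \wedge \pi^{\ast}\PD(B) \;=\; \int_{X'} \pi^{\ast}\bigl(\PD(A)\wedge \PD(B)\bigr),
\]
since $\pi^{\ast}$ is multiplicative. Because $\pi: X' \to X$ is a covering of degree $m$, integration satisfies $\int_{X'} \pi^{\ast}\omega = m \int_X \omega$, so the right-hand side equals $m \int_X \PD(A)\wedge \PD(B) = m (A \bullet B)$, which is the desired identity.

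The only mild subtlety to verify is that the identity $\int_{X'} \pi^{\ast}\omega = m \int_X \omega$ holds for the branched covering $\pi$ in the orbifold setting: the branch locus is a finite set of points and so contributes nothing to the integral, and away from it $\pi$ is an honest degree $m$ covering, so the formula reduces to the standard one. Alternatively, one can argue homologically by noting $\pi_{\ast} \pi^! = m \cdot \id$ (Lemma~\ref{pushforward}) and rewriting the intersection on $X'$ as $\pi^!A \bullet \pi^!B = \langle \PD(\pi^!A), \pi^!B\rangle = \langle \pi^{\ast}\PD(A), \pi^!B\rangle = \langle \PD(A), \pi_{\ast}\pi^!B\rangle = m\langle \PD(A), B\rangle = m(A\bullet B)$, which avoids any integration argument altogether and is the form I would present in the paper.
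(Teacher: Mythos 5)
The paper itself does not prove this lemma; it defers to \cite{FMlocal}, so there is no in-paper argument to compare against. Your proof is correct and self-contained. The first route — combining the intertwining $\PD(\pi^!C)=\pi^{\ast}\PD(C)$ from Proposition~\ref{dualofinverseimage}, multiplicativity of $\pi^{\ast}$, and the degree-$m$ integration identity — works, and you are right that the only point requiring comment in the orbifold setting is that $\pi$ is an honest $m$-sheeted covering away from the finite branch locus, which has measure zero and so does not affect the integral (the paper itself uses exactly this observation inside the proof of Proposition~\ref{dualofinverseimage}). Your alternative homological derivation, using the adjunction $\langle \pi^{\ast}\alpha, c\rangle = \langle \alpha,\pi_{\ast}c\rangle$ together with $\pi_{\ast}\pi^{!} = m\cdot\id$ (the cycle-level content of Lemma~\ref{pushforward} and Equation \eqref{eqhomologytransfer}), is the cleaner of the two here, precisely because it avoids integrating over the branched cover at all; it is the version best suited to the generality of this section.
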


\subsection{The case of capped special cycles}
In this section we  extend the transfer and its relation to the Gysin homomorphism to our capped cycles. We have previously defined the capped (spectacle) cycles $C_{\x}^c \otimes v$ and $( C_{\x}^{\prime}) ^c \otimes v$. In order to simplify notation in the case that $C_{\x} \otimes v$ is 
 already compact we will interpret the symbol $C_{\x}^c \otimes v$ as $C_{\x} \otimes v$.
From now on we will abbreviate $C'_{\x} \otimes v$ and $C_{\x} \otimes v$ to $C'_\x$ and $C_\x$
and the same for their capped analogues.

We define the capped reducible cycle
\begin{definition}\label{cappedhomologytransfer}
\[
(\tr^{\G}_{\G'} C_{\x} )^c = \frac{1}{d_{\x}}\sum_{\eta \in \Phi} \eta \cdot  ( C_{\x}^{\prime})^c .
\]
\end{definition}

We will see that to extend our previous work for the uncapped cycles we need only two properties  of the capped cycles {\it with their coefficients} expressing how capping commutes with deck transformations and covering projections acting on cycles. We state these in the following lemma.
Both are true is because the normalized caps with their coefficients are naturally determined by the infinite geodesic with its coefficient according to {\it geodesic} $\to$ {\it boundary point} $\to$ {\it Borel-Serre boundary component} and the fact that the coefficient on the geodesic {\it uniquely determines the coefficient on the normalized cap}.

\begin{lemma}
\hfill

\begin{enumerate}
\item $(\pi \cdot  C_{\x}^{\prime})^c = \pi \cdot ( C_{\x}^{\prime}) ^c$. \\
\item $(\eta \cdot  C_{\eta(\x)}^{\prime} )^c = \eta \cdot  (C_{\x}^{\prime})^c  $.\\
\item $(\tr^{\G}_{\G'}C_{\x})^c =\tr^{\G}_{\G'}(C_{\x}^c) $.
\end{enumerate}
\end{lemma}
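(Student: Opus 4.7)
The three assertions all descend from a single \emph{naturality principle}: the normalized cap $X_{\ell,c}\otimes w$ produced by Lemma~\ref{v_x-lemma} is uniquely determined by the data $(\ell,c,v,\gamma_\ell)$ — a cusp, a boundary basepoint, a coefficient vector, and the positively oriented generator of the cusp stabilizer — via the jump equation $(\gamma_\ell^{-1}-I)w=v$ together with the normalization $\int_{X_{\ell,c}\otimes w}\omega_{\ell,k}=0$. Both $\pi$ and the deck group $\Phi$ transport every item in this list compatibly, so uniqueness forces the caps to correspond as well.

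I would prove (2) first, as it is nearly formal. A deck transformation $\eta\in\Phi$ acts on $V$ by isometries, sends $\ell_\x\mapsto\ell_{\eta(\x)}$, $c_\x\mapsto c_{\eta(\x)}$, conjugates the stabilizer generator $\gamma_{\ell_\x}$ to $\gamma_{\ell_{\eta(\x)}}$, and carries $\omega'_{\ell_\x,k}$ to $\omega'_{\ell_{\eta(\x)},k}$. The jump equation and the normalization condition are therefore transformed into the corresponding data at $\eta(\x)$, and Lemma~\ref{v_x-lemma} yields $w_{\eta(\x)}=\eta\cdot w_\x$. Combined with the obvious equivariance $\eta\cdot C'_{\x,[k]}=C'_{\eta(\x),[k]}$ of the geodesic part, this gives (2).

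Next I would prove (1), which is the technical heart. In the non-compact case one has $\pi\cdot C'_\x=C_\x$, so the geodesic parts already agree; the crux is to show, at each end of $C'_\x$, the chain identity
\[
\pi_*(X'_{\ell,y}\otimes w')=X_{\ell,\pi(y)}\otimes w,
\]
where $\gamma_\ell$ generates $\G_\ell$ and $\gamma'_\ell=\gamma_\ell^{d_\ell}$ generates $\G'_\ell$, and where $w'$, $w$ are the cap coefficients chosen by Lemma~\ref{v_x-lemma} at level $\G'$ and $\G$ respectively. Both chains have boundary $\pi(y)\otimes v$, so their difference is some multiple $\alpha\cdot(X_\ell\otimes u_\ell^k)$ of the generator of $H_1(X_\ell,\widetilde{\Hk(V)})$. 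To determine $\alpha$, pair with $\omega_{\ell,k}$; the elementary identity $\pi^*\omega_{\ell,k}=d_\ell\,\omega'_{\ell,k}$ (forced by $\int_{X_\ell\otimes u_\ell^k}\omega_{\ell,k}=1=\int_{X'_\ell\otimes u_\ell^k}\omega'_{\ell,k}$ together with $\pi:X'_\ell\to X_\ell$ being a $d_\ell$-sheeted cover) then yields
\[
\int_{\pi_*(X'_{\ell,y}\otimes w')}\omega_{\ell,k}=\int_{X'_{\ell,y}\otimes w'}\pi^*\omega_{\ell,k}=d_\ell\cdot 0 = 0 = \int_{X_{\ell,\pi(y)}\otimes w}\omega_{\ell,k},
\]
so $\alpha=0$. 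In the compact case no caps are added and (1) is vacuous. Finally, (3) is immediate from (2) by linearity: cap each term $\eta\cdot C'_\x$ of $\tr^{\G}_{\G'}(C_\x)=\tfrac{1}{d_\x}\sum_{\eta\in\Phi}\eta\cdot C'_\x$ separately and commute the cap with the deck action using (2).

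The main obstacle is the normalization match in (1): the cap coefficients $w$ and $w'$ are genuinely different vectors (they solve different jump equations, since $(\gamma^{-1}-I)\ne(\gamma^{-d_\ell}-I)$), yet their chains must push forward correctly. The relation $\pi^*\omega_{\ell,k}=d_\ell\omega'_{\ell,k}$ is precisely the mechanism that compensates for this discrepancy. Once it is in hand, the rest of the proof reduces to Lemma~\ref{v_x-lemma} and formal equivariance.
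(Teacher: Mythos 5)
The paper itself offers no proof of this lemma beyond the one-sentence heuristic that "normalized caps\dots are naturally determined by the infinite geodesic with its coefficient" and the remark that the coefficient on the geodesic uniquely determines the coefficient on the normalized cap. Your proof supplies the actual argument behind that remark, and it is correct. The naturality principle you state at the outset is exactly the intended idea, and assertion (2) really is the formal equivariance you describe. The substantive content is your treatment of (1), where the main difficulty genuinely lives: the jump vectors $w$ and $w'$ solve \emph{different} equations $(\gamma^{-1}-I)w=v$ and $(\gamma^{-d_\ell}-I)w'=v$, so one cannot argue by uniqueness alone. Your device of recognizing that the difference $\pi_*\bigl(X'_{\ell,y}\otimes w'\bigr)-X_{\ell,\pi(y)}\otimes w$ is a $1$-cycle on the circle $X_\ell$ (hence, since $X_\ell$ is $1$-dimensional, a scalar multiple of the generator $X_\ell\otimes u_\ell^k$, not merely homologous to one), and then pinning down the scalar by pairing with $\omega_{\ell,k}$ via the covering relation $\pi^*\omega_{\ell,k}=d_\ell\,\omega'_{\ell,k}$, is a clean and correct way to close that gap. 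Two small points worth flagging: (a) the step from "cycle" to "multiple of the generator" uses that $B_1(X_\ell)=0$ because $X_\ell$ carries no $2$-chains, which you should state since readers may reflexively read "cycle" as "homology class"; and (b) your derivation of (3) invokes only (2), but as the paper sets things up $(\tr^{\G}_{\G'}C_\x)^c$ is \emph{defined} via the caps of the lifts $(C'_\x)^c$, so identifying this with $\tr^{\G}_{\G'}(C_\x^c)$ also uses (1) to know that $(C'_\x)^c$ is indeed the correct lift of $C_\x^c$; it would be cleaner to say (3) follows from (1) and (2) together with the transfer definition.
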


We also observe the following 
\[
\eta \cdot   (C_{\x}^{\prime})^c =  (C_{\eta(\x)}^{\prime}) ^c.
\]

For any closed form $\alpha$ extending over the Borel-Serre boundary of $X$, the pull-back $\pi^{\ast}\alpha:=\widetilde{\alpha}$ extends over the boundary of $X'$ and by definition we have the analogue of Proposition \ref{directimage}

\begin{equation}\label{pushforward1}
 \int_{ (C_{\x}^{\prime})^c} \widetilde{\alpha} = d_{\x} \int_{ C_{\x}  ^c} \alpha.
\end{equation}
We again have
 
\begin{lemma}\label{cappushforward}
\[
  \pi \cdot \pi^{!}( C_{\x} ^c) = \pi \cdot  \tr^{\G}_{\G'}(C_{\x}^c) = m  C_{\x} ^c .
\]
\end{lemma}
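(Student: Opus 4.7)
The plan is to mimic the proofs of Lemma~\ref{pushforward}, Proposition~\ref{dualofinverseimage}, and equation~\eqref{eqhomologytransfer} for the uncapped case, substituting the properties of capping stated in the unnumbered lemma immediately preceding the statement to be proved, together with equation~\eqref{pushforward1}. The second equality is the formal computation, and the first equality is the identification of the transfer with the Gysin map for the capped cycle.

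First I would treat the second equality. Using part~(3) of the unnumbered lemma, $\tr^{\G}_{\G'}(C_{\x}^c)=(\tr^{\G}_{\G'}C_{\x})^c=\tfrac{1}{d_{\x}}\sum_{\eta\in\Phi}\eta\cdot (C_{\x}^{\prime})^c$. Applying $\pi$ and using part~(1) gives
\[
\pi\cdot\tr^{\G}_{\G'}(C_{\x}^c)=\frac{1}{d_{\x}}\sum_{\eta\in\Phi}\pi\cdot\eta\cdot (C_{\x}^{\prime})^c=\frac{1}{d_{\x}}\sum_{\eta\in\Phi}\bigl(\pi\cdot (C_{\x}^{\prime})\bigr)^c,
\]
since $\pi\circ\eta=\pi$, and part~(1) then commutes $\pi$ past the cap. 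By Proposition~\ref{directimage}, $\pi\cdot C_{\x}^{\prime}=d_{\x}C_{\x}$, and because the cap construction is linear in the multiplicity of the underlying geodesic (the coefficients on the normalized caps are uniquely determined by the coefficient along the geodesic), $(d_{\x}C_{\x})^c=d_{\x}C_{\x}^c$. Summing the $m=|\Phi|$ equal contributions yields $m\,C_{\x}^c$.

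Next I would prove the first equality $\pi^{!}(C_{\x}^c)=\tr^{\G}_{\G'}(C_{\x}^c)$ by running the argument of Proposition~\ref{dualofinverseimage} in the absolute (capped) setting. Since $C_{\x}^c$ represents an absolute class in $H_1(\overline{X},\widetilde{\mathcal{H}_k(V)})$, its Poincar\'e dual pairs with any closed form $\alpha$ on $\overline{X}$. Set $\psi$ so that $\pi^{\ast}\psi=\tfrac{1}{d_{\x}}\sum_{\eta\in\Phi}\mathrm{PD}(\eta\cdot (C_{\x}^{\prime})^c)$, apply the projection formula to $\int_{X}\alpha\wedge\psi=\tfrac{1}{m}\int_{X'}\pi^{\ast}\alpha\wedge\pi^{\ast}\psi$, use $(\eta^{-1})^{\ast}\pi^{\ast}\alpha=\pi^{\ast}\alpha$ and equation~\eqref{pushforward1} to get
\[
\int_{X}\alpha\wedge\psi=\frac{1}{d_{\x}}\int_{(C_{\x}^{\prime})^c}\pi^{\ast}\alpha=\int_{C_{\x}^c}\alpha,
\]
so $\psi=\mathrm{PD}(C_{\x}^c)$ and therefore $\tr^{\G}_{\G'}(C_{\x}^c)=\mathrm{PD}^{-1}\pi^{\ast}\mathrm{PD}(C_{\x}^c)=\pi^{!}(C_{\x}^c)$ as in Remark~\ref{transfer}.

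The main (mild) obstacle is a bookkeeping one: one must check that capping really is linear in multiplicities and commutes with covering maps and deck transformations in the way asserted by parts (1)--(3) of the preceding unnumbered lemma. This amounts to the observation that normalized caps are uniquely determined by the geodesic together with its coefficient vector (via the characterization in Lemma~\ref{v_x-lemma}), so a $\Gamma$-equivariant identification of geodesics together with their coefficients automatically identifies the caps. Once this is in place, both equalities reduce to the uncapped arguments already given.
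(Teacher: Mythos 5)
Your proposal is correct and takes essentially the approach the paper intends: the paper gives no explicit proof of Lemma~\ref{cappushforward}, signaling by ``We again have'' and ``As before we have as a consequence'' that one should rerun the uncapped arguments of Lemma~\ref{pushforward} and Proposition~\ref{dualofinverseimage} using the cap-compatibility facts in the unnumbered lemma and \eqref{pushforward1}, which is exactly what you do. The only organizational difference is that you establish $\pi^{!}(C_{\x}^c)=\tr^{\G}_{\G'}(C_{\x}^c)$ (the content of the subsequent Lemma~\ref{dualofcappedinverseimage}) inside this proof rather than deferring it, which is harmless since the argument is self-contained.
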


As before we have as a consequence  
\begin{equation}\label{fullpushforward}
\int_{ \tr^{\G}_{\G'}(C_{\x}^c)} \widetilde{\alpha} = m \int_{ C_{\x} ^c} \alpha,
\end{equation}
and the capped version of the critical  Proposition \ref{dualofinverseimage}

\begin{lemma} \label{dualofcappedinverseimage}
\[
 \pi^{!}(C_{\x}^c) = \tr^{\G}_{\G'}(C_{\x}^c)
\]
\end{lemma}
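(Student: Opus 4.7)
The plan is to mirror the proof of Proposition \ref{dualofinverseimage} but in the Poincar\'e--Lefschetz setting for the pair $(\overline{X}, \partial \overline{X})$ (equivalently, in the mapping cone / compactly supported cohomology set-up of Section~\ref{Main-Section}). By the Lefschetz duality recalled in the appendix, $\pi^{!}$ applied to a class in $H_1(\overline{X}, \widetilde{E})$ is characterized as the homology class whose Poincar\'e dual on $\overline{X'}$ equals $\pi^{\ast}$ of the Poincar\'e dual on $\overline{X}$. Thus to identify $\pi^{!}(C_{\x}^c)$ with $\tr^{\G}_{\G'}(C_{\x}^c)$ it suffices, since $\pi^{\ast}: H^{1}(\overline{X},\widetilde{E}) \to H^{1}(\overline{X'},\widetilde{E})$ is injective, to verify the equality of pairings against an arbitrary closed $1$-form $\alpha$ on $\overline{X}$ representing a class in $H^{1}(\overline{X},\widetilde{E})$.

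The key computation is essentially one chain of equalities. For such an $\alpha$, set $\widetilde{\alpha} = \pi^{\ast}\alpha$, which is a closed form on $\overline{X'}$ extending over the Borel--Serre boundary. On the one hand, \eqref{fullpushforward} gives
\[
\int_{\tr^{\G}_{\G'}(C_{\x}^c)} \widetilde{\alpha} \;=\; m \int_{C_{\x}^c} \alpha.
\]
On the other hand, writing $\int_{C_{\x}^c}\alpha = \int_X \alpha \wedge \PD(C_{\x}^c)$ (the $\PD$ taken in $H^{1}_c(X,\widetilde{E})$ via the mapping cone construction), and using that the covering $\pi$ has degree $m$, we also obtain
\[
m \int_{C_{\x}^c} \alpha \;=\; \int_{X'} \pi^{\ast}\alpha \wedge \pi^{\ast}\PD(C_{\x}^c) \;=\; \int_{X'} \widetilde{\alpha} \wedge \pi^{\ast}\PD(C_{\x}^c).
\]
Combining these two identities shows that $\pi^{\ast}\PD(C_{\x}^c)$ pairs against every $\widetilde{\alpha}$ of the form $\pi^{\ast}\alpha$ exactly as $\PD(\tr^{\G}_{\G'}(C_{\x}^c))$ does. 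Since $\tr^{\G}_{\G'}(C_{\x}^c)$ is by construction $\Phi$-invariant, its Poincar\'e dual lies in the image of $\pi^{\ast}$, so testing against $\Phi$-invariant classes suffices, and we conclude $\PD(\tr^{\G}_{\G'}(C_{\x}^c)) = \pi^{\ast}\PD(C_{\x}^c)$, i.e., $\tr^{\G}_{\G'}(C_{\x}^c) = \pi^{!}(C_{\x}^c)$.

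The main obstacle I anticipate is bookkeeping at the Borel--Serre boundary: one must check that the cap constructions are compatible with $\pi^{\ast}$ in the mapping cone pairing, so that both sides really compute the same Kronecker pairing. This is precisely the content of the bulleted lemma preceding the statement (items (1)--(3) on commutation of caps with $\pi$ and with the $\Phi$-action) together with \eqref{pushforward1}, which ensures that the ``cap pieces'' of $\tr^{\G}_{\G'}(C_{\x}^c)$ really are the inverse images of the cap pieces of $C_{\x}^c$ counted with the correct multiplicities $d_{\x}$. Once these compatibilities are in place the calculation proceeds exactly as in the uncapped case, the factor $d_{\x}$ appearing in Definition \ref{cappedhomologytransfer} being precisely what cancels the same factor coming from \eqref{pushforward1} to yield the clean answer $m \int_{C_{\x}^c}\alpha$.
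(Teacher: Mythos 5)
Your proof is correct and follows the same route the paper intends: mirror the argument of Proposition \ref{dualofinverseimage}, using \eqref{fullpushforward} as the computational input, the degree-$m$ covering formula, nondegeneracy of the Kronecker pairing on the $\Phi$-invariants, and the fact that $\PD(\tr^{\G}_{\G'}(C_{\x}^c))$ is $\Phi$-invariant and hence lies in the image of $\pi^{\ast}$. The one point worth making explicit is that \eqref{fullpushforward} (and the clean $m\int_{C_{\x}^c}\alpha$ it produces) already encodes the cancellation between the $1/d_{\x}$ in Definition \ref{cappedhomologytransfer} and the $d_{\x}$ from \eqref{pushforward1}, which in turn rests on items (1)--(3) of the preceding lemma guaranteeing that capping commutes with $\pi$ and with the $\Phi$-action; you noted this, which is exactly the right thing to check.
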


Recall also that from Proposition \ref{dualofinverseimage} we have
\begin{equation}\label{transferofhorocycle}
\pi^{!}(X_{\ell}) = \tr^{\G}_{\G'}(X_{\ell})
\end{equation}

\subsection{Generalizing our Main Theorem to noncompact orbifolds}
We will now prove that if the  main theorem is true for neat torsion free congruence subgroups then it extends to all arithmetic subgroups.

Recall that the capped decomposable cycle $C_n^c,n>0,$ is given by 
\[
C_n^c=C_n^c(\G) = \sum_{\x \in \mathcal{S}_n(\G)} C_\x^c ,
\]
where $\mathcal{S}_n(\G)=\mathcal{S}_n(\G,\mathcal{L})$ denote a set of $\G$ orbit representatives of vectors $\x \in \mathcal{L}$ satisfying $q(\x)=n$.   
We also recall that the cycle $C_0$ is given by
\[
C_0=C_0(\G) = \sum_{\ell \in \mathcal{S}_0(\G)} c_{\ell}~(X_{\ell} \otimes u_{\ell}^k),
\]
where $\mathcal{S}_0(\G)=\mathcal{S}_n(\G,\mathcal{L})$ denotes the set of $\G$ orbit representatives of isotropic
rational lines which meet  $\mathcal{L}$ and the  $c_{\ell}$ are rational numbers depending only on the line $\ell$, the lattice and the congruence
condition $h$. Here we have included the coefficient $u^k$.  From now on we will omit it. We make the analogous definitions for $\G'$.
 
Recalling the (capped) homology transfer from Definition \ref{cappedhomologytransfer} and Lemma 
\ref{dualofcappedinverseimage}
 that computes it for the case in hand we have
\[
\pi^{!}C^c_n(\G) = \sum_{\x \in \mathcal{S}_n(\G)}   \tr^{\G}_{\G'}(C_\x^c) 
\]

\subsubsection{The transfer formula} 

Recall \cite{FCompo}, Lemma 3.6, that if $n$ is a square then $C_\x$ is noncompact for all $\x$ with $q(\x) =n$ and if $n$ is not a square then $C_\x$ is compact for all $\x$ with $q(\x) =n$.

We now return to proving the extension of the main theorem. As we will see that extension is a formal
consequence of what we have proved so far and the following (two) critical lemmas. We have separated the cases,
$n>0$ and $n=0$. The proofs are almost identical but the result(s) is so important we give both cases. First we treat the case for $n>0$. 

\begin{lemma}\label{vectorsandcycles}
Note we have a map $p_n:\mathcal{S}_n(\G') \to \mathcal{S}_n(\G)$. This map is the quotient map by $\Phi$ which accordingly acts transitively on the fibers of $p_n$. Then
\[
 \tr^{\G}_{\G'}(C_{\x}^c) = \sum_{\y \in p_n^{-1}(\x)}  (C_\y')^c.
\]
\end{lemma}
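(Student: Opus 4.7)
The plan is to re-index the transfer sum and apply the orbit-stabilizer theorem. First, using the equivariance property $\eta \cdot (C_\x')^c = (C_{\eta \x}')^c$ (item (2) of the preceding lemma, noting that the normalized caps $w_\x, w_\x'$ are characterized by $\G$-equivariant conditions via Lemma~\ref{v_x-lemma}, so the cap coefficient transforms by $\eta$ when $\x$ transforms by $\eta$), one rewrites
\[
\tr^{\G}_{\G'}(C_\x^c) \;=\; \frac{1}{d_\x} \sum_{\eta \in \Phi} \eta \cdot (C_\x')^c \;=\; \frac{1}{d_\x} \sum_{\eta \in \Phi} (C_{\eta \x}')^c.
\]

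Next, consider the map $\Phi \to \mathcal{S}_n(\G')$ sending $\eta \mapsto [\eta \x]$. Since $\G$-orbits in the length-$n$ vectors decompose into $\G'$-orbits and $\Phi = \G/\G'$ acts transitively on the set of $\G'$-orbits making up a given $\G$-orbit, the image of this map is exactly $p_n^{-1}(\x)$. The fiber over any $\y \in p_n^{-1}(\x)$ is a coset of the stabilizer $\Phi_\x \subset \Phi$, and hence has cardinality $|\Phi_\x|$. Therefore
\[
\sum_{\eta \in \Phi} (C_{\eta \x}')^c \;=\; |\Phi_\x| \sum_{\y \in p_n^{-1}(\x)} (C_\y')^c,
\]
and the lemma follows provided $|\Phi_\x| = d_\x$.

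The main (and only) point to check is this identification $|\Phi_\x| = d_\x$ in both the compact and the noncompact cases. If $C_\x$ is noncompact, the Corollary following Proposition~\ref{msstabilizer} asserts that $\x$ has trivial stabilizer in $\PSL_2(\Z)$, so $|\Phi_\x| = 1$, which matches the convention $d_\x = 1$ from Definition~\ref{coveringdegree}. If $C_\x$ is compact, this is precisely the last statement of Proposition~\ref{scstabilizer}, which identifies $\Phi_\x$ with the cyclic deck group $\Z/d_\x\Z$ of the covering $C_\x' \to C_\x$. Assembling these pieces gives $\tr^{\G}_{\G'}(C_\x^c) = \sum_{\y \in p_n^{-1}(\x)} (C_\y')^c$ as claimed.
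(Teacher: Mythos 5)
Your proof is correct and follows essentially the same path as the paper's: rewrite the transfer sum using the equivariance of the capping operation, re-index the sum over $\Phi$ as a sum over the fiber $p_n^{-1}(\x)$ with multiplicity $|\Phi_\x|$, and then invoke the corollary to Proposition~\ref{msstabilizer} (noncompact case) and Proposition~\ref{scstabilizer} (compact case) to identify $|\Phi_\x|=d_\x$. The only cosmetic difference is that you phrase the counting step explicitly via orbit--stabilizer, whereas the paper says directly that $\Phi\cdot\x$ repeats each element of $p_n^{-1}(\x)$ exactly $d_\x$ times.
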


\begin{proof}
We know, by the corollary to Proposition \ref{msstabilizer}, that $\Phi$ acts simply transitively on the fibers of $p_n$ in the case $n$ is a square ($C_{\x}$ is not compact so the isotropy of $\x$ is trivial). In case $n$ is not a square we know 
by Proposition \ref{scstabilizer}, that the isotropy of $\x$ is a nontrivial cyclic group of order $d_{\x}$, the order of the covering $C'_{\x} \to C_{\x}$. 
Now we have
\[
 \tr^{\G}_{\G'}(C_{\x}^c) = 
\frac{1}{d_{\x}} \sum_{\eta \in \Phi} \eta \cdot {C'_{\bf x}}^c 
=\frac{1}{d_{\x}} \sum_{\eta \in \Phi} ({C'_{\eta(\bf x)}})^c.
 \]
But since $\Phi$ acts transitively on $p_n^{-1}(\x)$ and again
{\it because $d_\x$ is the order of $\Phi_\x$} by Proposition \ref{scstabilizer}, we find that the set of translates $\Phi \cdot \x$ repeats each vector $d_\x$ times in $p_n^{-1}(\x)$  and  we have 
\[
\sum_{\eta \in \Phi} ({C'_{\eta(\bf x)}})^c = d_{\x} \sum_{\y \in p_n^{-1}(\x)}  (C_\y')^c.
\] \qedhere
\end{proof}

We now treat the case $n=0$. We have a map $p_0:\mathcal{S}_0(\G') \to \mathcal{S}_0(\G)$.   This map is again the quotient map by $\Phi$ which accordingly acts transitively on the fiber of $p_0$.

\begin{lemma}\label{linesandcycles}

\[
 \tr^{\G}_{\G'}(X_{\ell}) = \sum_{\ell' \in p_0^{-1}(\ell)}  X'_{\ell'}.
\]
\end{lemma}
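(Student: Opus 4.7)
The plan is to mimic the proof of Lemma~\ref{vectorsandcycles}, replacing the role of vectors $\mathbf{x}$ by isotropic lines $\ell$ and replacing Proposition~\ref{scstabilizer} by Proposition~\ref{hstabilizer}. Unwinding the Definition~\ref{uncappedhomologytransfer} of the transfer, I would start from
\[
\tr^{\G}_{\G'}(X_{\ell}) \;=\; \frac{1}{d_{\ell}}\sum_{\eta \in \Phi} \eta \cdot X'_{\ell} \;=\; \frac{1}{d_{\ell}}\sum_{\eta \in \Phi} X'_{\eta(\ell)},
\]
where the second equality uses that $\eta$ acts on the horocircles by its action on isotropic lines, and that the coefficient $u^{k}_{\ell}$ is transported naturally along with $\ell$. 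The goal is to rewrite this sum in terms of the fiber $p_0^{-1}(\ell)$.

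The key observation is that the action of $\Phi$ on $\mathcal{S}_0(\G')$ is by construction the quotient map by $\Phi$, so it acts transitively on each fiber of $p_0$. Thus the set $\{\eta(\ell):\eta\in\Phi\}$ traverses all of $p_0^{-1}(\ell)$, and each line in $p_0^{-1}(\ell)$ is attained exactly $|\Phi_{\ell}|$ times, where $\Phi_{\ell}\subset\Phi$ denotes the stabilizer of $\ell$. I would then invoke Proposition~\ref{hstabilizer} to identify $\Phi_{\ell}$ with the deck group of $X'_{\ell} \to X_{\ell}$, so that $|\Phi_{\ell}| = d_{\ell}$. (Since lines in a single $\Phi$-orbit have conjugate stabilizers, $d_{\ell'}=d_{\ell}$ for every $\ell'\in p_0^{-1}(\ell)$, so the normalizing factor $d_{\ell}$ is consistent across the fiber.)

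Putting this together,
\[
\sum_{\eta \in \Phi} X'_{\eta(\ell)} \;=\; d_{\ell}\sum_{\ell' \in p_0^{-1}(\ell)} X'_{\ell'},
\]
and dividing by $d_{\ell}$ yields the claimed identity. There is really no substantive obstacle: the only thing to check carefully is that the stabilizer of the decorated cycle $X'_{\ell}\otimes u_{\ell}^{k}$ coincides with the stabilizer of the underlying line $\ell$, which is precisely the content of Proposition~\ref{hstabilizer} (the coefficient $u_\ell^k$ is, up to scalar, the unique $\G_{\ell}$-fixed highest weight vector, so nothing in $\Phi$ stabilizes the cycle without already stabilizing $\ell$).
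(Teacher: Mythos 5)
Your proof matches the paper's essentially word for word: unwind Definition~\ref{uncappedhomologytransfer}, observe that $\Phi$ acts transitively on $p_0^{-1}(\ell)$ with stabilizer of order $d_\ell$ (Proposition~\ref{hstabilizer}), regroup the sum, and cancel the normalizing factor. Your closing remark --- that the stabilizer of the decorated cycle $X'_\ell\otimes u_\ell^k$ agrees with the stabilizer of the underlying line because $u_\ell^k$ is, up to scalar, the unique $\G_\ell$-fixed vector --- is a helpful point that the paper's proof leaves implicit.
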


\begin{proof}
We know $\Phi$ acts simply transitively on the fibers of $p_0$.  .Also we know the isotropy is a nontrivial cyclic group of order $d_{\ell}$, the order of the covering $X'_{\ell} \to X_{\ell}$. 
Now we have
\[
 \tr^{\G}_{\G'}(X'_{\ell}) = 
\frac{1}{d_{\ell}} \sum_{\eta \in \Phi} \eta \cdot X'_{\ell} 
=\frac{1}{d_{\ell}} \sum_{\eta \in \Phi} (X'_{\eta{\ell}}).
 \]
But since $\Phi$ acts transitively on $p_n^{-1}(\ell)$ and $\Phi\cdot \ell$ repeats each vector in $p_0^{-1}(\ell)$ the number of times equal to its stabilizer $d_{\ell}$ we have 
\[
\sum_{\eta \in \Phi} X'_{\eta(\ell)}= d_{\ell} \sum_{\ell' \in p_0^{-1}(\ell)}  X'_{\ell'}. \qedhere
\] 
\end{proof}

As an immediate consequence we have the key result, the transfer formula, relating the homology transfer of a decomposable cycle at one level to the corresponding decomposable cycle at another level.  Since this Proposition is a formal consequence of Lemmas \ref{dualofcappedinverseimage} and \ref{vectorsandcycles}
both of which we have proved carefully for both the cases $n>0$ and $n=0$ we may safely treat only the case $n>0$.
We should emphasize that the constants $c_{\ell}$ in the definition of $C_0(\G)$ do not depend on the level
but only on the rational line $\ell$, the lattice $L$ and the congruence condition $h$. 

\begin{proposition}\label{composite} 
Let $C_n^c(\G'),n \geq 0,$ be the decomposable cycle defined above for the group $\G'$. 
Then
\[
 \pi^{!}(C^c_n(\G)) =   C_n^c(\G').
 \]
\end{proposition}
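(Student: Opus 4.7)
The plan is to assemble Proposition~\ref{composite} as a direct consequence of the preceding lemmas, applying $\pi^{!}$ term by term in the decomposition of $C^c_n(\G)$. I will treat the case $n>0$ in detail; the argument for $n=0$ is formally identical, with isotropic lines replacing vectors and horocircles replacing capped geodesics.

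First I would use linearity of the Gysin homomorphism to write
\[
\pi^{!}(C^c_n(\G)) = \sum_{\x \in \mathcal{S}_n(\G)} \pi^{!}(C^c_\x).
\]
Applying Lemma~\ref{dualofcappedinverseimage} to each summand replaces $\pi^{!}(C^c_\x)$ by the transfer $\tr^{\G}_{\G'}(C^c_\x)$, and then Lemma~\ref{vectorsandcycles} rewrites this transfer as the sum $\sum_{\y \in p_n^{-1}(\x)} (C'_\y)^c$ of capped special cycles at level $\G'$ running over the fiber above $\x$. Substituting and combining the two summations gives
\[
\pi^{!}(C^c_n(\G)) = \sum_{\x \in \mathcal{S}_n(\G)} \sum_{\y \in p_n^{-1}(\x)} (C'_\y)^c.
\]
Since the fibers of $p_n\colon \mathcal{S}_n(\G') \to \mathcal{S}_n(\G)$ partition $\mathcal{S}_n(\G')$, the double sum collapses to a single sum over $\mathcal{S}_n(\G')$, which is precisely $C_n^c(\G')$ by definition.

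For $n=0$ the plan is the same: use equation~\eqref{transferofhorocycle} ($\pi^{!}(X_\ell) = \tr^{\G}_{\G'}(X_\ell)$) and Lemma~\ref{linesandcycles} in place of Lemmas~\ref{dualofcappedinverseimage} and \ref{vectorsandcycles}, and observe crucially that the constants $c_\ell$ defining $C_0$ depend only on the rational line $\ell$, the lattice $L$, and the coset $h$, so they are preserved when we pass from $\mathcal{S}_0(\G)$ to $\mathcal{S}_0(\G')$ via $p_0$.

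There is essentially no obstacle beyond bookkeeping here: the substantive work has already been done in Lemmas~\ref{dualofcappedinverseimage}, \ref{vectorsandcycles}, and \ref{linesandcycles}. The one subtle point worth flagging in the write-up is the counting in Lemma~\ref{vectorsandcycles}, which depends on the equality $d_\x = |\Phi_\x|$ from Proposition~\ref{scstabilizer}; this is what makes the factor $1/d_\x$ in the definition of $\tr^{\G}_{\G'}$ cancel exactly against the multiplicity with which each $\y \in p_n^{-1}(\x)$ appears in the $\Phi$-orbit of $\x$, leaving a clean sum over the fiber rather than a weighted one.
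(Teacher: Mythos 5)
Your proof is correct and follows the paper's argument exactly: apply linearity, then Lemma~\ref{dualofcappedinverseimage} to convert $\pi^{!}$ to the transfer, then Lemma~\ref{vectorsandcycles} to expand the transfer over the fiber of $p_n$, and collapse the double sum. The remarks you add about $n=0$ and about the role of $d_\x = |\Phi_\x|$ match the surrounding discussion in the paper as well.
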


\begin{proof}
We have by  Lemma \ref{dualofcappedinverseimage}
$$\pi^{!}(C_{\x}^c) = \tr^{\G}_{\G'}(C_{\x}^c)$$
and by Lemma \ref{vectorsandcycles}
\[ 
 \tr^{\G}_{\G'}(C_{\x}^c) = \sum_{\y \in p_n^{-1}(\x)}  (C_\y')^c.
\]
Hence
\[
\pi^{!}C_n(\G)^c = \sum_{\x \in S_n(\G)}\pi^{!}(C_{\x}^c)  = \sum_{\x \in S_n(\G)}  \sum_{\y \in p_n^{-1}(\x)}  (C_\y')^c = \sum_{\mathbf{z} \in \mathcal{S}_n(\G')} C_{\mathbf{z}}^c .
\]
The proposition follows.
\end{proof}

\subsubsection{The extension of the main theorem to orbifold quotients}

We now assume we have proved the main theorem for $\G'$, ie. ($\ast_{\G'}$), 
\begin{equation}
\theta(\varphi,\G') = \sum_{n=0}^{\infty} PD(C_n^c(\G'))~q^n 
\tag{$\ast_{\G'}$}
\end{equation}
for $X'$ and prove
\begin{equation}
\theta(\varphi,\G) = \sum_{n=0}^{\infty} PD(C_n^c(\G))~q^n  
\tag{$\ast_{\G}$}
\end{equation}
Since the map $\pi^{\ast}$ on first cohomology induced by $\pi$ is injective it suffices to prove 
\[
 \pi^{\ast} \theta(\varphi,\phi\G) = \sum_{n=0}^{\infty} \pi^{\ast} PD(C_n^c(\G))~q^n.
 \]
But 
\[
\pi^{\ast} \theta(\varphi,\phi,\G) = \theta(\varphi,\phi,\G').
\]
Hence the pull-back of the left-hand side of $\ast_{\G}$ is the left-hand side of $\ast_{\G'}$.
We will now show (term-by-term) that the pull-back of the right -hand side of $\ast_{\G}$ is the right-hand side of $\ast_{\G'}$.
But by definition
$$\pi^{\ast}  PD(C_n^c(\G)) = PD (\pi^{!} C_{n}^c(\G))$$
and by Proposition \ref{composite}
\[
 \pi^{!}(C^c_n(\G)) =   C_n^c(\G').
 \]
Hence we have shown that the pull-back of the right -hand side of $\ast_{\G}$ is the right-hand side of $\ast_{\G'}$
and we have obtained the required extension of our main theorem.

\subsection{The geometry of the compactified modular curve}
It is our goal in this section to give a heuristic proof that the entire homology of the
compactified modular curve with coefficients in $\calE _{2k}$ is captured by the homology
of the one-dimension simplicial complex with coefficients in the restricted local system consisting of the barycentric subdivison of the arc of the unit circle 
joining the images $i$ and $\rho$. The resulting complex has two one-simplices and three vertices.

As a topological space the Borel-Serre compactification $\overline{X}$ is a closed two-disk but of course
it has more structure, it is a hyperbolic orbifold with  two singular points, the images of $i$ and $\rho$
which we will again denote $i$ and $\rho$. 
We now  give a description that captures more of this hyperbolic geometry. 
Recall that the fundamental domain has boundary consisting of three geodesic arcs,  the two infinite
arcs from $\rho$ and $- \overline{\rho}$ to infinity and the finite arc joining $\rho$ to $ - \overline{\rho}$.
After making the identifications to pass to the quotient the two infinite arcs are identified to from
a ray $r_{\rho}$ and the finite
arc gets folded at $i$ to the geodesic segment $\overline{\rho i}$.  After compactifying by adding a circle $X_{\infty}$ at infinity and drawing the vertical
geodesic ray $r_i$ from $i$ to its limit point $c_i$ on $X_{\infty}$ the resulting space looks like an
unzipped change purse. The opened zipper at the top of the purse is the circle at infinity  $X_{\infty}$, the sides of the purse
are the two rays $r_i$ and $r_{\rho}$ and the (folded) bottom of the purse is the geodesic arc 
$\overline{\rho i}$ joining the two orbifold points $i$ and $\rho$. In order to triangulate $\overline{X}$
we first take the midpoint $z$ of the arc $\overline{\rho i}$ and draw two infinite rays $r_z^{\pm}$ from
$z$ to $X_{\infty}$, thereby dividing $\overline{X}$ into four rectangles. Drawing appropriate diagonals of these
rectangles we get a triangulation with eight two-simplices.   In particular, the arc $\overline{\rho i}$ gets subdivided into two oriented simplices
$(z,i)$ and $(z,\rho)$ and from this we obtain the cycles described in Remark \ref{foldedms}. We let
$C_{\bullet}(\overline{\rho i},\calE)$ denote this subcomplex.  We will prove the following theorem in \cite{FMlocal}.

\begin{theorem}
The inclusion $C_{\bullet} (\overline{\rho i},\calE) \to C_{\bullet} (\overline{X}, \calE)$ is a quasi-isomorphism.
\end{theorem}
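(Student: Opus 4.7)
My plan is to identify the homology of both complexes with the group homology $H_\bullet(\PSL_2(\Z), E_{2k})$ and show that the inclusion realizes that identification.

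First I would compute $H_\bullet(C_\bullet(\overline{\rho i}, \calE))$ directly. The barycentrically subdivided segment has three vertices $i, z, \rho$ and two ordered $1$-simplices $(z,i), (z,\rho)$. By the construction of $\calE = \calE_\Phi$ the vector spaces at the vertices are $(E_{2k})_{\Z/2}$, $E_{2k}$, and $(E_{2k})_{\Z/3}$ (coinvariants by the respective orbifold stabilizers), with the edge morphisms $T((z,i))$ and $T((z,\rho))$ given by the canonical projections. A direct calculation gives $H_0 = (E_{2k})_{\PSL_2(\Z)}$ and $H_1 = \ker\bigl(E_{2k} \to (E_{2k})_{\Z/2} \oplus (E_{2k})_{\Z/3}\bigr)$. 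One recognizes this as the $2$-term chain complex for the action of $\PSL_2(\Z) = \Z/2 \ast \Z/3$ on its Bass-Serre tree; its homology is therefore $H_\bullet(\PSL_2(\Z), E_{2k})$, the higher degrees vanishing because the finite vertex-stabilizers have no higher rational homology.

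For the larger complex, Corollary \ref{actualhomology} applied to a torsion-free normal subgroup $\G' \subset \SL_2(\Z)$ of finite index identifies $H_\bullet(C_\bullet(X, \calE_\Phi))$ with $H_\bullet(\SL_2(\Z), E_{2k}) = H_\bullet(\PSL_2(\Z), E_{2k})$ (the second equality because $-I$ acts trivially on $E_{2k}$), and this also computes $H_\bullet(\overline{X}, \calE)$ via the homotopy equivalence $\overline{X} \simeq X$. To see that the inclusion $C_\bullet(\overline{\rho i}, \calE) \hookrightarrow C_\bullet(\overline{X}, \calE)$ realizes this identification, I would invoke the classical $\PSL_2(\Z)$-equivariant deformation retraction of $\h$ onto the tree $T \subset \h$ whose edges are the $\PSL_2(\Z)$-translates of $\overline{\rho i}$, extended across the Borel-Serre boundary by pushing the cusp circle $X_\infty$ inward along geodesic rays. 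The induced orbifold deformation retraction $\overline{X} \to \overline{\rho i}$ admits a simplicial approximation $r$ for a suitable refinement of the $8$-simplex triangulation of $\overline{X}$ described in the excerpt, and $r_\ast$ is a chain-level inverse to the inclusion up to chain homotopy.

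The main technical obstacle is compatibility of the simplicial approximation with the edge morphisms $T((v_0, v_1))$ of $\calE_\Phi$, which at orbifold vertices are non-invertible projections onto coinvariants. Concretely one must choose the refinement so that the preimages under $r$ of $i$ and $\rho$ are vertices respecting the correct big-to-small partial order on simplices, and then write the chain homotopy as an explicit sum of $2$-simplices with parallel-section coefficients satisfying the zero-curvature condition. Once this combinatorial set-up is in place, the verification that $r_\ast$ inverts the inclusion on homology is a routine bookkeeping exercise.
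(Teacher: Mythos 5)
The paper does not actually prove this theorem here: it explicitly states ``We will prove the following theorem in \cite{FMlocal}'' and only offers a heuristic -- namely that $\calE$ restricted to the complement $\overline{X}\setminus\overline{\rho i}$ (a topological cylinder) is a genuine flat bundle, and that the arc $\overline{\rho i}$ is a deformation retract of $\overline{X}$, so the local system should retract along with the space. Your Step~3 is exactly this retraction argument, extended by the observation that the retraction of $\h$ onto the tree of $\PSL_2(\Z)$-translates of $\overline{\rho i}$ is equivariant, and your computation of the small complex via the Bass--Serre tree and the isotropy spectral sequence (with $H_{>0}(\Z/2,\cdot)=H_{>0}(\Z/3,\cdot)=0$ rationally, and $\Z/6$ stabilizers collapsing to $\Z/3$ because $-I$ acts trivially on $E_{2k}$) is precisely what the paper asks ``the reader to verify.'' Your Corollary~\ref{actualhomology} argument for the big complex is also correct. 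So your outline agrees with the paper's heuristic and adds a sanity-check computation of both sides.

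Where I would push back is your closing claim that once the partial order is set up, ``the verification that $r_\ast$ inverts the inclusion on homology is a routine bookkeeping exercise.'' That is optimistic. The simplicial approximation theorem produces a simplicial map of spaces, but you need a morphism of local systems over it, and the edge morphisms of $\calE_\Phi$ at the orbifold vertices $i$ and $\rho$ are honest projections with nontrivial kernel. A chain homotopy between $\id$ and $\iota\circ r$ must be built out of $2$-simplices with parallel-section coefficients, and the zero-curvature condition forces compatibility between the coefficients attached to adjacent triangles; where a $2$-simplex has an orbifold vertex, the coefficient must live in a preimage of a vector in the corresponding space of coinvariants, and there is no canonical lift. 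This non-invertibility is exactly the reason the authors introduce the quiver/local-system machinery of \cite{FMlocal} and defer the proof there; the flat-bundle/collapsing argument does not literally apply across the fold at $\overline{\rho i}$. In short, your approach is the right one and matches the paper's stated strategy, but the technical heart of the argument -- constructing the chain homotopy compatibly with the non-invertible $T$'s -- is genuinely the deferred content, not a bookkeeping exercise. Your abstract homology computation in Steps~1--2 cannot substitute for this, since agreement of abstract homology groups does not by itself imply the \emph{inclusion} is a quasi-isomorphism.
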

 
As promised we will conclude this section with a  short  intuitive argument motivating this theorem. 
Note that over the complement of $\overline{\rho i}$ ( a topological cylinder) the local system $\calE$
is locally free, a flat bundle determined by the action of the translation subgroup associated to the
cusp at infinity.  The arc $\overline{\rho i}$ is a deformation retract of the space $\overline{X}$ 
and from the observation immediated above it is ``intuitively clear'' that the entire local system $\calE$ retracts
onto its restriction over $\overline{\rho i}$. The reader will also verify that the homology of  
$C_{\bullet} (\overline{\rho i},\calE)$ is the homology of $\SL(2,\Z)$ with values in $E_{2k}$.

\appendix
\section{Cohomology groups associated to smooth manifolds with boundary}\label{mappingcone}

In this appendix, we discuss how one explicitly realize the isomorphism between the compactly supported cohomology of a (general) smooth manifold $\overline{X}$ with boundary and the cohomology of the mapping cone of the inclusion of the inclusion of the boundary $\partial{\overline{X}}$ in $\overline{X}$. Of course, this is in principal well-known, but for us it is critical to obtain explicit formulae for the Kronecker pairings in this setting in terms of integrals of forms over $X$ and $\overline{X}$. These will be needed for computations in this and future papers.

In this paper, $\overline{X}$ is the Borel-Serre compactification of an arithmetic quotient of the upper-half plane. In later papers, it will be the Borel-Serre compactification of a $\Q$-rank $1$ arithmetically defined locally symmetric space.

\subsection{The cohomology with compact supports and the relative cohomology}

In this section, $\overline{X}$ will be a smooth manifold with boundary $\partial \overline{X}$ and $E$
will be a flat vector bundle over $\overline{X}$.  We will consider the de Rham complexes 
$A^{\bullet}(\overline{X}, E), A^{\bullet}(\partial \overline{X},E)$ and the relative de Rham complex
$A^{\bullet}(\overline{X},\partial \overline{X}, E)$.  Henceforth when we refer to cohomology groups - we will mean cohomology groups {\it with coefficients}. 

Let $i: \partial \overline{X} \to \overline{X}$ be the inclusion and $i^{\ast} : A^{\bullet}(\overline{X}, E) \to A^{\bullet}(\partial \overline{X},E)$ be the restriction map. We note that $i^{\ast}$ fits into a short exact sequence
$$
A^{\bullet}(\overline{X},\partial \overline{X}, E) \to A^{\bullet}(\overline{X}, E) \to A^{\bullet}(\partial \overline{X},E).
$$

Let $k: A_c^{\bullet}(X,E) \to  A^{\bullet}(\overline{X}, \partial \overline{X},E)$ be the inclusion of the complex of
compactly supported forms on the open manifold $X= \overline{X} - \partial \overline{X}$ into the complex of forms on $\overline{X}$ whose restrictions
to the boundary $\partial \overline{X}$ vanish. We will need a standard result from topology, the following
result is stated in the proof of Theorem 3.43, p. 254 of \cite{Hatcher} with trivial coefficients. The basic argument using a collar neighborhood works equally well with nontrivial coefficients. The details
are left to the reader.

\begin{proposition}\label{Hatcher}
$k$ is a quasi-isomorphism, that is, $k$ induces an
isomorphism
$$
H_c^{\bullet}(X,E) \cong H^{\bullet}(\overline{X}, \partial \overline{X},E).
$$
\end{proposition}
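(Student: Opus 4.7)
The plan is to promote the standard collar-neighborhood argument from Hatcher to $E$-valued forms, producing an explicit chain-homotopy between $A^{\bullet}(\overline{X}, \partial \overline{X}, E)$ and $A^{\bullet}_c(X,E)$ up to a cut-off. First I would fix a collar embedding $c: \partial \overline{X} \times [0,1) \hookrightarrow \overline{X}$, write $t$ for the $[0,1)$-coordinate, and pick a smooth cut-off $\chi : \overline{X} \to [0,1]$ equal to $1$ on $c(\partial \overline{X} \times [0,1/3])$ and supported in $c(\partial \overline{X} \times [0,2/3])$. Because $E$ is flat, parallel transport along the collar lines $\{x\} \times [0,1)$ canonically identifies $E|_{c(x,t)}$ with $E|_x$; relative to this trivialisation, every $E$-valued form on the collar decomposes unambiguously as $\omega = \alpha + dt \wedge \beta$ with $\alpha,\beta$ horizontal (no $dt$), and the relative condition $i^*\omega = 0$ becomes $\alpha(\cdot,0) = 0$.

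The key tool will be the coning operator on the collar,
\[
(K\omega)(x,t) = \int_0^t \beta(x,s)\, ds,
\]
interpreted as an $E$-valued form via the above identification. A direct computation in the decomposition will yield the standard Poincar\'e-type identity $dK\omega + Kd\omega = \omega - r^*\omega$, where $r: \partial \overline{X} \times [0,1) \to \partial \overline{X}$ is the projection; in particular $K\omega$ vanishes at $t=0$, so $\chi K\omega$ extends by zero to an element of $A^{\bullet}(\overline{X}, \partial \overline{X}, E)$.

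Surjectivity of $k^*$ on cohomology then follows by a short calculation: for a closed $\omega \in A^k(\overline{X}, \partial \overline{X}, E)$, the form $\omega' := \omega - d(\chi K\omega)$ represents the same relative class, while on the region $\{\chi = 1\}$ near $\partial \overline{X}$ the homotopy identity reduces to $\omega = dK\omega$, so $\omega'$ vanishes there and hence lies in $A^k_c(X,E)$. For injectivity I would apply the same recipe to a primitive: if $\omega \in A^k_c(X,E)$ is closed with $\omega = d\eta$ for some $\eta \in A^{k-1}(\overline{X}, \partial \overline{X}, E)$, then $\eta' := \eta - d(\chi K\eta)$ satisfies $d\eta' = \omega$ and on $\{\chi = 1\}$ reduces to $Kd\eta = K\omega = 0$, using that $\omega$ is identically zero near $\partial \overline{X}$; hence $\eta' \in A^{k-1}_c(X,E)$ as required.

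The main subtlety, rather than a genuine obstacle, is where flatness of $E$ enters: the decomposition $\omega = \alpha + dt \wedge \beta$ and the coning operator $K$ are intrinsic only because parallel transport along the collar lines canonically trivialises $E$ in the $t$-direction, allowing us to integrate the $E$-valued form $\beta(x,s)$ coherently in $s$. Once this identification is fixed the rest of the argument is bookkeeping with the cut-off $\chi$, and the whole proof is a verbatim extension of the trivial-coefficient case in Hatcher.
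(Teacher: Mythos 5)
Your proof is correct and is the collar-neighborhood argument the paper defers to in Hatcher, carried over to $E$-valued de Rham forms via the standard fiber-integration homotopy $K$; the key observation you single out --- that flatness of $E$ makes parallel transport along collar lines canonical, so that the decomposition $\omega = \alpha + dt\wedge\beta$ and the operator $K$ are well-defined on $E$-valued forms --- is exactly the point the paper asserts works ``equally well'' but leaves to the reader. The one small imprecision is in the injectivity step: $K\omega$ need not vanish on all of $\{\chi = 1\}$, only on the sub-collar $\{t < T\}$ where $\omega$ itself vanishes, but since that set is still a neighborhood of $\partial\overline{X}$ and $\overline{X}$ is (tacitly, both here and in the paper) compact, $\eta'$ is still compactly supported and the conclusion stands.
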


In particular, the de Rham cohomology of the open manifold $X$ with coefficients in $E$ is dually paired with the compactly supported cohomology of $X$ with coefficients in $E^{\ast}$  of complementary degree by the integration pairing.

\subsection{From the cohomology with compact supports to the cohomology of the mapping cone}

In what follows we wish to represent the compactly-supported cohomology of $X$ with coefficients in $E$ by the cohomology of the mapping cone $C^{\bullet}$ of $i^*$, see eg \cite{Weibel}, p. 19. However we will change the sign
of the differential on $C^{\bullet}$ and shift the grading down by one. Thus we have
$$
C^i =\{ (a,b), a \in A^i (\overline{X},E), b \in A^{i-1}(\partial \overline{X},E)\}
$$
with
$$
d(a,b) = (da, i^*a - db).
$$
If $(a,b)$ is a cocycle in $C^{\bullet}$ we will use $[[a,b]]$ to denote its cohomology class. Hence we obtain

\begin{lemma}
A cocycle $(a,b)$ in $C^{\bullet}$ of degree $i$ consists of a closed form $a$ on $\overline{X}$ of degree $i$ and a form $b$ on $\partial\overline{X}$ of degree $i-1$ such that
\[
db  =  i^*a.
\]
\end{lemma}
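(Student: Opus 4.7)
The plan is to simply unpack the cocycle condition $d(a,b)=0$ using the explicit formula for the mapping cone differential given just above the statement, namely $d(a,b) = (da,\, i^*a - db)$. Since $C^i$ was defined as pairs $(a,b)$ with $a \in A^i(\overline{X},E)$ and $b \in A^{i-1}(\partial\overline{X},E)$, the degree information is built into membership in $C^i$ and requires no argument.

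First I would note that $(a,b) \in C^i$ is by definition a cocycle if and only if $d(a,b) = 0$ in $C^{i+1}$. By the formula for the differential this equation reads
\[
(da,\, i^*a - db) = (0,0) \quad \text{in } A^{i+1}(\overline{X},E) \oplus A^{i}(\partial\overline{X},E).
\]
Reading off the two components separately then yields the two stated conditions: $da = 0$, i.e.\ $a$ is closed on $\overline{X}$, and $i^*a - db = 0$, i.e.\ $db = i^*a$ on $\partial\overline{X}$.

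The argument is essentially a definition-chase, so I do not expect any obstacle; there is nothing to verify beyond the bilinearity/componentwise nature of the direct sum $C^i = A^i(\overline{X},E) \oplus A^{i-1}(\partial\overline{X},E)$. The only thing worth flagging is the sign/shift convention adopted just above the lemma (the sign of $db$ is reversed and the grading shifted down by one relative to the standard mapping cone), which is precisely what makes the cocycle condition take the convenient form $db = i^*a$ rather than $db = -i^*a$. This sign choice is what will make the subsequent Kronecker pairing formulas come out without extraneous signs when comparing with the description of $H^\bullet_c(X,E)$ via forms vanishing on $\partial\overline{X}$ (Proposition \ref{Hatcher}).
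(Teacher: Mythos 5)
Your proof is correct and is exactly the definition-chase the paper itself has in mind; indeed the paper offers no separate argument, treating the lemma as an immediate reading-off of the two components of the equation $d(a,b)=(da,\,i^*a-db)=0$. Your closing remark about the sign and degree-shift conventions is accurate and a helpful gloss, but nothing beyond that is needed.
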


We have a short exact sequence of cochain complexes
\begin{equation}\label{short}
A^{\bullet}(\partial \overline{X},E)[1] \to C^{\bullet} \to A^{\bullet}( \overline{X},E).
\end{equation}
Here the first map is the map $b \to (0,- b)$  and the second is projection on the first factor.
The first map is a map of complexes because the differential on $A^{\bullet}(\partial \overline{X},E)[1] $ is the
negative of the differential on $A^{\bullet}(\partial \overline{X},E)$, see \cite{Weibel}, pg. 10.
Here we  use the standard notation, see \cite{Weibel}, p. 9, $ A^{i}(\partial \overline{X},E)[1] = A^{i-1}(\partial \overline{X},E).$

We observe that if $c \in A^{\bullet}(\overline{X}, \partial \overline{X},E)$ then
$(c,0) \in C^{\bullet}$ and the induced map
$j:  A^{\bullet}(\overline{X}, \partial \overline{X},E) \to C^{\bullet}$ is a cochain map.

\begin{lemma}
The map $j:  A^{\bullet}(\overline{X}, \partial \overline{X},E) \to C^{\bullet}$ is a quasi-isomorphism.
\end{lemma}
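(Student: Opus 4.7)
The plan is to apply the Five Lemma to a comparison of two long exact sequences. The first is the tautological long exact cohomology sequence of the pair $(\overline{X}, \partial \overline{X})$, arising from the short exact sequence $0 \to A^{\bullet}(\overline{X},\partial \overline{X}, E) \to A^{\bullet}(\overline{X}, E) \to A^{\bullet}(\partial \overline{X}, E) \to 0$. (Surjectivity of $i^*$ on smooth forms is obtained via a collar neighborhood of $\partial\overline{X}$, in the same spirit as Proposition \ref{Hatcher}.) The second is the long exact sequence of the short exact sequence \eqref{short}. The map $j$, augmented by identity maps (up to signs) on the other terms, will be shown to give a map of long exact sequences; the Five Lemma will then force $j_*$ to be an isomorphism in every degree.

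First I would identify the connecting homomorphism for the mapping cone sequence. Given a closed $a \in A^{i-1}(\overline{X}, E)$, lift to $(a,0) \in C^{i-1}$; then $d(a,0) = (0, i^*a)$, which under the inclusion $b \mapsto (0,-b)$ corresponds to $-i^*a$. Thus the connecting map $H^{i-1}(\overline{X},E) \to H^{i-1}(\partial\overline{X},E)$ associated to \eqref{short} is $-i^*$, matching (up to sign) the corresponding map in the pair sequence.

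The essential step is commutativity of the square linking the boundary maps. Given $[b] \in H^{i-1}(\partial \overline{X}, E)$, choose $\tilde a \in A^{i-1}(\overline{X},E)$ with $i^*\tilde a = b$, so that the pair-sequence connecting map gives $\delta[b] = [d\tilde a]$, while the mapping-cone connecting map gives $\iota[b] = [[0,-b]]$. The key identity is that in $C^{\bullet}$,
\[
d(\tilde a, 0) = (d\tilde a, i^*\tilde a) = (d\tilde a, b),
\]
so $(d\tilde a, 0) - (0, -b) = (d\tilde a, b)$ is exact, whence $j_*\delta[b] = [[d\tilde a, 0]] = [[0,-b]] = \iota[b]$. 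The remaining squares either commute tautologically ($p \circ j$ equals the natural map $H^*(\overline{X}, \partial\overline{X}, E) \to H^*(\overline{X}, E)$) or commute after inserting alternating $\pm \mathrm{id}$ on the outer vertical maps to absorb the sign $-i^*$ in the connecting map; these sign choices do not affect the hypotheses of the Five Lemma.

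The main obstacle is purely bookkeeping: keeping straight the sign conventions coming from the definition $d(a,b) = (da, i^*a - db)$ on $C^{\bullet}$ and the degree shift in $A^{\bullet}(\partial\overline{X},E)[1]$, and checking that $j_*$ intertwines the two boundary operators on the nose rather than up to a sign that would spoil the comparison. Once these signs are pinned down, the application of the Five Lemma is completely formal.
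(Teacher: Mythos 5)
Your proof is essentially identical to the paper's: both compare the long exact sequence of the pair with the long exact sequence of the mapping cone, verify commutativity of the square involving the connecting homomorphism by the same computation $d(\tilde a,0)=(d\tilde a,b)$, and conclude by the Five Lemma. You are slightly more explicit about the sign $-i^*$ appearing in the mapping cone's connecting map, a detail the paper passes over silently, but the substance of the argument is the same.
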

\begin{proof}
From the short exact sequence Equation \ref{short}, see also \cite{Weibel} p. 19, 1.5.2, the cohomology of the cochain complex $C^{\bullet}$ fits into a long exact
sequence 
$$ \to H^i(C^{\bullet}) \to H^i(\overline{X},E) \to H^i(\partial \overline{X},E) \to \cdots .$$
But the  relative cohomology fits into a similar long exact sequence in the same place.
We map the exact sequence for relative cohomology to the exact sequence for the mapping cone
by mapping the class of $c \in H^i(\overline{X}, \partial \overline{X},E)$ to the class of $j(c)$.
We use the identity maps on the other terms. We claim all the resulting squares
are commutative. This is obvious for the squares not involving the coboundary map $d^{\bullet- 1}: H^{\bullet-1}(\partial \overline{X},E) \to 
H^{\bullet}(\overline{X},\partial \overline{X},E)$. For these latter squares we are required to prove an equality of cohomology classes
$$j\circ d^{i-1}([b]) = [[0,-b]].$$
Here we assume $b$ is a closed $i-1$ from on $\partial \overline{X}$ with cohomology class $[b]$.
Let $\overline{b}$ be any extension of $b$ to $\overline{X}$ then 
$$j\circ d^{i-1}([b]) = [[d \overline{b},0]].$$
But in $C^i$ we have
$$d(\overline{b},0) = (d \overline{b}, b)$$
whence $(d \overline{b},0)$ is cohomologous to $(0,- b)$ in $C^i$.
The claim is proved and the lemma follows from the five lemma.
\end{proof}

We have seen that the inclusion $k$ of the compactly supported forms $A^{\bullet}_c(X,E)$ into  $ A^{\bullet}(\overline{X}, \partial \overline{X},E)$
is a quasi-isomorphism. Composing $j$ with $k$ we obtain an inclusion 
$\ell: A_c^{\bullet}(X,E) \to C^{\bullet}$. From the above results we obtain
\begin{lemma} \label{quasiiso}
The inclusion $\ell:  A_c^{\bullet}(X,E) \to C^{\bullet}$ is a quasi-isomorphism.
\end{lemma}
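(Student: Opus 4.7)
The plan is to observe that this lemma is essentially a formal consequence of the two results established immediately above, namely Proposition~\ref{Hatcher} and the preceding lemma. By definition we have factored $\ell$ as the composition $\ell = j \circ k$, where $k : A_c^{\bullet}(X,E) \to A^{\bullet}(\overline{X}, \partial \overline{X}, E)$ is the inclusion of compactly supported forms on $X$ into forms on $\overline{X}$ vanishing on $\partial \overline{X}$, and $j : A^{\bullet}(\overline{X}, \partial \overline{X}, E) \to C^{\bullet}$ is the cochain map sending $c$ to $(c,0)$.

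First, I would invoke Proposition~\ref{Hatcher}: the map $k$ is a quasi-isomorphism on passing to cohomology, so it induces an isomorphism $H_c^{\bullet}(X,E) \cong H^{\bullet}(\overline{X}, \partial \overline{X}, E)$. Next, I would invoke the preceding lemma, which established that $j$ is also a quasi-isomorphism by means of the five lemma applied to the map between the long exact sequence of the pair $(\overline{X}, \partial \overline{X})$ and the long exact sequence of the mapping cone $C^{\bullet}$.

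The final step is purely formal: the composition of two quasi-isomorphisms is a quasi-isomorphism. On cohomology we obtain
\[
H^{\bullet}(\ell) = H^{\bullet}(j) \circ H^{\bullet}(k) : H_c^{\bullet}(X,E) \xrightarrow{\cong} H^{\bullet}(\overline{X},\partial \overline{X},E) \xrightarrow{\cong} H^{\bullet}(C^{\bullet}),
\]
which is an isomorphism as a composition of isomorphisms. Since everything of substance has already been done, there is no real obstacle here; the content of the lemma is precisely that the explicit realization of the compactly supported cohomology via the mapping cone complex is provided by the concrete cochain map $\ell$, which will be essential in later sections for writing down integral formulas for the Kronecker pairing.
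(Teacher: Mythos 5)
Your proposal is correct and takes essentially the same approach as the paper: the paper introduces $\ell$ explicitly as the composition $j\circ k$, cites Proposition~\ref{Hatcher} for $k$ and the immediately preceding lemma for $j$, and then states Lemma~\ref{quasiiso} as an immediate consequence without further elaboration. You have merely made explicit the (trivial) formal step that a composition of quasi-isomorphisms is a quasi-isomorphism, which is exactly what the paper intends.
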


\subsection{From the cohomology of the mapping cone to the cohomology with compact supports}

In this subsection we will construct a cochain map from the mapping cone of $i^*$ to
the cohomology of $X$ with compact supports (viewed as a cochain complex with zero differential) that induces the isomorphism on cohomology
inverse to the isomorphism induced by $\ell$. We assume that we have chosen a neighborhood
$V$ of the boundary and a product decomposition
$ V \cong \partial \overline{X} \times (\epsilon,0]$. We let $t' \in (\epsilon,0]$ be the normal coordinate to
$\partial \overline{X}$.
\begin{remark} 
For the case of this paper we
will use the geodesic flow projection to give
the normal coordinate $t'$ to $\partial \overline{X}$.
In more detail, we descend the map $D_{\ell}  \times [0,\infty) \to D$
induced by the extension of 
$$
(x,t') \to n_{\ell}(x) a((t')^{-1/2}) z_0
$$
to $t' =0$
to give coordinates in a product neighborhood $V$ of $\partial \overline{X}$.
\end{remark}
Hence we have arranged that the subset of $V$ defined by the equation $t'=0$ is $\partial \overline{X}$.
We let $\pi:V \to \partial \overline{X}$ be the projection. If $b$ is an $E$-valued form on $\partial \overline{X}$
we define $\tilde{b}$ on $V$ by
$$\tilde{b} = \pi^* b.$$
Let $f$ be a smooth function of the geodesic flow coordinate $t'$ which is $1$ near $t'=0$ and zero for $t' \geq \epsilon'$
for some small positive $\epsilon' \leq \epsilon$. 
We may regard $f$ as a function on a product neighborhood $U$ of $\partial \overline{X}$ by making it constant on
the $\partial \overline{X}$ factor.  We extend $f$ to all of $ \overline{X}$ by making it zero off of $U$.
Let $(a,b)$ be a cocycle in $C^i$. We need an explicit formula for a compactly supported form $\alpha$ so that $\ell(\alpha)$
is in the same  cohomology class as $(a,b)$. In fact, we now construct a map $F$
from the  mapping cone $C^{\bullet}$ to $H_c^{\bullet}(X,E)$ that induces the isomorphism $\overline{F}$ on cohomology that is inverse to 
that induced by $\ell$.

By Proposition \ref{Hatcher} we have

\begin{lemma}\label{alpha}
There exists a compactly supported closed form $\alpha $ and a form $\mu$
which vanishes on $\partial \overline{X}$ such that
$$a - d(f \tilde{b}) = \alpha + d\mu.$$
\end{lemma}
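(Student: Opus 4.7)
The plan is to verify that $a - d(f\tilde{b})$ is a closed form on $\overline{X}$ whose restriction to $\partial \overline{X}$ vanishes, and then invoke Proposition~\ref{Hatcher} (combined with Proposition~\ref{quasiiso}) to produce the required compactly supported representative.

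First I would check closedness: since $(a,b)$ is a cocycle in $C^i$ we have $da = 0$, so $d(a - d(f\tilde{b})) = 0$ automatically. Next I would check that $a - d(f\tilde{b})$ pulls back to zero on the boundary. Near $\partial \overline{X}$ the function $f$ is identically $1$, so in a collar of the boundary we have $d(f\tilde{b}) = f \, d\tilde{b} = d\pi^{\ast} b = \pi^{\ast}(db)$. Restricting to $\partial \overline{X}$ (where $\pi$ is the identity) gives $i^{\ast}d(f\tilde{b}) = db$. Since $(a,b)$ is a mapping cone cocycle we have $i^{\ast}a = db$, and therefore
\[
i^{\ast}(a - d(f\tilde{b})) = db - db = 0.
\]

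Consequently $a - d(f\tilde{b}) \in A^i(\overline{X}, \partial \overline{X}, E)$ is a relative cocycle. By Proposition~\ref{Hatcher} the inclusion $k \colon A_c^{\bullet}(X, E) \hookrightarrow A^{\bullet}(\overline{X}, \partial \overline{X}, E)$ is a quasi-isomorphism, so the relative cohomology class of $a - d(f\tilde{b})$ is the image of a class in $H_c^i(X, E)$. Picking a compactly supported closed representative $\alpha \in A_c^i(X, E)$, we obtain $\alpha - (a - d(f\tilde{b})) = d\mu$ for some $\mu \in A^{i-1}(\overline{X}, \partial \overline{X}, E)$, i.e., a form vanishing on $\partial \overline{X}$. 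Rearranging yields the claimed identity
\[
a - d(f\tilde{b}) = \alpha + d\mu.
\]

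There is no real obstacle here; the only subtlety is the boundary computation, for which it is essential that $f$ be constantly equal to $1$ in a neighborhood of $\partial \overline{X}$ (so that $df$ vanishes there) and that $\tilde{b} = \pi^{\ast} b$ so that $d\tilde{b}$ restricts to $db$. Everything else is a direct invocation of the earlier quasi-isomorphism results. The lemma will then feed into the construction of the explicit cochain map $F$ from $C^{\bullet}$ to $H_c^{\bullet}(X, E)$ inverting the map induced by $\ell$.
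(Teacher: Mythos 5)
Your proof is correct and follows the paper's intended argument exactly: the paper offers no more than the remark ``By Proposition~\ref{Hatcher} we have'' before stating the lemma, and you have supplied precisely the verification that makes this invocation legitimate, namely that $a - d(f\tilde{b})$ is a closed form vanishing on $\partial\overline{X}$ (using $da=0$, $f\equiv 1$ near the boundary, and $i^{\ast}a = db$) so that Proposition~\ref{Hatcher} yields a compactly supported $\alpha$ in its relative cohomology class. The parenthetical appeal to Lemma~\ref{quasiiso} in your opening sentence is unnecessary — only Proposition~\ref{Hatcher} is used — but this does not affect the correctness of the argument.
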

 
Suppose the degree of $(a,b)$ is $i$.
We define the cohomology class $[a,b]$ in 
the compactly supported  cohomology $H^i_c(X,E)$ to be the class of $\alpha$.
We wish to define $F: C^{\bullet} \to H_c^{\bullet}(X,E)$ by
$$F(a,b) = [a,b].$$

\begin{lemma}
$F$ is well-defined.
\end{lemma}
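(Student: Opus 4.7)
The plan is to verify that the compactly supported class $[\alpha]\in H^i_c(X,E)$ produced from a cocycle $(a,b)$ via Lemma \ref{alpha} depends only on $(a,b)$, not on the auxiliary decomposition $(\alpha,\mu)$ or the cutoff $f$. The organizing principle is Proposition \ref{Hatcher}: the inclusion $k:A^\bullet_c(X,E)\hookrightarrow A^\bullet(\overline{X},\partial\overline{X},E)$ is a quasi-isomorphism, so any compactly supported closed form that equals $d\nu$ for some $\nu\in A^{i-1}(\overline{X},\partial\overline{X},E)$ is already a boundary in $A^\bullet_c(X,E)$ up to cohomology.

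First I will fix $f$ and compare two decompositions $(\alpha_j,\mu_j)$, $j=1,2$, provided by Lemma \ref{alpha}. Subtracting the two identities yields
\[
\alpha_1-\alpha_2=d(\mu_2-\mu_1),
\]
with $\mu_2-\mu_1\in A^{i-1}(\overline{X},\partial\overline{X},E)$ since each $\mu_j$ vanishes on $\partial\overline{X}$. Proposition \ref{Hatcher} then forces $[\alpha_1]=[\alpha_2]$ in $H^i_c(X,E)$.

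Next I will vary the cutoff. Given two admissible choices $f_1,f_2$ with associated decompositions $(\alpha_j,\mu_j)$, the difference $g:=f_1-f_2$ vanishes in a neighborhood of $\partial\overline{X}$ (where both $f_j$ equal $1$) and outside a compact set, so $g\tilde b$ lies in $A^{i-1}_c(X,E)$. Subtracting the two identities of Lemma \ref{alpha} gives
\[
\alpha_1-\alpha_2=-d(g\tilde b)+d(\mu_2-\mu_1).
\]
The first summand is exact in $A^\bullet_c(X,E)$, while the compactly supported closed form $\alpha_1-\alpha_2+d(g\tilde b)=d(\mu_2-\mu_1)$ is a boundary in the relative complex, so Proposition \ref{Hatcher} again kills it in $H^i_c(X,E)$.

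The main obstacle I anticipate is not in these manipulations themselves, which are formal bookkeeping, but in ensuring each intermediate form has the correct support properties: compact support in $X$ for the forms we want to pair against $H^i_c(X,E)$, and vanishing trace on $\partial\overline{X}$ for the forms we want to feed into the quasi-isomorphism $k$. Once the two identities above are set up correctly, Proposition \ref{Hatcher} closes both cases and well-definedness of $F$ follows.
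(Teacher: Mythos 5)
Your first paragraph (fixing $f$ and comparing two decompositions $(\alpha_j,\mu_j)$) is precisely the argument in the paper: subtract the two identities to get $\alpha_1-\alpha_2=d(\mu_2-\mu_1)$ with $\mu_2-\mu_1\in A^{i-1}(\overline{X},\partial\overline{X},E)$, then invoke the injectivity on cohomology of $k$ (Proposition \ref{Hatcher}) to conclude $[\alpha_1]=[\alpha_2]$ in $H^i_c(X,E)$.

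Your second paragraph --- checking independence of the cutoff $f$ --- goes beyond the paper, which tacitly treats $f$ as fixed once and for all and verifies well-definedness only across choices of $(\alpha,\mu)$. Your supplementary argument is correct: since $f_1$ and $f_2$ both equal $1$ near $t'=0$ and both vanish for $t'$ large (and outside the collar $V$), the difference $g=f_1-f_2$ has compact support in $X$, so $d(g\tilde b)$ is already a coboundary in $A^\bullet_c(X,E)$, and the remaining discrepancy $\alpha_1-\alpha_2+d(g\tilde b)=d(\mu_2-\mu_1)$ is handled by Proposition \ref{Hatcher} exactly as before. This makes the lemma's claim unambiguous even if one intends $F$ to be independent of all auxiliary choices, not just the choice of primitive $\mu$. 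Both routes rest on the same key input (the quasi-isomorphism $k$); yours is simply more careful about which pieces of data are being fixed.
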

\begin{proof}
Given two decompositions as above,
$$a - d (f \tilde{b}) = \alpha_1 + d\mu_1 \qquad  \text{and} \qquad a- d (f \tilde{b}) = \alpha_2 + d\mu_2,$$
we obtain 
$$\alpha_1 - \alpha_2 = d(\mu_1 -\mu_2).$$
Hence the compactly supported form $\alpha_1 - \alpha_2$ is cohomology to zero in the relative complex.
But by \cite{Hatcher} the inclusion of complexes is
injective on cohomology whence $\alpha_1 - \alpha_2$ is the coboundary of a compactly supported
form. Thus the class of $\alpha$ is well-defined. 
\end{proof}

We next prove 

\begin{lemma} 
\begin{enumerate}
\item The map $F$ from $(a,b)$ to the class of $\alpha$ depends only on the cohomology class $[[a,b]]$
of $(a,b)$.
\item The induced map $\overline{F}:H^{\bullet}(C) \to H^{\bullet}_c(X,E)$ given by
$$\overline{F}([[a,b]]) = [a,b]$$
is an isomorphism.
\end{enumerate}
\end{lemma}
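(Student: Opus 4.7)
The plan is to prove Part (1) by a direct computation showing that a coboundary in $C^\bullet$ yields a representative $\alpha$ which is itself a coboundary in $A_c^\bullet(X,E)$, and then to deduce Part (2) from Lemma \ref{quasiiso} by checking that $\overline{F}$ is the two-sided inverse of the isomorphism induced by $\ell$.

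For Part (1), I would suppose $(a,b) = d(a'',b'') = (da'',\, i^*a'' - db'')$ is a coboundary in $C^\bullet$, and rewrite
\[
a - d(f\tilde b) \;=\; d\bigl(a'' - f\,\widetilde{i^*a''}\bigr) \;-\; d\bigl(df \wedge \widetilde{b''}\bigr),
\]
using the identity $f\,\widetilde{db''} = d(f\,\widetilde{b''}) - df \wedge \widetilde{b''}$, which in turn follows from $d\circ\pi^* = \pi^*\circ d$. The primitive $a'' - f\,\widetilde{i^*a''}$ vanishes on $\partial \overline X$, because there $f \equiv 1$, $\widetilde{i^*a''} = i^*a''$, and $a''|_{\partial \overline X} = i^*a''$. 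On the other hand, $df$ is supported in the interior of the collar, bounded away from $\partial \overline X$, so $df \wedge \widetilde{b''}$ lies in $A_c^\bullet(X,E)$. Combined with Lemma \ref{alpha}, the compactly supported form $\alpha + d(df \wedge \widetilde{b''})$ equals $d\eta$ for some $\eta$ vanishing on $\partial \overline X$, so its class in $H^i(\overline X, \partial \overline X, E)$ is zero. Since $k_*$ is an isomorphism by Proposition \ref{Hatcher}, this class is also zero in $H_c^i(X, E)$; as $d(df \wedge \widetilde{b''})$ is already exact in $A_c^\bullet(X,E)$, we obtain $[\alpha] = 0$, which is Part (1).

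For Part (2), Lemma \ref{quasiiso} already gives that $\ell$ induces an isomorphism on cohomology, so it suffices to verify that $\overline F \circ \ell_* = \mathrm{Id}_{H^\bullet_c(X,E)}$. For a compactly supported closed form $a$, $\ell(a) = (a, 0)$, and $a - d(f \cdot 0) = a$ is already compactly supported, so Lemma \ref{alpha} holds trivially with $\alpha = a$ and $\mu = 0$; thus $\overline F([[a,0]]) = [a]$, giving the required identity and forcing $\overline F$ to be the inverse of $\ell_*$.

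The principal technical obstacle is Part (1): one must carefully track which terms are compactly supported in $X$ versus merely vanishing on $\partial \overline X$, and exploit $d \circ \pi^* = \pi^* \circ d$ to isolate the compactly supported correction $d(df \wedge \widetilde{b''})$ that accounts for the failure of $\alpha$ to vanish on the nose. Once this is in hand, Part (2) is a formal consequence.
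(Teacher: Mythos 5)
Your proof is correct and follows essentially the same route as the paper's: in Part (1) the paper absorbs the $df\wedge\widetilde{b''}$ term into the single primitive $a''-f\widetilde{i^*a''}-df\wedge\widetilde{b''}$, observes that it vanishes on $\partial\overline X$, and so takes $\alpha=0$ directly in Lemma \ref{alpha}, whereas you split off $d(df\wedge\widetilde{b''})$ as a compactly supported exact correction and re-invoke Proposition \ref{Hatcher}; the two are equivalent. For Part (2) both arguments verify $\overline F\circ\ell_*=\mathrm{id}$ and conclude via Lemma \ref{quasiiso}.
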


\begin{proof}
Suppose $(a,b)$ is the  coboundary of $(a',b')$ whence we may write 
$$ a = d a' \ \text{and} \ b = i^* a' - db'.$$
Then
$$F((a,b)) = da' - d(f (\widetilde{i^*a'} - d\widetilde{b'})) = d( a' - f(\widetilde{i^* a'} - d \widetilde{b'})).$$
Thus   $\nu = a' - f\widetilde{i^* a'} +f d \widetilde{b'}$ is a primitive for $F((a,b))$. 
Unfortunately this primitive does not vanish on the boundary in general. We now construct a new primitive
that does vanish on the boundary.
It is immediate that $ a' - f(\widetilde{i^* a'})$ vanishes on $\partial \overline{X}$. As for 
the third term note that we may obtain a new primitive for $F((a,b))$ by subtracting the exact (hence closed) form
$d(f b')$ from $\nu$.
But $$\nu - d(fb') = a' - f(\widetilde{i^* a'}) - df \wedge \widetilde{b'}.$$
Now the third term vanishes on $\partial \overline{X}$ since $df$ does.

Finally, we observe that  for a closed compactly supported form $c$ on $X$ we have  $F\circ \ell(c)$ is the class of $c$ whence $\overline{F}$ is surjective and consequently is an isomorphism.
\end{proof}

\subsection{Integral formulas for Kronecker pairings}

Let $(a,b)$ be a cocycle in the mapping cone.
Let $\eta$ be a closed form on $\overline{X}$ of degree complementary to that of $a$ (or equivalently of $(a,b)$) and $C$ be a relative cycle in $\overline{X}$ of degree equal to that of $a$. We will need integral formulas for the Kronecker pairings
\[
\langle  [\eta],[a,b] \rangle \qquad \text{and} \qquad 
\langle [a,b],C \rangle
\]
We need to be a bit careful about the non-trivial coefficients case since we must pair a group with coefficients in $E$ to a corresponding group with coefficients in the dual $E^{\ast}$. In the cases we will study here $E$ has a parallel nondegenerate symmetric bilinear form so we will not have to change to $E^{\ast}$.

From the considerations in the previous subsections we easily obtain

\begin{lemma}\label{integralformula}
Let $\eta$ be a closed form on $\overline{X}$ of degree complementary to that of $(a,b)$. Let $\alpha$ be as in Lemma \ref{alpha}. Let $C$ be a relative cycle in $\overline{X}$ of degree equal to that of $a$. 
Then
\begin{align*}
\langle [\eta],[a, b] \rangle &= \int_{\overline{X}}\eta \wedge  \alpha 
= \int_{\overline{X}} \eta \wedge a - \int_{\partial \overline{X}}  i^*\eta \wedge b \\
\langle [a,b],C \rangle &= \int_C \alpha = \int_{C}a - \int_{\partial C} b.
\end{align*}
Here the $\eta \wedge \alpha$ is a scalar valued top degree differential form obtain by pairing in the coefficients. 

Furthermore, if $\eta$ is a closed form on ${X}$ (which might not extend to $\overline{X}$), we have
\[
\langle [\eta],[a, b] \rangle = \int_{X}\eta \wedge (a - d (f \tilde{b})).
\]
\end{lemma}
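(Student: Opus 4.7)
The plan is to deduce both formulas directly from the construction of the quasi-isomorphism $\overline{F} \colon H^\bullet(C^{\bullet}) \to H^\bullet_c(X,E)$ of the previous lemma. By that construction, for any cocycle $(a,b)$ one chooses a decomposition
\[
a - d(f\tilde{b}) = \alpha + d\mu
\]
with $\alpha$ compactly supported in $X$ and $\mu|_{\partial \overline{X}} = 0$, and then $\overline{F}([[a,b]]) = [\alpha]$. Hence the first equalities $\int_{\overline{X}} \eta \wedge \alpha$ and $\int_C \alpha$ in each formula are simply the standard integral formulas for the Kronecker pairing between de Rham cohomology and, respectively, compactly supported cohomology and relative homology, applied to the representative $\alpha$.

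For the second equalities I would substitute $\alpha = a - d(f\tilde{b}) - d\mu$ and integrate term by term. Using $d\eta = 0$ one has $\eta \wedge d(f\tilde{b}) = \pm d(\eta \wedge f\tilde{b})$, so Stokes' theorem together with $f \equiv 1$ and $\tilde{b}|_{\partial \overline{X}} = b$ gives $\int_{\overline{X}} \eta \wedge d(f\tilde{b}) = \pm \int_{\partial \overline{X}} i^\ast \eta \wedge b$. The remaining term $\int_{\overline{X}} \eta \wedge d\mu$ vanishes by Stokes since $\mu$ vanishes on $\partial \overline{X}$. The cycle version is analogous: $\partial C \subset \partial \overline{X}$, and $\int_C d(f\tilde{b}) = \int_{\partial C} b$, while $\int_C d\mu = \int_{\partial C} \mu = 0$. (The signs from Stokes' theorem depend on the degrees of $\eta$ and $b$, and I would track them to match the stated formula exactly.)

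The third formula treats $\eta$ that is only defined on $X$. The key preliminary observation is that $a - d(f\tilde{b})$ is a form on all of $\overline{X}$ whose restriction to $\partial \overline{X}$ is $i^\ast a - db = 0$ by the cocycle condition, so $\eta \wedge (a - d(f\tilde b))$ has integrable behavior at the boundary even when $\eta$ itself does not extend. Writing
\[
\int_X \eta \wedge (a - d(f\tilde{b})) = \int_X \eta \wedge \alpha + \int_X \eta \wedge d\mu,
\]
the first term equals $\langle [\eta], [a,b]\rangle$ by the definition of $\overline{F}$ together with the homotopy equivalence $\overline{X} \simeq X$. It remains to show the error $\int_X \eta \wedge d\mu$ vanishes. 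I would do this by exhausting $X$ by compact submanifolds $X_T$ with boundary collapsing onto $\partial \overline{X}$ and applying Stokes on each $X_T$ to get $\int_{\partial X_T} \eta \wedge \mu$; since $\mu$ vanishes along $\partial \overline{X}$ one may arrange, by modifying $\mu$ within its cohomology class or by using a suitable collar neighborhood, that these boundary integrals tend to zero.

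The main obstacle is precisely this last step: unlike the extending-$\eta$ case, there is no uniform bound that immediately kills the error term from Stokes, and one has to exploit the vanishing of $\mu$ at $\partial \overline{X}$ together with the behavior of $\eta$ near the boundary in a compatible way. All other verifications reduce to direct applications of Stokes' theorem using the defining properties of $\alpha$, $\mu$, and $f$.
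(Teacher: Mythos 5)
Your treatment of the first two formulas is correct and is exactly the route the paper intends when it says "from the considerations in the previous subsections we easily obtain": substitute $\alpha = a - d(f\tilde b) - d\mu$, integrate, and apply Stokes' theorem on the compact manifold with boundary $\overline{X}$, using that $f\equiv 1$ and $\tilde b = b$ on $\partial\overline{X}$, that $\mu|_{\partial\overline{X}}=0$, and (for the cycle pairing) that $\partial C\subset\partial\overline{X}$. No idea is missing there beyond tracking the sign $(-1)^{\deg\eta}$ in $d(\eta\wedge\,\cdot\,)$, which you rightly note must be reconciled with the conventions.

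For the third identity the point you flag at the end is a genuine gap, not a technicality. After reducing to showing $\int_X \eta\wedge d\mu = 0$, exhausting by $X_T$ and applying Stokes gives boundary terms $\int_{\partial X_T}\eta\wedge\mu$, and nothing in the hypotheses forces these to tend to zero: $\mu$ only vanishes \emph{on} $\partial\overline{X}$ (not in a neighborhood, and with no quantified rate), while $\eta$ is allowed arbitrary growth as it approaches the boundary. You cannot choose $\mu$ compactly supported, since that would force $a-d(f\tilde b)=\alpha+d\mu$ to be compactly supported, which it is not; and "modifying $\mu$ within its cohomology class" only alters $\mu$ by a closed form, which need not improve decay. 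A related worry applies already to the preliminary claim that $\eta\wedge(a-d(f\tilde b))$ is integrable: vanishing of $a-d(f\tilde b)$ on $\partial\overline{X}$ does not by itself control the product against a divergent $\eta$. In the paper's applications this is harmless because $a-d(f\tilde b)$ is a corrected theta series decaying exponentially at the cusps while $\eta=\eta_f$ grows at most polynomially, so both the main integral and the boundary terms $\eta\wedge\mu$ decay; but the lemma as stated is really an identity under implicit convergence/decay hypotheses of this kind, and your proof should state them explicitly rather than gesture at "arranging" the boundary integrals to vanish.
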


\end{document}